\newtheorem{definition}{\bf Definition}
\newtheorem{problem}{\bf Problem}
\newtheorem{assumption}{\bf Assumption}
\newtheorem{lemma}{\bf Lemma}
\newtheorem{remark}{\bf Remark}
\newtheorem{theorem}{\bf Theorem}
\newtheorem{example}{\bf Example}
\newcommand{\argmin}{\mathop{\rm argmin}\limits}
\newcommand{\latexe}{{\LaTeX\kern.125em2%
                      \lower.5ex\hbox{$\varepsilon$}}}
\chardef\bslash=`\\	
\def\square{\RIfM@\bgroup\else$\bgroup\aftergroup$\fi
  \vcenter{\hrule\hbox{\vrule\@height.6em\kern.6em\vrule}%
                                              \hrule}\egroup}
\author{Apurva Patil}  	
\address{Your Email or Physical Address (for Vita)}  
\title{Advancing Frontiers of Path Integral \\Theory for Stochastic Optimal Control} 
\begin{document}


\copyrightpage  

\commcertpage   

\titlepage      


\topskip0pt
\vspace*{\fill}
\hspace{20mm} Dedicated to my parents and my brother.
\vspace*{\fill}




%
\utabstract
Stochastic Optimal Control (SOC) problems arise in systems where the dynamics are influenced by random disturbances and uncertainties, such as robotic systems navigating unpredictable environments or financial systems subject to asset price fluctuations. These problems are modeled using stochastic differential equations (SDEs), where the goal is to design control inputs that minimize a cost function while accounting for the probabilistic nature of the system's evolution. Traditional methods like dynamic programming, though powerful, are computationally expensive, particularly for high-dimensional and non-linear systems due to the \textit{curse of dimensionality}. The path integral control approach offers a promising alternative for solving SOC problems, especially in complex, high-dimensional systems. Based on the Feynman-Kac theorem, the path integral method transforms the SOC problem into an expectation over noisy system trajectories, enabling efficient policy computation through Monte Carlo sampling. As the number of Monte Carlo samples approaches infinity, the policy generated by the path integral method converges to an optimal policy. Due to its purely simulator-driven nature, path integral control is applicable to high-dimensional, nonlinear, and SOC problems, where conventional model-based policy synthesis methods often face challenges. The ability to parallelize the Monte Carlo simulations on Graphics Processing Units (GPUs) makes the path integral approach scalable and well-suited for real-time applications. Additionally, it operates directly on the system’s non-linear dynamics, allowing it to handle a wide range of practical systems where traditional linear approximations fall short. The path integral approach is inherently robust to uncertainties, making it particularly relevant for safety-critical applications such as autonomous vehicle navigation, where accounting for sensor noise and unpredictable conditions is essential for safe operation. Its real-time policy generation capability makes it adaptable to dynamic environments, further enhancing its utility in autonomous systems and robotics. This dissertation develops and applies the path integral control theory to six classes of SOC problems: Chance-Constrained SOC, Two-Player Zero-Sum Stochastic Differential Games, Deceptive Control, Task Hierarchical Control, Risk Mitigation of Stealthy Attack and Discrete-Time Stochastic Linear Quadratic Regulator (LQR). Additionally, it presents a sample complexity analysis of the path integral controller for discrete-time stochastic LQR problems. Through these contributions, this research work lays foundational groundwork for simulator-driven autonomy in solving complex SOC problems, particularly those involving non-linear dynamics and high-dimensional state spaces.    

\tableofcontents   

\listoftables      
\listoffigures     

%
%

\chapter{Introduction}
Making the best possible decisions within the available time and computational resource constraints is a necessary skill for truly autonomous systems. In many engineering scenarios, optimal decision-making requires solving Stochastic Optimal Control (SOC) problems. SOC refers to a class of control problems that involves making optimal decisions for systems whose dynamics are influenced by random disturbances or uncertainties. These uncertainties can arise from various factors such as environmental noise, model inaccuracies, or unobserved phenomena. Examples include robotic systems navigating uncertain environments, financial systems where asset prices evolve stochastically, and energy systems subject to demand fluctuations. Unlike deterministic control problems, where the system's behavior can be predicted exactly, SOC problems account for the randomness in the system. The general SOC problem involves a dynamical system described by stochastic differential equations (SDEs). These equations model the evolution of the system's state over time, driven by both control inputs and stochastic processes, typically represented as \textit{Wiener} or \textit{Brownian motion} \cite{durrett2019probability}. The challenge lies in designing control inputs that minimize a given cost function, such as energy consumption or deviation from a target state, while considering the probabilistic evolution of the system. This cost function is often an expectation over all possible system trajectories, making the optimization inherently complex.\par

At the core of SOC is the Bellman principle of optimality, which states that the optimal policy at any given time depends only on the current state of the system, not on the prior history of states. This leads to the formulation of the Hamilton-Jacobi-Bellman (HJB) equation, a partial differential equation (PDE) that characterizes the value function of the SOC problem. Solving the HJB PDE yields the optimal control policy, but for high-dimensional systems and non-linear dynamics, solving the HJB equation analytically is often intractable. One of the commonly used approach to solve HJB PDE is \textit{dynamic programming} which breaks down the SOC problem into smaller subproblems by recursively solving the HJB PDE. While powerful, this method suffers from the \textit{curse of dimensionality}, where the computational cost grows exponentially with the number of state variables. This makes the traditional dynamic programming approaches impractical for real-time control.  \par

The path integral approach, rooted in the principles of statistical physics, offers an alternative method for solving nonlinear SOC problems online ~\cite{kappen2005path, theodorou2010generalized, williams2016aggressive}. The fundamental insight behind path integral control lies in the Feynman-Kac theorem \cite{oksendal2003stochastic}, a result from stochastic calculus that connects PDEs with stochastic processes. This theorem allows the transformation of the HJB equation into an expectation of an exponential cost function over noisy system trajectories which allows for efficient computation of optimal policies through Monte Carlo sampling methods. According to the strong law of large numbers \cite{durrett2019probability}, as the number of Monte Carlo samples approaches infinity, the policy generated by the path integral method converges to an optimal policy. The path integral method can be flexibly applied to high-dimensional control problems that involve nonlinear, stochastic, and hybrid dynamics -- areas where conventional model-based approaches often face challenges. Moreover, it relies on Monte Carlo simulations that can be massively parallelized on Graphics Processing Units (GPUs). Millions of trajectories can be simulated simultaneously significantly reducing computation time making the path integral approach less susceptible to the curse of dimensionality \cite{williams2017model}. Moreover, in certain cases, it can be formally shown that the required number of samples of Monte Carlo simulations grows only logarithmically as a function of the dimension of the control input \cite{patil2024discrete}, in contrast to exact dynamic programming, whose computational cost grows exponentially. \par 

The path integral approach is inherently robust to uncertainties. By accounting for the stochastic nature of the system, it ensures that control policies are not only optimized for expected outcomes but also for handling variability in system behavior. This makes the path inetgral method highly relevant for safety-critical applications, where uncertainties can lead to catastrophic failures if not properly managed. For instance, in autonomous vehicle navigation, the ability to optimize trajectories under uncertain road conditions or sensor noise is crucial for ensuring safe operation. Another significant benefit of the path integral control approach is its ability to handle non-linear systems. Many real-world systems, such as autonomous robots or drones, have non-linear dynamics that make traditional control methods, which rely on linear approximations, inadequate. The path integral method, however, operates directly on the system's non-linear dynamics, making it well-suited for controlling systems where non-linearities are inherent and crucial to their behavior. Moreover, the path integral approach is particularly effective for online computation of control policies which is important for applications such as autonomous driving or robotic navigation, where decisions must be made in real time. The path integral method can generate control inputs dynamically as new information becomes available, making it adaptable to rapidly changing environments. Finally, the path integral approach determines control inputs solely through simulators, bypassing the need for analytical models of the plant. This is particularly beneficial in scenarios where a simulator engine exists, but creating a mathematical (equation-based) model of the system poses challenges. As applications in robotics, autonomous systems, and other fields continue to grow, the path integral control approach is likely to play a critical role in advancing the state of the art in control theory and practice.

\section{History of Path Integral Control}\label{Sec: history}
The Path Integral Control is a control algorithm inspired by the Path Integral formulation of quantum mechanics. The origin of path integral control can be traced back to the stochastic variational treatment of quantum mechanics \cite{nelson1966derivation, nelson2020dynamical,rosenbrock1985variational,rosenbrock1995stochastic,guerra1983quantization,YASUE1981327}.  Using Nelson's diffusion model \cite{nelson1966derivation}, references \cite{guerra1983quantization,YASUE1981327} derived an SOC problem whose associated HJB equation coincides with the linear Schr\"{o}dinger equation. 
This connection between SOC and quantum mechanics was exploited by Itami \cite{2001193,2003637}, who proposed a Monte Carlo (Metropolis) algorithm to evaluate the path integral representation of the wave function. Noticing the semi-classical limit of the   Schr\"{o}dinger equation approximates the HJB equation, \cite{2003637} proposed a Monte Carlo-based solution approach to deterministic nonlinear optimal control problems. This is the first appearance of what we now know as path integral control to the best of the our knowledge. However, it should be noted that mathematically relevant problems, such as risk-sensitive control \cite[Ch.~VI]{fleming2006controlled},\cite{whittle1981risk, jacobson1973optimal}, distributionally robust control against relative entropy ambiguity set \cite{petersen2000minimax}, and linearly solvable Markov-Decision Processes (MDP) \cite{todorov2006linearly,todorov2009efficient} (also known as the KL control problems) have been investigated in separate research threads.The line of work \cite{todorov2006linearly,todorov2009efficient} has been broadly accepted by the neuroscience community, where linearly solvable MDPs are now widely used to reason about brain's functionality in spatial navigation (e.g., \cite{piray2021linear}).

Kappen \cite{kappen2005path} showed that a certain class of \emph{stochastic} optimal control problems can be solved by the path integral method without semi-classical approximation. For such SOC problems, it was shown that the associated stochastic HJB equation can be linearized by the Cole-Hopf transformation (a typical change-of-variables technique to relate the HJB equation with Burgers' equation and the heat equation \cite{evans2022partial}). The Feynman-Kac lemma \cite{oksendal2003stochastic} is then invoked to show that the solution to the linearized PDE admits a path integral representation, which can be numerically evaluated by Monte Carlo simulations. The authors of \cite{theodorou2010generalized, theodorou2010reinforcement} demonstrated the path integral for policy improvements (PI$^2$), who pioneered significant developments of path integral control in robot learning \cite{pastor2011skill,sugimoto2011phase,rombokas2012tendon,okada2018acceleration,abraham2020model,bhardwaj2022storm}. 
A receding horizon implementation of path integral control, called Model Predictive Path Integral (MPPI) control \cite{williams2017information,williams2017model}, demonstrated aggressive driving \cite{williams2016aggressive}, where real-time Monte Carlo simulations were parallelized on GPUs. The path integral control method has also been applied to constrained systems and systems with non-differentiable dynamics \cite{satoh2020nonlinear,carius2022constrained}.

An alternative derivation of path integral control bypassing the Feynman-Kac lemma was presented in the work \cite{theodorou2012relative}. In \cite{theodorou2012relative}, the authors used the variational lower bound and the Girsanov theorem to establish a connection between the linearly solvable MDP (the KL control problems) and the problem studied by Kappen \cite{kappen2005path}.
Yet another derivation of path integral control is possible by formulating an SOC as a statistical inference problem, and then applying the standard graphical model inference algorithms \cite{kappen2012optimal, levine2018reinforcement}.

It is worth noting that, in all derivations, the path integral control method faces the same limitation. It is optimal only for the class of SOC problems whose dynamic programming equations (e.g., HJB PDEs) are linearizable. Removal of this restriction has been studied in the literature. In \cite{satoh2016iterative}, the authors proposed an iterative applications of the Feynman-Kac lemma. The MPPI framework \cite{williams2017information} utilizes a heuristic approach based on the KL-divergence minimization. However, this limitation has not been satisfactorily removed as of today.

\section{Path Integral Control in Real-World Application}
Path Integral Control or its model-predictive variant known as Model Predictive Path Integral (MPPI), has gained significant traction in real-world applications. Its foundation in stochastic optimal control, coupled with its ability to handle complex dynamics and non-convex cost landscapes, makes it highly suitable for a range of domains requiring adaptive, real-time decision-making. Unlike deterministic planners, path integral control naturally incorporates uncertainty into its decision-making process. Its reliance on sampling-based computation allows effective parallelization on modern hardware like GPUs. Moreover, path integral controller does not require pre-computed policies or extensive training, making it highly adaptive to changing environments. While methods like Model Predictive Controller (MPC), Reinforcement Learning (RL), and Proportional-Integral-Derivative (PID) controller have their respective niches, path integral controller offers a unique blend of robustness, computational efficiency, and adaptability that makes it a compelling choice in scenarios where uncertainty and real-time performance are critical. This section discusses prominent application areas and provides a comparative analysis where path integral control outperforms alternative methods.

\subsubsection{Autonomous Driving}
One of the most visible applications of path integral control is in autonomous driving. The ability to compute control trajectories in real time under uncertainty has made path integral control an attractive choice for vehicle control tasks such as trajectory planning, obstacle avoidance, and dynamic lane following. In \cite{williams2018information}, the authors demonstrated the efficacy of MPPI in controlling a small-scale autonomous vehicle. Their implementation leveraged GPU acceleration to achieve real-time performance, showcasing MPPI’s scalability to hardware capabilities. Unlike conventional optimization-based planners (e.g., Sequential Quadratic Programming or RRT*), path integral control inherently accounts for stochasticity in its formulation. This stochastic nature provides robustness in environments with high sensor noise or dynamic obstacles. Moreover, compared to reinforcement learning (RL)-based methods, MPPI does not require extensive offline training and can adapt online to unforeseen changes in the environment.

\subsubsection{Robotic Manipulation}
In robotics, manipulation tasks often involve high degrees of freedom and require precise control in the presence of uncertainty. Path integral controller's sampling-based approach allows it to handle such complexities effectively. In manufacturing and logistics, robotic arms can employ path integral controller for tasks such as pick-and-place, assembly, and welding. \cite{bhardwaj2022storm} demonstrated that path integral controller is a promising tool for manipulators with complex, non-smooth dynamics, and cost functions. They showed that path integral controller can tightly integrate perception into the control problem by utilizing learned cost functions from raw sensor data. While methods such as CHOMP (Covariant Hamiltonian Optimization for Motion Planning) and STOMP (Stochastic Trajectory Optimization for Motion Planning) are popular for offline or semi-online trajectory optimization in robotic manipulation, there are several compelling reasons why MPPI often serves as a superior choice for real-time control of manipulators:
\begin{itemize}
    \item CHOMP/STOMP are primarily focused on optimizing a geometric or kinematic path, with dynamics or actuation constraints added in a secondary manner. They typically produce a reference trajectory to be tracked by a lower-level controller. On the other hand, MPPI directly solves an optimal control problem that accounts for system dynamics at each time step. Instead of splitting planning and control, MPPI unifies them in a receding-horizon framework, adjusting control inputs as the state evolves. This is particularly powerful if the manipulator must react to disturbances, changing goals, or time-varying constraints in real-time.
    \item CHOMP/STOMP usually perform iterative trajectory updates in an offline or batch manner. They can handle moderate uncertainty by re-optimizing, but the update frequency is often not high enough for truly dynamic tasks (e.g., tracking a moving object). MPPI constantly re-samples control sequences at a high rate. This means the manipulator can handle abrupt changes (like collisions or shifting targets) within milliseconds, which is crucial for safe, robust manipulation.
    \item CHOMP relies on gradient-based updates of a functional that measures collision cost and smoothness. While effective for certain geometric settings, it may face difficulties when contact or friction models become discontinuous. STOMP is stochastic, like MPPI, but it modifies trajectories in configuration space, often with partial reliance on gradient-like terms derived from the cost. MPPI is purely sampling-based. Non-smooth dynamics (e.g., contact) do not invalidate the update step, because no gradient needs to be calculated—trajectories that produce collisions or large friction forces simply get higher cost and lower exponential weighting in the rollouts.
    \item CHOMP/STOMP can be parallelized to some degree, but they are usually iterative solvers that sequentially refine trajectories. Real-time or near real-time performance for high-dimensional arms can be challenging. MPPI is explicitly designed to exploit parallel hardware, simulating many rollouts simultaneously. Modern GPUs have made real-time MPPI feasible even for complex manipulators.
\end{itemize}



\subsubsection{Legged Locomotion}

The authors in \cite{alvarez2024real} implement MPPI control on a real Unitree Go1 robot, showing that a sampling‐based approach can be run online at 100 Hz. This represents a departure from purely simulation‐oriented studies and demonstrates that MPPI, previously considered too computationally expensive for higher‐dimensional platforms, can be deployed successfully on physical hardware. By leveraging the full robot model (body plus legs, rather than just foot‐terrain contacts), the control framework plans for non‐traditional support contacts (e.g., using the robot’s torso or shoulders) during challenging tasks. This results in behaviors like:
\begin{itemize}
    \item Traversing uneven or tall obstacles (a box nearly the robot’s height) by hopping, re‐positioning its feet, or briefly using its body for support. 
    \item Pushing and manipulating a box to user‐specified goal locations, sometimes nudging it with the legs, shoulders, or torso—without the robot ever being explicitly told how to make contact.
\end{itemize}
A key enabler is the efficient parallelization of MuJoCo: the authors evaluate around 30–50 simulations in parallel at each policy update. Even though MuJoCo runs on a CPU (rather than GPU/TPU), it is sufficiently fast for in‐the‐loop control at real‐time rates. This shows that ``sampling‐based, whole‐body" control can now handle high‐dimensional dynamics in practice. When applied to real hardware, the MPPI controller displays resilience to terrain uncertainties and external pushes. By sampling over possible joint angles—parameterized through cubic splines—the method uncovers emergent strategies (such as using the torso as an additional contact point or slightly ``kicking" the box) without manual scripting or offline training. Unlike reinforcement learning methods that often require days of simulation to learn a policy, path integral control is purely online. The robot does not need an offline data‐collection or policy‐optimization phase; it instead samples state‐action trajectories in real time and updates its control inputs based on those rollouts.

\subsubsection{UAV Navigation and Exploration}
Unmanned Aerial Vehicles (UAVs) frequently operate in environments with limited visibility and high uncertainty. Path integral controller has been applied to UAV navigation tasks to enhance robustness and agility. \cite{minarik2024model} utilized MPPI for a quadrotor navigating through dense forests, outperforming conventional MPC in terms of both safety and computational efficiency. By leveraging stochastic sampling, MPPI could better anticipate and react to potential collisions. While MPC is a strong contender in UAV navigation, its reliance on linearized models often limits its applicability in highly nonlinear environments. Path integral control, on the other hand, directly simulates the nonlinear dynamics, offering a more accurate representation of the system.

\subsubsection{Multi-Robot Systems}
In \cite{streichenberg2023multi}, multi-agent path integral control is applied to interaction-aware motion planning in urban canals, where multiple autonomous vessels must navigate constrained waterways in real time while avoiding collisions and complying with traffic rules. Multi-Agent MPPI surpasses numerous conventional multi-agent planning and control methods by offering sampling-based flexibility, interaction-aware cost and coupling, real-time adaptability, robustness to model uncertainty, and unified global and local exploration. Its rollout mechanism naturally handles discrete changes and discontinuities without requiring derivatives, making it more effective than traditional MPC solutions that rely on gradient-based solvers. Methods like potential fields or graph search may struggle to capture the continuous nature of multi-robot interactions or may involve extensive discretization, but MPPI sidesteps these issues by directly integrating inter-robot interactions into the cost function, thereby facilitating cooperative or socially compliant maneuvers in dense traffic. In the urban canal setting, \cite{streichenberg2023multi} continuously sample and weight potential trajectories, enabling rapid re-planning as traffic density shifts, unexpected maneuvers occur, or environmental disturbances emerge. This parallel-friendly re-planning ensures robust collision avoidance, goal-reaching, and adherence to navigation constraints, which can be especially challenging for traditional deterministic planners prone to faltering under uncertainty or model mismatch. By generating a wide variety of trajectories at each timestep, MPPI also mitigates the risk of local optima that often hinder multi-agent coordination, ultimately delivering safer and more efficient motion plans.

\subsubsection{Roll-to-Roll Manufacturing}
Roll-to-roll (R2R) manufacturing is a continuous processing technology essential for scalable production of thin-film materials and printed electronics, but precise control remains challenging due to subsystem interactions, nonlinearities, and disturbances. Existing Model Predictive Control (MPC) methods address many of these issues but require solving computationally demanding quadratic programs, limiting their applicability for large-scale R2R systems. In our recent work \cite{martin2025model}, we propose a Model Predictive Path Integral (MPPI) control formulation for R2R systems, leveraging a GPU-based Monte-Carlo sampling approach to efficiently approximate optimal controls online. Crucially, MPPI easily handles non-differentiable cost functions, enabling the incorporation of complex performance criteria relevant to advanced manufacturing processes.

\section{Organization of the Dissertation}
In this work, we develop path integral theory to solve six classes of stochastic optimal control problems as depicted in Figure \ref{Fig. Dissertation Structure}.
\begin{figure}[h]
    \centering
    \!\!\!\!\!\!\!\!\!\!\!\!\includegraphics[scale=0.4]{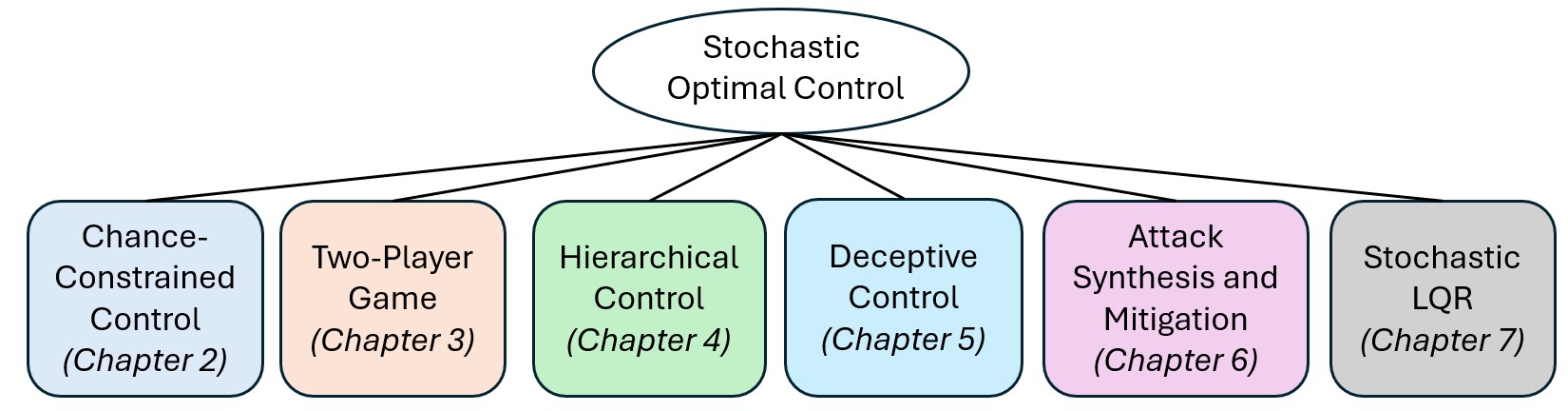} 
      
        \caption{Dissertation Structure} 
        \label{Fig. Dissertation Structure}
\end{figure}
This dissertation is organized as follows: we develop the path integral control theory for six classes of stochastic optimal control problems: Chance-Constrained SOC (Chapter \ref{Sec: Continous-Time Chance-Constrained Stochastic Optimal Control}), Two-Player Zero-Sum Stochastic Differential Games (Chapter \ref{Sec: Two-Player Zero-Sum Stochastic Differential Game}), Task Hierarchical Control (Chapter \ref{Sec: task hierarchy}), Deceptive Control (Chapter \ref{Sec: deception}), Deceptive Attack Synthesis and Its Mitigation (Chapter \ref{Sec: Deceptive Attack Synthesis and Its Mitigation for Nonlinear Cyber-Physical Systems: Path Integral Approach}) and Stochastic LQR Problem (Chapter \ref{Sec: sample complexity of LQR}). Chapter \ref{Sec: sample complexity of LQR} also derives the sample complexity of the path integral controller applied to a discrete-time stochastic LQR problem. Finally, chapter \ref{Sec: publications} provides a list of publications resulting from this work.

\chapter[Chance-Constrained Stochastic Optimal Control]{Chance-Constrained Stochastic Optimal Control}
\label{Sec: Continous-Time Chance-Constrained Stochastic Optimal Control}
\section{Motivation}
In safety-critical missions, quantitative characterization of system uncertainties is of critical importance as it impacts the overall safety of the system operation. System uncertainties arise due to unmodeled dynamics, unknown system parameters, and external disturbances. Subsequently, the control policies should be designed to accommodate such uncertainties in order to achieve user-defined safety requirements. Robust control is a popular paradigm to guarantee safety against set-valued uncertainties \cite{kothare1996robust, kuwata2007robust}. Generally, a robust control approach aims to synthesize a policy that optimizes the worst-case performance, commonly known as the minimax policy. While such a strategy is suitable for applications where safety is an absolute requirement, the computation of the exact minimax policy is often intractable, which necessitates a sequential outer-approximation of the uncertain sets. This often results in overly conservative solutions \cite{calafiore2006scenario}. Also, robust control is difficult to apply when the uncertainty is modeled probabilistically using random variables with unbounded support (e.g., Gaussian distributions).\par

\textit{Chance-constrained} stochastic optimal control (SOC) is an alternative paradigm for policy synthesis under uncertainty \cite{blackmore2011chance, oguri2019convex}. Unlike robust control, this approach aims to optimize the system performance by accepting a user-specified threshold for the probability of failure (e.g., collisions with obstacles). Notably, the acceptance of the possibility of failure is often effective to reduce the conservatism of the controller even if the introduced probability of failure is practically negligible \cite{vidyasagar2001randomized}. Consequently, chance-constrained SOC is also widely used as a framework for policy synthesis for systems in safety-critical missions.
\section{Literature Review}
In this work, we consider a continuous-time continuous-space chance-constrained SOC problem. A central challenge in this problem stems from the fact that the continuous-time end-to-end probability of failure is generally challenging to evaluate and optimize against. A common course of action found in the literature is to look for a tractable approximation of the chance-constrained SOC problem in order to use existing tools from optimization.  In \cite{blackmore2011chance} a discrete-time chance-constrained SOC problem is converted to a disjunctive convex program by approximating the chance constraint using Boole’s inequality. The disjunctive convex program is then solved using branch-and-bound techniques. The work presented in \cite{ono2015chance} solves the discrete-time chance-constrained problem by formulating its dual. However, in this work, the joint chance constraint is conservatively approximated using Boole's inequality. Hence, the duality gap is nonzero and the obtained solution is suboptimal. In \cite{wang2020non}, authors use statistical moments of the distributions in concentration inequalities to upper-bound the chance constraint for discrete-time trajectory planning under non-Gaussian uncertainties, and solve the problem using nonlinear program solvers. A scenario-based optimization method that translates the chance constraints into deterministic ones is provided in \cite{de2021scenario}. Taking ideas from distributionally robust optimization, deterministic convex reformulation of the chance-constrained stochastic optimal control problems is proposed in \cite {li2021distributionally} for discrete-time linear system dynamics. Different from the above works in our work, we consider continuous-time, control-affine system dynamics. For the continuous-time chance-constrained planning problem the authors in \cite{jasour2021convex} use risk contours to transform the original problem into a deterministic planning problem and use convex methods based on sum-of-squares optimization to obtain continuous-time trajectories. In \cite{nakka2019trajectory} a continuous-time chance-constrained SOC problem is converted to a deterministic control problem with convex constraints using generalized polynomial chaos expansion and Chebyshev inequality. The deterministic optimal control problem is then solved using sequential convex programming.  Other approaches that consider approximations of the chance constraints include approaches based on concentration of measure inequalities \cite{hokayem2013chance}, Bernstein approximation \cite{nemirovski2007convex}, and moment based surrogate \cite{paulson2019efficient}. A common limitation of the above approaches is the conservatism introduced by the approximation of the chance constraints, leading to overly cautious policies that compromise performance or cause artificial infeasibilities \cite{nemirovski2007convex, frey2020collision, patil2023upper, patil2022upper}. 
Deep reinforcement learning algorithms such as soft actor-critic also have been used to solve the chance-constrained SOC problems \cite{huang2021risk}. 

\section{Contributions}
In our work, we utilize the path integral approach to numerically solve the posed chance-constrained SOC problem using open-loop samples of system trajectories. 
The contributions of this work are as follows: 
\begin{enumerate}
    \item  We leverage the notion of exit time from the continuous-time stochastic calculus \cite{chern2021safe, shah2011probability} to formulate a chance-constrained SOC problem. The dual of this chance-constrained SOC problem is constructed by incorporating the chance constraint into the cost function. The chance constraint is then transformed into an expectation of an indicator function, which enjoys the same additive structure as the primal cost function. No approximation nor time discretization is introduced in this transformation.
    \item Given a fixed dual variable, we evaluate the dual objective function by numerically solving the Hamilton-Jacobi-Bellman (HJB) partial differential equation (PDE). 
    \item It is shown that under a certain assumption on the system dynamics and cost function, a strong duality holds between the primal problem and the dual problem (the duality gap is zero).
    \item We propose a novel path-integral-based dual ascent algorithm to numerically solve the dual problem. This allows us to solve the original chance-constrained problem online via open-loop samples of system trajectories. Finally, we present simulation studies on chance-constrained motion planning for spatial navigation of mobile robots. The solution obtained using the path integral approach is compared with that of the finite difference method.
    \item Finally, we provide an open-source library to solve continuous-time chance-constrained stochastic optimal control problem using the path integral controller: \url{https://github.com/patil-apurva/CC_SOC}
\end{enumerate}

\section*{Notations}
Let $\mathbb{R}$, $\mathbb{R}^n$, and $\mathbb{R}^{m\times n}$ be the set of real numbers, the set of $n$-dimensional real vectors, and the set of $m\times n$ real matrices. If a stochastic process $x(t)$ starts from $x$ at time $t$, then let $P_{x,t}\left(\mathcal{E}\right)=P\big(\mathcal{E}({x})\;|\;{x}(t)=x\big)$ denote the probability of event $\mathcal{E}({x})$ involving the stochastic process ${x}(t)$ conditioned on ${x}(t)=x$, and let $\mathbb{E}_{x, t}\left[F\left({x}\right)\right]=\mathbb{E}\left[F\left({x}\right)\;|\;{x}\left(t\right)=x\right]$ denote the expectation of $F\left({x}\right)$ (a functional of ${x}\left(t\right)$) also conditioned on ${x}\left(t\right)=x$. Let $\mathds{1}_{\mathcal{E}}$ be an indicator function, which returns $1$ when the condition $\mathcal{E}$ holds and 0 otherwise. The $\bigvee$ symbol represents a logical OR implying existence of a satisfying event among a collection. $\text{Tr}(A)$ denotes the trace of a matrix $A$. $\partial_x$ and $\partial^2_x$ are used to define, respectively, the first and second-order partial derivatives w.r.t. $x$. If $x$ is a vector then $\partial_x$ returns a column vector and $\partial^2_x$ returns a matrix. The short forms a.a. and a.s. denote almost all and almost surely, respectively. Table \ref{tab:notation CC} represents the mathematical notations frequently used in this chapter. 

\begin{table}
\begin{center}
\begin{tabular}{||c | c || c | c||} 
 \hline
 \textbf{Notation} & \textbf{Description} & \textbf{Notation} & \textbf{Description} \\ [0.5ex] 
 \hline\hline
 $\mathcal{X}_s$ & safe region & $\partial\mathcal{X}_s$ & boundary of the safe region \\ 
 \hline
 $x(t)$ & controlled process & $\hat{x}(t)$ & uncontrolled process \\
 \hline
 $u(x(t),t)$ & control input & $w(t)$ & Wiener process \\
 \hline
$P_\mathrm{fail}$ & probability of failure & $t_f$ & exit time \\
 \hline
 $ C\left(x_0, t_0, {u}(\cdot)\right)$ & cost function & $\psi\left({x}({t}_{f})\right)$ & terminal cost \\ 
 \hline
 $ V\left({x}(t), t\right)$ & running state cost & $\Delta$ & risk tolerance \\ 
 \hline
 $ \eta$ & Lagrange multiplier & $\lambda$ & PDE linearizing constant \\ 
 \hline
 $ J(x,t; \eta)$ & value function & $\xi\left(x,t;\eta\right)$ & transformed value function \\ 
 \hline
 $ \gamma$ & step size of gradient ascent & $S$ & cost-to-go \\ 
 [1ex] 
 \hline
\end{tabular}
\caption{Table of frequently used mathematical notation in Chapter \ref{Sec: Continous-Time Chance-Constrained Stochastic Optimal Control}}
\label{tab:notation CC}
\end{center}
\end{table}

\section{Preliminaries}
Let $\mathcal{X}_{s}\subseteq\mathbb{R}^n$ be a bounded open set representing a safe region, $\partial\mathcal{X}_{s}$ be its boundary, and closure $\overline{\mathcal{X}_{s}}=\mathcal{X}_{s}\cup\partial\mathcal{X}_{s}$.
\subsection{Controlled and Uncontrolled Processes}
Consider a controlled process ${x}(t)\in\mathbb{R}^n$ driven by following control-affine It\^{o} stochastic differential equation (SDE):
\begin{equation}\label{SDE}
\begin{aligned}
   d{x}(t)=&{f}\left({x}(t), t\right)dt+{G}\left({x}(t), t\right){u}({x}(t), t)dt+{\Sigma}\left({x}(t), t\right)d{w}(t),
\end{aligned}
\end{equation}
where ${u}\left({x}(t), t\right)\in\mathbb{R}^m$ is a control input, ${w}(t)\in\mathbb{R}^k$ is a $k$-dimensional standard Wiener process on a suitable probability space $\left(\Omega, \mathcal{F}, P\right)$, ${f}\left({x}(t), t\right)\in\mathbb{R}^n$, ${G}\left({x}(t), t\right)\in\mathbb{R}^{n\times m}$ and ${\Sigma}\left({x}(t), t\right)\in\mathbb{R}^{n\times k}$. Let $\hat{{x}}(t)\in\mathbb{R}^n$ be an uncontrolled process driven by the following SDE:
\begin{equation}\label{uncontrolled SDE}
  d\hat{{x}}(t)\!=\!\!{f}\!\left(\hat{{x}}(t),\! t\right)\!dt\!+\!{\Sigma}\!\left(\hat{{x}}(t),\! t\right)\!d{w}(t). 
\end{equation}
At the initial time $t_0$, ${x}(t_0)=\hat{{x}}(t_0)=x_0\in\overline{\mathcal{X}_s}$. Throughout this work, we assume sufficient regularity in the coefficients of (\ref{SDE}) and (\ref{uncontrolled SDE}) so that unique strong solutions exist \cite[Chapter 1]{oksendal2013stochastic}. In the rest of the dissertation, for notational compactness, the functional dependencies on $x$ and $t$ are dropped whenever it is unambiguous.

\subsection{Probability of Failure}
 For a given finite time horizon $t\in[t_0, T]$, $t_0<T$, if the system (\ref{SDE}) leaves the safe region $\mathcal{X}_{s}$ at any time $t\in(t_0, T]$, then we say that it fails. 
\begin{definition}[Probability of failure]
 The probability of failure $P_\mathrm{fail}\left(x_0,t_0,u(\cdot)\right)$ of system (\ref{SDE}) starting at $(x_0, t_0)$ and operating under the policy $u(\cdot)$ is defined as   
\begin{equation}\label{pfail}
    P_\mathrm{fail}\left(x_0,t_0,u(\cdot)\right)=P_{x_0,t_0}\left(\bigvee_{t\in(t_0, T]} {x}(t)\notin \mathcal{X}_{s}\right).
\end{equation}
\end{definition}
\subsection{Exit Times}
Let $\mathcal{Q}=\mathcal{X}_s\times[t_0, T)$ be a bounded set with the boundary  $\partial\mathcal{Q}=\left(\partial\mathcal{X}_s\times[t_0,T]\right)\cup\left(\mathcal{X}_s\times\{T\}\right)$, and closure $\overline{\mathcal{Q}}=\mathcal{Q}\cup\partial\mathcal{Q}=\overline{\mathcal{X}_s}\times[t_0, T]$. We define exit time ${t}_{f}$ for process ${x}(t)$ as
\begin{equation*}\label{tf with Q}
 {t}_{f} \coloneqq \text{inf}\{t> t_0: ({x}(t), t)\notin \mathcal{Q}\}.   
\end{equation*}
Alternatively, ${t}_f$ can be defined as 
\begin{equation}\label{tf}
{t}_{f} \coloneqq 
\begin{cases}
T, & \!\!\!\!\!\!\!\!\!\!\!\!\!\!\!\!\!\!\!\!\!\!\!\!\!\!\!\!\!\!\!\!\!\!\!\!\!\!\text{if}\;\; {x}(t)\in\mathcal{X}_{s}, \forall t\in(t_0, T),\\
\text{inf}\;\{t\in(t_0, T) : {x}(t)\notin\mathcal{X}_{s}\}, & \text{otherwise}.
 \end{cases}
\end{equation}
For one-dimensional state space, the domains $\mathcal{X}_s$, $\mathcal{Q}$ and their boundaries $\partial \mathcal{X}_s$, $\partial\mathcal{Q}$ are shown in Figure \ref{Fig. computational domain}. The figure also depicts the exit times for two realizations of trajectories $\{x(t), t\in[t_0, {t}_f]\}$. 
\begin{figure}[h]
    \centering
    \!\!\!\!\!\!\!\!\!\!\!\!\includegraphics[scale=0.5]{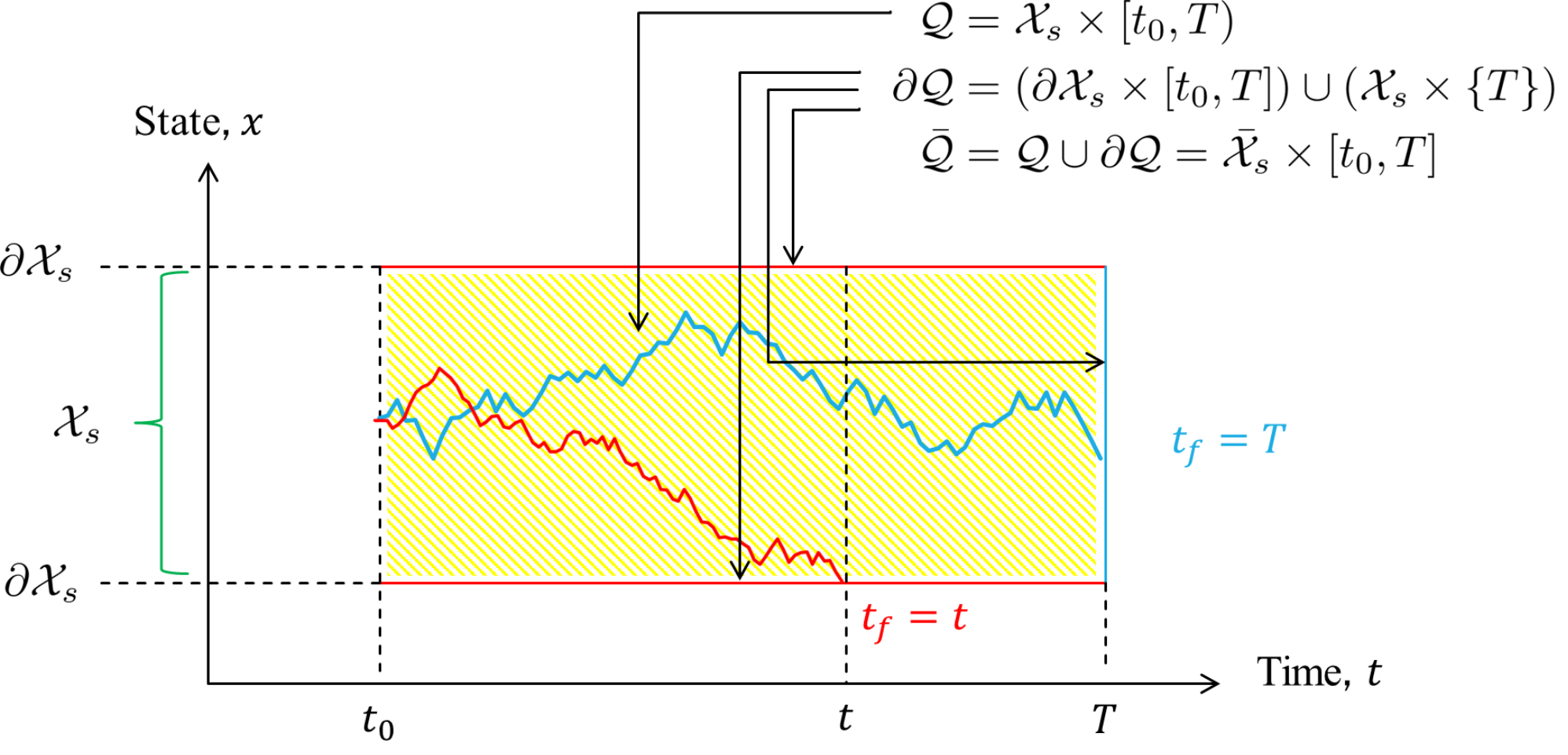} 
      
        \caption{Computational domains and exit times $t_f$} 
        \label{Fig. computational domain}
\end{figure}

Similarly, exit time $\hat{{t}}_f$ for process $\hat{{x}}(t)$ is defined as

\begin{equation}\label{t_hat_f with Q}
 \hat{{t}}_{f} \coloneqq \text{inf}\{t> t_0: (\hat{{x}}(t), t)\notin \mathcal{Q}\}.   
\end{equation}

Note that by the above definitions, $\left({x}({t}_{f}),{t}_{f}\right)\in\partial{\mathcal{Q}}$ and $\left(\hat{{x}}(\hat{{t}}_{f}),\hat{{t}}_{f}\right)\in\partial{\mathcal{Q}}$.

\section{Problem Formulation}
We first formalize a chance-constrained SOC problem in Section \ref{Sec: CC-SOC}. In Section \ref{Sec: dual}, we formulate its dual.

\subsection{Chance-Constrained SOC Problem}\label{Sec: CC-SOC}
Consider a cost function that is quadratic in the control input and has the form:
\begin{equation*}\label{C}
\begin{split}
 C\left(x_0, t_0, {u}(\cdot)\right)&\coloneqq\mathbb{E}_{x_0,t_0}\Bigg[\psi\left({x}({t}_{f})\right)\cdot\mathds{1} _{{x}({t}_{f})\in \mathcal{X}_{s}}+\int_{t_0}^{{t}_{f}}\left(\frac{1}{2}{u}^\top{R}\left({x}(t), t\right){u}+ V\left({x}(t), t\right)\right)dt\Bigg]
\end{split}
\end{equation*}
where $\psi\left({x}({t}_{f})\right)$ denotes a terminal cost, $V\left({x}(t), t\right)$ a state dependent running cost, and $R\left({x}(t), t\right)\in\mathbb{R}^{m\times m}$ a given positive definite matrix (for all values of ${x}(t)$ and $t$). $\mathds{1} _{{x}({t}_{f})\in \mathcal{X}_{s}}$ returns $1$ if the state of the system at the exit time ${t}_f$ is inside the safe region and $0$ otherwise i.e., the terminal cost is active only when the state of the system at the exit time ${t}_f$ is safe. Note that our cost function is defined over time horizon $[t_0, {t}_{f}]$ instead of $[t_0, T]$, because we do not consider the cost incurred after the system fails. We wish to find an optimal control policy for system (\ref{SDE}) such that $C\left(x_0, t_0, u(\cdot)\right)$ is minimal and the probability of failure (\ref{pfail}) is below a specified threshold. This problem can be formulated as a chance-constrained SOC problem as follows:
\begin{problem}[Chance-constrained SOC problem]\label{Problem: Risk-constrained SOC problem}
    \begin{equation}\label{CC-SOC}
    \begin{aligned}
    \min_{{u}(\cdot)}\;& \mathbb{E}_{x_0, t_0}\!\!\left[\psi\left({x}({t}_{f})\right)\!\cdot\!\mathds{1} _{{x}({t}_{f})\in \mathcal{X}_{s}}\!+\!\!\int_{t_0}^{{t}_{f}}\!\!\!\left(\frac{1}{2}{u}^\top\!{R}{u}+\!V\!\right)\!dt\right]\\
    {s.t.} \;\; & d{x}={f}dt+{G}{u}dt+{\Sigma}d{w},\quad {x}(t_0)=x_0,\\
      &P_{x_0, t_0}\left(\bigvee_{t\in(t_0, T]} {x}(t)\notin \mathcal{X}_{s}\right)\leq\Delta,
    \end{aligned}    
    \end{equation}
    where $\Delta\in(0,1)$ represents a given risk tolerance over the horizon $[t_0, T]$, and the admissible policy $u(\cdot)$ is measurable with respect to the $\sigma$-algebra generated by ${x}(s), 0\leq s\leq t$.
\end{problem}

\subsection{Dual SOC Problem}\label{Sec: dual}
We define the Lagrangian associated with Problem \ref{Problem: Risk-constrained SOC problem} as

\begin{equation}\label{eq: lagrangian}
\mathcal{L}\left(x_0, t_0, {u}(\cdot); \eta\right) +=  C\left(x_0, t_0, {u}(\cdot)\right)  + \eta P_{x_0, t_0}\left(\bigvee_{t\in(t_0, T]} {x}(t)\notin \mathcal{X}_{s}\right)-\eta\Delta
\end{equation}
where $\eta\geq 0$ is the \textit{Lagrange multiplier}.
Using a standard equality in probability theory \cite{durrett2019probability}, $P_{\mathrm{fail}}$ in the chance constraint of (\ref{CC-SOC}) can be transformed into an expectation of an indicator function as
\begin{equation}\label{pfail2}
    P_{x_0, t_0}\left(\bigvee_{t\in(t_0, T]} {x}(t)\notin \mathcal{X}_{s}\right)=\mathbb{E}_{x_0, t_0}\left[\mathds{1} _{{x}({t}_{f})\in \partial\mathcal{X}_{s}}\right].
\end{equation}
Here, $\mathds{1} _{{x}({t}_{f})\in \partial\mathcal{X}_{s}}$ returns $1$ if the state of the system \eqref{SDE} at the exit time is on the boundary of the safe set $\partial \mathcal{X}_s$ (i.e. the state of the system at the exit time escapes the the safe region) and $0$, otherwise. Using \eqref{pfail2}, the Lagrangian \eqref{eq: lagrangian} can be reformulated as
    \begin{equation}\label{Lagrangian2}
\!\!\!\!\mathcal{L}\!\left(x_0, t_0, {u}(\cdot); \eta\right) \!= \!C\!\left(x_0, t_0, {u}(\cdot)\right) \!+\! \eta\! \left[\mathbb{E}_{x_0, t_0}\!\!\left[\mathds{1} _{{x}({t}_{f})\in \partial\mathcal{X}_{s}}\right]\!-\!\Delta\right]
\end{equation}
 Observe that for any $\eta\geq0$ if we define $\phi:\overline{\mathcal{X}_s}\to\mathbb{R}$ as\footnote{In what follows, function $\phi(x; \eta)$ sets a boundary condition for a PDE and we often need technical assumptions on the regularity of $\phi(x; \eta)$ (e.g., continuity on $\overline{\mathcal{X}_s}$) to guarantee the existence of a solution of the PDE. When such requirements are needed, we approximate  (\ref{phi(x)}) as $\phi(x; \eta) \approx \psi(x)B(x) + \eta\left(1-B(x)\right)$, where $B(x)$ is a smooth bump function on $\mathcal{X}_s$.}: 
\begin{equation}\label{phi(x)}
    \phi\left({x}; \eta\right)\coloneqq\psi\left({x}\right)\cdot \mathds{1} _{{x}\in \mathcal{X}_{s}}+\eta\cdot\mathds{1} _{{x}\in \partial\mathcal{X}_{s}}-\eta\Delta,
\end{equation}
then, the second term in (\ref{Lagrangian2}) can be absorbed in a new terminal cost function $\phi$ as follows:
\begin{equation}\label{Chat}
    \mathcal{L}\!\left(x_0,\!t_0,\! {u}(\cdot); \eta\right)=\mathbb{E}_{x_0, t_0}\!\!\left[\phi\!\left({x}({t}_{f}); \eta\right)\!+\!\!\!\int_{t_0}^{{t}_{f}}\!\!\!\left(\!\frac{1}{2}{u}^\top\!R{u}+V\!\right)\!dt\!\right].
\end{equation}
Now we formulate the dual problem as follows:
\begin{problem}[Dual SOC problem]\label{prob: dual problem}
\begin{align}\label{eq: dual}
     \max_{\eta} \min_{u(\cdot)}&\;\;\mathcal{L}\left(x_0, t_0, {u}(\cdot);\eta\right)\\
     {s.t.}&\;\; \eta\geq 0\nonumber\\
     &\;\; d{x}={f}dt+{G}{u}dt+{\Sigma}d{w},\quad {x}(t_0)=x_0,\nonumber
\end{align}
\end{problem}
In order to solve Problem \ref{prob: dual problem} we first solve the subproblem 
\begin{equation}\label{eq: g of eta}
g(\eta)\coloneqq \min_{u(\cdot)}\mathcal{L}\left(x_0, t_0, {u}(\cdot); \eta\right).
\end{equation}
Note that, $\mathcal{L}$ possesses the time-additive Bellman structure i.e., for any $\eta\geq0$ and $t_0\leq t\leq{t}_f$, we can write 
\begin{align*}
    \mathcal{L}\left(x_0,t_0, {u}(\cdot); \eta\right)=&\mathbb{E}_{x_0, t_0}\left[\int_{t_0}^{t}\!\!\left(\frac{1}{2}{u}^\top R{u}+V\right)dt\right]   + \mathbb{E}_{x, t}\left[\phi\left({x}({t}_{f}); \eta\right)+\int_{t}^{{t}_{f}}\!\!\left(\frac{1}{2}{u}^\top R{u}+V\right)dt\right].
\end{align*}
Therefore problem \eqref{eq: g of eta} can be solved by utilizing dynamic programming without having to introduce any conservative approximation of the failure probability $P_{\mathrm{fail}}$. After solving the subproblem \eqref{eq: g of eta} we solve the dual problem
\begin{equation}\label{eq: dual problem}
   \max_{\eta \geq0} g(\eta).
\end{equation}
 Since the dual function $g(\eta)$ is the pointwise infimum of a family of affine functions of $\eta$, it is concave even when the primal problem is not convex. Moreover, since affine functions are upper semicontinuous, $g(\eta)$ is also upper semicontinuous.

\section{Synthesis of Optimal Control Policies}
In this section, we first express the dual function in terms of the solution to an HJB PDE parametrized by the dual variable $\eta$. Next, we show that the strong duality holds between the primal problem \eqref{CC-SOC} and the dual problem \eqref{eq: dual} (the duality gap is zero) under a certain assumption on the system dynamics and the cost function. Finally, we propose a novel Monte-Carlo-based dual ascent algorithm to numerically solve the dual problem \eqref{eq: dual problem}. This implies that the optimal control input for the original chance-constrained problem (primal problem) can be computed online by real-time Monte-Carlo simulations.

\subsection{Computation of the Dual Function}\label{Sec: Computation of the Dual Function}
In this section, we compute the dual function by solving problem \eqref{eq: g of eta} using dynamic programming. For each $(x,t)\in\overline{\mathcal{Q}}$, $\eta\geq0$ and an admissible policy $u(\cdot)$ over $[t, T)$, we define the cost-to-go function:
\begin{equation*}\label{value function}
    \mathcal{L}\!\left(x, t, {u}(\cdot); \eta\right)\!=\!\mathbb{E}_{x, t}\!\!\left[\phi\left({x}({t}_{f});\eta\right)\!+\!\!\int_{t}^{{t}_{f}}\!\!\!\left(\frac{1}{2}{u}^\top\!R{u}+\!V\!\right)\!dt\right]\!\!.
\end{equation*}
Now, we state the following theorem.
\begin{theorem}\label{theorem: solution to risk-minimizing soc}
Suppose for a given $\eta\geq0$, there exists a function $J:\overline{\mathcal{Q}}\rightarrow \mathbb{R}$ such that 
\begin{enumerate}[label=(\alph*)]
    \item $J(x,t; \eta)$ is continuously differentiable in $t$ and twice continuously differentiable in $x$ in the domain $\mathcal{Q}$;
    \item $J(x,t;\eta)$ solves the following dynamic programming PDE (HJB PDE):
    \begin{equation}\label{HJB PDE}
  \!\!\!\!\!\!\!\!\!\!\!\!\begin{cases}
         -\partial_tJ\!=\!-\frac{1}{2}\!\left(\partial_xJ\right)^\top\!\!GR^{-1}G^\top\!\partial_xJ\!+\!V+\!\!f^\top\!\partial_xJ+\frac{1}{2}\text{Tr}\left(\Sigma\Sigma^\top\partial^2_xJ\right),
 &  \forall(x,t)\in\mathcal{Q}, \\
    \!\!\underset{\substack{(x,t)\to(y,s) \\ (x,t)\in\mathcal{Q}}}{\lim}J(x,t;\eta)=\phi(y;\eta), & \!\!\!\forall(y,s)\in\partial\mathcal{Q}.
  \end{cases}
    \end{equation}
\end{enumerate}
Then, the following statements hold:
\begin{enumerate}
\item $J(x,t;\eta)$ is the \textit{value function} for Problem \eqref{eq: g of eta}. That is,
\begin{equation*}\label{J as value function}
    J\left(x, t; \eta\right) = \min_{{u}(\cdot)} \mathcal{L}\left(x, t, {u}(\cdot);\eta\right),\quad \forall\;(x,t)\in\overline{\mathcal{Q}}.
\end{equation*}

\item The solution to Problem \eqref{eq: g of eta} is given by 
\begin{equation}\label{optimal policy}
    u^*(x,t; \eta)=-R^{-1}\left(x, t\right){G}^\top\left(x, t\right)\partial_xJ\left(x, t; \eta\right).
\end{equation}
\end{enumerate}
\hfill$\blacksquare$
\end{theorem}

\begin{proof}
Let $J$ be the function satisfying (a) and (b). By Dynkin's formula \cite[Theorem 7.4.1]{oksendal2013stochastic}, for each $(x,t)\in\overline{\mathcal{Q}}$ and $\eta\geq 0$, we have
\begin{equation}\label{ito with dJ}
 \mathbb{E}_{x,t}\left[J\left({x}({t}_{f}), {t}_{f};\eta\right)\right]\!=\!J(x,t; \eta) +  \mathbb{E}_{x,t}\left[\int_{t}^{{t}_{f}}\!\!\!\!\!\!dJ\left({x}(s), s; \eta\right)\right]\!,
\end{equation}
where
\begin{equation}\label{dJ}
    \begin{aligned}
\!\!\!dJ\left({x}(s), s; \eta\right)=&(\partial_tJ)ds + (d{x})^\top\partial_xJ+\frac{1}{2}(d{x})^\top\!\!\left(\partial_x^2J\right)\!(d{x})\\
=&(\partial_tJ)ds + (f\!+Gu)^\top\!(\partial_xJ)ds+\left(\Sigma d{w}\right)^\top(\partial_xJ)+\!\frac{1}{2}\text{Tr}\left(\Sigma\Sigma^\top\!\partial_x^2J\right)ds.
    \end{aligned}
\end{equation}
Notice that the term $\int_{t}^{{t}_{f}}\left(\Sigma d{w}\right)^\top(\partial_xJ)$ is an It\^o integral. Using the property of It\^o integral \cite[Chapter 3]{oksendal2013stochastic}, we get 
\begin{equation}\label{ito integral}
    \mathbb{E}_{x,t}\int_{t}^{{t}_{f}}\left(\Sigma d{w}\right)^\top(\partial_xJ)=0
\end{equation}
Substituting (\ref{dJ}) into (\ref{ito with dJ}) and using \eqref{ito integral}, we have  
\begin{equation}\label{ito}
    \begin{aligned}
       \mathbb{E}_{x,t}\left[J\left({x}({t}_{f}), {t}_{f};\eta\right)\right]&=J(x,t;\eta)+\mathbb{E}_{x,t}\left[\int_{t}^{{t}_{f}}\!\!\!\left(\!\partial_tJ + (f+Gu)^{\!\top}\!(\partial_xJ)+\frac{1}{2}\text{Tr}\left(\Sigma\Sigma^{\!\top}\!\partial_x^2J\right)\!\right)\!ds\right]\!\!.
    \end{aligned}
\end{equation}
By the boundary condition of the PDE (\ref{HJB PDE}),\newline $J\left({x}({t}_{f}), {t}_{f}; \eta\right) =\phi\left({x}({t}_{f});\eta\right)$. Hence, from (\ref{ito}), we obtain
\begin{equation}\label{ito after plugging BC}
    \begin{aligned}
       J(x,t;\eta)&=\mathbb{E}_{x,t}\left[\phi\left({x}({t}_{f});\eta\right)\right]-\mathbb{E}_{x,t}\left[\int_{t}^{{t}_{f}}\!\!\!\left(\!\partial_tJ + (f+Gu)^{\!\top}\!(\partial_xJ)+\frac{1}{2}\text{Tr}\left(\Sigma\Sigma^{\!\top}\!\partial_x^2J\right)\!\right)\!ds\right]\!\!.
    \end{aligned}
\end{equation}
Now, notice that the right hand side of the PDE in (\ref{HJB PDE}) can be expressed as the minimum value of a quadratic form in $u$ as follows:
\begin{equation*}
    \begin{aligned}
         -\partial_tJ&=-\frac{1}{2}\left(\partial_xJ\right)^\top GR^{-1}G^\top\partial_xJ+V+f^\top\partial_xJ+\frac{1}{2}\text{Tr}\left(\Sigma\Sigma^\top\partial^2_xJ\right)\\
         &=\min_{{u}}\left[\frac{1}{2}u^\top Ru+V+\left(f+Gu\right)^\top\partial_xJ
         +\frac{1}{2}\text{Tr}\left(\Sigma\Sigma^\top\partial^2_xJ\right)\right]\!\!.
          \end{aligned} 
\end{equation*}
Therefore, for an arbitrary $u$, we have 
\begin{equation}\label{pde ineqaulity}
    \begin{aligned}
         \!\!\!\!-\partial_tJ\!\leq\!\frac{1}{2}u^\top\!Ru\!+\!V\!+\!\left(f\!+\!Gu\right)^\top\!\!\partial_xJ\!
         +\!\frac{1}{2}\text{Tr}\!\left(\Sigma\Sigma^\top\partial^2_xJ\right)
          \end{aligned} 
\end{equation}
where the equality holds iff
\begin{equation}\label{optimal policy2}
    u = -R^{-1}G^\top\partial_xJ.
\end{equation}
Combining (\ref{ito after plugging BC}) and (\ref{pde ineqaulity}), we obtain
\begin{equation}\label{J leq C_hat}
  \begin{aligned}
       \!J(x,t; \eta)\!&\leq\!\mathbb{E}_{x,t}\!\left[\phi\!\left({x}({t}_{f}); \eta\right)\right]\!+\!\mathbb{E}_{x,t}\!\left[\!\int_{t}^{{t}_{f}}\!\!\!\left(\!\frac{1}{2}{u}^\top\!R{u}\!+\!V\!\!\right)\!ds\right]\\
       &= \mathcal{L}\left(x,t,{u}(\cdot);\eta\right).
    \end{aligned}   
\end{equation}
This proves the statement (i). Since (\ref{J leq C_hat}) holds with equality iff (\ref{optimal policy2}) is satisfied, the statement (ii) also follows. 
\end{proof}
Theorem \ref{theorem: solution to risk-minimizing soc} implies that the solution of problem \eqref{eq: g of eta} can be expressed in terms of the HJB PDE \eqref{HJB PDE} parameterized by the dual variable $\eta$. Notice that \eqref{HJB PDE} is a nonlinear PDE, which is in general, difficult to solve. In what follows, we linearize the PDE \eqref{HJB PDE} whose solution can be obtained by utilizing the Feyman-Kac lemma \cite{williams2017model}. First, we make the following assumption which is essential to linearize the PDE \eqref{HJB PDE}.
\begin{assumption}\label{Assum: linearity}
For all $(x,t)\in\overline{\mathcal{Q}}$, there exists a positive constant $\lambda$ satisfying the following equation: 
\begin{equation}\label{lambda}
  \Sigma(x, t)\Sigma^\top(x, t) = \lambda G(x, t)R^{-1}(x,t) G^\top(x, t). 
\end{equation}
\end{assumption}
The above assumption implies that the control input in the direction with higher noise variance is cheaper than that in the direction with lower noise variance. See \cite{kappen2005path} for further discussion on this condition. 
Suppose Assumption 1 holds. Using the constant $\lambda$ that satisfies (\ref{lambda}), we introduce the following transformed value function $\xi(x,t;\eta)$:
\begin{equation}\label{exp transformation}
 J(x,t; \eta) = -\lambda\,\text{log}\left(\xi\left(x,t;\eta\right)\right).
\end{equation}
Transformation (\ref{exp transformation}) allows us to write the PDE (\ref{HJB PDE}) in terms of $\xi\left(x,t;\eta\right)$ as:

\begin{equation}\label{transformed HJB PDE}
  \!\!\!\!\!\!\begin{cases}
     \begin{aligned}
         \!\partial_t\xi\!=&\frac{V\xi}{\lambda}-\!\frac{1}{2}\text{Tr}\!\left(\Sigma\Sigma^\top\!\partial^2_x\xi\right)\!+\!\frac{1}{2\xi}\!\left(\partial_x\xi\right)^{\!T}\!\!\Sigma\Sigma^\top\!\partial_x\xi-\!\frac{\lambda}{2\xi}\!\left(\partial_x\xi\right)^\top\!\!\left(GR^{-1}G^\top\right)\!\partial_x\xi\!-\!f^\top\partial_x\xi,
          \end{aligned} & \forall(x,t)\in\mathcal{Q}, \\
    \!\!\underset{\substack{(x,t)\to(y,s) \\ (x,t)\in\mathcal{Q}}}{\lim}\xi(x,t;\eta)\!=\!\text{exp}{\left(-\frac{\phi(y;\eta)}{\lambda}\right)}, &\forall(y,s)\in\partial\mathcal{Q}.
  \end{cases}
    \end{equation}
Using Assumption 1 in the equation \eqref{transformed HJB PDE}, we rewrite PDE (\ref{HJB PDE}) as a linear PDE in terms of $\xi\left(x,t;\eta\right)$ as:

\begin{equation}\label{linearized risk-minimizing HJB}
 \!\!\begin{cases}
     \!\partial_t\xi\!=\!\frac{V\xi}{\lambda}\!-\!f^\top\partial_x\xi-\frac{1}{2}\text{Tr}\left(\Sigma\Sigma^\top\partial^2_x\xi\right),       & \forall(x,t)\!\in\!\mathcal{Q}, \\
    \!\!\underset{\substack{(x,t)\to(y,s) \\ (x,t)\in\mathcal{Q}}}{\lim}\xi(x,t; \eta)\!=\!\text{exp}{\left(-\frac{\phi(y; \eta)}{\lambda}\right)}, & \!\!\!\forall(y,s)\!\in\!\partial\mathcal{Q}.\\  
  \end{cases}
\end{equation}
Now we find the solution of the linearized PDE \eqref{linearized risk-minimizing HJB} using the Feynman-Kac lemma.
\begin{lemma}[Feynman-Kac lemma]\label{theorem: E and !}
    At any $\eta\geq0$, the solution to the linear PDE \eqref{linearized risk-minimizing HJB} exists. Moreover, the solution is unique in the sense that $\xi$ solving \eqref{linearized risk-minimizing HJB} is given by
    \begin{equation}\label{xi}
       \begin{aligned}
  \!\!\!\!\!\!\xi\!\left(x,t;\eta\right) & \!= \!\mathbb{E}_{x,t}\!\! \left[\text{exp}\!\left(\!\!-\frac{\phi\left(\hat{{x}}({\hat{{t}}_{f}});\eta\right)}{\lambda}\!-\!\frac{1}{\lambda}\!\int_{t}^{\hat{{t}}_{f}}\!\!\!\!V\!\!\left(\hat{{x}}(r), r\right)\!dr \!\!\right)\!\right]\!.\\
  \end{aligned}
    \end{equation}
    where $\hat{{x}}(t)$ evolves according to the dynamics \eqref{uncontrolled SDE} starting at $(x,t)$ and the exit time $\hat{{t}}_f$ is defined according to \eqref{t_hat_f with Q}.
    
    \begin{proof}
    The proof follows from \cite[Theorem 9.1.1]{oksendal2013stochastic}.
    \end{proof}

\end{lemma}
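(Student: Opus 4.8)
The plan is to recognize the linear PDE \eqref{linearized risk-minimizing HJB} as a Feynman-Kac equation driven by the \emph{uncontrolled} diffusion \eqref{uncontrolled SDE}, whose infinitesimal generator is $\mathcal{A}\varphi = f^\top\partial_x\varphi + \frac{1}{2}\text{Tr}\left(\Sigma\Sigma^\top\partial_x^2\varphi\right)$. Rearranging \eqref{linearized risk-minimizing HJB} as
\begin{equation*}
\partial_t\xi + \mathcal{A}\xi - \frac{V}{\lambda}\xi = 0 \quad \text{on }\mathcal{Q},
\end{equation*}
exhibits it as a backward parabolic equation with a zeroth-order (killing) term $V/\lambda$ and boundary data $\exp\left(-\phi/\lambda\right)$ on $\partial\mathcal{Q}$. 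This is exactly the structure handled by the cited result, and the candidate representation \eqref{xi} is its stochastic solution.

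First I would establish the verification (sufficiency) direction, which parallels the Dynkin-formula argument already used in the proof of Theorem \ref{theorem: solution to risk-minimizing soc}. Assuming $\xi$ is a solution that is $C^{2,1}$ on $\mathcal{Q}$, I would introduce the discounted process $M(s) = \exp\left(-\frac{1}{\lambda}\int_t^s V(\hat{x}(r),r)\,dr\right)\xi(\hat{x}(s),s)$ for $s\in[t,\hat{t}_f]$, with $\hat{x}$ started at $(x,t)$. Applying It\^o's formula, the finite-variation (drift) part collects into $\exp(\cdots)\left[\partial_t\xi + \mathcal{A}\xi - \frac{V}{\lambda}\xi\right]ds$, which vanishes identically by the PDE; only the It\^o integral against $\Sigma\,dw$ survives. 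As in \eqref{ito integral}, its conditional expectation is zero, so $M$ is a martingale and $\mathbb{E}_{x,t}\left[M(\hat{t}_f)\right] = M(t) = \xi(x,t)$. Since $(\hat{x}(\hat{t}_f),\hat{t}_f)\in\partial\mathcal{Q}$, the boundary condition replaces $\xi(\hat{x}(\hat{t}_f),\hat{t}_f)$ by $\exp\left(-\phi(\hat{x}(\hat{t}_f))/\lambda\right)$, yielding precisely \eqref{xi}. Uniqueness follows from the same identity: the difference of two solutions solves the homogeneous PDE with zero boundary data, and the martingale relation forces it to be identically zero on $\overline{\mathcal{Q}}$.

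The hard part will be the existence and regularity half, namely showing that the function \emph{defined} by \eqref{xi} is genuinely a classical $C^{2,1}$ solution and that the stochastic integral above is a true (not merely local) martingale. This requires: (i) finiteness of the exit time $\hat{t}_f$ almost surely, which follows from $\mathcal{Q}$ being bounded together with nondegeneracy of $\Sigma\Sigma^\top$; (ii) integrability of the exponential weight, guaranteed by boundedness and continuity of $V$ and $\phi$; (iii) uniform ellipticity of $\Sigma\Sigma^\top$ and H\"older continuity of $f,\Sigma,V$, so that interior Schauder-type estimates deliver the needed smoothness of \eqref{xi}; and (iv) regularity of the boundary $\partial\mathcal{Q}$ so that the boundary data is attained continuously as $(x,t)\to(y,s)$. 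Point (iv) is the subtle one, since the natural terminal cost $\phi$ in \eqref{phi(x)} is discontinuous across $\partial\mathcal{X}_s$; this is exactly why the footnote to \eqref{phi(x)} smooths $\phi$ via a bump function, restoring the continuity of boundary data needed to invoke \cite[Theorem 9.1.1]{oksendal2013stochastic}. Once these hypotheses are in place, the theorem supplies both existence and the uniqueness already argued above.
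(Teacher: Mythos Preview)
Your proposal is correct and is precisely the standard Feynman--Kac verification argument underlying \cite[Theorem 9.1.1]{oksendal2013stochastic}; the paper's own proof is nothing more than a citation to that theorem, so you have in fact supplied strictly more detail than the paper does. Your discussion of the regularity hypotheses (uniform ellipticity, H\"older coefficients, boundary regularity) and the need to smooth $\phi$ via the bump-function footnote is exactly the content that makes the citation legitimate.
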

Now we state the following theorem which proves that under Assumption \ref{Assum: linearity}, for a given value of $\eta$, the optimal policy of problem \eqref{eq: g of eta} exists and is unique.
\begin{theorem}\label{theorem: E and ! of policy}
    If there exists a positive constant $\lambda$ that satisfies Assumption 1, then for any $\eta\geq0$ a \textit{unique} value function $J(x,t; \eta)$ of the Problem \eqref{eq: g of eta} exists and is given by $J(x,t;\eta) = -\lambda\,\text{log}\left(\xi\left(x,t;\eta\right)\right)$ where $\xi(x,t;\eta)$ is given by \eqref{xi}. Consequently, there exists a \textit{unique} optimal policy given by \eqref{optimal policy}. 
\end{theorem}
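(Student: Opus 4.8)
The plan is to assemble the pieces already established in the excerpt: the exponential transformation \eqref{exp transformation}, the equivalence of the nonlinear HJB PDE \eqref{HJB PDE} with the linear PDE \eqref{linearized risk-minimizing HJB} under Assumption \ref{Assum: linearity}, the Feynman--Kac representation of Lemma \ref{theorem: E and !}, and the verification result Theorem \ref{theorem: solution to risk-minimizing soc}. The theorem is essentially a ``glue'' statement, so the work is in checking that the regularity and positivity conditions line up so that these results can be chained together.

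First I would invoke Lemma \ref{theorem: E and !} to obtain, for each fixed $\eta\ge 0$, a solution $\xi(x,t;\eta)$ of the linear PDE \eqref{linearized risk-minimizing HJB} given explicitly by \eqref{xi}, unique in the solution class supplied by \cite[Theorem 9.1.1]{oksendal2013stochastic}. I would then observe that, since $\xi$ is the expectation of a strictly positive random variable, $\xi(x,t;\eta)>0$ throughout $\overline{\mathcal{Q}}$. This strict positivity is precisely what makes the definition $J \coloneqq -\lambda\log\xi$ legitimate, and combined with the $C^{1,2}$ regularity furnished by Feynman--Kac it guarantees that $J$ is continuously differentiable in $t$ and twice continuously differentiable in $x$ on $\mathcal{Q}$, i.e.\ condition (a) of Theorem \ref{theorem: solution to risk-minimizing soc}.

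Next I would run the substitutions \eqref{transformed HJB PDE}--\eqref{linearized risk-minimizing HJB} in reverse: because the map $\xi\mapsto -\lambda\log\xi$ is a smooth bijection on the positive reals and Assumption \ref{Assum: linearity} forces the quadratic-gradient terms to cancel exactly, ``$\xi$ solves the linear PDE'' is equivalent to ``$J$ solves the nonlinear HJB PDE \eqref{HJB PDE},'' with the boundary datum $\xi\to\exp(-\phi/\lambda)$ transforming into $J\to\phi$ on $\partial\mathcal{Q}$. This verifies condition (b). With (a) and (b) in hand, Theorem \ref{theorem: solution to risk-minimizing soc} immediately yields that $J(x,t;\eta)=-\lambda\log\xi(x,t;\eta)$ is the value function of Problem \eqref{eq: g of eta} and that the optimal policy is \eqref{optimal policy}, establishing existence.

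For uniqueness I would argue on two levels. The value function is unique because the uniqueness of $\xi$ in Lemma \ref{theorem: E and !} transfers through the bijection $J=-\lambda\log\xi$: any second value function would, by the equivalence just used, induce a second solution of \eqref{linearized risk-minimizing HJB}, contradicting Feynman--Kac uniqueness. The optimal policy is unique because, as shown in the proof of Theorem \ref{theorem: solution to risk-minimizing soc}, the pointwise minimization in \eqref{pde ineqaulity} is of a strictly convex quadratic in $u$ (here $R\succ 0$), so its minimizer \eqref{optimal policy2} is unique. The hard part will not be any single computation, since the PDE algebra is already displayed; rather, it will be the careful bookkeeping of regularity and positivity, namely ensuring $\xi>0$ so that $-\lambda\log\xi$ is well defined and smooth up to the boundary, and confirming that the class in which Feynman--Kac guarantees uniqueness coincides with the class (a) required by the verification theorem, so that the two uniqueness statements genuinely align.
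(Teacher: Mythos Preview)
Your proposal is correct and follows essentially the same route as the paper: invoke the Feynman--Kac Lemma~\ref{theorem: E and !} to obtain existence and uniqueness of $\xi$, pass through the exponential transformation to $J$, and then apply the verification Theorem~\ref{theorem: solution to risk-minimizing soc}. The paper's proof is considerably terser and does not spell out the positivity of $\xi$, the regularity bookkeeping, or the strict-convexity argument for policy uniqueness that you supply, so your version is in fact more complete.
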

\begin{proof}
According to Lemma \ref{theorem: E and !}, the solution of the linear PDE \eqref{linearized risk-minimizing HJB} exists and is unique in the sense that $\xi$ solving \eqref{linearized risk-minimizing HJB} is given by \eqref{xi}. Therefore $J(x,t;\eta)$ is unique and is given by \eqref{exp transformation}. Consequently, from Theorem \ref{theorem: solution to risk-minimizing soc}, a unique optimal policy exists and is given by \eqref{optimal policy}.
\end{proof} 

\subsection{Strong Duality}
Due to the weak duality \cite{boyd2004convex}, the value of the dual problem \eqref{eq: dual problem} is always a lower bound for the primal problem \eqref{CC-SOC}. The difference between the values of \eqref{eq: dual problem} and \eqref{CC-SOC} is called the duality gap. When the duality gap is zero, we say that the strong duality holds. In our current study on chance-constrained SOC, strong duality has a practical significance as it implies that an optimal solution to the ``hard-constrained" problem \eqref{CC-SOC} can be obtained by solving the ``soft-constrained" problem \eqref{eq: g of eta}, provided that the dual variable $\eta$ is properly chosen.\par
Since \eqref{CC-SOC} is a non-convex optimization problem in general, establishing the strong duality between \eqref{CC-SOC} and \eqref{eq: dual problem} is not trivial. In Appendix \ref{sec: Nonconvex Optimization and Strong Duality}, we outline general conditions under which a non-convex optimization problem admits a zero duality gap. In the sequel, we apply the result in Appendix \ref{sec: Nonconvex Optimization and Strong Duality} to \eqref{CC-SOC} to delineate the conditions under which the chance-constrained SOC admits the strong duality.\par 
The following assumption is reminiscent of the Slater's condition, which is often a natural premise for strong duality:
\begin{assumption}\label{assum: strict feasibility}
There exists a policy $\widetilde{u}(\cdot)$ such that $P_\text{fail}(x_0, t_0, \widetilde{u}(\cdot))-\Delta<0$, i.e., Problem \ref{Problem: Risk-constrained SOC problem} is strictly feasible.
\end{assumption}
Using Lemma \ref{lem:existence_dual_sol} in the Appendix \ref{sec: Nonconvex Optimization and Strong Duality}, we can show that under Assumptions \ref{Assum: linearity} and \ref{assum: strict feasibility}, there exists a dual optimal solution $\eta^*$ such that $0\leq\eta^*<\infty$ such that $g(\eta^*)=\underset{{\eta\geq 0}}{\max}\;g(\eta)$.
To proceed further, we need the following assumption. We conjecture that this assumption is valid under mild conditions; a formal analysis is postponed as future work. 
\begin{assumption}\label{assum: continuity of Pfail}
    Suppose Assumption 1 holds so that for each $\eta\geq 0$ a unique optimal policy $u^*(x,t;\eta)$ of Problem \eqref{eq: g of eta} exists (c.f., Theorem \ref{theorem: E and ! of policy}). We further assume that the function  $\eta\mapsto P_{\mathrm{fail}}(x_0,t_0, u^*(x,t;\eta)):[0, \infty)\rightarrow[0,1]$ is continuous.
\end{assumption}
 Now notice that under Assumption  \ref{Assum: linearity}, the solution $u^*(\cdot;\eta)$ of the problem ${\argmin}_{u(\cdot)}\; \mathcal{L}\left(x_0, t_0, {u}(\cdot); \eta\right)$ exists and is unique for each $\eta\geq0$. Therefore, statements (i) and (ii) of the Assumption \ref{asmp:continuity} in the Appendix \ref{sec: Nonconvex Optimization and Strong Duality} hold true. Moreover, by the Assumption \ref{assum: continuity of Pfail}, statement (iii) of Assumption \ref{asmp:continuity} in the Appendix \ref{sec: Nonconvex Optimization and Strong Duality} is satisfied. Now under Assumption \ref{asmp:continuity} using Lemma \ref{lem:comp_slackness}, we can prove the following complementary slackness statements:
 \begin{enumerate}[label=(\alph*)]
        \item If $\eta^*=0$, then $P_{\mathrm{fail}}(x_0,t_0, u^*(\cdot\;;{\eta^*})-\Delta\leq0$
        \item If $\eta^*>0$, then $P_{\mathrm{fail}}(x_0,t_0, u^*(\cdot\;;{\eta^*}))-\Delta=0$
    \end{enumerate}

    The following theorem is the main result of this section.
\begin{theorem}\label{theorem: strong duality}
Consider problem \ref{Problem: Risk-constrained SOC problem} and suppose Assumptions \ref{Assum: linearity}, \ref{assum: strict feasibility} and \ref{assum: continuity of Pfail} hold. Then there exists a dual optimal solution $0\leq\eta^*<\infty$ that maximizes $g(\eta)$ and a unique optimal policy $u^*(\cdot\;;\eta^*)$ of problem \eqref{eq: g of eta} is an optimal policy of Problem \ref{Problem: Risk-constrained SOC problem} (primal problem) such that $C\left(x_0, t_0, {u}^*(\cdot\;;\eta^*)\right)=g(\eta^*)$ i.e., the duality gap is zero.
\end{theorem}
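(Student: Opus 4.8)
The plan is to assemble the three facts already established in the preceding discussion---finiteness of a dual optimizer, existence and uniqueness of the inner minimizer, and complementary slackness---and then play them against weak duality to pin the duality gap at zero.

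First I would fix the dual optimizer. Under Assumptions \ref{Assum: linearity} and \ref{assum: strict feasibility}, Lemma \ref{lem:existence_dual_sol} supplies a maximizer $0 \le \eta^* < \infty$ of $g$, so that $g(\eta^*) = \max_{\eta \ge 0} g(\eta)$. For this frozen $\eta^*$, Theorem \ref{theorem: E and ! of policy} (which needs only Assumption \ref{Assum: linearity}) delivers the \emph{unique} minimizer $u^*(\cdot; \eta^*)$ of the inner problem \eqref{eq: g of eta}, whence, using the form of the Lagrangian in \eqref{Lagrangian2},
\[
g(\eta^*) = \mathcal{L}\bigl(x_0, t_0, u^*(\cdot; \eta^*); \eta^*\bigr) = C\bigl(x_0, t_0, u^*(\cdot; \eta^*)\bigr) + \eta^*\bigl(P_{\mathrm{fail}}(x_0, t_0, u^*(\cdot; \eta^*)) - \Delta\bigr).
\]

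The crux is to show that the coupling term $\eta^*\bigl(P_{\mathrm{fail}} - \Delta\bigr)$ disappears and that $u^*(\cdot; \eta^*)$ is primal-feasible, and here I would split on the complementary-slackness dichotomy derived under Assumption \ref{assum: continuity of Pfail} via Lemma \ref{lem:comp_slackness}. If $\eta^* = 0$, the coupling term is trivially zero and statement (a) yields $P_{\mathrm{fail}}(x_0, t_0, u^*(\cdot; \eta^*)) - \Delta \le 0$, so $u^*(\cdot; \eta^*)$ satisfies the chance constraint of \eqref{CC-SOC}. If $\eta^* > 0$, statement (b) gives $P_{\mathrm{fail}}(x_0, t_0, u^*(\cdot; \eta^*)) - \Delta = 0$, which simultaneously annihilates the coupling term and exhibits feasibility with the constraint active. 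In either branch I obtain $g(\eta^*) = C\bigl(x_0, t_0, u^*(\cdot; \eta^*)\bigr)$ with $u^*(\cdot; \eta^*)$ admissible for Problem \ref{Problem: Risk-constrained SOC problem}.

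Finally I would close the gap by squeezing with weak duality \cite{boyd2004convex}. Writing $p^*$ for the optimal value of \eqref{CC-SOC}, weak duality gives $g(\eta^*) \le p^*$; feasibility of $u^*(\cdot; \eta^*)$ gives the reverse, $p^* \le C\bigl(x_0, t_0, u^*(\cdot; \eta^*)\bigr) = g(\eta^*)$. The two bounds collapse to $p^* = g(\eta^*) = C\bigl(x_0, t_0, u^*(\cdot; \eta^*)\bigr)$, which is exactly a zero duality gap together with primal optimality of $u^*(\cdot; \eta^*)$. I do not expect this stitching to be the difficult part; the genuine technical burden has been front-loaded into the appendix lemmas, and the most delicate link in the chain is Assumption \ref{assum: continuity of Pfail}---the continuity of $\eta \mapsto P_{\mathrm{fail}}(x_0, t_0, u^*(\cdot; \eta))$---on which the complementary-slackness dichotomy rests and which the authors themselves flag as only conjectured.
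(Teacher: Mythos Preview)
Your proposal is correct and follows essentially the same approach as the paper: the paper's proof defers to Theorem~\ref{theorem: strong duality2} in the appendix, which likewise extracts a finite $\eta^*$ from Lemma~\ref{lem:existence_dual_sol}, uses Lemma~\ref{lem:comp_slackness} to obtain primal feasibility and $\eta^* f_1(x^*)=0$, and then reads off $f_0(x^*)=g(\eta^*)$. The only cosmetic difference is that you explicitly invoke weak duality to sandwich $p^*$, whereas the appendix leaves that step implicit.
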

\begin{proof}
Refer to the proof of Theorem \ref{theorem: strong duality2} in the Appendix \ref{sec: Nonconvex Optimization and Strong Duality}.
\end{proof}
 Similar to our approach, the work presented in \cite{ono2015chance} solves the chance-constrained problem by formulating its dual. However, in this work, the joint chance constraint is conservatively approximated using Boole's inequality. Hence, the duality gap is nonzero and the obtained solution is suboptimal. Unlike this method, in our work, we prove that the strong duality exists between the primal chance-constrained problem \eqref{CC-SOC} and its dual \eqref{eq: dual} under certain assumptions on the system dynamics and cost function. Consequently, the chance-constrained problem can be solved by evaluating the dual objective function (c.f. Section \ref{Sec: Computation of the Dual Function}) and solving the dual problem (c.f. Section \ref{Sec: Solution of the Dual Problem}). \par
 In order to solve the dual problem, it is natural to use the gradient ascent algorithm $\eta \leftarrow \eta+\gamma(P_{\mathrm{fail}}(x_0,t_0,u^*(\cdot;\eta)))$ to iteratively update the dual variable $\eta$. Here, $\gamma$ is the step size, $u^*(.;\eta)$ is the optimal policy solving \eqref{eq: g of eta}, and $P_{\mathrm{fail}}(x_0,t_0,u^*(\cdot;\eta))$ is the probability of failure under the policy $u^*(.;\eta)$. For the dual ascent algorithm to be practical, we need to be able to evaluate the gradient $P_{\mathrm{fail}}-\Delta$ efficiently in each iteration. Therefore, next, we study how to compute $P_{\mathrm{fail}}$ for the given value of $\eta$. 
  
\subsection{Risk Estimation}
We present two approaches to compute $P_{\mathrm{fail}}(x_0,t_0,u^*(.;\eta))$ for a given value of $\eta$. The first approach is PDE-based in which we find the optimal policy $u^*(.;\eta)$ first, and then evaluate its risk $P_{\mathrm{fail}}(x_0,t_0,u^*(.;\eta))$ by solving a PDE. Despite conceptual simplicity, this approach is difficult to implement computationally unless there exists an analytical expression of $u^*(.;\eta)$. To circumvent this difficulty, we also present an importance-sampling-based approach. This approach allows us to numerically compute  $P_{\mathrm{fail}}(x_0,t_0,u^*(.;\eta))$ without ever constructing $u^*(.;\eta)$ and is more computationally amenable.
\subsubsection{PDE-Based Approach}
We state the following theorem:
\begin{theorem}\label{Theorem: risk estimation}
Suppose there exists a function $J:\overline{\mathcal{Q}}\rightarrow \mathbb{R}$ such that 
\begin{enumerate}[label=(\alph*)]
    \item $J(x,t)$ is continuously differentiable in $t$ and twice continuously differentiable in $x$ in the domain $\mathcal{Q}$;
    \item $J(x,t)$ solves the following PDE for the given optimal control policy $u^*(\cdot)$:
    \begin{equation}\label{risk PDE}
      \!\!\!\!\!\!\!\!\!\!\!\!\!\begin{cases}
 \begin{aligned}
    \!\!-\partial_{t}J\!=\!\left(\!f\!+\!Gu^*\right)^\top\!\!\partial_xJ\!+\!\frac{1}{2}\text{Tr}\!\left(\!\Sigma\Sigma^\top\!\partial^2_xJ\right)\!,
    \end{aligned}& \!\!\forall(x,t)\!\in\!\mathcal{Q}, \\
    \!\!\underset{\substack{(x,t)\to(y,s) \\ (x,t)\in\mathcal{Q}}}{\lim}J(x,t)= \phi'(y), & \!\!\!\!\!\forall(y,s)\!\in\!\partial\mathcal{Q}.
    \end{cases}
          \end{equation}
\end{enumerate}
where ${\phi}'(x) \!=\! \mathds{1} _{{x}\in \partial\mathcal{X}_{s}}$. Then, $P_{\mathrm{fail}}$ is given by
\begin{equation*}
    P_\mathrm{fail}\left(x_0,t_0,u^*(\cdot)\right) = J(x_0,t_0).
\end{equation*}
\end{theorem}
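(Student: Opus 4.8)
The plan is to mirror the argument of Theorem~\ref{theorem: solution to risk-minimizing soc}, except that now $J$ is supplied by hypothesis and we apply It\^o calculus along the single \emph{controlled} process driven by the given optimal policy $u^*(\cdot)$, rather than optimizing over policies. First I would fix the process $x(t)$ evolving according to $dx = (f + Gu^*)dt + \Sigma\,dw$ with $x(t_0) = x_0$, and apply Dynkin's formula (equivalently, It\^o's lemma followed by taking the conditional expectation, cf.\ \cite[Theorem 7.4.1]{oksendal2013stochastic}) to $J(x(s), s)$ over the stochastic interval $[t_0, t_f]$. Exactly as in \eqref{ito integral}, the martingale property of the It\^o integral $\int_{t_0}^{t_f}(\Sigma\,dw)^\top \partial_x J$ forces it to vanish in expectation, yielding
\begin{equation*}
\mathbb{E}_{x_0, t_0}\left[J\left(x(t_f), t_f\right)\right] = J(x_0, t_0) + \mathbb{E}_{x_0, t_0}\left[\int_{t_0}^{t_f}\left(\partial_t J + (f + Gu^*)^\top \partial_x J + \frac{1}{2}\text{Tr}\left(\Sigma\Sigma^\top \partial_x^2 J\right)\right)ds\right].
\end{equation*}

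The second step is to invoke hypothesis (b). At every interior point $(x,s)\in\mathcal{Q}$, the integrand above is precisely the left-hand side minus the right-hand side of \eqref{risk PDE}, hence identically zero. The entire integral term therefore drops, leaving $J(x_0, t_0) = \mathbb{E}_{x_0, t_0}[J(x(t_f), t_f)]$. The third step identifies the terminal value through the boundary condition: since $(x(t_f), t_f) \in \partial\mathcal{Q}$ by the definition of the exit time, continuity up to the boundary gives $J(x(t_f), t_f) = \phi'(x(t_f)) = \mathds{1}_{x(t_f)\in \partial\mathcal{X}_s}$. Here I would stress the role of the decomposition $\partial\mathcal{Q} = (\partial\mathcal{X}_s \times [t_0, T]) \cup (\mathcal{X}_s \times \{T\})$: on the spatial portion the indicator returns $1$ (a genuine escape/failure), whereas on the terminal-time portion $x(t_f) \in \mathcal{X}_s$, so the indicator returns $0$ (the trajectory survived to the horizon). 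Combining with the identity \eqref{pfail2} closes the argument,
\begin{equation*}
J(x_0, t_0) = \mathbb{E}_{x_0, t_0}\left[\mathds{1}_{x(t_f) \in \partial\mathcal{X}_s}\right] = P_{\mathrm{fail}}(x_0, t_0, u^*(\cdot)).
\end{equation*}

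The main obstacle I anticipate is the rigorous justification of applying Dynkin's formula up to the \emph{random} exit time $t_f$ and of the vanishing expectation of the stopped It\^o integral: one needs $t_f$ to be an almost-surely finite stopping time with the stochastic integral being a true (rather than merely local) martingale, which requires suitable integrability of $\partial_x J$ against $\Sigma$ on the trajectory. Because $t_f \le T$ is bounded by construction and $\mathcal{Q}$ is a bounded set with $J$ assumed $C^{2,1}$ on its closure, these integrability and optional-stopping conditions should hold, and I would lean on the same machinery of \cite[Theorem 7.4.1]{oksendal2013stochastic} invoked in the proof of Theorem~\ref{theorem: solution to risk-minimizing soc} to discharge them. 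A minor accompanying point, shared with the earlier footnote on the regularity of $\phi$, is that the discontinuous boundary datum $\phi'$ may need a smooth approximation to guarantee a classical $C^{2,1}$ solution $J$; since this is absorbed into hypotheses (a)--(b), it does not enter the verification argument itself.
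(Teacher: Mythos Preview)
Your proposal is correct and mirrors the paper's own proof essentially step for step: apply Dynkin's formula to $J$ along the $u^*$-controlled process up to $t_f$, use the PDE \eqref{risk PDE} to kill the drift integral, invoke the boundary condition to replace $J(x(t_f),t_f)$ by $\phi'(x(t_f))$, and conclude via \eqref{pfail2}. Your additional commentary on the decomposition of $\partial\mathcal{Q}$ and on the optional-stopping / integrability caveats is sound elaboration but not a departure from the paper's argument.
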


\begin{proof}
    Let $J(x,t)$ be the function satisfying (a) and (b). By Dynkin's formula, for each $(x,t)\in\overline{\mathcal{Q}}$ we have
\begin{equation*}
    \begin{aligned}
       \mathbb{E}_{x,t}\left[J\left({x}({t}_{f}), {t}_{f}\right)\right]&=J(x,t)+\!\mathbb{E}_{x,t}\!\!\left[\!\int_{t}^{{t}_{f}}\!\!\!\left(\!\!\partial_tJ \!+\! (f\!+\!Gu^*)^\top\!(\partial_xJ)\!+\!\frac{1}{2}\text{Tr}\!\left(\!\Sigma\Sigma^\top\!\partial_x^2J\right)\!\!\right)\!ds\!\right]\!.
    \end{aligned}
\end{equation*}
The second term on the right side contains, in parentheses, the PDE in (\ref{risk PDE}) and is therefore zero. Hence, we obtain
\begin{equation}\label{flag1}
       J(x,t)=\mathbb{E}_{x,t}\left[J\left({x}({t}_{f}), {t}_{f}\right)\right].
\end{equation}
From the boundary condition of the PDE (\ref{risk PDE})
\begin{equation}\label{flag2}
     \!\!\mathbb{E}_{x,t}\!\left[J\!\left({x}({t}_{f}), {t}_{f}\right)\right] \!=\! \mathbb{E}_{x,t}\left[\phi'\left({x}({t}_{f})\right)\right] \!=\mathbb{E}_{x, t}\left[\mathds{1} _{{x}({t}_{f})\in \partial\mathcal{X}_{s}}\right].
 \end{equation}
Combining (\ref{flag1}), (\ref{flag2}), and  we obtain
\begin{equation*}
   J(x_0,t_0) = P_\mathrm{fail}\left(x_0,t_0,u^*(\cdot)\right). 
\end{equation*}
\end{proof}
\begin{remark}
 We do not prove here the existence of the solution of the PDE \eqref{risk PDE}, rather we suppose that a solution exists. Note that PDE (\ref{risk PDE}) is a special case of the Cauchy-Dirichlet problem. For proof of the existence of a solution to the Cauchy-Dirichlet problem, we refer the readers to \cite[Chapter 6]{friedman1975stochastic}.
\end{remark} 
Theorem \ref{Theorem: risk estimation} implies that if we have the solution for the optimal policy $u^*(\cdot)$, $P_\mathrm{fail}\left(x_0,t_0,u^*(\cdot)\right)$ can be computed by solving the PDE \eqref{risk PDE}. Next, we present an importance-sampling-based approach to find $P_{\mathrm{fail}}$ without constructing $u^*(\cdot)$.
\subsubsection{Importance-Sampling-Based Approach}
Let $\mathcal{T}$ be the space of trajectories $x\coloneqq\{x(t), t\in[t_0, {t}_f]\}$. Let $Q^*(x)$ be the probability distribution of the trajectories defined by system \eqref{SDE} under the optimal policy $u^*(\cdot)$. Using \eqref{pfail2}, we can write
\begin{align}\label{pfail under Q*}
P_{\mathrm{fail}}(x_0, t_0, u^*(\cdot)) &= \int_{\mathcal{T}}\mathds{1} _{{x}({t}_{f})\in \partial\mathcal{X}_{s}}Q^*(dx).
\end{align}
Suppose we generate an ensemble of large number of $N$ trajectories $ \{x^{(i)}\}_{i=1}^N$ under the distribution $Q^*$. Then according to the strong law of large numbers, as $N\rightarrow\infty$, 
\begin{align*}
\frac{1}{N} \sum_{i=1}^{N} \mathds{1} _{{x}^{(i)}({t}_{f})\in \partial\mathcal{X}_{s}} \overset{a.s.}{\rightarrow} P_{\mathrm{fail}}(x_0, t_0, u^*(\cdot)) \quad x^{(i)}\sim Q^*(x). 
\end{align*}
This implies that a Monte Carlo algorithm is applicable to numerically evaluate $P_{\mathrm{fail}}$. However, such a Monte Carlo algorithm is impractical since it is difficult to sample trajectories from $Q^*$ as the optimal policy $u^*$ is unknown. Fortunately, an importance sampling scheme is available which allows us to numerically evaluate $P_{\mathrm{fail}}$ using a trajectory ensemble sampled from the distribution $P(x)$ of the uncontrolled system (2). The next theorem provides the details:  
\begin{theorem}\label{Thm: PI fast}
Suppose we generate an ensemble of a large number of  $N$ trajectories $ \{x^{(i)}\}_{i=1}^N$ under the distribution $P$. For each $i$, let $r^{(i)}$ be the path reward of the sample path $i$ given by 
\begin{align}\label{r(i)}
    r^{(i)} &= \text{exp}\!\left(\!\!-\frac{\phi\left({{x}^{(i)}}({{{t}}_{f}});\eta\right)}{\lambda}\!-\!\frac{1}{\lambda}\!\int_{t}^{{{t}}_{f}}\!\!\!\!V\!\!\left({{x}^{(i)}}(r), r\right)\!dr \!\!\right)
\end{align}
Let us define $r \coloneqq \sum_{i=1}^{N}r^{(i)}$. Then as $N\rightarrow\infty$, 
\begin{equation*}
    \sum_{i=1}^{N}\frac{r^{(i)}}{r}\mathds{1}_{{x}^{(i)}({t}_{f})\in \partial\mathcal{X}_{s}} \overset{a.s.}{\rightarrow} P_{\mathrm{fail}}(x_0, t_0, u^*(\cdot)) \quad x^{(i)}\sim P(x).
\end{equation*}
\end{theorem}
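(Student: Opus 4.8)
The plan is to recognize the claimed limit as a \emph{self-normalized importance sampling} estimator and to identify the weight $r^{(i)}/r$ with a Monte-Carlo approximation of the Radon--Nikodym derivative $dQ^*/dP$. Starting from \eqref{pfail under Q*}, which expresses $P_{\mathrm{fail}}$ as the $Q^*$-expectation of $\mathds{1}_{x(t_f)\in\partial\mathcal{X}_s}$, the goal is to rewrite this expectation over the inaccessible optimal law $Q^*$ as a ratio of two expectations over the uncontrolled law $P$ of \eqref{uncontrolled SDE}, and then invoke the strong law of large numbers on each. Writing $\mathbb{E}_P$ and $\mathbb{E}_{Q^*}$ for integration against these two path measures, everything reduces to establishing the likelihood ratio and an integrability bound.

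First I would establish the change of measure. The laws $Q^*$ and $P$ are generated by \eqref{SDE} driven by $u^*$ and by \eqref{uncontrolled SDE}, respectively; they share the diffusion coefficient $\Sigma$ and differ only in drift by $Gu^*$. Using \eqref{optimal policy} and the exponential transformation \eqref{exp transformation}, together with Assumption \ref{Assum: linearity} in the form $GR^{-1}G^\top=\tfrac{1}{\lambda}\Sigma\Sigma^\top$, one computes $Gu^*=-GR^{-1}G^\top\partial_xJ=\Sigma\Sigma^\top(\partial_x\xi)/\xi=\Sigma v$ with $v\coloneqq\Sigma^\top(\partial_x\xi)/\xi$. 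Since the drift shift lies in the range of $\Sigma$, Girsanov's theorem yields
\begin{equation*}
\frac{dQ^*}{dP}=\exp\!\left(\int_{t_0}^{t_f} v^\top dw-\tfrac{1}{2}\int_{t_0}^{t_f}\lVert v\rVert^2\,dt\right).
\end{equation*}

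The crux of the argument is evaluating this Girsanov exponent in closed form. I would apply It\^o's lemma to $\log\xi(x(t),t)$ along the uncontrolled dynamics \eqref{uncontrolled SDE}; substituting the linear PDE \eqref{linearized risk-minimizing HJB} cancels the drift and trace terms and leaves the clean identity $d\log\xi=\tfrac{V}{\lambda}\,dt+v^\top dw-\tfrac{1}{2}\lVert v\rVert^2\,dt$. Integrating from $t_0$ to the exit time and applying the boundary condition $\xi(x(t_f),t_f)=\exp(-\phi(x(t_f);\eta)/\lambda)$ from \eqref{linearized risk-minimizing HJB} converts the Girsanov exponent into $\log\!\big(r(x)/\xi(x_0,t_0)\big)$, where $r(x)$ is the path reward of \eqref{r(i)}. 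Hence $dQ^*/dP=r(x)/\xi(x_0,t_0)$, and since Lemma \ref{theorem: E and !} identifies $\xi(x_0,t_0)=\mathbb{E}_P[r]$, we obtain
\begin{equation*}
P_{\mathrm{fail}}=\mathbb{E}_P\!\left[\frac{r}{\mathbb{E}_P[r]}\,\mathds{1}_{x(t_f)\in\partial\mathcal{X}_s}\right]=\frac{\mathbb{E}_P[\,r\,\mathds{1}_{x(t_f)\in\partial\mathcal{X}_s}]}{\mathbb{E}_P[r]}.
\end{equation*}

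Finally, rewriting the estimator as $\big(\tfrac{1}{N}\sum_i r^{(i)}\mathds{1}_{x^{(i)}(t_f)\in\partial\mathcal{X}_s}\big)\big/\big(\tfrac{1}{N}\sum_i r^{(i)}\big)$, the strong law of large numbers applied separately to numerator and denominator gives a.s.\ convergence to $\mathbb{E}_P[r\,\mathds{1}_{x(t_f)\in\partial\mathcal{X}_s}]$ and to $\mathbb{E}_P[r]=\xi(x_0,t_0)$; because $r>0$ the denominator limit is strictly positive, so the continuous-mapping theorem for the ratio yields a.s.\ convergence to $P_{\mathrm{fail}}$. I expect the main obstacle to be the It\^o/Girsanov identification of the likelihood ratio---in particular verifying the drift decomposition $Gu^*=\Sigma v$, which hinges crucially on Assumption \ref{Assum: linearity}, and checking the Novikov-type integrability of $v$ needed to make the Girsanov change of measure rigorous and to guarantee $\mathbb{E}_P[r]<\infty$; the law-of-large-numbers step is then routine.
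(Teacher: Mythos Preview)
Your argument is correct and reaches the same key identity $dQ^*/dP = r(x)/\mathbb{E}_P[r]$ as the paper, but by a genuinely different route. The paper (via Theorem~\ref{Thm: likelihood ratio} in the Appendix) reformulates the minimization \eqref{eq: g of eta} as a KL control problem using Assumption~\ref{Assum: linearity}, and then invokes the Legendre duality between KL divergence and free energy (Appendix~\ref{sec:Legendre Duality}) to identify the optimal path measure $Q^*$ directly as the Gibbs reweighting $dQ^*\propto r\,dP$. You instead compute the Girsanov exponent explicitly: writing $Gu^*=\Sigma v$ with $v=\Sigma^\top(\partial_x\xi)/\xi$ (which is exactly where Assumption~\ref{Assum: linearity} enters), you apply It\^o's formula to $\log\xi$ along the uncontrolled paths and use the linearized PDE \eqref{linearized risk-minimizing HJB} to collapse the drift terms, arriving at $dQ^*/dP=r/\xi(x_0,t_0)$; Lemma~\ref{theorem: E and !} then identifies $\xi(x_0,t_0)=\mathbb{E}_P[r]$.

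Your approach is more self-contained and elementary---it stays entirely within the stochastic-calculus/PDE machinery already developed in the chapter and avoids the variational (Donsker--Varadhan) detour. The paper's route, on the other hand, makes the structural link to linearly solvable MDPs and KL control explicit, which is thematically valuable given the dissertation's broader scope. Your final self-normalized SLLN step (numerator and denominator separately, then continuous mapping) is also cleaner than the paper's somewhat informal statement that the empirical ratio $\tfrac{r^{(i)}/r}{1/N}$ ``is'' the likelihood ratio. The caveat you flag about Novikov-type integrability of $v$ is a genuine technical point that neither proof addresses rigorously.
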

\begin{proof}
    Notice that $P_{\mathrm{fail}}$ in \eqref{pfail under Q*} can be equivalently written as
\begin{equation}\label{pfail under P}
    P_{\mathrm{fail}}(x_0, t_0, u^*(\cdot))  = \int_{\mathcal{T}}\mathds{1} _{{x}({t}_{f})\in \partial\mathcal{X}_{s}}\frac{dQ^*}{dP}(x)P(dx)
\end{equation}
where the Radon-Nikodym derivative $\frac{dQ^*}{dP}(x)$ represents the likelihood ratio of observing a sample path $x$ under distributions $Q^*$ and $P$.  Now, according to Theorem \ref{Thm: likelihood ratio} in the Appendix \ref{sec: The Likelihood Ratio}, for a given ensemble of $N$ trajectories $\{x^{(i)}\}_{i=1}^N$ sampled under the distribution $P$, the likelihood ratio $\frac{dQ^*}{dP}$ of observing a sample path $x^{(i)}$ is given by $\frac{r^{(i)}/r}{1/N}$. Therefore, using \eqref{pfail under P}, by the strong law of large numbers, as $N\rightarrow\infty$ we get
\begin{align*}
    \frac{1}{N} &\sum_{i=1}^{N}\frac{r^{(i)}/r}{1/N}\mathds{1} _{{x}^{(i)}({t}_{f})\in \partial\mathcal{X}_{s}} \overset{a.s.}{\rightarrow} P_{\mathrm{fail}}(x_0, t_0, u^*(\cdot))\\
    {x}^{(i)} &\sim P(x)
\end{align*}
which completes the proof.
\end{proof} 
Theorem \ref{Thm: PI fast} provides us a sampling-based approach to numerically compute the probability of failure $P_{\mathrm{fail}}(x_0, t_0, u^*(\cdot))$. According to Theorem \ref{Thm: PI fast}, if we sample $N$ trajectories under distribution $P$, then we can approximate the probability of failure as 
\begin{equation}\label{approx pfail under P}
   \!\!\!\!\!\! P_{\mathrm{fail}}(x_0, t_0, u^*(\cdot))\! \approx\! \sum_{i=1}^{N}\frac{r^{(i)}}{r}\mathds{1}_{{x}^{(i)}({t}_{f})\in \partial\mathcal{X}_{s}} \quad  {x}^{(i)} \sim P(x).
\end{equation}
where $r^{(i)}$ is defined by \eqref{r(i)} and $r = \sum_{i=1}^{N} r^{(i)}$. Note that sampling under distribution $P$ is easy since we only need to simulate the uncontrolled system dynamics \eqref{uncontrolled SDE}. 
\subsection{Generalized Risk-Estimation PDE}
In select applications, requiring that the solution to (\ref{risk PDE}) satisfies the boundary condition everywhere on $\partial\mathcal{Q}$ can be overly restrictive. In this section, we consider a generalized version of PDE (\ref{risk PDE}) by reducing the requirement in the boundary condition to hold only for a subset of points on the boundary $(x,t)\in\partial\mathcal{Q}$ called the \textit{regular points}. We show that if a solution of such a PDE exists, then we can still interpret it as the probability of failure. Before we state the generalized version of Theorem \ref{Theorem: risk estimation} we need the following definitions and an auxiliary lemma. 
\begin{definition}[Regular and irregular points]\label{Def: reg point}
A point $(x,t)\in\partial\mathcal{Q}$ is called regular for $\mathcal{Q}$ with respect to ${x}(t)$ if $P_{x,t}\left({t}_{f}=t_0\right)=1$; i.e., a.a. paths ${x}(t)$ starting from $x$ at time $t$ leave $\mathcal{Q}$ immediately. Otherwise the point $(x,t)$ is called irregular.
\end{definition}

\begin{example}\label{example: punctured ball}
{Consider a punctured ball $\mathcal{B}=\{x\in\mathbb{R}^d: 0<\|x\|<1\}$ and its boundary $\partial \mathcal{B}=\{x\in\mathbb{R}^d: \|x\|=1\} \cup \{\mathbf{0}\}$ i.e, the boundary $\partial \mathcal{B}$ contains the surface of the ball $\mathcal{B}$ and the origin. If a Brownian motion ${w}(t)$ starts at the origin, it returns to $\mathcal{B}$ immediately. On the other hand, if it starts anywhere on the surface of $\mathcal{B}$, it leaves $\mathcal{B}$ immediately (c.f., the 0-1 law \cite[Corollary 9.2.7]{oksendal2013stochastic}). Hence, the origin is irregular for $\mathcal{B}$ with respect to ${w}(t)$, whereas all points on the surface of $\mathcal{B}$ are regular.}   
\end{example}
For more interesting examples of regular and irregular points, see \cite[Chapter 4]{karatzas2012brownian}.

\begin{definition}[Hunt's condition]
A process ${x}(t)$ is said to satisfy Hunt's condition if every semipolar set for process ${x}(t)$ is also polar for ${x}(t)$. A semipolar set is a countable union of thin sets and a measurable set $E$ is called thin for process ${x}(t)$ if for all starting points, ${x}(t)$ does not hit $E$ immediately, a.s. A measurable set $F$ is called polar for process ${x}(t)$ if for all starting points, ${x}(t)$ never hits $F$, a.s.
\end{definition}
Hunt's condition holds for Brownian motion \cite{blumenthal2007markov}. See \cite{oksendal2013stochastic} for a discussion on the requirements on the coefficients of the SDE (\ref{SDE}) in order for it to satisfies Hunt's condition.

\begin{lemma}\label{Lemma: for generalized thm}
Let $\mathcal{Q}^I$ denote the set of irregular points of $\mathcal{Q}$ with respect to process ${x}(t)$. Suppose ${x}(t)$ satisfies the Hunt's condition. Then $({x}({t}_{f}), {t}_{f})\notin\mathcal{Q}^I$ a.s.
\end{lemma}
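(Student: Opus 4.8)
The plan is to prove that the irregular set $\mathcal{Q}^I$ is a \emph{polar} set for the process $x(t)$, and then to read off the conclusion from the continuity of the sample paths. Once $\mathcal{Q}^I$ is polar, the process a.s. never visits it at any positive time; since a diffusion exits $\mathcal{Q}$ continuously through its boundary $\partial\mathcal{Q}=\mathcal{Q}^R\cup\mathcal{Q}^I$ (where $\mathcal{Q}^R$ denotes the regular points), the exit point $(x(t_f),t_f)$ can only land in $\mathcal{Q}^R$, which is exactly the claim. The polarity of $\mathcal{Q}^I$ will be obtained in two stages: first showing that $\mathcal{Q}^I$ is \emph{semipolar}, then upgrading semipolar to polar via the hypothesized Hunt's condition.

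The crux of the argument — and the step I expect to be the main obstacle — is to show that $\mathcal{Q}^I$ is semipolar. I would first translate the paper's boundary terminology into the standard potential-theoretic language: for a point $z\in\partial\mathcal{Q}$, run a fresh copy of the process from $z$ and let $\tau=\inf\{s>0:(x(s),s)\notin\mathcal{Q}\}$; then $z$ is regular precisely when $P_z(\tau=0)=1$, and by the Blumenthal $0$-$1$ law this probability is always either $0$ or $1$, so the regular/irregular dichotomy is well posed. Since $\partial\mathcal{Q}\cap\mathcal{Q}=\emptyset$ we have $\partial\mathcal{Q}\subseteq\mathcal{Q}^c$, and a boundary point is irregular for $\mathcal{Q}$ exactly when it is an irregular point of the set $\mathcal{Q}^c$ in the sense of potential theory, i.e. $P_z(T_{\mathcal{Q}^c}=0)<1$ with $T_{\mathcal{Q}^c}=\inf\{s>0:(x(s),s)\in\mathcal{Q}^c\}$. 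The real content is then the classical theorem of Hunt: for a nice (Hunt) process and a nearly Borel set $B$, the set of its irregular points is semipolar (see \cite{blumenthal2007markov}). Applying this with $B=\mathcal{Q}^c$ shows that the irregular points of $\mathcal{Q}^c$ form a semipolar set, and $\mathcal{Q}^I$, being a subset of it, is semipolar as well.

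It then remains to finish. By hypothesis $x(t)$ satisfies Hunt's condition, so every semipolar set is polar; in particular $\mathcal{Q}^I$ is polar. By the definition of a polar set, from the given initial condition the process almost surely never hits $\mathcal{Q}^I$ at any time $t>t_0$. Combining this with the continuity of paths — which forces $(x(t_f),t_f)\in\partial\mathcal{Q}$ — we conclude that the exit point avoids the never-visited set $\mathcal{Q}^I$, so $(x(t_f),t_f)\in\mathcal{Q}^R$, i.e. $(x(t_f),t_f)\notin\mathcal{Q}^I$ almost surely. The only issues requiring care are the measure-theoretic regularity of $\mathcal{Q}^I$ (near-Borel measurability, needed to invoke Hunt's theorem) and the degenerate case of starting exactly at an irregular boundary point, which does not arise for interior initial data $x_0\in\mathcal{X}_s$ since then $t_f>t_0$.
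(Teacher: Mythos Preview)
Your proposal is correct and follows essentially the same route as the paper: show $\mathcal{Q}^I$ is semipolar, upgrade to polar via Hunt's condition, and conclude that the exit point avoids $\mathcal{Q}^I$. Your treatment is in fact more careful than the paper's, which asserts semipolarity ``from the definition of irregular points,'' whereas you correctly identify that this step rests on the classical Blumenthal--Getoor/Hunt theorem that the irregular points of a nearly Borel set form a semipolar set.
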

\begin{proof}
From the definition of irregular points, the set $\mathcal{Q}^I$ is semipolar for ${x}(t)$. Furthermore, since ${x}(t)$ satisfies Hunt's condition, the set $\mathcal{Q}^I$ is polar for ${x}(t)$, and therefore $({x}({t}_{f}), {t}_{f})\notin\mathcal{Q}^I$ a.s.
\end{proof}

We can now state the generalized version of Theorem \ref{Theorem: risk estimation} as follows:

\begin{theorem}\label{theorem: Generalized Risk-Minimizing HJB PDE}
Suppose ${x}(t)$ satisfies Hunt's condition and there exists a function $J:\overline{\mathcal{Q}}\rightarrow \mathbb{R}$ such that
\begin{enumerate}[label=(\alph*)]
    \item $J(x,t)$ is continuously differentiable in $t$ and twice continuously differentiable in $x$ in domain $\mathcal{Q}$;
    \item $J(x,t)$ solves the following PDE for a given admissible control policy $u(\cdot)$:
    \begin{equation}\label{generalized risk PDE}
      \!\!\!\!\!\!\!\!\!\!\!\!\begin{cases}
 \begin{aligned}
    \!\!-\partial_{t}J\!=\!\left(\!f\!+\!Gu\right)^T\!\!\partial_xJ\!+\!\frac{1}{2}\text{Tr}\!\left(\!\Sigma\Sigma^T\!\partial^2_xJ\right)\!,
    \end{aligned}& \!\!\forall(x,t)\!\!\in\!\!\mathcal{Q}, \\
    \!\!\underset{\substack{(x,t)\to(y,s) \\ (x,t)\in\mathcal{Q}}}{\lim}J(x,t)= \phi'(y), & \!\!\!\!\!\!\!\!\!\!\!\!\!\!\!\!\!\!\!\!\!\!\!\!\!\!\forall\;\text{regular }(y,s)\!\!\in\!\!\partial\mathcal{Q}.
    \end{cases}
          \end{equation}
\end{enumerate}
Then, $P_{\mathrm{fail}}$ is given by
\begin{equation*}
    P_{\mathrm{fail}} = J(x_0,t_0).
\end{equation*}
\end{theorem}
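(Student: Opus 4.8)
The plan is to reproduce the Dynkin-formula argument used in the proof of Theorem \ref{Theorem: risk estimation}, since the interior PDE in part (b) of \eqref{generalized risk PDE} is identical to that of \eqref{risk PDE}; the only place where the two proofs must diverge is the invocation of the boundary condition, which now holds only on the regular portion of $\partial\mathcal{Q}$. The crucial new ingredient is Lemma \ref{Lemma: for generalized thm}, which, under Hunt's condition, guarantees that the process exits $\mathcal{Q}$ through a regular boundary point almost surely.

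First I would apply Dynkin's formula to $J({x}(s), s)$ on the interval $[t, {t}_f]$, exactly as in the proof of Theorem \ref{Theorem: risk estimation}, to obtain
\[
\mathbb{E}_{x,t}\left[J({x}({t}_f), {t}_f)\right] = J(x,t) + \mathbb{E}_{x,t}\left[\int_t^{{t}_f}\left(\partial_t J + (f+Gu)^\top \partial_x J + \tfrac{1}{2}\text{Tr}(\Sigma\Sigma^\top \partial_x^2 J)\right)ds\right].
\]
Because $J$ solves the interior PDE in \eqref{generalized risk PDE}, the integrand vanishes identically on $\mathcal{Q}$, leaving
\[
J(x,t) = \mathbb{E}_{x,t}\left[J({x}({t}_f), {t}_f)\right].
\]
This step does not use the boundary condition at all and is therefore unaffected by the relaxation to regular points.

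The substantive work is in evaluating $J$ at the random exit point $({x}({t}_f), {t}_f) \in \partial\mathcal{Q}$. By Lemma \ref{Lemma: for generalized thm}, the exit point avoids the irregular set $\mathcal{Q}^I$ almost surely, so $({x}({t}_f), {t}_f)$ is a regular point with probability one. Since the diffusion has continuous sample paths approaching the boundary from within $\mathcal{Q}$, the limiting boundary condition in \eqref{generalized risk PDE}---imposed precisely at regular points---yields $J({x}({t}_f), {t}_f) = \phi'({x}({t}_f)) = \mathds{1}_{{x}({t}_f)\in \partial\mathcal{X}_s}$ almost surely. Substituting this into the identity above and specializing to $(x,t) = (x_0, t_0)$ gives
\[
J(x_0, t_0) = \mathbb{E}_{x_0, t_0}\left[\mathds{1}_{{x}({t}_f)\in \partial\mathcal{X}_s}\right] = P_{\mathrm{fail}}(x_0, t_0, u(\cdot)),
\]
where the last equality follows from the definition \eqref{pfail} together with the transformation \eqref{pfail2}.

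The main obstacle I anticipate is the rigorous justification of $J({x}({t}_f), {t}_f) = \phi'({x}({t}_f))$ almost surely. Two issues must be reconciled: first, the boundary condition is stated as a limit taken from inside $\mathcal{Q}$ and only at regular points, whereas the Dynkin identity requires the pointwise value of $J$ at the random exit point; and second, we must exclude positive-probability exits through irregular points, where $J$ is not constrained to equal $\phi'$. The second issue is precisely what Lemma \ref{Lemma: for generalized thm} settles via Hunt's condition, while the first relies on path continuity of the diffusion, which ensures ${x}(s) \to {x}({t}_f)$ from within $\mathcal{Q}$ so that the limiting boundary value is attained along the trajectory at regular exit points.
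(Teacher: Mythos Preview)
Your proposal is correct and follows essentially the same approach as the paper: apply Dynkin's formula so that the interior PDE kills the integral term, then invoke Lemma~\ref{Lemma: for generalized thm} (via Hunt's condition) to ensure the exit point is regular almost surely, which lets the relaxed boundary condition yield $J({x}({t}_f),{t}_f)=\phi'({x}({t}_f))$ and hence $J(x_0,t_0)=P_{\mathrm{fail}}$. Your discussion of the two potential obstacles is slightly more explicit than the paper's, but the argument is the same.
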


\begin{proof}
Let $J(x,t)$ be the function satisfying (a) and (b). By Dynkin's formula, for each $(x,t)\in\overline{\mathcal{Q}}$ we have
\begin{equation*}
    \begin{aligned}
       \mathbb{E}_{x,t}\left[J\left({x}({t}_{f}), {t}_{f}\right)\right]&=J(x,t)+\!\mathbb{E}_{x,t}\!\!\left[\!\int_{t}^{{t}_{f}}\!\!\!\left(\!\!\partial_tJ \!+\! (f\!+\!Gu)^T\!(\partial_xJ)\!+\!\frac{1}{2}\text{Tr}\!\left(\!\Sigma\Sigma^T\!\partial_x^2J\right)\!\!\right)\!ds\!\right]\!.
    \end{aligned}
\end{equation*}
The second term on the right side contains, in parentheses, the PDE in (\ref{generalized risk PDE}) and is therefore zero. Hence, we obtain
\begin{equation}\label{flag_1}
       J(x,t)=\mathbb{E}_{x,t}\left[J\left({x}({t}_{f}), {t}_{f}\right)\right].
\end{equation}
 Since ${x}(t)$ satisfies Hunt's condition, from Lemma \ref{Lemma: for generalized thm} we know that $({x}({t}_{f}), {t}_{f})$ does not belong to the irregular set of $\mathcal{Q}$ a.s. Hence from the boundary condition of PDE (\ref{generalized risk PDE}),
 \begin{equation}\label{flag_2}
     \!\!\mathbb{E}_{x,t}\!\left[J\!\left({x}({t}_{f}), {t}_{f}\right)\right] \!=\! \mathbb{E}_{x,t}\left[\phi'\left({x}({t}_{f})\right)\right] .
 \end{equation}
Combining (\ref{flag_1}) and (\ref{flag_2}) we obtain
\begin{equation*}
   J(x_0,t_0) =  P_{\mathrm{fail}}. 
\end{equation*}
\end{proof}
Note that Theorem \ref{theorem: Generalized Risk-Minimizing HJB PDE} is more useful than Theorem \ref{Theorem: risk estimation} for the risk estimation in Example \ref{example: punctured ball} because the boundary condition at the origin is relaxed. 
\begin{remark}
{PDE (\ref{generalized risk PDE}) can be considered as a special case of the generalized Dirichlet-Poisson problem, the existence of whose solution is proved in \cite[Chapter 9]{oksendal2013stochastic}.}    
\end{remark}

\subsection{Solution of the Dual Problem \eqref{eq: dual problem}}\label{Sec: Solution of the Dual Problem}
We present two numerical approaches to solve the dual problem \eqref{eq: dual problem}. The first approach is the finite difference method (FDM) which is one of the most popular approaches to numerically solve the partial differential equations. Despite its popularity, this approach suffers from the \textit{curse of dimensionality} and the computational cost grows exponentially as the state-space dimension increases. To circumvent this difficulty, we also present the path integral approach. This approach is more computationally amenable. It allows us to numerically solve the dual problem \eqref{eq: dual problem} online via open-loop samples of system trajectories.
\subsubsection{Finite Difference Method}
When the geometry of the computational domain is simple, it is straightforward to form discrete approximations to spatial differential operators with a high order of accuracy via Taylor series expansions \cite{Grossmann2007}. A finite number of grid points are placed at the interior and on the boundary of the computational domain, and the solution to the PDE is sought at these finite set of locations. Once the grid is determined, finite difference operators are derived to approximate spatial derivatives in the PDE. In this work, centered formulas are used to approximate spatial operators with up to eight-order accuracy at the interior grid points. For grid points near the boundary, asymmetric formulas that maintain the order of accuracy are used in conjunction with Dirichlet boundary condition. The use of finite difference operators yields a system of ordinary differential equations (ODEs) that is integrated in time with the desired method. The MATLAB suite of ODE solvers \cite{Shampine_1997_matlab} provides a number of efficient ODE integrators with high-temporal order, error control, and variable time-stepping that advance the solution in time.\par 

Note that FDM numerically solves the HJB PDE (\ref{linearized risk-minimizing HJB}) backward in time. The solution needs to be computed offline and the optimal control policy needs to be stored in a look-up table. For a given state and time, the optimal input for real-time control is obtained from the stored look-up table. Similar to the PDE \eqref{linearized risk-minimizing HJB}, FDM can be utilized to compute the probability of failure by numerically solving the PDE \eqref{risk PDE} backward in time. It is well known that the computational complexity and memory requirements of FDM increase exponentially as the state-space dimension increases. Therefore, FDM is intractable for systems with more than a few state variables. Moreover, this method is inconvenient for real-time implementation since the HJB PDE needs to be solved backward in time. Also, FDM computes a global solution over the entire domain $\mathcal{Q}$ even if the majority of the state-time pairs $(x,t)$ will never be visited by the actual system. 
To overcome these difficulties, we present the path integral approach which numerically solves the dual problem \eqref{eq: dual problem} in real-time. 
\subsubsection{Path Integral Approach}
Assumption \ref{Assum: linearity} implies that the stochastic noise has to enter the system dynamics via the control channels. 
Therefore, in what follows, we assume that system \eqref{SDE} can be partitioned into subsystems that are directly and non-directly driven by the noise as:
\begin{equation*}
    \begin{aligned}
  \begin{bmatrix}
   d{x}^{(1)} \\  d{x}^{(2)}
  \end{bmatrix}= \begin{bmatrix}
    {f}^{(1)}({x}, t) \\ {f}^{(2)}({x}, t)
  \end{bmatrix}\!dt + \begin{bmatrix}
    \mathbf{0}\\{G}^{(2)}\!\left({x}, t\right)
  \end{bmatrix}\!{u}({x}, t)dt+\begin{bmatrix}
    \mathbf{0}\\{\Sigma}^{(2)}\!\left({x}, t\right)
  \end{bmatrix}\!d{w}
\end{aligned}
\end{equation*}
where $\mathbf{0}$ denotes a zero matrix of appropriate dimensions. The path integral control framework utilizes the fact that the solution to PDE (\ref{linearized risk-minimizing HJB}) admits the Feynman-Kac representation (\ref{xi}).
The optimal control input $u^*(x,t;\eta)$ (\ref{optimal policy}) can be obtained by taking the gradient of (\ref{xi}) with respect to $x$ \cite{williams2017model, theodorou2010generalized}. We obtain
\begin{equation}\label{path integral control}
 u^*(x,t;\eta)dt=\mathcal{G}\left(x,t\right)\frac{\mathbb{E}_{x,t}\left[\text{exp}{\left(-\frac{1}{\lambda}S\right)}\Sigma^{(2)}\left(x,t\right)d{w}(t)\right]}{\mathbb{E}_{x,t}\left[\text{exp}{\left(-\frac{1}{\lambda}S\right)}\right]}, 
\end{equation}
where matrix $\mathcal{G}\left(x,t\right)$ is defined as
\begin{equation*}
\mathcal{G}\!\left(x,t\right)\!=\!R^{-1}\!(x,t){G^{(2)}}^{\!\!\top}\!\!(x,t)\!\left(\!G^{(2)}(x,t)R^{-1}\!(x,t){G^{(2)}}^{\!\!\top}\!\!(x,t)\right)^{\!\!\!-1}. 
\end{equation*}
and $S$ represents the cost-to-go of a trajectory of system $\hat{{x}}(t)$ starting at $(x,t)$ , i.e., \[S=\phi\left(\hat{{x}}(\hat{{t}}_{f});\eta\right)+\int_{t}^{\hat{{t}}_{f}} V\left(\hat{{x}}(t), t\right)dt.\] To evaluate expectations in (\ref{xi}) and (\ref{path integral control}) numerically, we can discretize the uncontrolled dynamics (\ref{uncontrolled SDE}) and use Monte Carlo sampling \cite{williams2017model}. Unlike FDM, the path integral framework solves PDE (\ref{linearized risk-minimizing HJB}) in the forward direction. It evaluates a solution locally without requiring knowledge of the solution nearby so that there is no need for a (global) discretization of the computational domain. For real-time control, Monte Carlo simulations can be performed in real-time in order to evaluate (\ref{path integral control}) for the current $(x,t)$. Similar to (\ref{xi}) and (\ref{path integral control}), the failure probability \eqref{approx pfail under P} can be numerically computed by Monte Carlo sampling using the importance sampling method. The Monte Carlo simulations can be parallelized by using the Graphic Processing Units (GPUs) and thus the path integral approach is less susceptible
to the curse of dimensionality.\par
Now, we present the path-integral-based dual ascent algorithm to numerically solve the dual problem \eqref{eq: dual problem}. The algorithm for that is given in Algorithm 1. 
\begin{algorithm}
\caption{Dual ascent via path integral approach}\label{alg:cap}
\begin{algorithmic}[1]
\Require Error tolerance $\epsilon>0$, learning rate $\gamma>0$
\State Set $\eta=0$ 
\State Find $u^*(\cdot;\eta=0)$ using \eqref{path integral control}\label{sol of linear HJB pde}
\State Compute the failure probability $P_\mathrm{fail}\left(x_0,t_0,u^*(\cdot;\eta=0)\right)$ using \eqref{approx pfail under P}. \label{sol of risk pde}
\If {$P_\mathrm{fail}\left(x_0,t_0,u^*(\cdot;\eta=0)\right)\leq\Delta$} 
\State return $u^*(\cdot;\eta=0)$ 
\EndIf

\State Choose initial $\eta>0$.
\While{True} 
\State Find $u^*(\cdot;\eta)$ using \eqref{path integral control} \label{sol of linear HJB pde 2}
\State Compute the failure probability $P_\mathrm{fail}\left(x_0,t_0,u^*(\cdot;\eta)\right)$ using \eqref{approx pfail under P} \label{sol of risk pde 2}
\If {$|P_\mathrm{fail}\left(x_0,t_0,u^*(\cdot;\eta)\right)-\Delta|<\epsilon$}
\State return $u^*(\cdot;\eta)$

\EndIf
\State $\eta \leftarrow \eta + 
\gamma(P_\mathrm{fail}\left(x_0,t_0,u^*(\cdot;\eta)\right)-\Delta)$
\EndWhile 

\end{algorithmic}
\end{algorithm}
The algorithm starts by dealing with a special case $\eta=0$. We find the optimal policy \eqref{path integral control}. Then, we compute $P_\mathrm{fail}\left(x_0,t_0,u^*(\cdot;\eta=0)\right)$ by importance-sampling approach. If $P_\mathrm{fail}\left(x_0,t_0,u^*(\cdot;\eta=0)\right)\leq \Delta$, we return $u^*(\cdot;\eta=0)$. Otherwise, the algorithm chooses some initial $\eta$ and iteratively updates the dual variable $\eta \leftarrow \eta+\gamma(P_{\mathrm{fail}}(x_0,t_0,u^*(\cdot;\eta))-\Delta)$ with a learning rate of $\gamma$. Once $|P_\mathrm{fail}\left(x_0,t_0,u^*(\cdot;\eta)\right)-\Delta|$ is less than the error tolerance $\epsilon$, we return the policy $u^*(\cdot;\eta)$. Thus, using the Algorithm \ref{alg:cap}, we can numerically solve the original chance-constrained problem \eqref{CC-SOC} online via real-time Monte-Carlo simulations.
\section{Simulation Results}
In this section, we demonstrate the effectiveness of the proposed control synthesis framework in addressing the chance-constrained problem defined in \eqref{CC-SOC}. In Section \ref{Sec: sim_2D}, we employ a 2D state-space velocity input model to solve a mobile robot navigation problem using Algorithm \ref{alg:cap}. The solution derived from the path-integral method is compared with that of FDM. In Section \ref{Sec: sim_4D}, we extend the analysis to a unicycle model in a 4D state-space, and in Section \ref{Sec: sim_5D}, to a car model with a 5D state-space. In both the 4D and 5D cases, we solve the corresponding chance-constrained problems using Algorithm \ref{alg:cap}. Due to the high computational cost of applying FDM to models with 4D and 5D state spaces, we rely exclusively on the path-integral method for these examples.

\subsection{Input Velocity Model}\label{Sec: sim_2D}
The problem is illustrated in Figure \ref{Fig. sample trajs 2D} where a particle robot wants to navigate in a 2D space from a given start position (shown by a yellow star) to the origin (shown by a magenta star), by avoiding the collisions with the red obstacle and the outer boundary.\par
Let the states of the system be ${p}_x$ and ${p}_y$, the positions along $x$ and $y$, respectively. The system dynamics are given by the following SDEs:
\begin{equation}\label{velocity input model}
\begin{split}
    d{p}_x={v}_xdt+\sigma d{w}_x,\quad d{p}_y={v}_ydt+\sigma d{w}_y,
\end{split}
\end{equation}
where ${v}_x$ and ${v}_y$ are velocities along $x$ and $y$ directions, respectively. Assume 
\begin{equation*}
 v_x = \overline{v}_x + \widetilde{v}_x, \qquad  v_y = \overline{v}_y + \widetilde{v}_y, 
\end{equation*}
where $\overline{v}_x$ and $\overline{v}_y$ are nominal velocities given by $\overline{v}_x = -k_xp_x $ and $\overline{v}_y = -k_yp_y$ for some constants $k_x$ and $k_y$. Hence, (\ref{velocity input model}) can be rewritten as
\begin{equation}\label{input velocity model}
\begin{split}
    d{p}_x=-k_x{p}_xdt+\widetilde{{v}}_xdt+\sigma d{w}_x,\quad d{p}_y=-k_y{p}_ydt+\widetilde{{v}}_ydt+\sigma d{w}_y.
\end{split}
\end{equation}
Now, the goal is to design an optimal control policy $u^* = [\widetilde{v}^*_x\;\widetilde{v}^*_y]^\top$ for the chance-constrained problem (\ref{CC-SOC}). The initial state is $x_0 = [ -0.3 \; 0.3 ]^\top$. We set $k_x=k_y=0.5$, $V\left({x}(t)\right) = {p}_x^2(t) + {p}_y^2(t)$, $\psi\left({x}(T)\right) = {p}_x^2(T) + {p}_y^2(T)$, $R=\begin{bmatrix}
    1 & 0\\
    0& 1
\end{bmatrix}$, $t_0=0$, $T=2$, and $\sigma^2=0.01$. 
In Algorithm \ref{alg:cap}, we set the error tolerance $\epsilon = 0.01$ and learning rate $\gamma=0.1$. For FDM, the computational domain is discretized by a grid of $96\times 96$ points and the solver ode45 is used with a relative error tolerance equal to $10^{-3}$. In the path integral simulation, for Monte Carlo sampling, $10^5$ trajectories and a step size equal to $0.01$ are used. 

\begin{figure}[h]
    \centering
      \begin{tabular}{c c}
     \!\!\!\!\!\!\!\!\includegraphics[scale=0.5]{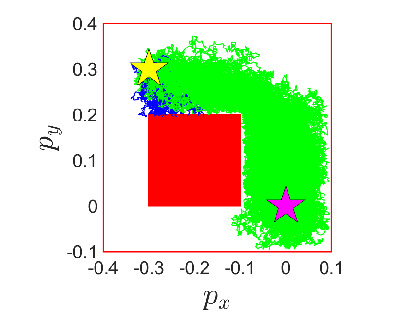} &\!\!\!\!\!\!\!\!\!\!\!\!\!\!\!\!\!\!\includegraphics[scale=0.5]{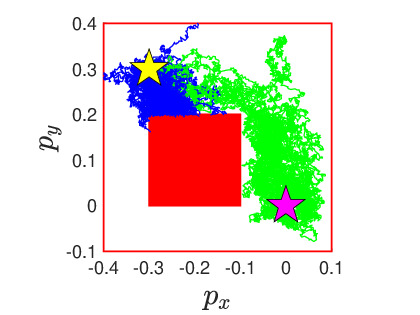} \\
      \!\!\!\!\!\!\!\!(a) $\Delta = 0.25$ & \!\!\!\!\!\!\!\!\!\!\!\!\!\!\!\!\!\! (b) $\Delta = 0.9$ \\
      \end{tabular}
        \caption{Robot navigation problem for the input velocity model. The start position is shown by a yellow star and the target position (the origin) by a magenta star. $100$ sample trajectories generated using optimal control policies for two values of $\Delta$ are shown. The trajectories are color-coded; blue paths collide with the obstacle or the outer boundary, while the green paths converge in the neighborhood of the magenta star.} 
        \label{Fig. sample trajs 2D}
\end{figure}

In Figure \ref{Fig. sample trajs 2D}, we plot $100$ sample trajectories generated using synthesized optimal policies for two values of $\Delta$. The trajectories are color-coded; the blue paths collide with the obstacle or the outer boundary, while the green paths converge in the neighborhood of the origin (the goal position). For a lower value of $\Delta$ the weight of blue paths is less and that of the green paths is more as compared to the higher value of $\Delta$. In other words, the failure probability for the lower value of $\Delta$ is less as compared to that of the higher value of $\Delta$.
\begin{figure}[h]
    \centering
      \begin{tabular}{c c}
     \!\!\!\!\!\!\!\!\!\includegraphics[scale=0.17]{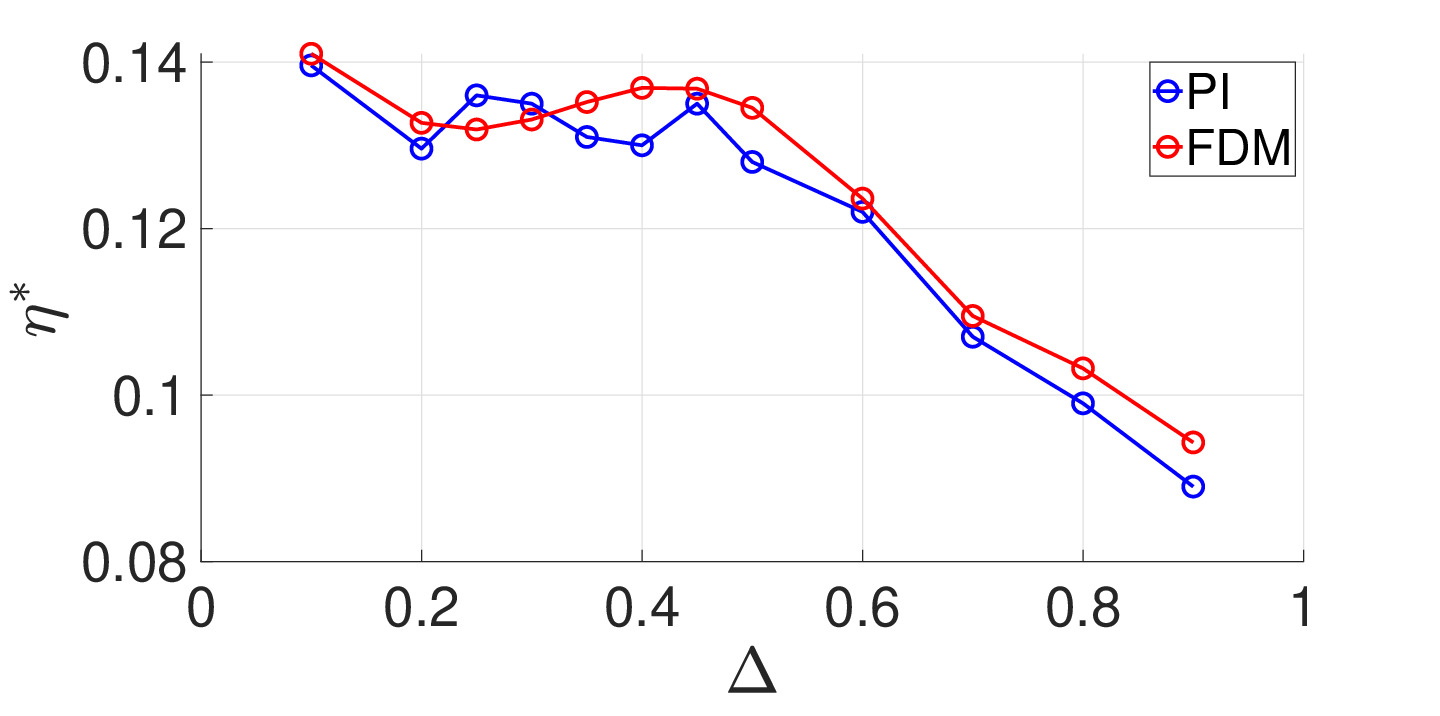} &\!\!\!\!\!\!\!\!\!\!\!\!\!\includegraphics[scale=0.17]{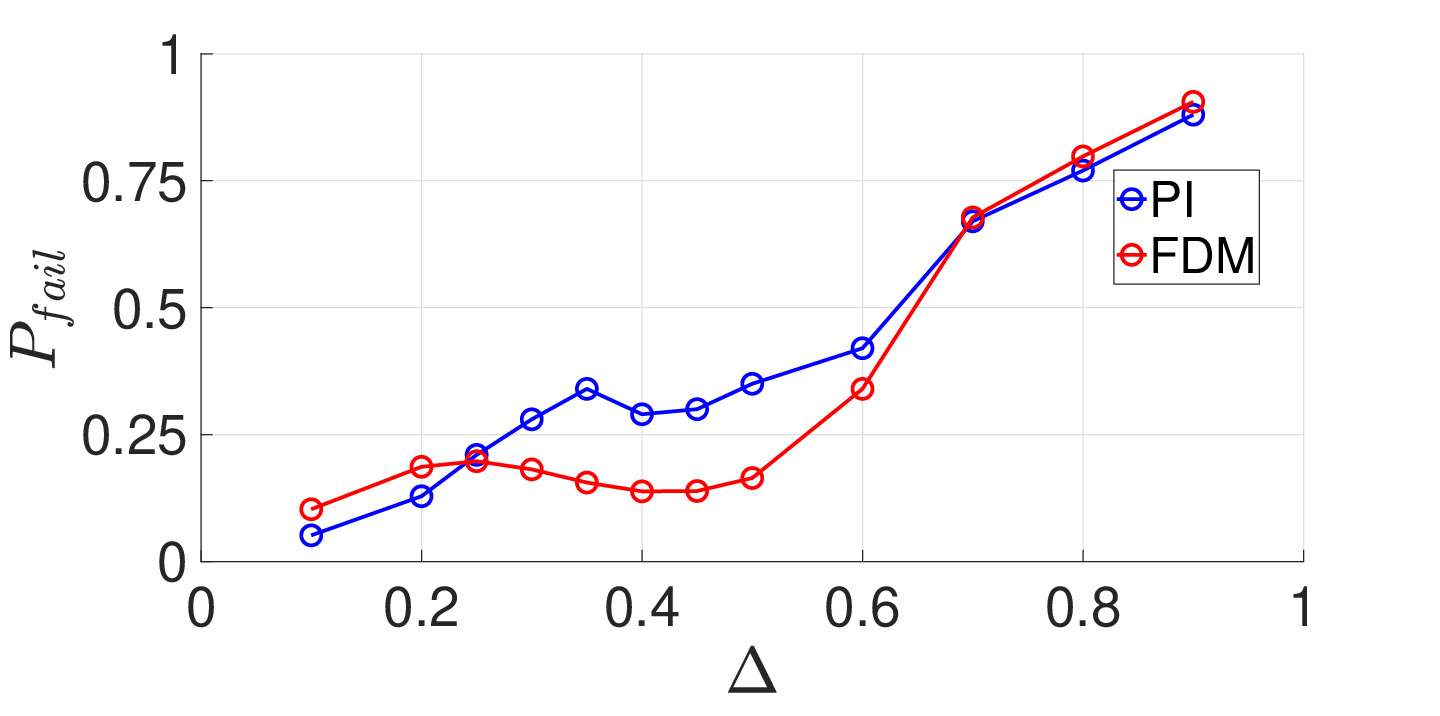} \\
      (a) $\eta^*$ vs $\Delta$ & \!\!\!\!\!\!\!\!\!\!\!\!\!(b) $P_{\mathrm{fail}}(x_0,t_0, u^*(\cdot\;;{\eta^*}))$ vs $\Delta$ \\
      \end{tabular}
        \caption{$\eta^*$ and $P_{\mathrm{fail}}(x_0,t_0, u^*(\cdot\;;{\eta^*}))$ vs $\Delta$ for input velocity model using path integral control and FDM.} 
        \label{Fig.eta pfail vs Delta(2D)}
\end{figure}

Figure \ref{Fig.eta pfail vs Delta(2D)} represents how the value of $\eta^*$ and $P_{\mathrm{fail}}(x_0,t_0, u^*(\cdot\;;{\eta^*}))$ change with respect to $\Delta$. The values obtained using path integral control and FDM are compared. We expect that the value of $\eta^*$ reduces and that of $P_{\mathrm{fail}}$ increases as $\Delta$ increases. Note that the graphs are not monotonic. This is because we used numerical methods to solve the PDE \eqref{linearized risk-minimizing HJB}, which caused some errors in the computation of $\eta^*$ and $P_{\mathrm{fail}}(x_0,t_0, u^*(\cdot\;;{\eta^*}))$.

\begin{figure*}[h]
    \centering
      \begin{tabular}{c c c c}
      \!\!\!\!\!\!\!\!\!\!\!\!\!\!\!\!\!\!\!\includegraphics[scale=0.27]{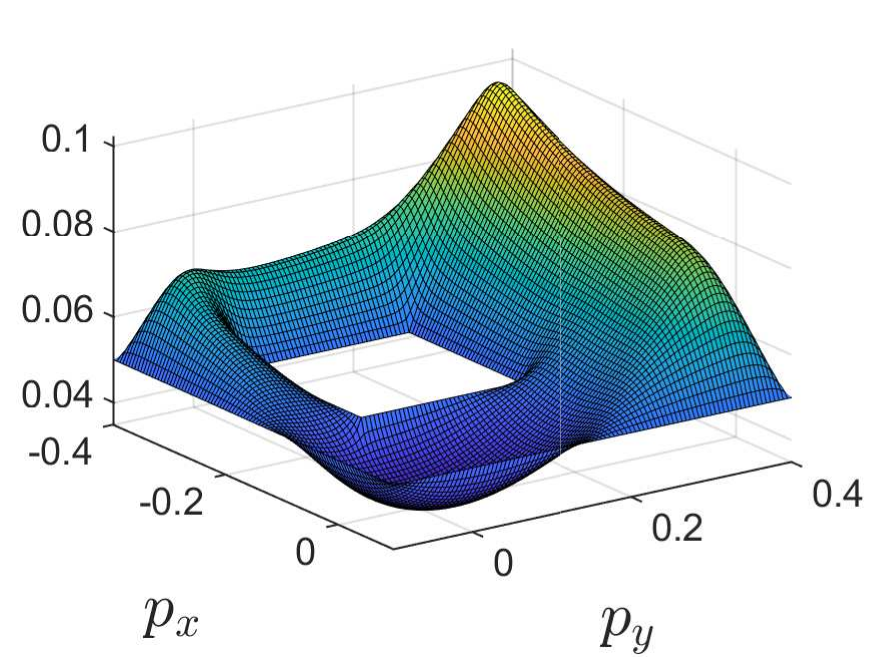} &\includegraphics[scale=0.27]{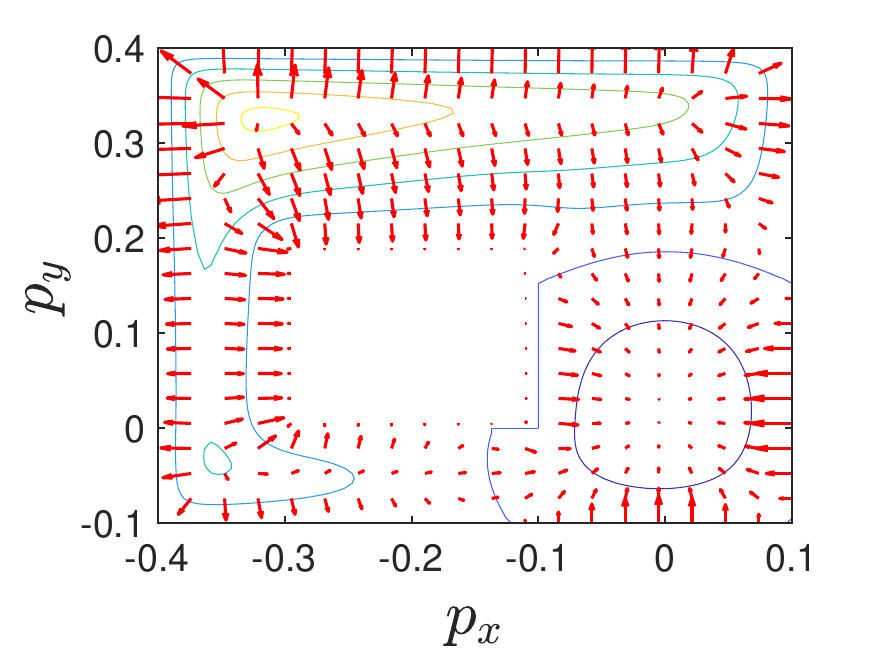} &\includegraphics[scale=0.27]{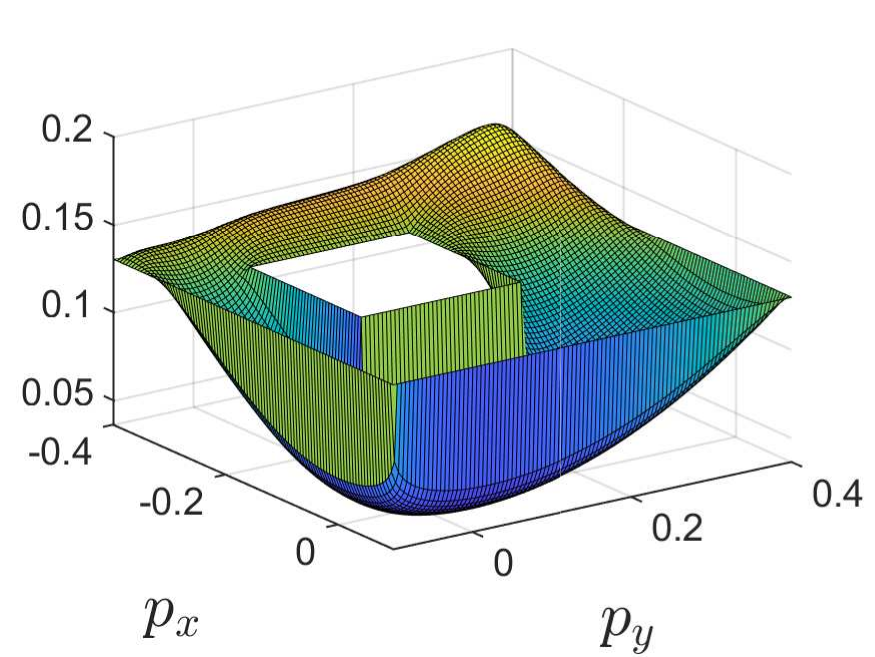} &\includegraphics[scale=0.27]{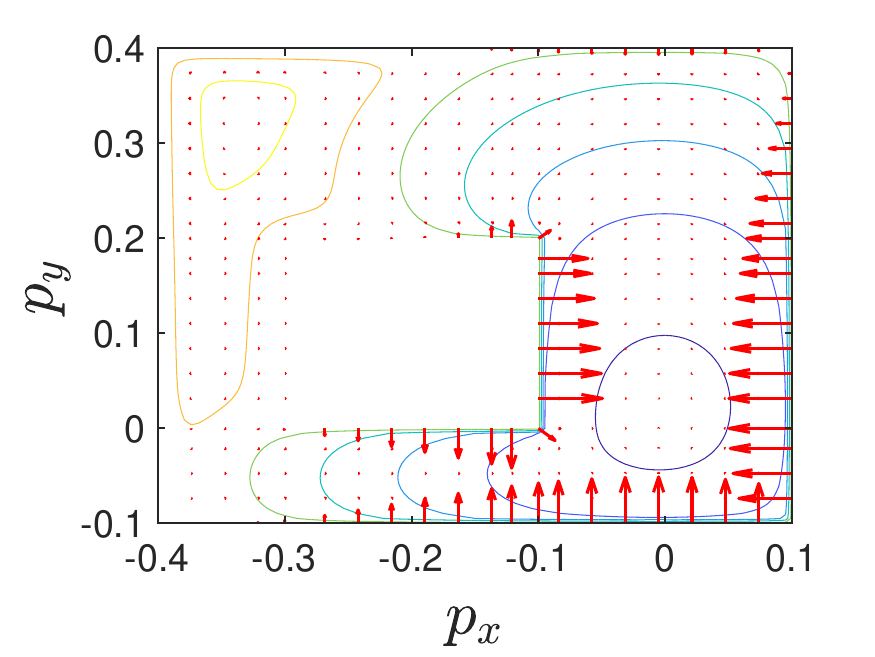}\\
      \!\!\!\!\!\!\!\!\!\!\!\!\!\!\!\!\!\!\!\small(a) $J(x,t_0)$ for $\eta = 0.05$ & \small(b) $u^*(x,t_0)$ for $\eta = 0.05$ &  \small (c) $J(x,t_0)$ for $\eta = 0.13$ & \small (d) $u^*(x,t_0)$ for $\eta = 0.13$\\
      
        \!\!\!\!\!\!\!\!\!\!\!\!\!\!\!\!\!\!\!\includegraphics[scale=0.27]{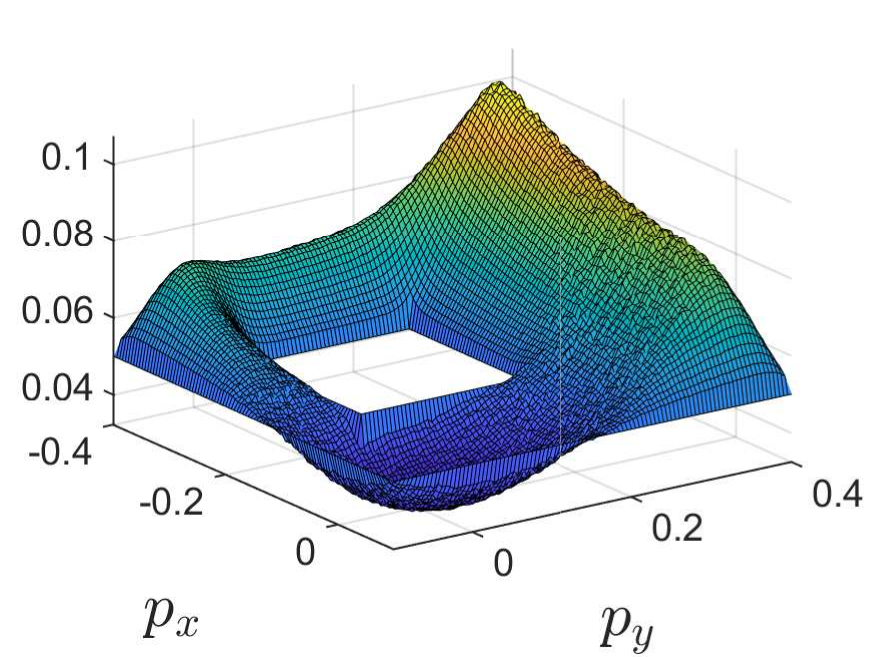} &\includegraphics[scale=0.27]{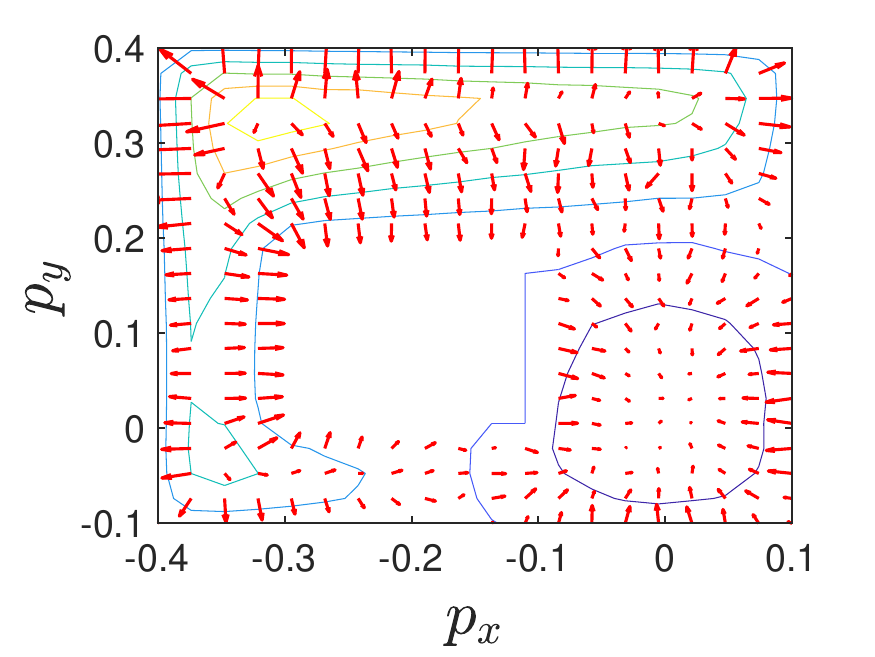}
        &\includegraphics[scale=0.27]{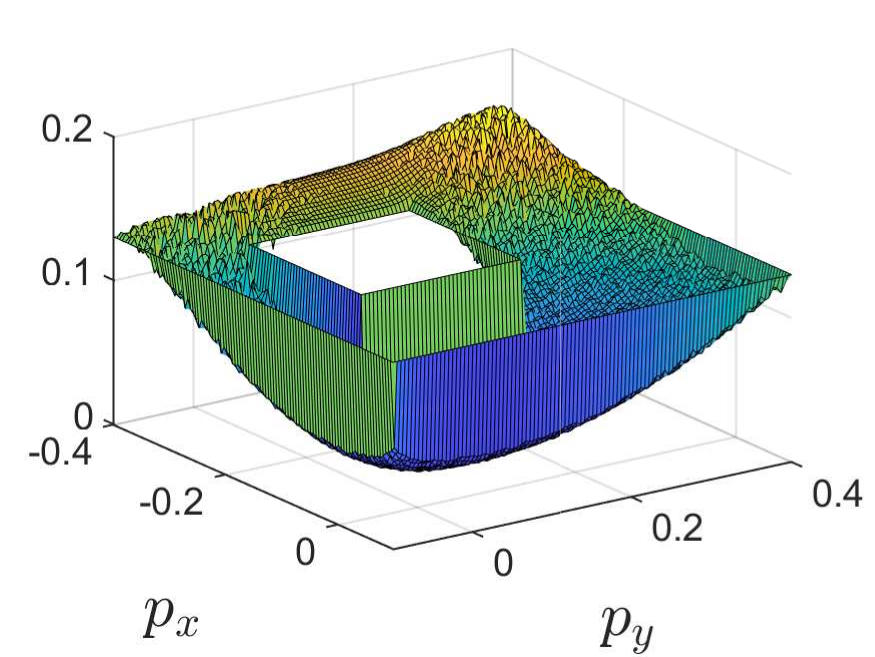} &\includegraphics[scale=0.27]{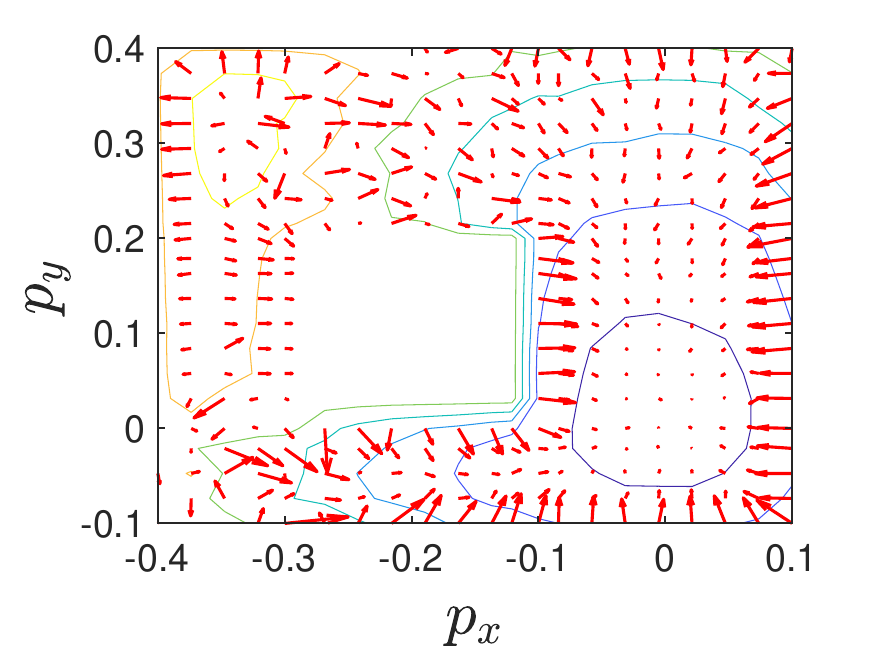}\\
        \!\!\!\!\!\!\!\!\!\!\!\!\!\!\!\!\!\!\!\small(e) $J(x,t_0)$ for $\eta = 0.05$ & \small(f) $u^*(x,t_0)$ for $\eta = 0.05$ & \small(g) $J(x,t_0)$ for $\eta = 0.13$ & \small(h) $u^*(x,t_0)$ for $\eta = 0.13$\\
      \end{tabular}
        \caption{Input velocity model: comparison of solutions $J(x,t_0)$ and $u^*(x,t_0)$ obtained from FDM (a-d) and path integral (e-h) for $\eta=0.05$ and $\eta=0.13$. The optimal control inputs $u^*(x,t_0)$ in (b, d, f, h) are plotted together with contours of $J(x, t_0)$.} 
        \label{Fig. surf plots FDM}
\end{figure*}
Figure \ref{Fig. surf plots FDM} shows the comparison of solutions $J(x,t_0)$ and $u^*(x,t_0)$ of the PDE \eqref{HJB PDE} obtained from FDM (top row) and path integral (bottom row) for $\eta=0.05$ and $\eta=0.13$. Since the path integral is a sampling-based approach its solutions are noisier compared to FDM, as expected. Notice that for $\eta=0.05$, due to a lower boundary value, the control inputs $u^*(x,t_0)$ push the robot towards the obstacle or the outer boundary from the most part of the safe region $\mathcal{X}_{s}$, except near the origin (the target position). Whereas, for $\eta=0.13$, aggressive inputs $u^*(x,t_0)$ are applied near the boundary to force the robot to go towards the goal position.

\begin{figure}[h]
    \centering
      \begin{tabular}{c c}
    \includegraphics[scale=0.37]{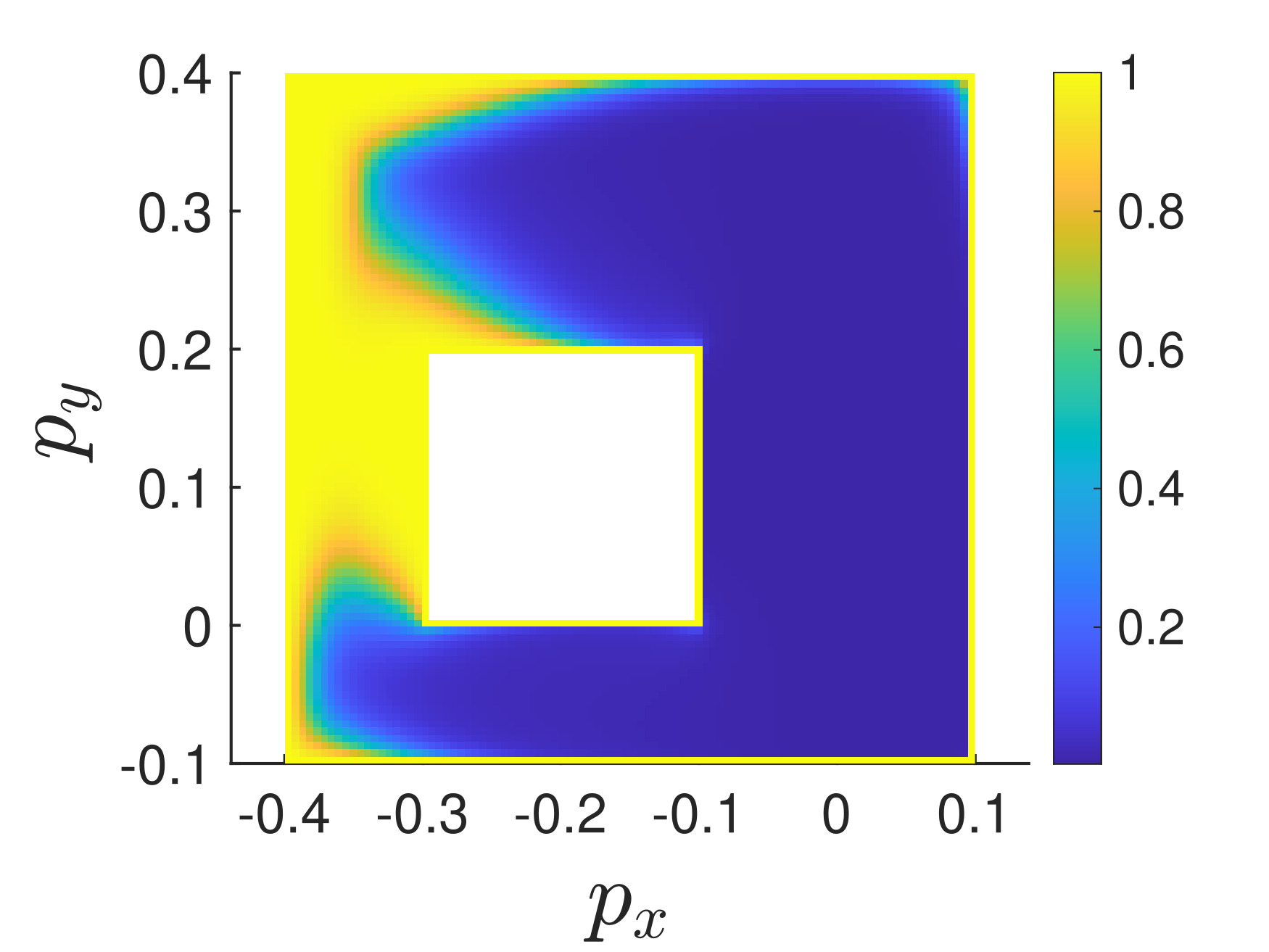} &\includegraphics[scale=0.37]{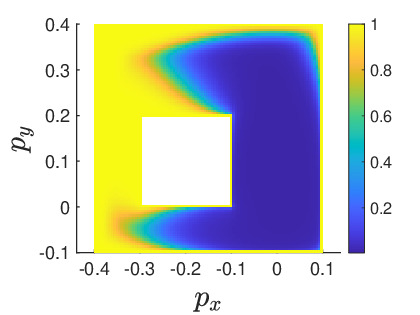} \\
      (a) $\Delta = 0.25$ & (b) $\Delta = 0.9$
      \end{tabular}
        \caption{Colormaps of the failure probabilities of the optimal policies synthesized for the input velocity model as functions of initial position.} 
        \label{Fig. risks}
\end{figure}

Figure \ref{Fig. risks} shows the colormaps of the failure probabilities of the synthesized optimal policies for $\Delta=0.25$ and $\Delta=0.9$ as functions of initial position. Notice that the region of higher failure probabilities is more for $\Delta = 0.9$ than that for the lower value of $\Delta = 0.25$.\par

The factors that affect the computation speed of FDM include the grid size, the choice of an ODE solver and its error tolerances. Whereas the computation speed of path integral depends on the number of Monte Carlo samples and the number of time steps. The computation time is evaluated on a machine with an Intel Core i7-9750H CPU clocked at 2.6 GHz. Both FDM and path integral approaches are implemented in MATLAB. Path integral simulations are run parallelly using MATLAB's parallel processing toolbox and the GPU Nvidia GeForce GTX 1650. The average number of iterations required by Algorithm \ref{alg:cap} to converge for both FDM and path integral is $3$. We choose the initial $\eta$ to be $0.05$ and the learning rate $\gamma$ to be $0.1$. For FDM, the average computation time for each iteration is approximately $20$ sec. For the path integral framework, the average computation time required for each iteration is $3$ sec. Note that no special effort was made to optimize the algorithm for speed. We plan to reduce the computation time of the algorithm in future work.

\subsection{Unicycle Model}\label{Sec: sim_4D}
The problem is illustrated in Figure \ref{Fig. sample trajs 4D}. Similar to the problem in Section \ref{Sec: sim_2D}, a particle robot wants to navigate in a 2D space from a given start position (shown by a yellow star) to the origin (shown by a magenta star), by avoiding the collisions with the red obstacles and the outer boundary. The states of the unicycle model ${x}=[{p}_x \; {p}_y \; {s}\;\; \theta ]^\top$ consist of its $x-y$ position $[{p}_x \; {p}_y]^\top$, speed ${s}$ and heading angle $\theta\in[0, 2\pi]$. The system dynamics are given by the following SDE:
    \begin{equation} \label{unicycle model}
\begin{aligned}
    \begin{bmatrix}
    d{p}_x\\d{p}_y\\d{s}\\d{\theta}
    \end{bmatrix}\!=\!
    -k
    \begin{bmatrix}
    {p}_x\\
    {p}_y\\
    {s}\\
    {\theta}
    \end{bmatrix}dt+
    \begin{bmatrix}
    {s}\cos{{\theta}}\\{s}\sin{{\theta}}\\0\\0
    \end{bmatrix}\!dt+\begin{bmatrix}
    0 & 0\\0 & 0\\1 & 0\\0 & 1
    \end{bmatrix} \!
    \left(\begin{bmatrix}
    a\\
    \omega
    \end{bmatrix}\!dt \!+\!
    \begin{bmatrix}
    \sigma & 0\\
    0 & \nu 
    \end{bmatrix}\!d{w}
    \right).
\end{aligned}
\end{equation}
The control input $u\coloneqq\begin{bmatrix} a & \omega \end{bmatrix}^\top$ consists of acceleration $a$ and angular speed $\omega$. $\sigma$ and $\nu$ are the noise level parameters. In the simulation, we set $\sigma=\nu=0.1$, $k=0.2$, $t_0=0$, $T=10$, $x_0=\begin{bmatrix}
-0.4 &-0.4 & 0 &0
\end{bmatrix}^\top$, $V({x}) = {p}_x^2 + {p}_y^2$ and $\psi\left({x}(T)\right) = {p}_x^2(T) + {p}_y^2(T)$. We solve the chance-constrained problem \eqref{CC-SOC} via path integral approach using Algorithm \ref{alg:cap}. Since the state-space of the system is of high order we do not use FDM to solve this problem. For Monte Carlo sampling, $10^4$ trajectories and a step size equal to $0.01$ are used.

\begin{figure}[h]
    \centering
      \begin{tabular}{c c}
     \includegraphics[scale=0.45]{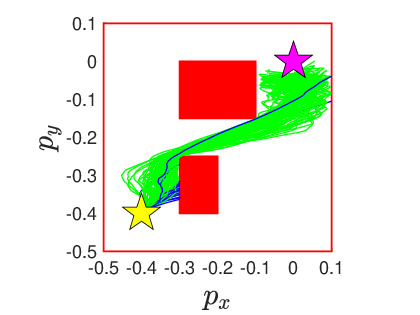} &\includegraphics[scale=0.45]{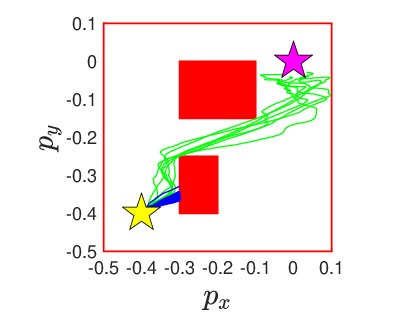} \\
       (a) $\Delta = 0.2$ &  (b) $\Delta = 0.9$\\
      \end{tabular}
        \caption{Robot navigation problem for the unicycle model. The start position is shown by a yellow star and the target position (the origin) by a magenta star. $100$ sample trajectories generated using optimal control policies for two values of $\Delta$ are shown. The trajectories are color-coded; blue paths collide with the obstacle or the outer boundary, while the green paths converge in the neighborhood of the magenta star.} 
        \label{Fig. sample trajs 4D}
\end{figure}

In Figure \ref{Fig. sample trajs 4D}, we plot $100$ sample trajectories generated using synthesized optimal policies for two values of $\Delta$. The trajectories are color-coded similar to the problem in Section \ref{Sec: sim_2D}. For a lower value of $\Delta$ the weight of blue paths is less and that of the green paths is more as compared to the higher value of $\Delta$. In other words, the failure probability for the lower value of $\Delta$ is less as compared to that of the higher value of $\Delta$.  

\begin{figure}[h]
    \centering
      \begin{tabular}{c c}
     \includegraphics[scale=0.15]{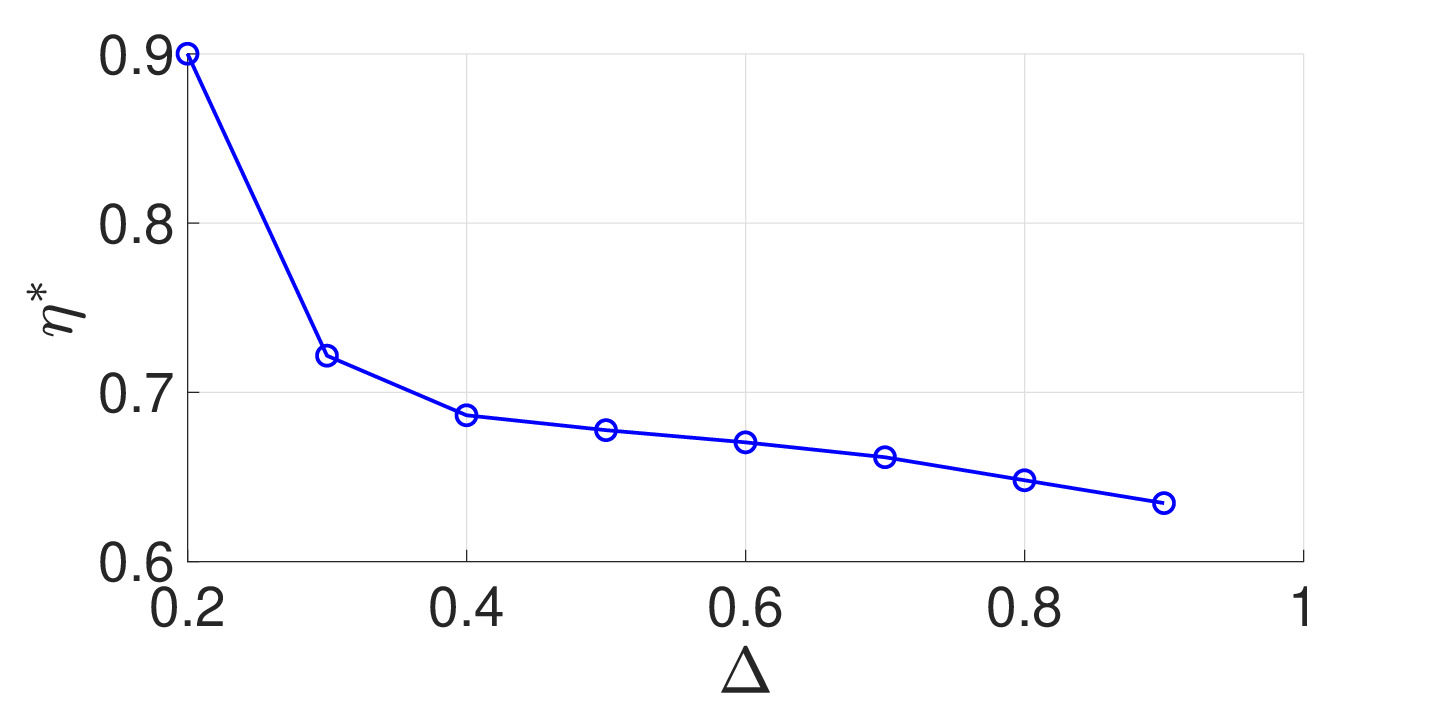} &\includegraphics[scale=0.15]{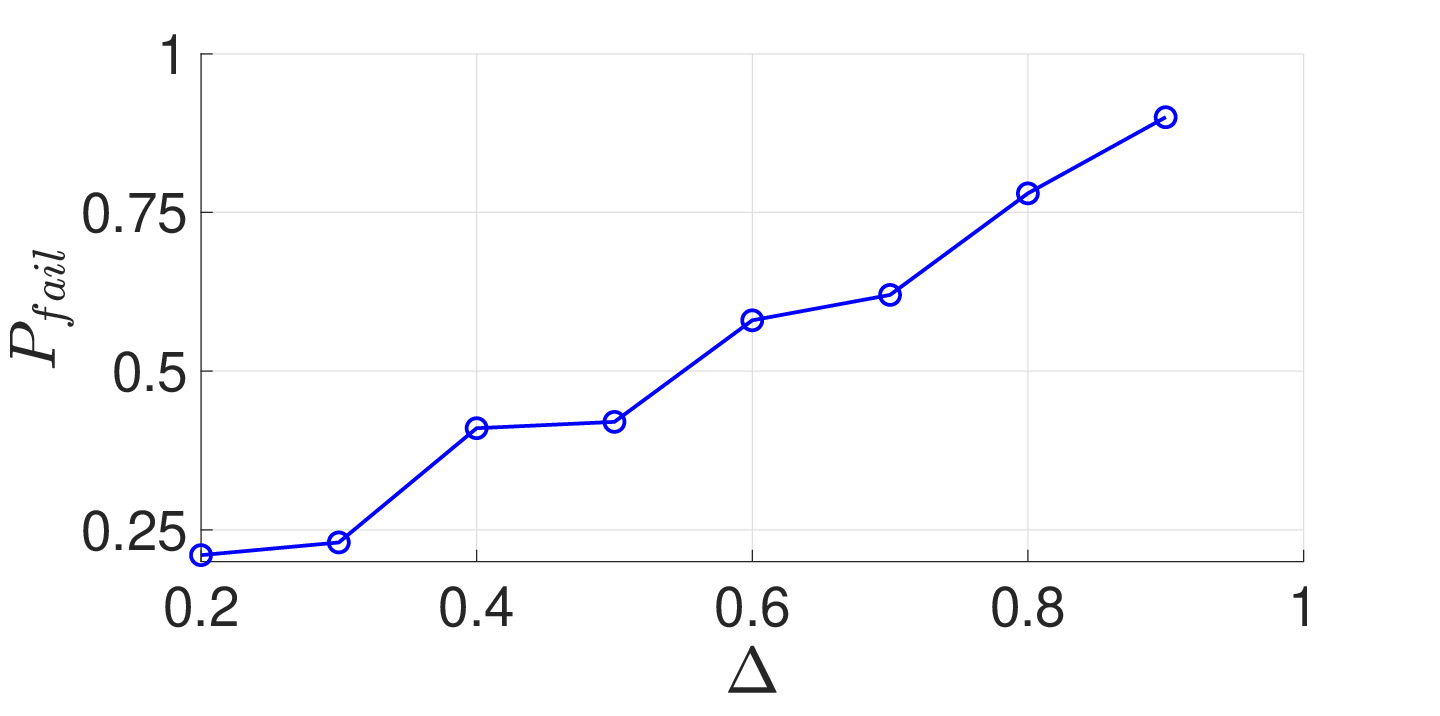} \\
      (a) $\eta^*$ vs $\Delta$ & (b) $P_{\mathrm{fail}}(x_0,t_0, u^*(\cdot\;;{\eta^*}))$ vs $\Delta$ \\
      \end{tabular}
        \caption{$\eta^*$ and $P_{\mathrm{fail}}(x_0,t_0, u^*(\cdot\;;{\eta^*}))$ vs $\Delta$ for unicycle model using path integral control.} 
        \label{Fig.eta pfail vs Delta(4D)}
\end{figure}

Figure \ref{Fig.eta pfail vs Delta(4D)} represents how the value of $\eta^*$ and $P_{\mathrm{fail}}(x_0,t_0, u^*(\cdot\;;{\eta^*}))$ change with respect to $\Delta$. As expected the value of $\eta^*$ reduces and that of $P_{\mathrm{fail}}$ increases as $\Delta$ increases.\par

The algorithm is implemented on the same machine mentioned in Section \ref{Sec: sim_2D}. It is run in MATLAB, and the  Monte Carlo simulations are run parallelly using MATLAB’s parallel processing toolbox. The average number of iterations required by the proposed algorithm to converge is $4$. We choose the initial $\eta$ to be $0.6$ and the learning rate $\gamma$ to be $0.08$. The average computation time required per iteration is $2.9$ sec. 
Note that no special effort was made to optimize the algorithm for speed. We plan to reduce the computation time of the algorithm in future work.

\subsection{Car Model}\label{Sec: sim_5D}
The problem is illustrated in Figure \ref{Fig. sample trajs 5D}. A car wants to navigate in a 2D space from a given start position (shown by a yellow star) to the origin (shown by a magenta star), by avoiding the collisions with the red obstacles and the outer boundary. The states of the car model $\begin{bmatrix}
    {p}_x&
    {p}_y&
    {s}&
    {\theta}&
    {\phi}
    \end{bmatrix}^\top$ consists of its $x-y$ position $\begin{bmatrix}
    {p}_x&
    {p}_y
    \end{bmatrix}^\top$, the speed $s$, heading angle $\theta$ and the front wheel angle $\phi$. $L$ is the inter-axle distance. The system dynamics are given by the following SDE:
    
    \begin{equation} \label{car model}
\begin{aligned}
    \begin{bmatrix}
    d{p}_x\\d{p}_y\\d{s}\\d{\theta}\\d\phi
    \end{bmatrix}\!=\!
    -k
    \begin{bmatrix}
    {p}_x\\
    {p}_y\\
    {s}\\
    {0}\\
    {0}
    \end{bmatrix}dt+
    \begin{bmatrix}
    {s}\cos{{\theta}}\\{s}\sin{{\theta}}\\0\\\frac{s\tan{\phi}}{L}
    \\0
    \end{bmatrix}\!dt+\begin{bmatrix}
    0 & 0\\0 & 0\\1 & 0\\0 & 0\\0 & 1
    \end{bmatrix} \!
    \left(\begin{bmatrix}
    a\\
    \zeta
    \end{bmatrix}\!dt \!+\!
    \begin{bmatrix}
    \sigma & 0\\
    0 & \nu 
    \end{bmatrix}\!d{w}
    \right).
\end{aligned}
\end{equation}
    The control input $u\coloneqq\begin{bmatrix} a & \zeta \end{bmatrix}^\top$ consists of acceleration $a$ and the front wheel angular rate $\zeta$. $\sigma$ and $\nu$ are the noise level parameters. In the simulation, we set $\sigma=\nu=0.07$, $k=0.2$, $t_0=0$, $T=10$, $L=0.05$, $x_0=\begin{bmatrix}
-0.4 &-0.4 & 0 &0& 0
\end{bmatrix}^\top$, $V({x}) = {p}_x^2 + {p}_y^2$ and $\psi\left({x}(T)\right) = {p}_x^2(T) + {p}_y^2(T)$. We solve the chance-constrained problem \eqref{CC-SOC} via path integral approach using Algorithm \ref{alg:cap}. Since the state-space of the system is of high order we do not use FDM to solve this problem. For Monte Carlo sampling, $10^4$ trajectories and a step size equal to $0.1$ are used. \par
In Figure \ref{Fig. sample trajs 5D}, we plot $100$ sample trajectories generated using synthesized optimal policies for two values of $\Delta$. The trajectories are color-coded similar to the previous two examples. We can observe that the failure probability for the lower value of $\Delta$ is less as compared to that of the higher value of $\Delta$.
Figure \ref{Fig.eta pfail vs Delta(5D)} represents how the value of $\eta^*$ and $P_{\mathrm{fail}}(x_0,t_0, u^*(\cdot\;;{\eta^*}))$ change with respect to $\Delta$. As expected the value of $\eta^*$ reduces and that of $P_{\mathrm{fail}}$ increases as $\Delta$ increases.\par

The algorithm is implemented on the same machine mentioned in the previous two examples. It is run in MATLAB, and the  Monte Carlo simulations are run parallelly using MATLAB’s parallel processing toolbox. The average number of iterations required by the proposed algorithm to converge is $11$. We choose the initial $\eta$ to be $0.77$ and the learning rate $\gamma$ to be $0.1$. The average computation time required per iteration is $12$ sec. Note that no special effort was made to optimize the algorithm for speed. We plan to reduce the computation time of the algorithm in future work.

\begin{figure}[h]
    \centering
      \begin{tabular}{c c}
     \includegraphics[scale=0.45]{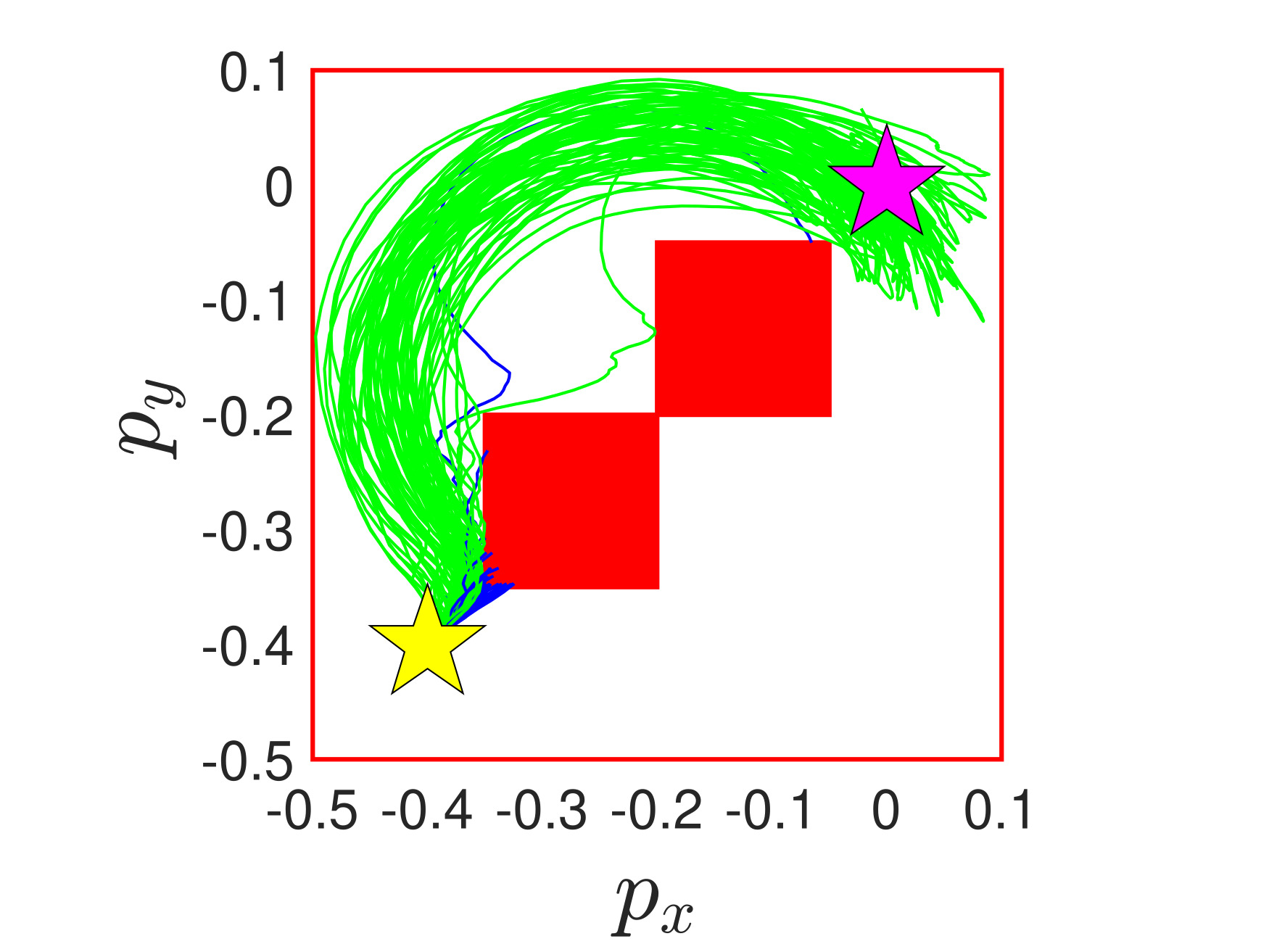} &\includegraphics[scale=0.45]{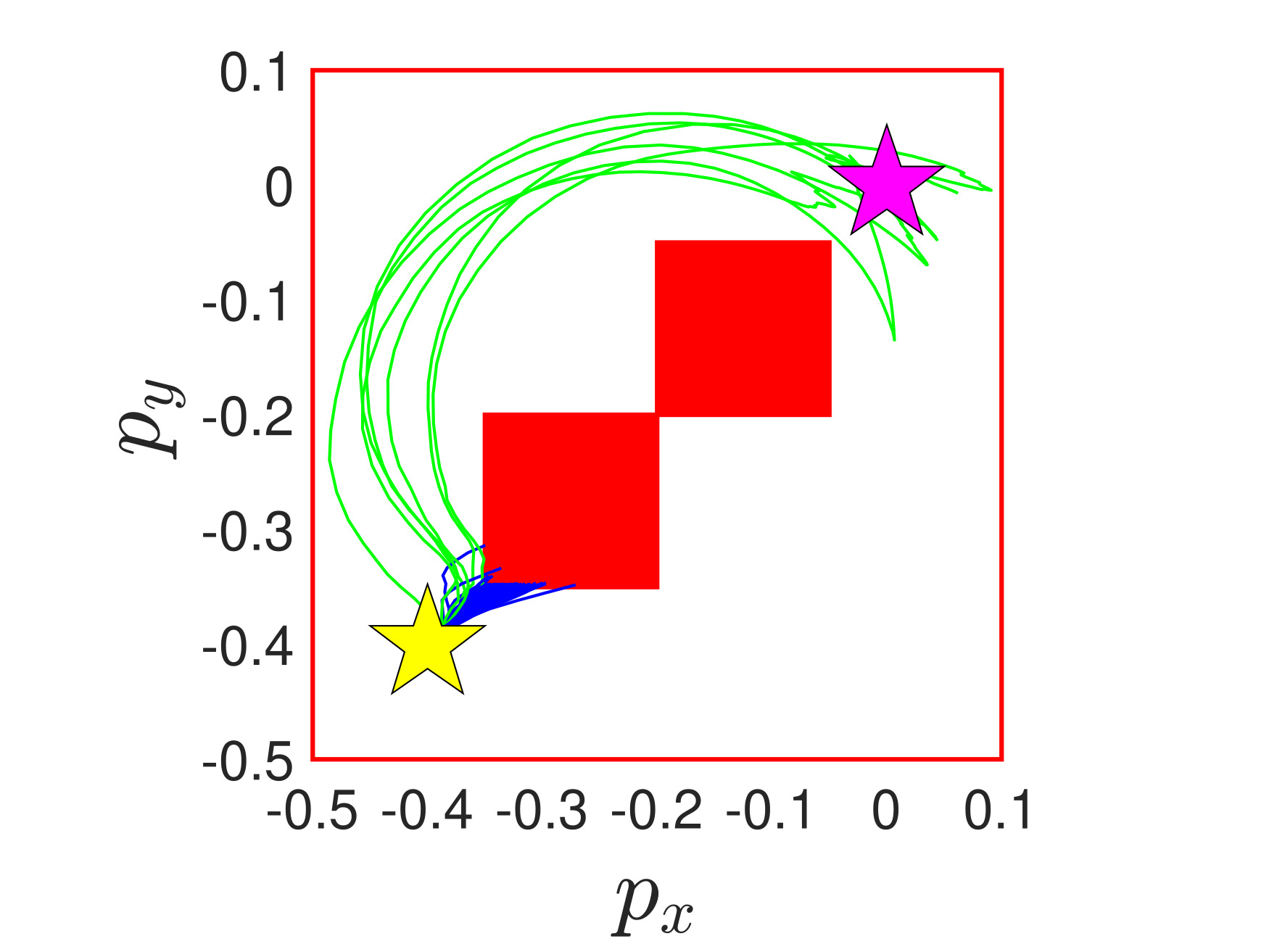} \\
     (a) $\Delta = 0.2$ &  (b) $\Delta = 0.9$\\
      \end{tabular}
        \caption{Robot navigation problem for a car model. The start position is shown by a yellow star and the target position (the origin) by a magenta star. $100$ sample trajectories generated using optimal control policies for two values of $\Delta$ are shown. The trajectories are color-coded; blue paths collide with the obstacle or the outer boundary, while the green paths converge in the neighborhood of the magenta star.} 
        \label{Fig. sample trajs 5D}
\end{figure}

\begin{figure}[h]
    \centering
      \begin{tabular}{c c}
    \!\!\!\!\!\!\!\!\!\!\!\!\!\!\!\!\!\!\!\includegraphics[scale=0.17]{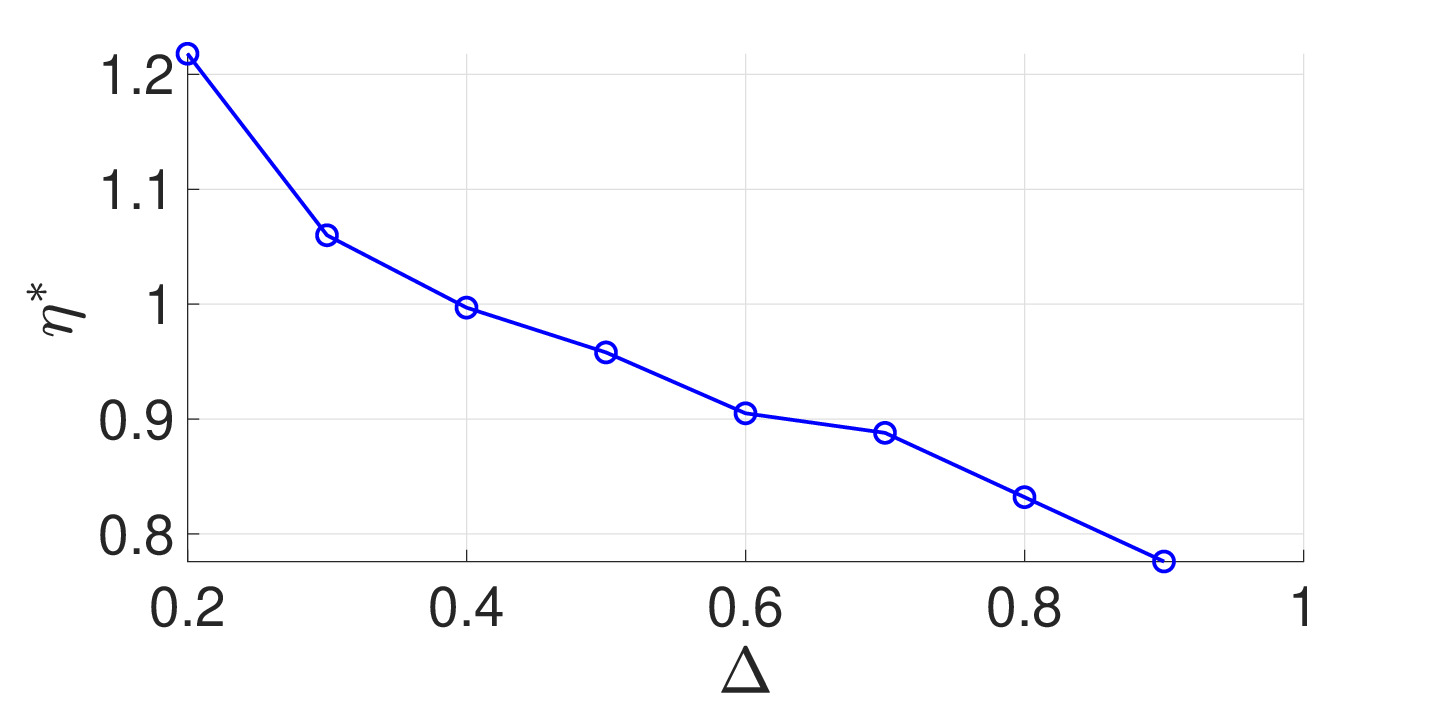} &\includegraphics[scale=0.17]{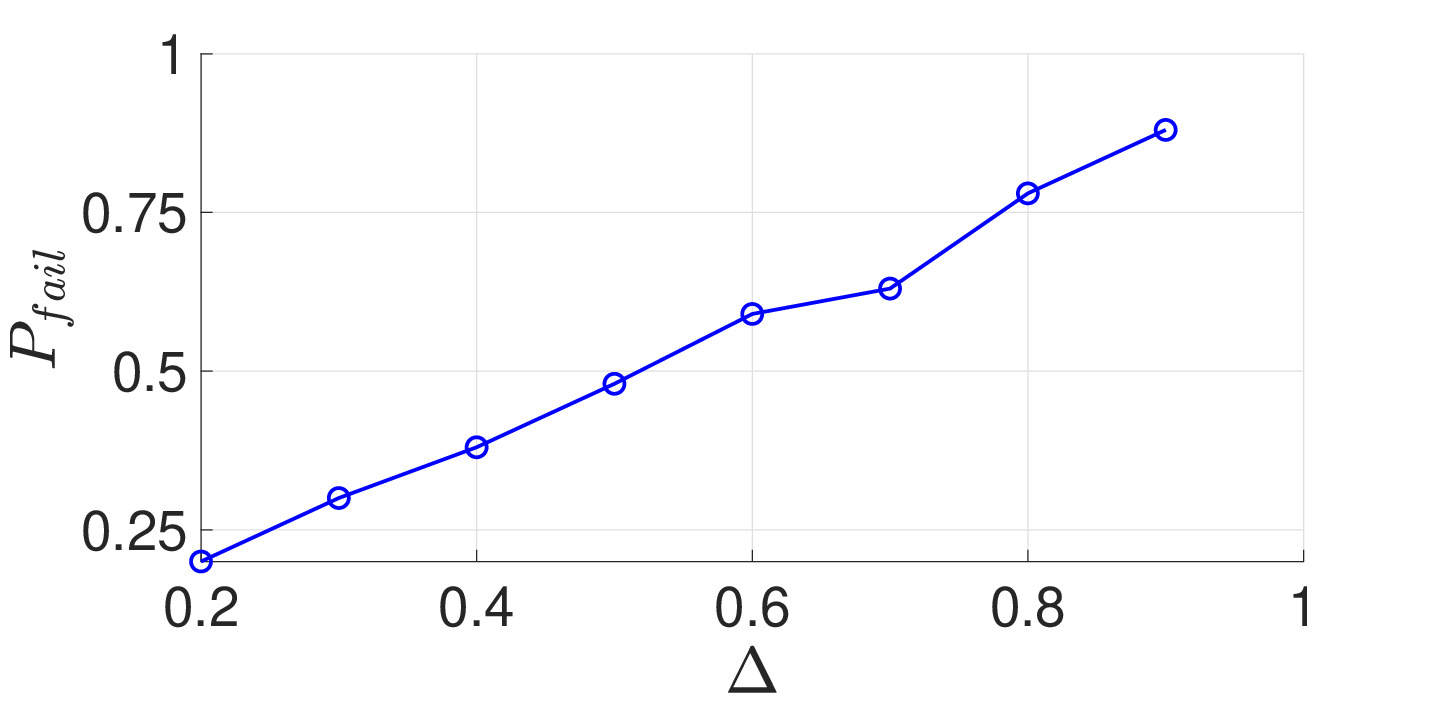} \\
      \!\!\!\!\!\!\!\!\!\!\!\!\!\!\!\!\!\!\!(a) $\eta^*$ vs $\Delta$ & (b) $P_{\mathrm{fail}}(x_0,t_0, u^*(\cdot\;;{\eta^*}))$ vs $\Delta$ \\
      \end{tabular}
        \caption{$\eta^*$ and $P_{\mathrm{fail}}(x_0,t_0, u^*(\cdot\;;{\eta^*}))$ vs $\Delta$ for a car model using path integral control.} 
        \label{Fig.eta pfail vs Delta(5D)}
\end{figure}

\section{Publications}
\begin{itemize}
     \item \textbf{A. Patil}, A. Duarte, A. Smith, F. Bisetti, T. Tanaka, ``Chance-Constrained Stochastic Optimal Control via Path Integral and Finite Difference Methods," \textit{2022 IEEE Conference on Decision and Control (CDC)}
    \item \textbf{A. Patil}, A. Duarte, F. Bisetti, T. Tanaka, ``Strong Duality and Dual Ascent Approach to Continuous-Time Chance-Constrained Stochastic Optimal Control," \textit{under review in Transactions on Automatic Control (TAC)}
\end{itemize}
\section{Future Work}

In order to prove the strong duality between the primal chance-constrained problem \eqref{CC-SOC} and its dual \eqref{eq: dual problem}, we required Assumption \ref{assum: continuity of Pfail}, where we assumed continuity of the function $\eta\mapsto P_{\mathrm{fail}}(x_0,t_0, u^*(x,t;\eta)):[0, \infty)\rightarrow[0,1]$. We will conduct a formal analysis which will prove that the Assumption \ref{assum: continuity of Pfail} holds true under mild conditions. Along with that, we plan to reduce the computation time of the proposed algorithm. We also plan to conduct the sample complexity analysis of the path integral control in order to investigate how the accuracy of Monte Carlo sampling affects the solution of chance-constrained SOC problems. Some preliminary results on this topic are presented in \cite{patil2024discrete, yoon2022sampling}. In order to achieve a strong duality between the primal chance-constrained problem and its dual, in this work, we require a certain assumption on the system dynamics and the cost function (Assumption \ref{Assum: linearity}). This assumption restricts the class of applicable system models and cost functions. In future work, we plan to find alternatives in order to get rid of this restrictive assumption (one such solution is provided in \cite{satoh2016iterative}). Another topic of future investigation is chance-constrained stochastic games.
\chapter[Two-Player Zero-Sum Stochastic
Differential Game]{Two-Player Zero-Sum Stochastic
Differential Game}
\label{Sec: Two-Player Zero-Sum Stochastic Differential Game}
\section{Motivation and Literature Review}
Interactions among multiple agents are prevalent in many fields such as economics, politics, and engineering. Game theory studies the collective decision-making process of multiple interacting agents \cite{bacsar1998dynamic}. Two-person zero-sum games involve two players with conflicting interests, and one player's gain is the other's loss. Pursuit-evasion games (competition between a pursuer and an evader) \cite{nahin2012chases,7172219} and robust control  (competition between a controller and the nature) \cite{bacsar1998dynamic} are some examples of two-player zero-sum games. In this work, we consider a two-player stochastic differential game (SDG) in which the outcome of the game depends not only on the decision of both players but also on the stochastic input added by nature.\par

When the game dynamics and cost functions are known, the saddle-point equilibrium of a two-player zero-sum SDG can be characterized by the Hamilton-Jacobi-Isaacs (HJI) partial differential equation (PDE). The analytical solutions of the HJI PDEs are in general not available, and one needs to resort on numerical methods such as grid-based approaches \cite{falcone2006numerical}, \cite{huang2014automation} to solve these PDEs approximately. However, the grid-based approaches suffer from \textit{curse of dimensionality}, making them computationally intractable for systems with large dimensions \cite{mitchell2005time}. Moreover, in general, the solutions can not be computed in real-time using these methods; they need to be precomputed as lookup tables and recalled for use in an online setting \cite{huang2014automation}.

Several reinforcement learning algorithms have also been proposed to find approximate solutions to game problems. A reinforcement-learning-based adaptive dynamic programming algorithm is proposed in \cite{vrabie2011adaptive} to determine online a saddle-point solution of linear continuous-time two-player zero-sum differential games. A deep reinforcement learning algorithm based on updating players' policies simultaneously is proposed in \cite{prajapat2021competitive} to solve two-player zero-sum games.

These methods assume deterministic game dynamics and do not consider system uncertainties. In the presence of system uncertainties, the performance and safety of both players are affected unpredictably, if the uncertainties are not accommodated while designing policies. An effective uncertainty evaluation method, the multivariate probabilistic collocation, was used in \cite{liu2020adaptive} with integral reinforcement learning to solve multi-player SDGs for linear system dynamics online. A two-person zero-sum stochastic game with discrete states and actions is solved in \cite{lin2017multiagent} using Bayesian inverse reinforcement learning. Common challenges in the learning-based methods include training efficiency, and rigorous theoretical guarantees on convergence and optimality. Moreover, these approaches do not explicitly take into account the players' failure probabilities while synthesizing their policies. \par

In our work, we formulate a continuous-time, nonlinear, two-player zero-sum SDG on a state space modeled by an It\^o stochastic differential equation. Since the stochastic uncertainties in our model are unbounded, both players have nonzero probabilities of failure. Failure occurs when the state of the game enters into predefined undesirable domains, and one player's failure is the other's success. Our objective is to solve a game in which each player seeks to minimize its risk of failure (failure probability)\footnote{Throughout this section the word ``risk" simply means the probability of failure. It is not our intention to discuss various risk measures existing in the literature (e.g. \cite{artzner1999coherent, dixit2022risk}).} along with its control cost; hence, the name \textit{risk-minimizing zero-sum SDG}. We explain the risk-minimizing zero-sum SDG via the following example:
\begin{example}
Consider a pursuit-evasion game in which the pursuer catches the evader if they are less than a certain distance $\rho$ away from each other. In this setting, the evader wishes to minimize its probability of entering the ball of radius $\rho$ centered at the pursuer's location. Whereas, the pursuer wishes to minimize the probability of staying out of the ball of radius $\rho$ centered at the evader's location. The goal of each player is to balance the trade-off between the above probabilities (probabilities of failure) and the control cost (for e.g., their energy consumption). This problem can be formulated as a risk-minimizing zero-sum SDG. 
\end{example}\par

We derive a sufficient condition for this game to have a saddle-point equilibrium and show that it can be solved via an HJI PDE with \textit{Dirichlet boundary condition}. Under certain assumptions on the system dynamics and cost function, we establish the existence and uniqueness of the saddle-point equilibrium of the formulated risk-minimizing zero-sum SDG. Furthermore, explicit expressions for the saddle-point policies are derived which can be numerically evaluated using path integral control.  The use of the path integral technique to solve stochastic games was proposed in \cite{vrushabh2020robust}. In this work, we generalize their work and develop a path integral formulation to solve HJI PDEs with Dirichlet boundary conditions and find saddle-point equilibria of risk-minimizing zero-sum SDGs. The proposed framework allows us to solve the game online using Monte Carlo simulations of system trajectories, without the need for any offline training or precomputations.  \par

\section{Contributions}
The contributions of this work are as follows: 
\begin{enumerate}
    \item We formulate a continuous-time risk-minimizing zero-sum SDG in which players aim at balancing the trade-off between the failure probability and control cost. A sufficient condition for this game to have a saddle-point equilibrium is derived, and it is shown that this game can be solved via an HJI PDE with a Dirichlet boundary condition.
    \item Under certain assumptions on the system dynamics and cost function, we establish the existence and uniqueness of the saddle-point solution. We also obtain explicit expressions for the saddle-point policies which can be numerically evaluated using path integral control.
    \item The proposed control synthesis framework is validated by applying it to two classes of risk-minimizing zero-sum SDGs, namely a disturbance attenuation problem and a pursuit-evasion game. 
\end{enumerate}

\section*{Notations}
We use the same notations as described in Chapter \ref{Sec: Continous-Time Chance-Constrained Stochastic Optimal Control}. Table \ref{tab:notation game} represents the mathematical notations frequently used in this chapter. 

\begin{table}
\begin{center}
\begin{tabular}{||c | c || c | c||} 
 \hline
 \textbf{Notation} & \textbf{Description} & \textbf{Notation} & \textbf{Description} \\ [0.5ex] 
 \hline\hline
 $\mathcal{X}_s$ & safe region & $\partial\mathcal{X}_s$ & boundary of the safe region \\ 
 \hline
 $x(t)$ & controlled process & $\hat{x}(t)$ & uncontrolled process \\
 \hline
 $u(x(t),t)$ & agent control input &  $v(x(t),t)$ & adversary control input \\
 \hline
  $w(t)$ & Wiener process & $P^{\mathrm{ag}}_{\mathrm{fail}}$ & agent's probability of failure \\
 \hline
$P^{\mathrm{ad}}_{\mathrm{fail}}$ & adversary's probability of failure & $t_f$ & game terminal time \\
 \hline
 $ C\left(x_0, t_0; {u}, {v}\right)$ & cost function & $\psi\left({x}({t}_{f})\right)$ & terminal cost \\ 
 \hline
 $ V\left({x}(t), t\right)$ & running state cost & $\lambda$ & PDE linearizing constant \\ 
 \hline
 $ J(x,t)$ & value function & $\xi\left(x,t\right)$ & transformed value function \\ 
 [1ex] 
 \hline
\end{tabular}
\caption{Table of frequently used mathematical notation in Chapter \ref{Sec: Two-Player Zero-Sum Stochastic Differential Game}}
\label{tab:notation game}
\end{center}
\end{table}

\section{Problem Formulation}
We consider a two-player zero-sum stochastic differential game (SDG) on a finite time horizon $t\in[t_0, T]$, $t_0<T$. Consider a class of control-affine stochastic systems described by the following :

\begin{equation}\label{SDE_game}
\begin{aligned}
  d{x}(t)=&{f}\left({x}(t), t\right)dt+{G_u}\left({x}(t), t\right){u}({x}(t), t)dt\\
  &\!\!\!+ G_v\left({x}(t), t\right) v\left({x}(t), t\right)dt+{\Sigma}\left({x}(t), t\right)d{w}(t)
\end{aligned}
\end{equation}
where ${x}(t)\in\mathbb{R}^n$ is the state, ${u}({x}(t), t)\in\mathbb{R}^m$ is the control input of the first player (henceforth called the agent), and ${v}({x}(t), t)\in\mathbb{R}^l$ that of the second player (called the adversary). ${w}(t)\in\mathbb{R}^k$ is a $k$-dimensional standard Wiener process on a suitable probability space $\left(\Omega, \mathcal{F}, P\right)$. We assume sufficient regularity in the functions ${f}\left({x}(t), t\right)\in\mathbb{R}^n$, ${G_u}\left({x}(t), t\right)\in \mathbb{R}^{n\times m}$, ${G_v}\left({x}(t), t\right)\in \mathbb{R}^{n\times l}$ and ${\Sigma}\left({x}(t), t\right)\in\mathbb{R}^{n\times k}$ so that a unique strong solution  of (\ref{SDE_game}) exists \cite[Chapter 1]{oksendal2013stochastic}. 
Both the control inputs $u$, and $v$ are assumed to be square integrable (i.e., of finite energy).\par

Let $\mathcal{X}_{s}\subseteq\mathbb{R}^n$ be a bounded open set representing a safe region, $\partial\mathcal{X}_{s}$ be its boundary, and closure $\overline{\mathcal{X}_{s}}=\mathcal{X}_{s}\cup\partial\mathcal{X}_{s}$. Suppose that the agent tries to keep the system (\ref{SDE_game}) in the safe set $\mathcal{X}_s$ for the entire time horizon $[t_0, T]$ of the game, whereas the adversary seeks the opposite. For example, in pursuit-evasion games, the safe set $\mathcal{X}_s$ could be a region outside the ball of radius $\rho$, centered at the adversary's location. Or, in the disturbance rejection problems, if an agent wishes to navigate through obstacles in the presence of adversarial disturbances, then the region outside obstacles could be considered as a safe set. Suppose, when the game starts at $t_0$, the system is in the safe set i.e., ${x}(t_0)=x_0\in{\mathcal{X}_s}$. If the system leaves the region $\mathcal{X}_{s}$ at any time $t\in(t_0, T]$, we say that the agent fails. On the other hand, the adversary fails if the system stays in $\mathcal{X}_{s}$ for all $t\in[t_0, T]$. Therefore, we define the agent's probability of failure $P^{\mathrm{ag}}_{\mathrm{fail}}$ as   
\begin{equation}\label{pfail_game}
    P^{\mathrm{ag}}_{\mathrm{fail}}\!\coloneqq\!P_{x_0,t_0}\!\!\left(\bigvee_{t\in(t_0, T]} \!\!\!\!{x}(t)\notin \mathcal{X}_{s}\!\!\right)
\end{equation}
and the adversary's probability of failure $P^{\mathrm{ad}}_{\mathrm{fail}} \coloneqq 1-P^{\mathrm{ag}}_{\mathrm{fail}}$. We define the terminal time ${t}_{f}$ of the game as
\begin{equation*}\label{tf_game}
{t}_{f} \coloneqq 
\begin{cases}
T, & \!\!\!\!\!\!\!\!\!\!\!\!\!\!\!\!\!\!\!\!\!\!\!\!\!\!\!\!\!\!\!\!\!\!\!\!\!\!\text{if}\;\; {x}(t)\in\mathcal{X}_{s}, \forall t\in(t_0, T),\\
\text{inf}\;\{t\in(t_0, T) : {x}(t)\notin\mathcal{X}_{s}\}, & \text{otherwise}.
 \end{cases}
\end{equation*}
Alternatively, ${t}_f$ can be defined as 
\begin{equation}\label{tf with Q game}
 {t}_{f} \coloneqq \text{inf}\{t> t_0: ({x}(t), t)\notin \mathcal{Q}\} 
\end{equation}
where $\mathcal{Q}=\mathcal{X}_s\times[t_0, T)$ is a bounded set with the boundary  $\partial\mathcal{Q}=\left(\partial\mathcal{X}_s\times[t_0,T]\right)\cup\left(\mathcal{X}_s\times\{T\}\right)$, and closure $\overline{\mathcal{Q}}=\mathcal{Q}\cup\partial\mathcal{Q}=\overline{\mathcal{X}_s}\times[t_0, T]$. Note that by the above definitions, $\left({x}({t}_{f}),{t}_{f}\right)\in\partial{\mathcal{Q}}$ and the agent's failure probability $P^{\mathrm{ag}}_{\mathrm{fail}}$ in (\ref{pfail_game}) can be written in terms of ${t}_f$ as
\begin{equation*}\label{pfail2_game}
    \!P_{x_0,t_0}\!\!\left(\bigvee_{t\in(t_0, T]} \!\!\!\!{x}(t)\notin \mathcal{X}_{s}\!\!\right)\!=\!\mathbb{E}_{x_0, t_0}\left[\mathds{1} _{{x}({t}_{f})\in \partial\mathcal{X}_{s}}\right]\!.
\end{equation*}
Since the stochastic uncertainty of the system (\ref{SDE_game}) is modeled with an unbounded distribution, both the agent and the adversary have nonzero probabilities of failure. In our two-player SDG setting, we assume that both players aim to design an optimal policy against the \textit{worst} possible opponent's policy such that their own \textit{risk of failure} is minimized. Therefore, we define the following \textit{risk-minimizing} cost function:
\begin{equation}\label{C_game}
\begin{split}
 C\left(x_0, t_0; {u}, {v}\right)&\coloneqq\eta\,\mathbb{E}_{x_0, t_0}\left[\mathds{1} _{{x}({t}_{f})\in \partial\mathcal{X}_{s}}\right]\\
 &\!\!\!\!\!\!\!\!\!\!\!\!\!\!\!\!\!\!\!\!\!\!\!\!\!\!\!\!\!\!\!\!\!\!\!\!\!\!\!\!\!\!\!+\mathbb{E}_{x_0, t_0}\!\Bigg[\!\psi\!\left({x}({t}_{f}\!)\right)\!\cdot\!\mathds{1} _{{x}({t}_{f})\in \mathcal{X}_{s}}\!+\!\!\int_{t_0}^{{t}_{f}}\!\!\!\!L\!\left({x}(t), {u}(t), {v}(t), t\right)\!dt\!\Bigg]\!.\\
\end{split}
\end{equation}
The first term indicates the penalty associated with the agent's failure with the weight parameter $\eta>0$. $\psi\left({x}({t}_{f})\right)$ and $L\left({x}(t), {u}(t), {v}(t), t\right)$ denote the terminal and running costs, respectively. Note that the game ends at ${t}_f$ and the system doesn't evolve after that (this is motivated by the applications where ``collision" or ``capture" ends the game). Therefore, the running cost is integrated over the time horizon $[t_0, {t}_f]$. The agent tries to minimize $C$ by controlling $u$, whereas the adversary tries to maximize it by controlling $v$. The weight parameter $\eta$ balances the trade-off between the control cost and the failure probability. \par
Notice that if we define $\phi:\overline{\mathcal{X}_s}\to\mathbb{R}$ as: 
\begin{equation*}\label{phi(x)_game}
    \phi\left({x}\right)\coloneqq\psi\left({x}\right)\cdot \mathds{1} _{{x}\in \mathcal{X}_{s}}+\eta\cdot\mathds{1} _{{x}\in \partial\mathcal{X}_{s}},
\end{equation*}
then, the first term in (\ref{C_game}) can be absorbed in a new terminal cost function $\phi$ as follows:
\begin{equation}\label{C with phi}
\begin{aligned}
    {C}\!\left(x_0,t_0;{u}, {v}\right)\!=\!\mathbb{E}_{x_0, t_0}\!\Big[\phi\left({x}({t}_{f})\right)\!+\!\!\int_{t_0}^{{t}_{f}}\!\!\!\!L\!\left({x}, {u}, {v}, t\right)dt\Big]\!.
\end{aligned}
\end{equation}
In this work, we consider the following running cost that is quadratic in $u$ and $v$:
\begin{equation}\label{running cost}
\begin{aligned}
 L\!\left({x}, {u}, {v}, t\right)\!=\!V\!\left({x}, t\right)\!+\!\frac{1}{2}{u}^{\!T}\!{R_u}\!\left({x}, t\right){u}\!-\!\frac{1}{2}{v}^{\!T}\!{R_v}\!\left({x}, t\right){v}\!
\end{aligned}
\end{equation}
where $V\left({x}, t\right)$ denotes a state dependent cost, and $R_u\left({x}, t\right)\in\mathbb{R}^{m\times m}$ and $R_v\left({x}, t\right)\in\mathbb{R}^{l\times l}$ are given positive definite matrices (for all values of ${x}$ and $t$). Now, we formulate our risk-minimizing zero-sum SDG as follows: 

\begin{problem} [Risk-Minimizing Zero-Sum SDG]\label{Problem: Risk-minimizing SOC problem}

\begin{align}\label{risk-minimizing SOC problem (tf)}
\min_{u} \max_{v} \; & \mathbb{E}_{x_0, t_0}\!\!\left[\!\phi\left({x}({t}_{f})\right)\!+\!\!\!\int_{t_0}^{{t}_{f}}\!\!\!\left(\!\frac{1}{2}{u}^\top\!\!R_u{u}\!-\!\frac{1}{2}{v}^\top\!\!R_v{v}\!+\!V\!\!\right)\!dt\!\right] \notag\\
\textrm{s.t.} \;\; d{x} = & {f} dt +{G_u}{u}dt+ G_vvdt + {\Sigma}d{w},\\
&{x}(t_0)=x_0. \notag
\end{align}   
where the admissible policies $u$, $v$ are measurable with respect to the $\sigma$-algebra generated by ${x}(s), t_0\leq s\leq t$.
\end{problem}
Note that Problem \ref{Problem: Risk-minimizing SOC problem} is a \textit{variable-terminal-time} zero-sum SDG where the terminal time is determined by (\ref{tf with Q game}).

\section{Synthesis of Optimal Control Policies}
This section presents the main results. In Section \ref{Section: Risk-Minimizing HJI PDE}, we show that Problem \ref{Problem: Risk-minimizing SOC problem} can be solved using backward dynamic programming which results into an HJI PDE with a Dirichlet boundary condition. In Section \ref{Sec: PI}, we find a solution of a class of risk-minimizing zero-sum SDGs via path integral control. 

\subsection{Backward Dynamic Programming}\label{Section: Risk-Minimizing HJI PDE}

Notice that the cost function of the risk-minimizing zero-sum SDG (\ref{risk-minimizing SOC problem (tf)}) possesses the time-additive Bellman structure. Therefore, Problem \ref{Problem: Risk-minimizing SOC problem} can be solved by utilizing the principle of dynamic programming. For each $(x,t)\in\overline{\mathcal{Q}}$, and admissible policies $u$, $v$ over $[t, T)$, define the cost-to-go function:
\begin{equation*}\label{value function game}
\begin{aligned}
 {C}\left(x, t; {u},{v}\right)=&\mathbb{E}_{x, t}\big[\phi\left({x}({t}_{f})\right)\big]+\mathbb{E}_{x, t}\left[\int_{t}^{{t}_{f}}\left(\frac{1}{2}{u}^\top R_u{u}-\frac{1}{2}{v}^\top R_v{v}+V\right)\!dt\right].  
\end{aligned}
\end{equation*}

\begin{definition}[Saddle-point solution]\label{Def: SP}\cite[Chapter 2]{bacsar2008h}:
Given a two-player zero-sum differential game, a pair of admissible policies $(u^*, v^*)$ over $[t, T)$ constitutes a saddle-point solution, if for each $(x,t)\in\overline{\mathcal{Q}}$, and admissible policies $(u, v)$ over $[t, T)$,
\begin{equation*}
    C(x,t;u^*,v)\leq C^* \coloneqq C(x,t;u^*,v^*) \leq C(x,t;u,v^*).
\end{equation*}
The quantity $C^*$ is the \textit{value} of the game. The value of the game is defined if it satisfies the following relation
\begin{equation*}\label{Isaacs condition}
\begin{aligned}
 C^* \!\!=\! \min_{u} \max_{v} {C}\left(x, t; {u}, {v}\right) = \max_{v} \min_{u} {C}\left(x, t; {u}, {v}\right).  
\end{aligned}
\end{equation*}
\end{definition}
The following theorem provides the sufficient condition for a saddle-point solution of Problem \ref{Problem: Risk-minimizing SOC problem} to exist.

\begin{theorem}\label{theorem: solution to risk-minimizing soc game}
Suppose there exists a function $J:\overline{\mathcal{Q}}\rightarrow \mathbb{R}$ such that 
\begin{enumerate}[label=(\alph*)]
    \item $J(x,t)$ is continuously differentiable in $t$ and twice continuously differentiable in $x$ in the domain $\mathcal{Q}$;
    \item $J(x,t)$ solves the following stochastic HJI PDE:
    \begin{equation}\label{HJB PDE game}
   \!\!\!\!\!\!\!\!\!\!\begin{cases}
     \begin{aligned}
         \!\!-\partial_tJ\!=&V\! +\!f^\top\!\partial_xJ\!+\!\frac{1}{2}\text{Tr}\left(\Sigma\Sigma^\top\partial^2_xJ\right)\\ &\!\!\!\!\!\!\!\!\!\!\!\!\!\!\!\!\!\!\!+\frac{1}{2}\!\left(\partial_xJ\right)^\top\!\!\left(G_vR_v^{-1}G_v^\top\!-\!G_uR_u^{-1}G_u^\top\right)\!\partial_xJ,
          \end{aligned} &  \forall(x,t)\!\in\!\mathcal{Q}, \\
          \vspace{-4mm}&\\
    \!\!\underset{\substack{(x,t)\to(y,s) \\ (x,t)\in\mathcal{Q}}}{\lim}J(x,t)=\phi(y), & \!\!\!\forall(y,s)\in\partial\mathcal{Q}.
  \end{cases}
    \end{equation}
\end{enumerate}
Then, the following statements hold:
\begin{enumerate}[label=(\roman*)]
\item $J(x,t)$ is the value of the game formulated in Problem \ref{Problem: Risk-minimizing SOC problem}. That is,
\begin{equation*}\label{J as value function game}
   \begin{aligned}
     J\left(x, t\right)= \min_{u} \max_{v} {C}\left(x, t; {u}, {v}\right) =\max_{v} \min_{u} {C}\left(x, t; {u}, {v}\right),\;\:\forall\;(x,t)\!\in\!\overline{\mathcal{Q}}.
   \end{aligned}
\end{equation*}
\item The optimal solution to Problem \ref{Problem: Risk-minimizing SOC problem} is given by 
\begin{equation}\label{u* game}
\begin{aligned}
  u^*(x,t)&=-R_u^{-1}\!\left(x, t\right){G_u}^\top\!\!\left(x, t\right)\partial_xJ\!\left(x, t\right), 
\end{aligned}
\end{equation}
\begin{equation}\label{v*}
     v^*(x,t)=R_v^{-1}\!\left(x, t\right){G_v}^\top\!\!\left(x, t\right)\partial_xJ\!\left(x, t\right).
\end{equation}
\end{enumerate}
\end{theorem}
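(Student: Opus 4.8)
The plan is to mirror the verification argument used for Theorem~\ref{theorem: solution to risk-minimizing soc}, adapting it to the two-sided saddle-point structure. First I would apply Dynkin's formula \cite[Theorem 7.4.1]{oksendal2013stochastic} to $J$ along the trajectory of (\ref{SDE_game}) generated by an \emph{arbitrary} admissible pair $(u,v)$, stopped at the terminal time $t_f$ (which is almost surely bounded by $T$, so integrability poses no difficulty on the compact set $\overline{\mathcal{Q}}$). Expanding $dJ$ by It\^o's formula, using that the It\^o integral $\int_t^{t_f}(\Sigma dw)^\top\partial_x J$ has zero conditional expectation, and invoking the Dirichlet boundary condition $J(x(t_f),t_f)=\phi(x(t_f))$, I obtain the identity
\begin{equation*}
J(x,t)=\mathbb{E}_{x,t}\!\left[\phi(x(t_f))\right]-\mathbb{E}_{x,t}\!\left[\int_t^{t_f}\!\!\left(\partial_t J+(f+G_u u+G_v v)^\top\partial_x J+\tfrac{1}{2}\text{Tr}(\Sigma\Sigma^\top\partial^2_x J)\right)ds\right].
\end{equation*}

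The key step is to read the bracketed integrand through the Hamiltonian $H(u,v)\coloneqq L(x,u,v,s)+(f+G_u u+G_v v)^\top\partial_x J+\tfrac{1}{2}\text{Tr}(\Sigma\Sigma^\top\partial^2_x J)$, with $L$ the running cost (\ref{running cost}), so that the identity rewrites compactly as $J(x,t)=C(x,t;u,v)-\mathbb{E}_{x,t}\!\left[\int_t^{t_f}(\partial_t J+H(u,v))\,ds\right]$. Because $L$ is strictly convex in $u$ and strictly concave in $v$ while the $u$- and $v$-dependence of $H$ is \emph{separable} (there is no cross term coupling the two inputs), a completion-of-squares computation shows that $H$ admits a unique pointwise saddle point attained exactly at the candidate policies (\ref{u* game})--(\ref{v*}), and that the saddle value reproduces the right-hand side of the HJI PDE (\ref{HJB PDE game}). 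Hence $\min_u\max_v H=\max_v\min_u H=H(u^*,v^*)=-\partial_t J$, which simultaneously verifies the Isaacs condition and delivers $\partial_t J+H(u^*,v^*)=0$.

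With this in hand I would establish the two saddle inequalities separately. Fixing $v=v^*$ and an arbitrary $u$, the saddle property gives $H(u,v^*)\ge H(u^*,v^*)=-\partial_t J$, so $\partial_t J+H(u,v^*)\ge0$ and therefore $J(x,t)\le C(x,t;u,v^*)$, with equality iff $u=u^*$. Symmetrically, fixing $u=u^*$ and an arbitrary $v$, the inequality $H(u^*,v)\le H(u^*,v^*)=-\partial_t J$ yields $J(x,t)\ge C(x,t;u^*,v)$, with equality iff $v=v^*$. Chaining these and evaluating at $(u^*,v^*)$ gives $C(x,t;u^*,v)\le J(x,t)=C(x,t;u^*,v^*)\le C(x,t;u,v^*)$, which is precisely the saddle-point definition (Definition~\ref{Def: SP}); this proves statement (i), that $J$ is the value of the game, and statement (ii), that (\ref{u* game})--(\ref{v*}) are the saddle-point policies.

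The main obstacle I anticipate is not the pointwise optimization---the separability of $H$ makes the saddle point and the Isaacs condition almost automatic---but rather the measure-theoretic bookkeeping around the variable terminal time: justifying that Dynkin's formula applies up to the exit time $t_f$ for each admissible pair, that the It\^o integral's conditional expectation genuinely vanishes, and that the boundary limit in (\ref{HJB PDE game}) may be substituted for $J(x(t_f),t_f)$ on $\partial\mathcal{Q}$. These points carry over from the single-player argument, but in the game setting one must confirm they hold for each of the competing policies used to test the two sides of the saddle inequality.
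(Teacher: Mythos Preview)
Your proposal is correct and follows essentially the same verification argument as the paper: apply Dynkin's formula to $J$ along the controlled process up to $t_f$, invoke the boundary condition $J(x(t_f),t_f)=\phi(x(t_f))$, observe that the running integrand is a separable convex--concave function of $(u,v)$ whose pointwise saddle reproduces the HJI PDE, and then derive the two one-sided inequalities $J\le C(\cdot;u,v^*)$ and $J\ge C(\cdot;u^*,v)$ by fixing one policy at its candidate optimum. Your explicit introduction of the Hamiltonian $H(u,v)$ and the compact identity $J=C-\mathbb{E}\!\left[\int(\partial_t J+H)\,ds\right]$ is a slightly tidier way of organizing the same computation the paper carries out, and your closing remarks on the exit-time bookkeeping correctly flag the points the paper takes for granted.
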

\begin{proof}
Let $J(x,t)$ be the function satisfying (a) and (b). By Dynkin's formula \cite{oksendal2013stochastic,durrett2019probability}, for each $(x,t)\in\overline{\mathcal{Q}}$ we have
\begin{align}\label{ito game}
    \mathbb{E}_{x,t}\left[J\left({x}({t}_{f}), {t}_{f}\right)\right]&=J(x,t)\\ &\!\!\!\!\!\!\!\!\!\!\!\!\!\!\!\!\!\!\!\!\!\!\!\!\!\!\!\!\!\!\!\!\!\!\!\!\!\!\!\!\!\!\!\!\!\!\!\!\!\!\!\!+\!\mathbb{E}_{x,t}\!\!\left[\!\int_{t}^{{t}_{\!f}}\left(\!\!\partial_tJ \!+\! (f\!+\!G_{\!u}u\!+\!G_{\!v}v)^{\!T}\!(\partial_xJ)\!+\!\frac{1}{2}\text{Tr}\!\left(\Sigma\Sigma^\top\!\partial_x^2J\right)\!\!\right)\!ds\!\right]\!\!.\notag
\end{align}
By the boundary condition of the PDE (\ref{HJB PDE game}),\newline $J\left({x}({t}_{f}), {t}_{f}\right) =\phi\left({x}({t}_{f})\right)$. Hence, from (\ref{ito game}), we obtain
\begin{equation}\label{ito after plugging BC game}
   J(x,t)=\mathbb{E}_{x,t}\left[\phi\left({x}({t}_{f})\right)\right]-\!\mathbb{E}_{x,t}\!\!\left[\!\int_{t}^{{t}_{\!f}}\!\!\left(\!\!\partial_tJ \!+\! (f\!+\!G_{\!u}u\!+\!G_{\!v}v)^{\!T}\!(\partial_{x}J)\!+\!\frac{1}{2}\text{Tr}\!\left(\Sigma\Sigma^{T}\!\partial_x^2J\right)\!\!\right)\!ds\!\right]\!\!.\notag
\end{equation}
Now, notice that the right-hand side of the PDE in (\ref{HJB PDE game}) can be expressed as the minimum and maximum value of a quadratic form in $u$ and $v$, respectively, as follows:
\begin{equation}\label{-dJ/dt as minimax}
    \begin{aligned}
         \!\!-\partial_tJ\!=\!\min_{{u}} \max_{{v}}\!\Bigg[&\frac{1}{2}u^\top\!R_uu-\frac{1}{2}v^\top\!R_vv\!+\!V+\!\left(\!f\!+\!G_uu\!+\!G_vv\right)^\top\!\partial_xJ+\!\frac{1}{2}\text{Tr}\!\left(\Sigma\Sigma^\top\partial^2_xJ\right)\!\Bigg]\!.
          \end{aligned} 
\end{equation}
Observe that the ``$\underset{u}{\min}$", ``$\underset{v}{\max}$" operations in (\ref{-dJ/dt as minimax}) can be interchanged. Hence, the game formulated in (\ref{risk-minimizing SOC problem (tf)}) satisfies the \textit{Isaacs condition} \cite{isaacs1999differential}. If $\hat{u}$ and $\hat{v}$ represent the minimum and maximum values of the right-hand side of (\ref{-dJ/dt as minimax}) respectively, then 
\begin{equation}\label{uhat, vhat}
    \hat{u} = -R_u^{-1}G_u^\top\partial_xJ, \qquad \hat{v} =R_v^{-1}G_v^\top\partial_xJ.
\end{equation}
Therefore, for an arbitrary $u$, we have 
\begin{equation}\label{pde inequality game}
    \begin{aligned}
         \!\!\!\!-\partial_tJ\!\leq\!\Bigg[&\frac{1}{2}u^\top\!R_uu-\frac{1}{2}{\hat{v}}^\top R_v\hat{v}\!+\!V+\!\left(\!f\!+\!G_uu\!+\!G_v\hat{v}\right)^\top\!\partial_xJ+\!\frac{1}{2}\text{Tr}\!\left(\Sigma\Sigma^\top\partial^2_xJ\right)\!\Bigg]\!.
          \end{aligned} 
\end{equation}
Now, notice that the equality in (\ref{ito after plugging BC game}) holds for any $v$. Replacing $v$ by $\hat{v}$ in (\ref{ito after plugging BC game}) yields
\begin{equation}\label{ito after plugging BC2}
    \begin{aligned}
       J(x,t)&=\mathbb{E}_{x,t}\left[\phi\left({x}({t}_{f})\right)\right]-\!\mathbb{E}_{x,t}\!\!\left[\!\int_{t}^{{t}_{\!f}}\!\!\!\!\left(\!\!\partial_tJ \!+\! (\!f\!+\!G_{\!u}u\!+\!G_{\!v}\hat{v})^{\!T}\!(\partial_xJ)\!+\!\frac{1}{2}\text{Tr}\!\left(\Sigma\Sigma^\top\!\partial_x^2J\right)\!\!\right)\!\!ds\!\right]\!\!.
    \end{aligned}
\end{equation}
Combining (\ref{pde inequality game}) and (\ref{ito after plugging BC2}), we obtain

\begin{equation*}\label{J leq C_hat game}
  \begin{aligned}
       \!\!\!\!\!\!J(x,t)\!&\leq\!\mathbb{E}_{x,t}\!\!\left[\!\phi\!\left({x}({t}_{f}\!)\right)\!+\!\!\!\int_{t}^{{t}_{f}}\!\!\!\!\left(\!\frac{1}{2}{u}^{T\!}\!R_u{u}\!-\!\frac{1}{2}\hat{{v}}^{\!T}\!\!R_v\hat{{v}}+\!V\!\!\right)\!ds\!\right]\\
       &= C\left(x,t;u,\hat{v}\right)
    \end{aligned}   
\end{equation*}
where the equality holds iff $ \hat{u} =-R_u^{-1}G_u^\top\partial_xJ$. Similarly, for an arbitrary $v$, we can show that 
\begin{equation}\label{J geq C_hat}
  J(x,t)\geq C\left(x,t;\hat{u},v\right)
\end{equation}
where the equality holds iff $\hat{v} =R_v^{-1}G_v^\top\partial_xJ$.
Therefore, from Definition \ref{Def: SP}, it follows that the pair of policies $(\hat{u}, \hat{v})$ defined in (\ref{uhat, vhat}) provides the optimal solution to the zero-sum game formulated in Problem \ref{Problem: Risk-minimizing SOC problem} and $J(x,t)$ is the value of the game.
\end{proof}

\begin{remark}
Theorem \ref{theorem: solution to risk-minimizing soc game} does not say anything about the existence of a function $J(x,t)$ satisfying statements (a) and (b), and it is not in the scope of this dissertation. However, in Section \ref{Sec: PI}, we focus on a special case in which (\ref{HJB PDE game}) can be linearized where the existence and uniqueness of such a function is guaranteed. 
\end{remark}\par

\subsection{Path Integral Solution}\label{Sec: PI}
In this section, we derive a path integral formulation to solve a class of risk-minimizing zero-sum SDGs that satisfy certain assumptions on the system dynamics and cost function. Let $\xi(x,t)$ be the logarithmic transformation of the value function $J(x,t)$ defined as
\begin{equation}\label{exp transformation game}
 J(x,t) = -\lambda\,\text{log}\left(\xi\left(x,t\right)\right)
\end{equation}
where $\lambda$ is a proportionality constant to be defined. Applying the transformation in (\ref{exp transformation game}) to (\ref{HJB PDE game}) yields

 \begin{equation}\label{transformed HJB PDE game}
  \!\!\!\!\!\!\begin{cases}
     \begin{aligned}
         \!\partial_t\xi\!=&\frac{V\xi}{\lambda}-\!\frac{1}{2}\text{Tr}\!\left(\Sigma\Sigma^\top\!\partial^2_x\xi\right)\!+\!\frac{1}{2\xi}\!\left(\partial_x\xi\right)^{\!T}\!\!\Sigma\Sigma^\top\!\partial_x\xi\\
         &\!\!\!\!\!\!\!\!\!\!\!\!\!\!\!\!+\!\frac{\lambda}{2\xi}\!\left(\partial_x\xi\right)^\top\!\!\left(G_vR_v^{-1}G_v^\top\!-\! G_uR_u^{-1}G_u^\top\right)\!\partial_x\xi\!-\!f^\top\partial_x\xi,
          \end{aligned} & \forall(x,t)\in\mathcal{Q}, \\
    \!\!\underset{\substack{(x,t)\to(y,s) \\ (x,t)\in\mathcal{Q}}}{\lim}\xi(x,t)\!=\!\text{exp}{\left(-\frac{\phi(y)}{\lambda}\right)}, & \!\!\!\!\!\!\!\!\!\!\!\!\!\!\!\!\!\!\!\!\!\!\!\!\!\!\!\!\!\!\!\!\!\!\forall(y,s)\in\partial\mathcal{Q}.
  \end{cases}
    \end{equation}
Now, we make the following assumption:
\begin{assumption}\label{Assumption: linearity}
For all $(x,t)\in\overline{\mathcal{Q}}$, there exists a constant $\lambda>0$ such that 
\begin{equation}\label{lambda game}
 \begin{aligned}
    \Sigma(x, t)\Sigma^\top(x, t) \!= &\lambda G_u(x, t)R_u^{-1}(x,t) G_u^\top(x, t)-\lambda G_v(x, t)R_v^{-1}(x,t) G_v^\top(x, t).\\
 \end{aligned}
\end{equation}
\end{assumption}
Assumption \ref{Assumption: linearity} is similar to the assumption required in the path integral formulation of a single agent stochastic control problem \eqref{lambda}. A possible interpretation of condition \eqref{lambda game} is that in a direction with high noise variance, the agent's control cost has to be low whereas that of the adversary has to be high. Therefore, the weights of the control cost $R_u$ and $R_v$ need to be tuned appropriately for the given diffusion coefficient $\Sigma(x, t)$ and the control gains $G_u(x, t)$ and $G_v(x, t)$ in the system dynamics \eqref{SDE_game}. Assumption \ref{Assumption: linearity} also implies that the stochastic noise has to enter the system dynamics via the control channels. 
Therefore, in what follows, we assume that system \eqref{SDE_game} can be partitioned into subsystems that are directly and non-directly driven by the noise as:
\begin{equation}\label{SDE partition}
    \begin{aligned}
  \begin{bmatrix}
   d{x}^{(1)} \\  d{x}^{(2)}
  \end{bmatrix}=& \begin{bmatrix}
    {f}^{(1)}({x}, t) \\ {f}^{(2)}({x}, t)
  \end{bmatrix}\!dt + \begin{bmatrix}
    \mathbf{0}\\{G_u}^{\!\!\!(2)}\!\left({x}, t\right)
  \end{bmatrix}\!{u}({x}, t)dt+ \begin{bmatrix}
    \mathbf{0}\\{G_v}^{\!\!\!(2)}\!\left({x}, t\right)
  \end{bmatrix}\!{v}({x}, t)dt+\begin{bmatrix}
    \mathbf{0}\\{\Sigma}^{(2)}\!\left({x}, t\right)
  \end{bmatrix}\!d{w}
\end{aligned}
\end{equation}
where $\mathbf{0}$ denotes a zero matrix of appropriate dimensions. By assuming a $\lambda$ satisfying Assumption \ref{Assumption: linearity} holds in (\ref{transformed HJB PDE game}), we obtain the linear PDE in $\xi$ with Dirichlet boundary condition:
\begin{equation}\label{linearized risk-minimizing HJB game}
 \!\!\begin{cases}
     \!\partial_t\xi\!=\!\frac{V\xi}{\lambda}\!-\!f^\top\partial_x\xi-\frac{1}{2}\text{Tr}\left(\Sigma\Sigma^\top\partial^2_x\xi\right),       & \forall(x,t)\!\in\!\mathcal{Q}, \\
    \!\!\underset{\substack{(x,t)\to(y,s) \\ (x,t)\in\mathcal{Q}}}{\lim}\xi(x,t)\!=\!\text{exp}{\left(-\frac{\phi(y)}{\lambda}\right)}, & \!\!\!\forall(y,s)\!\in\!\partial\mathcal{Q}.\\  
  \end{cases}
\end{equation}
The solution of a linear Dirichlet boundary value problem of the form (\ref{linearized risk-minimizing HJB game}) exits under a sufficiently regular boundary condition, and it is unique \cite[Chapter 6]{friedman1975stochastic}. Furthermore, the solution admits the Feynman-Kac representation \cite{patil2022chance}. Suppose $\hat{{x}}(t)\in\mathbb{R}^n$ is an uncontrolled process driven by the following SDE:
\begin{equation}\label{uncontrolled SDE game}
  d\hat{{x}}(t)\!=\!\!{f}\!\left(\hat{{x}}(t),\! t\right)\!dt\!+\!{\Sigma}\!\left(\hat{{x}}(t),\! t\right)\!d{w}(t)
\end{equation}
and let $ \hat{{t}}_{f} \coloneqq \text{inf}\{t> t_0: (\hat{{x}}(t), t)\notin \mathcal{Q}\}$. Then, the solution of the PDE (\ref{linearized risk-minimizing HJB game}) is given as
\begin{equation}\label{xi game}
\xi\left(x,t\right)=\mathbb{E}_{x, t}\left[\text{exp}{\left(-\frac{1}{\lambda}S\left(\tau\right)\right)}\right]  
\end{equation}
 where $S\left(\tau\right)$ denotes the cost-to-go of a trajectory $\tau$  of the uncontrolled system \eqref{uncontrolled SDE game} starting at $(x,t)$:
\begin{equation}\label{Stau}
   S\left(\tau\right)=\phi\left(\hat{{x}}(\hat{{t}}_{f})\right)+\int_{t}^{\hat{{t}}_{f}} V\left(\hat{{x}}(t), t\right)dt. 
\end{equation}
Equation \eqref{xi game} provides a path integral form for the exponentiated value function $\xi\left(x,t\right)$, which can be numerically evaluated using Monte Carlo sampling of trajectories generated by the uncontrolled SDE \eqref{uncontrolled SDE game}.
 
We now obtain the expressions for the saddle-point policies via the following theorem:
\begin{theorem}
Suppose Assumption \ref{Assumption: linearity} holds and the system (\ref{SDE_game}) can be partitioned as \eqref{SDE partition}. Then, a saddle-point solution of the risk-minimizing zero-sum SDG \eqref{risk-minimizing SOC problem (tf)} exists, is unique, and is given by
\begin{equation}\label{u* PI}
 \!\!\!\!u^*(x,t)dt\!=\!\mathcal{G}_u\!\left(x,t\right)\!\frac{\mathbb{E}_{x,t}\!\!\left[\text{exp}{\left(-\frac{1}{\lambda}S\left(\tau\right)\right)}\Sigma^{(2)}\!\!\left(x,t\right)d{w}\right]}{\mathbb{E}_{x,t}\left[\text{exp}{\left(-\frac{1}{\lambda}S\left(\tau\right)\right)}\right]}, 
\end{equation}
where 
\begin{equation*}
\mathcal{G}_u\!=\!R_u^{-1}{G_u^{(2)}}^\top\!\!\left(G_u^{(2)}R_u^{-1}{G_u^{(2)}}^\top\!\!\!-G_v^{(2)}R_v^{-1}{G_v^{(2)}}^\top\right)^{\!-1}
\end{equation*}
and 
\begin{equation}\label{v* PI}
 \!\!\!\!v^*(x,t)dt\!=\!\mathcal{G}_v\left(x,t\right)\!\frac{\mathbb{E}_{x,t}\!\!\left[\text{exp}{\left(-\frac{1}{\lambda}S\left(\tau\right)\right)}\Sigma^{(2)}\!\!\left(x,t\right)d{w}\right]}{\mathbb{E}_{x,t}\left[\text{exp}{\left(-\frac{1}{\lambda}S\left(\tau\right)\right)}\right]}, 
\end{equation}
where 
\begin{equation*}
\mathcal{G}_v\!=\!-R_v^{-1}{G_v^{(2)}}^\top\!\!\left(G_u^{(2)}R_u^{-1}{G_u^{(2)}}^\top\!\!\!-G_v^{(2)}R_v^{-1}{G_v^{(2)}}^\top\right)^{\!-1}.
\end{equation*}
\end{theorem}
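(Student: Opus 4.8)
The plan is to combine Theorem~\ref{theorem: solution to risk-minimizing soc game}, which already characterises the saddle point through $\partial_x J$, with the Feynman--Kac representation \eqref{xi game} of the transformed value function.

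First I would settle existence and uniqueness. Under Assumption~\ref{Assumption: linearity}, the logarithmic transformation \eqref{exp transformation game} turns the nonlinear HJI PDE \eqref{HJB PDE game} into the \emph{linear} Dirichlet problem \eqref{linearized risk-minimizing HJB game}, whose solution exists and is unique for sufficiently regular boundary data \cite[Chapter~6]{friedman1975stochastic} and is given by the path integral \eqref{xi game}. Consequently $\xi$ is uniquely determined, $J=-\lambda\log\xi$ is a well-defined function meeting hypotheses (a)--(b) of Theorem~\ref{theorem: solution to risk-minimizing soc game}, and that theorem then guarantees that the pair $(u^*,v^*)$ of \eqref{u* game}--\eqref{v*} is the unique saddle point.

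Next I would convert the implicit policies into explicit path integrals. Differentiating \eqref{exp transformation game} gives $\partial_xJ=-\lambda\,\partial_x\xi/\xi$, whence $u^*=\lambda R_u^{-1}G_u^\top(\partial_x\xi/\xi)$ and $v^*=-\lambda R_v^{-1}G_v^\top(\partial_x\xi/\xi)$. The partition \eqref{SDE partition} forces $G_u=[\,\mathbf 0;\,G_u^{(2)}\,]$ and $G_v=[\,\mathbf 0;\,G_v^{(2)}\,]$, so only the derivative in the noise-driven coordinates survives, i.e.\ $G_u^\top\partial_x\xi={G_u^{(2)}}^\top\partial_{x^{(2)}}\xi$ and similarly for $G_v$. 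The crux is to compute $\partial_{x^{(2)}}\xi$ from \eqref{xi game}. Following the standard path integral argument I would discretise the uncontrolled dynamics \eqref{uncontrolled SDE game}, note that over the first time step a perturbation of the initial state $x^{(2)}$ enters identically to a shift of the first increment $\Sigma^{(2)}\,dw$, and integrate by parts against the corresponding Gaussian density; using $S(\tau)=V\,dt+S_{\mathrm{next}}$ and the tower property this yields
\begin{equation*}
 \lambda\,\frac{\partial_{x^{(2)}}\xi}{\xi}\,dt
 = \lambda\big(\Sigma^{(2)}{\Sigma^{(2)}}^{\!\top}\big)^{-1}\,
 \frac{\mathbb{E}_{x,t}\!\left[\exp\!\left(-\tfrac{1}{\lambda}S(\tau)\right)\Sigma^{(2)}\,dw\right]}
 {\mathbb{E}_{x,t}\!\left[\exp\!\left(-\tfrac{1}{\lambda}S(\tau)\right)\right]},
\end{equation*}
with $S(\tau)$ as in \eqref{Stau}.

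Finally I would invoke Assumption~\ref{Assumption: linearity} in its partitioned form, $\Sigma^{(2)}{\Sigma^{(2)}}^{\!\top}=\lambda\big(G_u^{(2)}R_u^{-1}{G_u^{(2)}}^\top-G_v^{(2)}R_v^{-1}{G_v^{(2)}}^\top\big)$, so that $\lambda\big(\Sigma^{(2)}{\Sigma^{(2)}}^{\!\top}\big)^{-1}=\big(G_u^{(2)}R_u^{-1}{G_u^{(2)}}^\top-G_v^{(2)}R_v^{-1}{G_v^{(2)}}^\top\big)^{-1}$. Substituting the gradient identity into $u^*\,dt=R_u^{-1}{G_u^{(2)}}^\top\big[\lambda(\partial_{x^{(2)}}\xi/\xi)\,dt\big]$ and into the analogous expression for $v^*$ collects the prefactors into exactly $\mathcal{G}_u$ and $\mathcal{G}_v$, reproducing \eqref{u* PI}--\eqref{v* PI} (the sign of $\mathcal{G}_v$ coming from the $-R_v^{-1}G_v^\top$ in $v^*$). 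The principal obstacle is the gradient step: making the differentiation-under-the-expectation and the Gaussian integration by parts rigorous --- ideally through a Bismut--Elworthy--Li type formula rather than the heuristic discretisation --- because the random terminal time $\hat{t}_f$ makes $S(\tau)$ depend on the initial state through the stopping boundary in a non-smooth manner.
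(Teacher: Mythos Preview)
Your proposal is correct and follows essentially the same route as the paper: existence and uniqueness via the linear Dirichlet problem \eqref{linearized risk-minimizing HJB game} combined with Theorem~\ref{theorem: solution to risk-minimizing soc game}, then the explicit formulas by differentiating the Feynman--Kac representation \eqref{xi game} with respect to $x$ and invoking Assumption~\ref{Assumption: linearity}. In fact your sketch of the gradient step (discretisation plus Gaussian integration by parts, and your caveat about the random exit time) is considerably more explicit than the paper's own proof, which simply refers to the single-agent derivations in \cite{satoh2016iterative,theodorou2010generalized} and omits the computation for brevity.
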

\begin{proof}
The existence and uniqueness of the saddle-point solution follow from the existence and uniqueness of the linear Dirichlet boundary value problem \eqref{linearized risk-minimizing HJB game} \cite[Chapter 6]{friedman1975stochastic} and from Theorem \ref{theorem: solution to risk-minimizing soc game}. The saddle-point solution $u^*(x,t)$ (\ref{u* game}) and $v^*(x,t)$ (\ref{v*}) can be computed by taking the gradient of (\ref{xi game}) with respect to $x$ and using the condition (\ref{lambda game}). (The derivation of (\ref{u* game}) and (\ref{v*}) is in the same vein as the derivation of optimal controls in single-agent settings \cite{satoh2016iterative,theodorou2010generalized}; not presented here for brevity.)
\end{proof}
 Equations (\ref{u* PI}) and (\ref{v* PI}) provide the path integral forms for the saddle-point equilibrium. Similar to \eqref{xi game}, the expectations in (\ref{u* PI}) and (\ref{v* PI}) can be numerically evaluated in real-time via the Monte Carlo sampling of the trajectories generated by the uncontrolled SDE \eqref{uncontrolled SDE game}. Path integral framework allows us to solve the game online without requiring any offline training or precomputations. Even though Monte Carlo simulations must be performed in real-time in order to evaluate (\ref{u* PI}, \ref{v* PI}) for the current $(x,t)$, these simulations can be massively parallelized through the use of GPUs. 
 
\section{Simulation Results}
In this section, we apply the path integral framework to two classes of risk-minimizing zero-sum SDGs (\ref{risk-minimizing SOC problem (tf)}): a disturbance attenuation problem and a pursuit-evasion game.

\subsection{Disturbance Attenuation Problem}\label{Sec: H-inf}
Consider a special class of systems (\ref{SDE}):
\begin{equation}\label{ex1:SDE}
\begin{aligned}
  d{x}\!=\!{f}({x}, t)dt+{G_u}({x}, t)\Big(\!{u}({x}, t)dt \!+\! v\left({x}, t\right)dt\!+\!d{w}\!\Big)
\end{aligned}
\end{equation}
where $u\left({x}, t\right)\in\mathbb{R}^m$ is the control input, $v\left({x},t\right)\in\mathbb{R}^m$ is the bounded disturbance and ${w}(t)\in\mathbb{R}^m$ is a Wiener process. Here, we have two sources of noise that corrupt the system's control input $u$: the bounded noise $v$ whose statistics are unknown, and the white noise $d{w}$. 
In the disturbance attenuation problem, the objective is to design a policy $u$ in the presence of stochastic noise and bounded disturbance $v$ such that the system's control performance $ \mathbb{E}_{x_0, t_0}\!\left[\phi\left({x}({t}_{f})\right)\!+\!\int_{t_0}^{{t}_{f}}\!\left(\frac{1}{2}{u}^{\!\top}\!{u}+V\right)dt\right]$ is minimized. This problem can be solved using the following zero-sum SDG, where $u$ is considered as a control input of the first player (agent) and $v$ that of the second player (adversary):
\begin{equation}\label{J_gamma}
   \!\!\! \!\underset{u}{\min}\;\underset{v}{\max}\;\mathbb{E}_{x_0, t_0}\!\!\left[\!\phi\left({x}({t}_{f})\right)\!\!+\!\!\!\int_{t_0}^{{t}_{\!f}}\!\!\!\left(\!\frac{1}{2}{u}^\top\!{u}\!-\!\frac{\gamma^{2}}{\!2}{v}^{\!T}\!{v}\!+\!V\!\!\right)\!dt\!\right].
\end{equation}
$\gamma$ is a given positive constant that determines the level of disturbance attenuation. Theorem \ref{Propo. H_inf bound} provides an upper bound on the system's control performance (in the presence of a bounded disturbance $v$) that can be obtained by solving the game \eqref{J_gamma}.
\begin{theorem}\label{Propo. H_inf bound}
Suppose $(u^*_\gamma, v^*_\gamma)$ represent the saddle-point policies of the SDG (\ref{J_gamma}) for any $\gamma$, and let
\begin{equation}\label{delta gamma}
    \delta_\gamma \coloneqq \mathbb{E}_{x_0, t_0}^{u_\gamma^*, v_\gamma^*}\left[\int_{t_0}^{{t}_{\!f}}\!\!\!\!{v}^\top\!{v}\;dt\right]
\end{equation}
where the superscript on $\mathbb{E}$ denotes the policies under which the expectation is computed. Then, for all adversarial policies $v$ such that $\mathbb{E}_{x_0, t_0}^{u_\gamma^*, v}\left[\int_{t_0}^{{t}_{\!f}}\!{v}^\top\!{v}\;dt\right]\leq\delta$ (for any $\delta>0$), we get the following upper bound on the system's control performance in the presence of disturbance $v$:
\begin{equation}\label{UB on sys cost}
\begin{aligned}
      \mathbb{E}_{x_0, t_0}^{u_\gamma^*, v_\gamma^*}\!\!\left[\!\phi\left({x}({t}_{f})\right)\!\!+\!\!\!\int_{t_0}^{{t}_{\!f}}\!\!\!\left(\!\frac{1}{2}{u}^\top\!{u}\!+\!V\!\!\right)\!dt\!\right] + \frac{\gamma^2}{2}\left(\delta - \delta_\gamma\right) \geq\mathbb{E}_{x_0, t_0}^{u_\gamma^*, v}\!\!\left[\!\phi\left({x}({t}_{f})\right)\!+\!\!\!\int_{t_0}^{{t}_{\!f}}\!\!\!\left(\!\frac{1}{2}{u}^\top\!{u}+\!V\!\!\right)\!dt\!\right].
\end{aligned}
\end{equation}
\end{theorem}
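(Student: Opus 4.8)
The plan is to obtain (\ref{UB on sys cost}) directly from the saddle-point inequality of Definition \ref{Def: SP}, bypassing the HJI PDE entirely. The single substantive observation is that the game cost in (\ref{J_gamma}) and the \emph{control-performance} functional appearing in (\ref{UB on sys cost}) differ only by the disturbance-penalty term $\frac{\gamma^2}{2}\mathbb{E}[\int_{t_0}^{t_f} v^\top v\,dt]$. Consequently, bounding one functional in terms of the other reduces to transferring this penalty across the saddle-point inequality and then controlling it with the hypothesis placed on $v$.

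First I would fix the saddle point $(u^*_\gamma, v^*_\gamma)$ and abbreviate the control-performance functional as
\begin{equation*}
\mathcal{P}(v) \coloneqq \mathbb{E}_{x_0,t_0}^{u^*_\gamma, v}\left[\phi(x(t_f)) + \int_{t_0}^{t_f}\left(\frac{1}{2}(u^*_\gamma)^\top u^*_\gamma + V\right)dt\right],
\end{equation*}
so that the game cost evaluated at the pair $(u^*_\gamma, v)$ is exactly $C(x_0,t_0;u^*_\gamma,v) = \mathcal{P}(v) - \frac{\gamma^2}{2}\mathbb{E}_{x_0,t_0}^{u^*_\gamma,v}\big[\int_{t_0}^{t_f}v^\top v\,dt\big]$. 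Specializing this identity to the worst-case adversary $v = v^*_\gamma$ and using the definition (\ref{delta gamma}) of $\delta_\gamma$ identifies the value of the game as $C^* = \mathcal{P}(v^*_\gamma) - \frac{\gamma^2}{2}\delta_\gamma$.

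Next I would invoke the left half of the saddle-point inequality, $C(x_0,t_0;u^*_\gamma,v)\leq C^*$, which holds for \emph{every} admissible adversarial policy $v$. Substituting both expressions for the game cost in terms of $\mathcal{P}$ gives
\begin{equation*}
\mathcal{P}(v) - \frac{\gamma^2}{2}\mathbb{E}_{x_0,t_0}^{u^*_\gamma,v}\left[\int_{t_0}^{t_f}v^\top v\,dt\right] \leq \mathcal{P}(v^*_\gamma) - \frac{\gamma^2}{2}\delta_\gamma,
\end{equation*}
and isolating $\mathcal{P}(v)$ yields $\mathcal{P}(v) \leq \mathcal{P}(v^*_\gamma) - \frac{\gamma^2}{2}\delta_\gamma + \frac{\gamma^2}{2}\mathbb{E}_{x_0,t_0}^{u^*_\gamma,v}\big[\int_{t_0}^{t_f}v^\top v\,dt\big]$. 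Finally I would apply the hypothesis $\mathbb{E}_{x_0,t_0}^{u^*_\gamma,v}\big[\int_{t_0}^{t_f}v^\top v\,dt\big]\leq\delta$ to upper-bound the transferred penalty, obtaining $\mathcal{P}(v) \leq \mathcal{P}(v^*_\gamma) + \frac{\gamma^2}{2}(\delta-\delta_\gamma)$, which is precisely (\ref{UB on sys cost}).

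I do not expect a genuine obstacle: once the saddle-point property from Definition \ref{Def: SP} is in hand (as established in Theorem \ref{theorem: solution to risk-minimizing soc game} under the stated regularity and linearizability assumptions), the result is pure algebraic rearrangement. The only point that demands care is the bookkeeping of which policy pair governs each expectation. In particular, the inequality runs in the intended direction precisely because $v^*_\gamma$ \emph{maximizes} the game cost, so deviating to any other $v$ can only lower $C$; after adding back the disturbance penalty — which the hypothesis bounds by $\delta$ — this produces the desired one-sided performance guarantee.
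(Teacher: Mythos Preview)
Your proposal is correct and follows essentially the same approach as the paper: both arguments split the game cost into the control-performance functional minus the disturbance penalty $\frac{\gamma^2}{2}\mathbb{E}[\int v^\top v\,dt]$, invoke the saddle-point inequality $C(x_0,t_0;u^*_\gamma,v)\leq C(x_0,t_0;u^*_\gamma,v^*_\gamma)$ (i.e., $v^*_\gamma$ maximizes), and then bound the transferred penalty using the hypothesis $\mathbb{E}[\int v^\top v\,dt]\leq\delta$. Your introduction of the abbreviation $\mathcal{P}(v)$ makes the bookkeeping slightly cleaner, but the logical content is identical.
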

\begin{proof}
Consider cost of the SDG (\ref{J_gamma}) under the saddle-point policies $(u^*_\gamma, v^*_\gamma)$:

\begin{subequations}\label{contra. final}
\begin{align}
\mathbb{E}_{x_0, t_0}^{u_\gamma^*, v_\gamma^*}\!\!\left[\!\phi\left({x}({t}_{f})\right)\!\!+\!\!\!\int_{t_0}^{{t}_{\!f}}\!\!\!\left(\!\frac{1}{2}{u}^\top{u}\!+\!V\!\!-\!\frac{\gamma^2}{2}{v}^\top{v}\!\!\right)\!dt\!\right] =&\;\mathbb{E}_{x_0, t_0}^{u_\gamma^*, v_\gamma^*}\!\!\left[\!\phi\left({x}({t}_{f})\right)\!\!+\!\!\!\int_{t_0}^{{t}_{\!f}}\!\!\!\left(\!\frac{1}{2}{u}^\top{u}\!+\!V\!\!\right)\!dt\!\right] -\frac{\gamma^2}{2}\delta_\gamma \label{contra. final1}\\
\geq&\;\mathbb{E}_{x_0, t_0}^{u_\gamma^*, v}\!\!\left[\!\phi\left({x}({t}_{f})\right)\!\!+\!\!\!\int_{t_0}^{{t}_{\!f}}\!\!\!\left(\!\frac{1}{2}{u}^\top{u}\!+\!V\!\!-\!\frac{\gamma^2}{2}{v}^\top{v}\!\!\right)\!dt\!\right]\label{contra. final2}\\
\geq&\;\mathbb{E}_{x_0, t_0}^{u_\gamma^*, v}\!\!\left[\!\phi\left({x}({t}_{f})\right)\!\!+\!\!\!\int_{t_0}^{{t}_{\!f}}\!\!\!\left(\!\frac{1}{2}{u}^\top{u}\!+\!V\!\!\right)\!dt\!\right] -\frac{\gamma^2}{2}\delta.\label{contra. final3}
\end{align}
\end{subequations}
The equation \eqref{contra. final1} follows from (\ref{delta gamma}). For any adversarial policy $v$, the inequality \eqref{contra. final2} follows because $v^*_\gamma$ maximizes the cost in \eqref{J_gamma}. The inequality \eqref{contra. final3} follows from the bound on $\mathbb{E}_{x_0, t_0}^{u_\gamma^*, v}\left[\int_{t_0}^{{t}_{\!f}}\!{v}^\top\!{v}\;dt\right]$. Using \eqref{contra. final1} and \eqref{contra. final3}, we get the desired inequality \eqref{UB on sys cost}. 
\end{proof}
\vspace{2mm}
In order to solve the HJI PDE associated with the game (\ref{J_gamma}) via the path integral framework described in Section \ref{Sec: PI}, it is necessary to find a constant $\lambda>0$ (by Assumption \ref{Assumption: linearity}) such that
\begin{equation*}\label{linearity on H_inf}
    \lambda\left(1-\frac{1}{\gamma^2}\right)=1.
\end{equation*}
Therefore, for all $\gamma>1$, Assumption 1 is satisfied and as a consequence, the zero-sum SDG (\ref{J_gamma}) admits a unique saddle-point solution. \par
We now present a simulation study of the disturbance attenuation problem using a unicycle navigation example. Consider the following unicycle dynamics model:
\begin{equation*} 
\begin{aligned}
    \begin{bmatrix}
    d{p}_x\\d{p}_y\\d{s}\\d{\theta}
    \end{bmatrix}\!=\!
    -k
    \begin{bmatrix}
    {p}_x\\
    {p}_y\\
    {s}\\
    {\theta}
    \end{bmatrix}dt+
    \begin{bmatrix}
    {s}\cos{{\theta}}\\{s}\sin{{\theta}}\\0\\0
    \end{bmatrix}\!dt+\begin{bmatrix}
    0 & 0\\0 & 0\\1 & 0\\0 & 1
    \end{bmatrix} \!
    \left(\begin{bmatrix}
    a\\
    \omega
    \end{bmatrix}\!dt \!+\!
    \begin{bmatrix}
    \Delta a\\
    \Delta \omega
    \end{bmatrix}\!dt \!+\!
    \begin{bmatrix}
    \sigma & 0\\
    0 & \nu 
    \end{bmatrix}\!d{w}
    \right),
\end{aligned}
\end{equation*}
where $({p}_x,\:{p}_y)$, ${s}$ and ${\theta}$ denote the position, speed, and the heading angle of the unicycle, respectively. The control input $u\coloneqq\begin{bmatrix} a & \omega \end{bmatrix}^\top$ consists of acceleration $a$ and angular speed $\omega$. $v\coloneqq\begin{bmatrix} \Delta a & \Delta \omega \end{bmatrix}^\top$ is the bounded disturbance acting on the system's control input, and $d{w}\in\mathbb{R}^2$ is the white noise with $\sigma$ and $\nu$ being the noise level parameters. As illustrated in Figure \ref{Fig. effect of eta}, the unicycle aims to navigate in a two-dimensional space from its initial position (represented by the yellow star) to the origin (represented by the magenta star), in finite time, while avoiding the red obstacles and the outer boundary. The white region that lies between the outer boundary and the obstacles is the safe region $\mathcal{X}_s$. This is a disturbance attenuation problem since the unicycle aims to design its control policy $u$ in order to minimize the control performance and risk of failure (collision with the obstacles or the outer boundary) under worst-case disturbance $v$. Therefore, we can formulate this problem as the risk-minimizing zero-sum SDG (\ref{J_gamma}). In the simulation, we set $\sigma=\nu=0.1$, $k=0.2$, $t_0=0$, $T=10$, $x_0=\begin{bmatrix}
-0.4 &-0.4 & 0 &0
\end{bmatrix}^\top$, $V({x}) = {p}_x^2 + {p}_y^2$ and $\psi\left({x}(T)\right) = {p}_x^2(T) + {p}_y^2(T)$. In order to evaluate the optimal policies (\ref{u* PI}) and (\ref{v* PI}) via Monte Carlo sampling, $10^4$ trajectories and a step size equal to $0.01$ are used. We demonstrate two experiments.
\subsubsection{Experiment 1}
In this experiment, we set $\eta = 0.67$ and plot in Figure \ref{Fig. effect of eta} $100$ sample trajectories generated using synthesized saddle-point policies $(u^*, v^*)$ for two values of $\gamma$. The trajectories are color-coded; the blue paths collide with the obstacles, while the green paths converge in the neighborhood of the origin (the target position). The figure shows that for a higher value of $\gamma$ i.e. when the adversary becomes less powerful, the failure probability of the agent $P^\mathrm{ag}_{\mathrm{fail}}$ reduces.
\subsubsection{Experiment 2}
In this experiment, we set $\gamma^2=3$, $\eta=1$, and study the effect of ignoring the adversary. First, we compute saddle-point policies $(u^*, v^*)$ for the game (\ref{J_gamma}) same as Experiment 1 and plot in Figure \ref{Fig. safe unsafe}-(a) $100$ sample trajectories generated using $(u^*, v^*)$. In this case, the agent is aware of the adversary and designs its policy $u^*$ cautiously. The probability of failure is $23\%$. In the second case, the agent is not aware of the presence of the adversary and computes its policy (say) ${\widetilde{u}}^*$ by solving a single agent optimization problem. However, in reality, the adversary is present and suppose it follows the policy $v^*$. Figure \ref{Fig. safe unsafe}-(b) shows $100$ sample trajectories generated using $({\widetilde{u}}^*, v^*)$. In this case, the agent's performance is poor, it fails $65\%$ of the time. The color-coding of the trajectories is the same as in Experiment 1.
      

\begin{figure}
     \centering
       \begin{tabular}{c c}
       \!\!\!\!\!\!\!\!\!\!\!\!\includegraphics[scale=0.5]{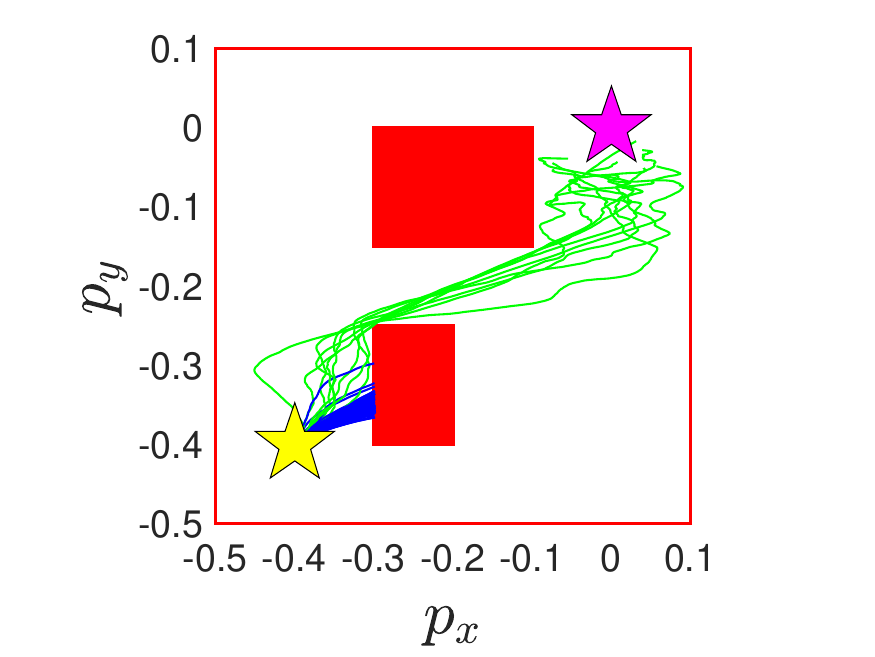} &\!\!\!\!\!\!\!\!\!\!\!\!\includegraphics[scale=0.5]{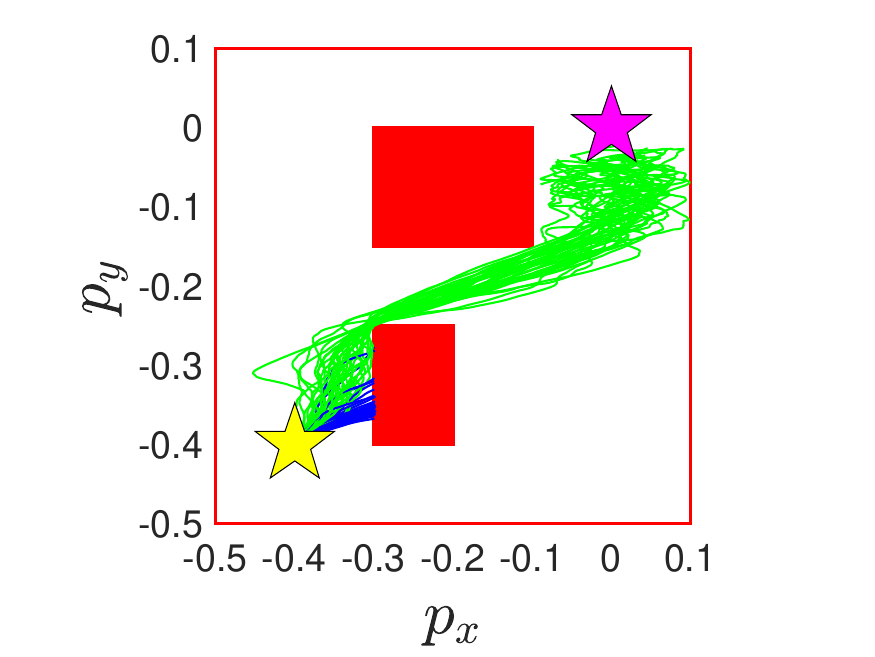} \\
       (a) $\gamma^2 = 2$, $P^\mathrm{ag}_{\mathrm{fail}} = 0.9$  & (b) $\gamma^2 = 7 $, $P^\mathrm{ag}_{\mathrm{fail}}= 0.64$ \\

       \end{tabular}
         \caption{Unicycle navigation in the presence of bounded and stochastic disturbances. The start position is shown by a yellow star and the target position (the origin) by a magenta star. $100$ sample trajectories generated using saddle-point policies ($u^*, v^*$) for two values of $\gamma$ are shown. The trajectories are color-coded; blue paths collide with the red obstacles or the outer boundary, while the green paths converge in the neighborhood of the magenta star. The failure probabilities of the agent $P^\mathrm{ag}_{\mathrm{fail}}$ are noted below each case.} 
         \label{Fig. effect of eta}
 \end{figure}
 
\begin{figure}
     \centering
       \begin{tabular}{c c}
       \!\!\!\!\!\!\!\!\!\!\!\!\includegraphics[scale=0.5]{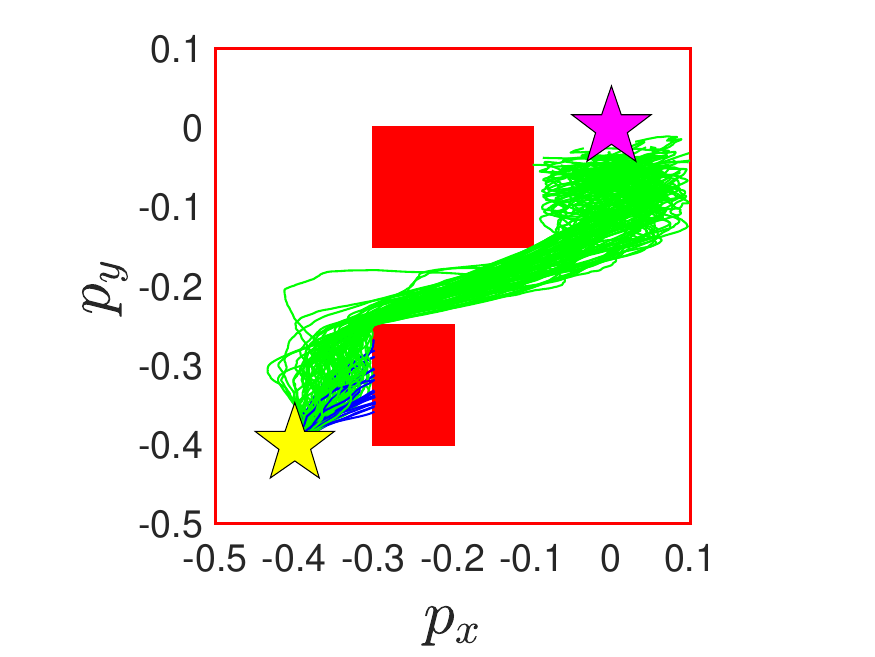} &\!\!\!\!\!\!\!\!\!\!\!\!\includegraphics[scale=0.5]{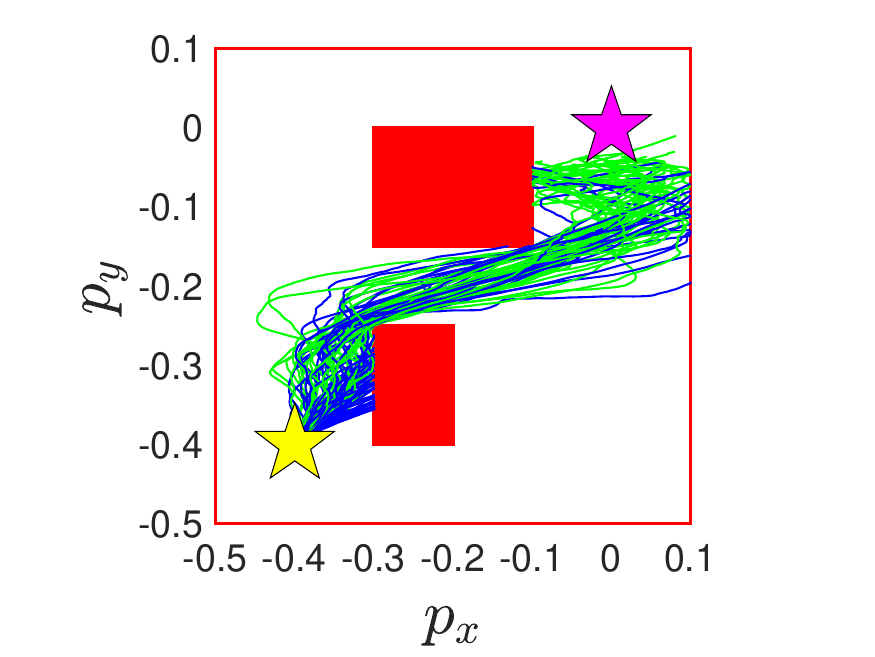} \\
       (a) Agent is aware of & (b) Agent is not aware\\ the adversary, $P^\mathrm{ag}_{\mathrm{fail}} = 0.23 $&   of the adversary, $P^\mathrm{ag}_{\mathrm{fail}}= 0.65$\\
       \end{tabular}
         \caption{Unicycle navigation in the presence of bounded and stochastic disturbances. (a) The agent is aware of the presence of an adversary. $100$ sample trajectories generated using saddle-point policies $(u^*, v^*)$. (b) The agent is not aware of the presence of an adversary. $100$ sample trajectories generated using $({\widetilde{u}}^*, v^*)$. The failure probabilities of the agent $P^\mathrm{ag}_{\mathrm{fail}}$ are noted for each case.} 
         \label{Fig. safe unsafe}
 \end{figure}
 
\subsection{Pursuit-Evasion Game}\label{Sec: PE}

Consider a two-player zero-sum SDG on a finite time horizon $[t_0, T]$, in which the adversary is chasing the agent and the agent is trying to escape from the adversary. We will call the adversary as a pursuer and the agent as an evader. Suppose the evader and the pursuer are moving in a two-dimensional plane according to
 \begin{equation}\label{sim EP model}
     \begin{split}
    d{p}^E_x={u}_xdt+\sigma^E_x d{w}^E_x,\qquad d{p}^P_x={v}_xdt+\sigma^P_x d{w}^P_x,\\
     d{p}^E_y={u}_ydt+\sigma^E_y d{w}^E_y,\qquad d{p}^P_y={v}_ydt+\sigma^P_y d{w}^P_y,
\end{split}
 \end{equation}
where ${x}_E\coloneqq\begin{bmatrix}
 {p}^E_x & {p}^E_y 
\end{bmatrix}^\top$ is the position and $u \coloneqq\begin{bmatrix}
 u_x & u_y 
\end{bmatrix}^\top$ is the control input of the evader. Similarly,  ${x}_P\coloneqq\begin{bmatrix}{p}^P_x & {p}^P_y\end{bmatrix}^\top$ and $v\coloneqq\begin{bmatrix}
 v_x &v_y 
\end{bmatrix}^\top$ are the position and control input of the pursuer. ${w}_x^E, {w}_y^E, {w}_x^P, {w}_y^P$ are independent one-dimensional standard Brownian motions. If at any time $t \in (t_0, T]$, the pursuer gets within a distance $\rho$ of the evader, then it catches the evader and the evader fails. On the other hand, if the evader avoids getting within a distance $\rho$ of the pursuer for the entire time horizon $[t_0, T]$, then that's a failure for the pursuer. The pursuer aims at designing its control policy $v$ in order to maximize the probability of catching the evader, whereas the evader seeks the opposite by designing $u$. For this two-player differential game, it is the relative position of the pursuer and evader that is important (and relevant), rather than their absolute positions. Let ${x}\coloneqq\begin{bmatrix}
  {p}_x & {p}_y
\end{bmatrix}$ be the evader's position with respect to the pursuer where 
\begin{equation*}
    {p}_x = {p}^E_x - {p}^P_x, \quad {p}_y = {p}^E_y - {p}^P_y
\end{equation*}
and the origin coincides with the pursuer's position. Thus, the coordinate system is attached to the pursuer and is not fixed in space. The system ${x}$ follows the SDE
\begin{equation}\label{rel. dynamics}
    d{x}=\begin{bmatrix}
    d{p}_x\\ d{p}_y
    \end{bmatrix} = 
    \begin{bmatrix}
    u_x\\ u_y
    \end{bmatrix}dt - 
    \begin{bmatrix}
    v_x\\ v_y
    \end{bmatrix}dt +
    \begin{bmatrix}
  \sigma_x & 0\\  0 & \sigma_y
    \end{bmatrix}d{w}
\end{equation}
where $\sigma_x =\sqrt{(\sigma_x^E)^2 + (\sigma_x^P)^2}$, $\sigma_y =\sqrt{(\sigma_y^E)^2 + (\sigma_y^P)^2}$ and ${w}$ is a two-dimensional standard Brownian motion. In this game, the safe set $\mathcal{X}_s$ can be defined as $\mathcal{X}_s \coloneqq \left\{x\in\mathbb{R}^2:\|x\|>\rho\right\}$. Suppose the control cost matrix $R_u$ of the evader is unity and that of the pursuer $R_v = {r_v}^2$, where $r_v$ is a given positive scalar constant. Therefore, the risk-minimizing zero-sum SDG takes the form:
\begin{equation}\label{SDG of PE}
    \!\!\underset{u}{\min}\;\underset{v}{\max}\;\mathbb{E}_{x_0, t_0}\!\!\left[\!\phi\left({x}({t}_{f}\!)\right)\!+\!\!\!\int_{t_0}^{{t}_{\!f}}\!\!\!\!\left(\!\frac{1}{2}{u}^{\!T}\!{u}\!-\!\frac{{r_{\!v}}^{\!2}}{2}{v}^{\!T}\!{v}\!+\!V\!\!\right)\!dt\!\right]\!.
\end{equation}
In order to solve the associated HJI equation of this game via the path integral framework, it is necessary to find a constant $\lambda>0$ (by Assumption 1) such that
\begin{equation*}
    \lambda\left(1-\frac{1}{{r_v}^2}\right)=1.
\end{equation*}
Therefore, for all $r_v>1$, Assumption 1 is satisfied and as a consequence, the zero-sum SDG (\ref{SDG of PE}) admits a unique saddle-point solution. \par 

In the simulation, we set $\sigma_x^E=\sigma_y^E=\sigma_x^P=\sigma_y^P=\sqrt{0.1}$, $\rho=0.1$, $t_0=0$, $T=2$, $x_0 = \begin{bmatrix}
  0.3 & 0.3
\end{bmatrix}^\top$, $V({x})=\psi\left({x}(T)\right)=0$,  $\eta=0.2$, ${r_v}^2 = 2$. Figure \ref{Fig. EP} shows a plot of two sample trajectories of the system (\ref{rel. dynamics}) generated using synthesized saddle-point policies $(u^*, v^*)$. The trajectories start from $x_0$ shown by the yellow star. The red disc of radius $\rho = 0.1$, centered at the origin represents that the pursuer is within a distance $\rho$ of the evader. The green trajectory never enters the red disc in the horizon $[t_0, T]$, thus, it represents a case when the evader escapes from the pursuer. The blue trajectory, on the other hand, enters the red disc and thus represents a case when the pursuer catches the evader. Figure \ref{Fig. pfail with rv2} shows a plot of failure probabilities of the agent (i.e., evader) as a function of $r_v$, when the players follow the saddle-point policies ($u^*, v^*$). These values are computed using na\"ive Monte Carlo sampling, with $400$ sample trajectories. The plot shows that as the control cost weight $r_v$ of the adversary (i.e., pursuer) increases, the chances of the evader getting caught reduces.   

 \begin{figure}
     \centering
       \begin{tabular}{c}
       \!\!\!\!\!\!\!\!\!\!\!\!\!\!\!\!\!\includegraphics[scale=0.25]{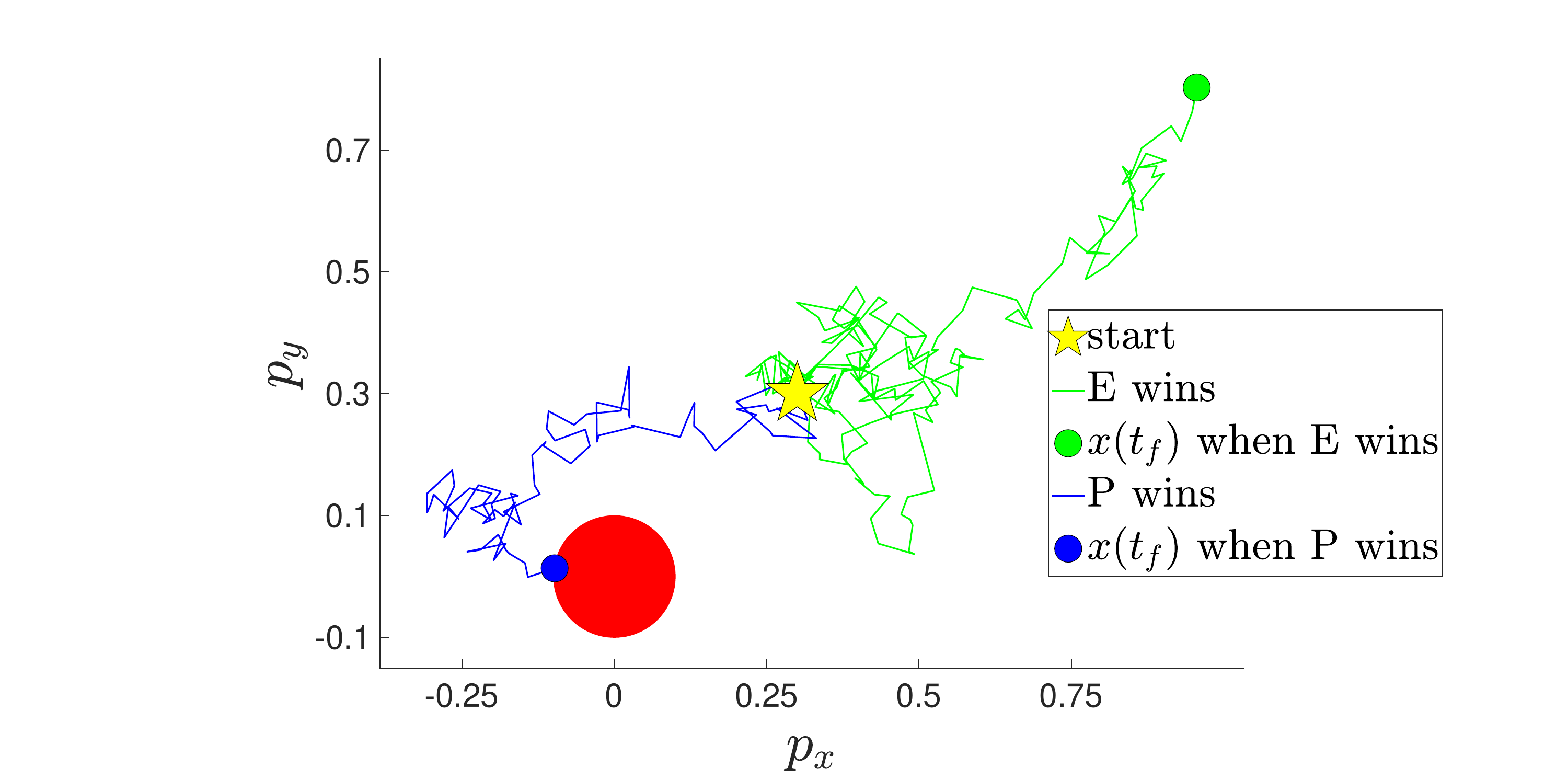} \\
       \end{tabular}
         \caption{Two sample trajectories of the relative position of the players in a pursuit-evasion game. The start position of the trajectories is shown by a yellow star. The red disc of radius $\rho = 0.1$, centered at the origin represents that the pursuer is within the distance $\rho$ of the evader. The green trajectory never enters the red disc in the horizon $[t_0, T]$, thus, it represents a case when the evader wins. The blue trajectory enters the red disc and thus represents a case when the pursuer wins.} 
         \label{Fig. EP}
 \end{figure}
 
 \begin{figure}
    \centering
\includegraphics[scale=0.25]{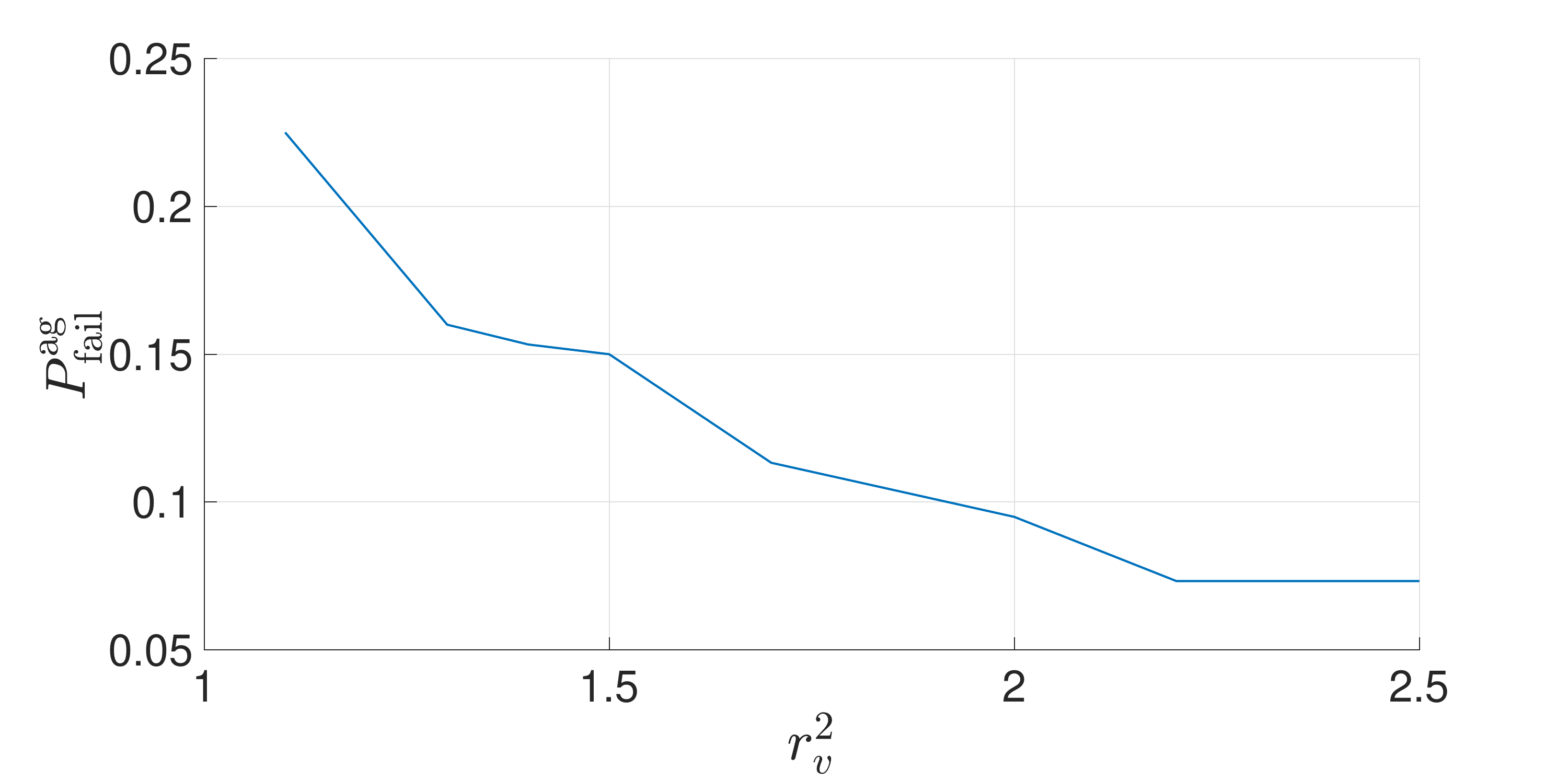} 
        \caption{Failure probabilities of the agent (i.e., evader) as a function of $r_v$, when the players follow the saddle-point policies ($u^*, v^*$).} 
        \label{Fig. pfail with rv2}
\end{figure}\par
The aim of the presented simulation studies is to validate the proposed theoretical formulation of the risk-minimizing zero-sum SDGs. Future work will emphasize scaling this framework to higher dimensional and more complex game dynamics.

\section{Publications}
\begin{itemize}
     \item \textbf{A. Patil}, Y. Zhou, D. Fridovich-Keil, T. Tanaka, ``Risk-Minimizing Two-Player Zero-Sum Stochastic Differential Game via Path Integral Control," \textit{2023 IEEE Conference on Decision and Control (CDC)}
\end{itemize}
\section{Future Work}
In the future, we plan to conduct sample complexity analysis for path integral control in order to investigate how the accuracy of Monte Carlo sampling affects the solution of SDGs. The central challenge in using the path integral framework is the particular requirement on the relationship between the cost function and the noise covariance. This requirement restricts the class of applicable system models and cost functions. In future work, we plan to find alternatives in order to get rid of this restrictive requirement (one such solution is provided in \cite{satoh2016iterative}). Another topic of future investigation could be chance-constrained stochastic games in which each player would aim to satisfy a hard bound on its failure probability.  
\chapter[Task Hierarchical Control]{Task Hierarchical Control}
\label{Sec: task hierarchy}
\section{Motivation and Literature Review}\label{Sec: Introduction Task Hierarchical Control}
Robotic systems with a large number of degrees of freedom offer significant versatility; however, this also introduces redundancies. These systems are often used to accomplish multiple subtasks with varying levels of importance, allowing for the establishment of a task hierarchy. One of the most frequently applied methods to accomplish task hierarchical control is the null-space projection technique \cite{antonelli2008null, khatib1987unified, slotine1991general}. In the null-space projection, the top priority task is executed by employing all the capabilities of the system. The second priority task is then applied to the null space of the top priority task. In other words, the task on the second level is executed as well as possible without disturbing or interfering with the first level. The task on level three is then executed without disturbing the two higher-priority tasks, and so forth. The null-space projection technique is based on a hierarchical arrangement of the involved tasks and can be interpreted as instantaneous local optimization. \par

The null-space projection technique has been widely applied, such as in multi-robot team control \cite{sentis2009large}, whole-body behavior synthesis \cite{sentis2005synthesis}, and manipulator control \cite{dietrich2015overview}. A comprehensive overview and comparison of null-space projections is given in \cite{dietrich2015overview}. In the literature \cite{antonelli2008null, dietrich2015overview}, the individual controllers for the tasks in the hierarchy are designed using simple low-level controllers such as proportional-integral-derivative (PID) controllers. While these controllers are easy to design and are capable of generating control inputs in real time, there is no systematic way to optimize the overall performance of the hierarchy of controllers.  
Global optimization techniques such as dynamic programming on the other hand minimize some performance index across a whole trajectory. For example, to design optimal control policies for a linear system with a quadratic cost function, the linear quadratic regulator (LQR) is widely used. For nonlinear systems and cost functions, approaches such as iterative LQR (ILQR), and differential dynamic programming (DDP) can be utilized. However, even though global optimization solutions perform better than local optimization solutions, they are impractical for online feedback control, due to the heavy computational requirements. In this work, we employ the path integral control method, a stochastic optimal control framework that can be applied to nonlinear systems and enables the computation of optimal control inputs in real time through Monte Carlo simulations. \par

Based on the foundational work of \cite{kappen2005path, theodorou2010generalized}, the path integral method can be defined as a sampling-based algorithm to compute control input at each time step from a large number of Monte-Carlo simulations of the \emph{uncontrolled dynamics}. Unlike traditional optimal control methods, the path integral approach can directly deal with stochasticity and nonlinearity \cite{patil2022chance}, \cite{patil2023simulator, patil2023risk}. Moreover, unlike dynamic programming, it can evaluate control input without solving a high-dimensional Hamilton-Jacobi-Bellman partial differential equation \cite{patil2022chance}. The Monte Carlo simulations can also be highly parallelized, leveraging the GPU resources available on modern robotics platforms \cite{williams2017model}, making it particularly effective for real-time control applications.\par


This work integrates the path integral control approach with the null-space projection technique to overcome their individual limitations and enhance their respective strengths. We explain our idea via the following example:

\begin{example}

Consider a platoon of robots navigating in an obstacle-filled environment, tasked with three goals in descending order of importance:
\begin{enumerate}
    \item Avoiding collisions with obstacles
    \item Steering the platoon’s centroid toward a goal position
    \item Maintaining specific distances between the robots
\end{enumerate}
Designing an optimal controller using only the path integral method for each robot in the platoon would present scalability challenges due to the method's sampling-based nature. On the other hand, simple low-level controllers (such as PID), while computationally efficient, are difficult to tune manually for a better performance. We propose using local controllers for tasks 1 and 3 while applying the path integral controller to the more complex task 2. In this way, the path integral controller optimizes the centroid’s movement, while simpler tasks are handled by local controllers.
\end{example} 

In \cite{sentis2009large}, the authors address the issue of the task hierarchical controller falling into local minima by complementing the low-level controllers with the A* search algorithm. In graph-based methods like A*, the optimal trajectory is generated in the workspace, requiring an additional tracking controller to be designed separately, taking into account the system's dynamics. In contrast, the path integral controller computes the optimal policy directly, eliminating the need for such a decoupled approach. Furthermore, path integral controllers can adapt in real time making them particularly suitable for dynamic environments. Graph-based algorithms like A* are primarily designed for static environments and are less effective in dynamic scenarios, as they require re-planning the trajectory each time the environment changes. Although there are graph-based algorithms, such as RRTX, capable of handling dynamic environments \cite{otte2016rrtx}, these approaches tend to have higher computational costs compared to static planners and often require substantial memory resources as the number of samples increases.
\section{Contributions}
The contributions of this work are as follows:
\begin{enumerate}
    \item We introduce a new framework for hierarchical task control that combines the null-space projection technique with the path integral control method. This leverages Monte Carlo simulations for real-time computation of optimal control inputs, allowing for the seamless integration of simpler PID-like controllers with a more sophisticated optimal control technique. 
    \item Despite the wide applicability of the path integral approach, it has not been utilized for solving the task hierarchical control problem to the best of the authors' knowledge. This expands the applicability of path integral control to multi-task robotic systems, enabling a more robust handling of task prioritization.
    \item Our simulation studies demonstrate the effectiveness of the proposed approach, showing how it overcomes the limitations of the state-of-the-art methods by optimizing task performance. 
\end{enumerate}

\section{Preliminaries}\label{Sec: preliminaries}
Let $q\in\mathbb{R}^n$ be the configuration of a robot, where $n$ is the number of degrees-of-freedom (DOFs). We introduce $K$ task variables 
\begin{equation}\label{task function}
  \sigma_k = h_k(q)\in\mathbb{R}^{m_k}, \quad k\in \mathcal{K} 
\end{equation}
 where $m_k$ is the dimension of task $k$ and $\mathcal{K}=\{1, 2, ... , K\}$ denotes a set of the indices. The hierarchy is defined such that $\sigma_{1}$ is at the top priority and $\sigma_i$ is located higher in the priority order than $\sigma_j$ if $i<j $. The task variables $\sigma_k$ represent functional quantities (e.g., a cost or a potential function) as part of the desired actions, and $h_k$ is a differentiable nonlinear function. Our goal is to devise a policy for the robotic system that would accomplish the $K$ subtasks $\sigma_k, k\in \mathcal{K}$ in their descending order of importance. In the rest of the dissertation, for notational compactness, the functional dependency on $q$ is dropped whenever it is unambiguous. \par

Differentiating \eqref{task function} with respect to time, we get 
\begin{equation}\label{Jacobian}
    \dot{\sigma}_k = J_k(q)\dot{q}, \qquad J_k(q) = \frac {\partial h_k(q)}{\partial q}
\end{equation}
where $J_k(q)\in\mathbb{R}^{m_k\times n}$ is the Jacobian matrix and $\dot{q}$ represents the velocity of the robot in the configuration space. This velocity can be computed by inverting the mapping \eqref{Jacobian} \cite{siciliano1990kinematic}, \cite{antonelli2009stability}, \cite{antonelli2009prioritized}. However, in the case of a redundant system i.e., when $n>m_k$, the problem \eqref{Jacobian} admits infinite solutions. A common approach is to solve for the minimum-norm velocity, which leads to the least-squares solution:
\begin{equation*}
    \dot{q} = J^\dag_k(q)\dot{\sigma}_k
\end{equation*}
where $J_k^\dag(q) = J_k^\top(q)(J_k(q)J_k^\top(q))^{-1}$ is the right pseudo-inverse of the Jacobian matrix $J_k(q)$. In the following $J_k(q)$ is assumed to be non-singular, hence of full row rank.\par
Similar to \eqref{Jacobian}, the acceleration in the configuration space can be computed by further differentiating \eqref{Jacobian}
\begin{equation*}
    \ddot{\sigma}_k = J_k(q)\ddot{q} + \dot{J}_k(q)\dot{q}.
\end{equation*}
The minimum-norm solution for the acceleration $\ddot{q}$ is obtained as:
\begin{equation}\label{q ddot}
   \ddot{q} = J^\dag_k(q) \left(\ddot{\sigma}_k - \dot{J}_k(q)\dot{q} \right). 
\end{equation}
Equation \eqref{q ddot} provides a basic method to compute system acceleration in an open-loop style. To improve convergence, a feedback term is added to \eqref{q ddot}, as suggested by \cite{siciliano1990kinematic} and \cite{tsai1987strictly}, leading to the expanded form:
\begin{equation}\label{q ddot PD}
   \ddot{q} = J^\dag_k(q) \left(\left\{\ddot{\sigma}_{k,d} + K_{p,k}\widetilde{\sigma}_k + K_{d,k}\frac{d\widetilde{\sigma}_k}{dt}\right\} -\dot{J}_k(q)\dot{q} \right) 
\end{equation}
where $\widetilde{\sigma}_k = \sigma_{k,d} - \sigma_{k}$ denotes the error between the desired task trajectory $\sigma_{k,d}$ and the actual task trajectory $\sigma_{k}$ which can be computed from the system's current configurations using \eqref{task function}. For task $k$, the terms $K_{p,k}$ and $K_{d,k}$ are proportional and derivative gains, respectively, which shape the convergence of the error $\widetilde{\sigma}_k$. Equation \eqref{q ddot PD} is called a \textit{closed loop inverse kinematic} version of the equation \eqref{q ddot}. The controller having a similar structure is presented in \cite{siciliano1990kinematic}.\par
Note that in the above control input computation technique, the desired task trajectory $\sigma_{k,d}$ is often chosen manually. Due to the complexity of the architecture, it is often difficult to select an appropriate desired trajectory $\sigma_{k,d}$ and low-level controller gains $K_{p,k}$, $K_{d,k}$ to optimize the overall system performance. Moreover, the above method is only a local optimization technique as opposed to global optimization techniques which minimize some performance index across a whole trajectory and typically offer better solutions compared to local optimization approaches. Table \ref{tab:notation hierarchy} represents the mathematical notations frequently used in this chapter. 

\begin{table}
\begin{center}
\begin{tabular}{||c | c || c | c||} 
 \hline
 \textbf{Notation} & \textbf{Description} & \textbf{Notation} & \textbf{Description} \\ [0.5ex] 
 \hline\hline
 $q$ & configuration & $n$ & number of DOFs \\ 
 \hline
 $ \sigma_k$ & task variable $k$ & $m_k$ & dimension of task $k$ \\ 
 \hline
 $J_k(q)$ & Jacobian matrix $k$ & $K_{p,k}$ & proportional gain for task $k$ \\ 
 \hline
 $K_{d,k}$ & derivative gain for task $k$ & $x(t)$ & state of the system \\ 
 \hline
 $u(t)$ & control input & $N_k(q)$ & null-space projector for task $k$ \\ 
  \hline
 $w(t)$ & standard Brownian motion & $\hat{s}$ & diffusion coefficient \\
 \hline
 $L(x(t))$ & running state cost & $\phi(x(T))$ & a terminal cost \\
 \hline
 $I$ & number of robots & $q_g$ & goal position \\
 \hline
 $\lambda$ & PDE linearizing constant & $\Delta t$ & discretized time-step \\
 [1ex] 
 \hline
\end{tabular}
\caption{Table of frequently used mathematical notation in Chapter \ref{Sec: task hierarchy}}
\label{tab:notation hierarchy}
\end{center}
\end{table}

\section{Null-Space Projection}\label{Sec: task hierarchy-1}
Consider a robotic system with a control-affine dynamics:
 \begin{equation}\label{double integrator}
     \dot{x}(t) = f(x) + G(x)u(t)
 \end{equation}
where $x(t)$ is the state of the system\footnote{Often the case, the configuration of a system $q$ is part of its states; see examples in Section \ref{Sec: simulations}.}, $u(t)$ is the control input, $f(x)$ is a drift term, and $G(x)$ is the control coefficient. In the rest of the dissertation, for notational compactness, the functional dependencies on $x$, and $t$ are dropped whenever it is unambiguous. In the null-space projection technique, the control input $u_2$ corresponding to the second-priority task is projected onto the null space of the primary task using the formula:
\begin{equation}\label{a2_proj}
    u_2' = N_2^{}(q)u_2
\end{equation}
where $u_2'$ is the projected control input that does not interfere with the primary task. The null-space projector $N_2^{}(q)$ is obtained by evaluating
\begin{equation*}
    N_2^{}(q) = I - J_1^\dag(q) J_1(q)
\end{equation*}
where $J_1^\dag(q)$ is the right pseudo-inverse of the primary task’s Jacobian $J_1$, and $I$ is an identity matrix of suitable dimensions. Analogous to \eqref{a2_proj}, for the remaining tasks in the hierarchy ($2<k\leq K$), the control inputs are projected as $u_k' = N_k^{}(q)u_k$, with the null-space projectors recursively computed as:
\begin{align*}
    N_k^{}(q) & = N_{k-1}^{}(q)\left(I-J_{k-1}^\dag(q) J_{k-1}(q)\right),\quad 2\leq k\leq K
\end{align*}
and $N_1(q) = I$. Here $I$ is the identity matrix with suitable dimensions. Each task input is computed as if it were acting alone; then before adding its contribution to the overall system control input, a lower-priority task is projected onto the null space of the immediately higher-priority task so as to remove those control input components that would conflict with it. This technique guarantees that lower-priority objectives are constrained and therefore do not interfere with higher-priority objectives. As a result, the high-priority task is always achieved, and the lower ones are met only if they do not conflict with the task of higher priority. The final control input can be formulated by adding up the primary task control input and all the projected control inputs: 
\begin{align}\label{final acc}
    u = u_1 + \sum_{k=2}^K u_k' =  \sum_{k=1}^K N_k^{}(q)u_k. 
\end{align}
Plugging \eqref{final acc} into \eqref{double integrator}, we get
\begin{equation}\label{double integrator 2}
   \dot{x}(t) = f(x) + G(x)\sum_{k\in \mathcal{K}} N_k^{}(q)u_k.\end{equation}


A natural question arises of how many tasks can be handled simultaneously using this approach. Let us suppose that the primary task, of dimension $m_1$ is fulfilled by $n$ DOFs robotic system. The null space of its Jacobian (of full row rank) is a space of dimensions $n-m_1$. Supposing the secondary task of dimension $m_2$ does not conflict with the primary task (meaning that the secondary task acts in the null space of the primary task), the null space of their combination has dimension $n-m_1-m_2$. Choosing the tasks in a way they are not conflicting, it is useful to add tasks until $\sum m_k = n$. Thus, once all the degrees of freedom of the system are covered, it is useless to add successive tasks of lower priority since they will be projected onto an empty space (thus giving always a null contribution to the system control input). In case of conflicting tasks, it is not possible to make any generic assumption regarding the useful number of tasks but, case by case, the intersection among null spaces should be analyzed.

\subsection{Integration of Null Space Projection and Path Integral Control}
\begin{figure*}[h]
    \centering
      \begin{tabular}{c c}
     \!\!\!\!\!\!\!\!\!\!\!\!\!\!\!\!\!\!\includegraphics[scale=0.33]{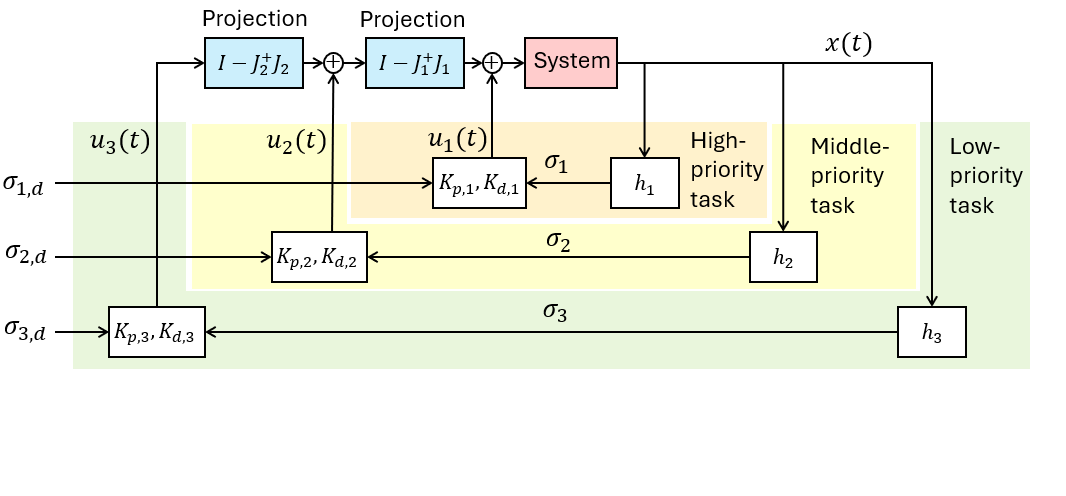} &\!\!\!\!\!\!\includegraphics[scale=0.33]{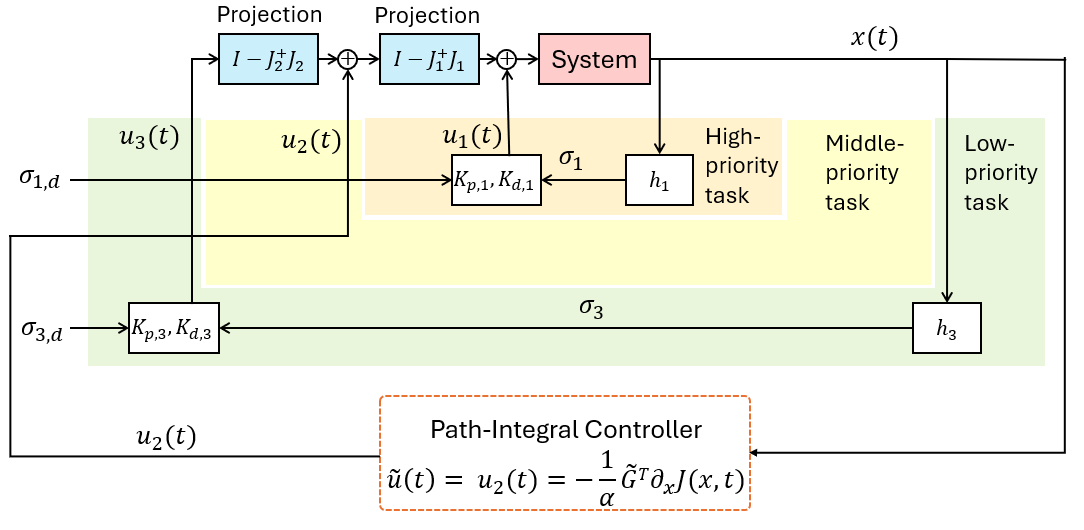} \\
     (a) Conventional task hierarchical control &(b) Integration of null-space projection and \\
     &  path integral control (proposed architecture)
      \end{tabular}
        \caption{Comparison of the conventional task hierarchical control approach and our proposed control approach, which integrates null-space projection with path integral control.} 
        \label{Fig. comparison}
\end{figure*}
In this section, we explain our proposed architecture. Consider a scenario where we need to accomplish $K$ tasks, however, only the $k$-th task is handled using a path integral controller. Here, we assume only one task is accomplished using the path integral controller. A similar formulation can be extended to cases where multiple tasks are controlled using path integral controllers. The control inputs for the other tasks $u_i, i\neq k$ are designed using PD controllers \eqref{q ddot PD}. Using \eqref{double integrator 2}, the system dynamics can be written as 
\begin{equation}\label{double integrator 3}
  \dot{x}(t) = f(x) + \!\!\!\sum_{i\in\mathcal{K}, i\neq k} \!\!\!\!G(x)N_iu_i(t) + G(x)N_ku_k(t).  
\end{equation}
Defining $\widetilde{G}(x) = G(x)N_k$, $\widetilde{u} = u_k$ and
\begin{align*}
    \widetilde{f}(x)& = f(x)+\sum_{i\in\mathcal{K}, i\neq k}\!\!\!\!G(x)N_iu_i(t),  
\end{align*}
we can rewrite \eqref{double integrator 3} as $d{x} = \widetilde{f}(x)dt + \widetilde{G}(x) \widetilde{u}\,dt.$ Note that the terms $\widetilde{f}(x)$ and $\widetilde{G}(x)$ are computed using the PD-like low-level controllers \eqref{q ddot PD} derived in Section \ref{Sec: preliminaries} and the null-space projection explained in Section \ref{Sec: task hierarchy-1}. Next, we derive a path integral controller for the control input $\widetilde{u}$. \par
First, we perturb $\widetilde{u}$ by adding a stochastic term: $\widetilde{u}\,dt \rightarrow \widetilde{u}\,dt + \hat{s}\,dw(t)$, where $w(t)$ is a standard Brownian motion and $\hat{s}\in \mathbb{R}$ is the diffusion coefficient. This perturbation allows the system to explore the policy space and discover a direction that minimizes the cost. The perturbed system dynamics are then given by 
\begin{equation}\label{SDE hierar}
     d{x} = \widetilde{f}(x)dt + \widetilde{G}(x) \left(\widetilde{u}\,dt + \hat{s}\,dw(t)\right).
\end{equation}
Now we formulate the path integral control problem with the cost function: 
\begin{equation}\label{PI cost}
    \mathbb{E}_Q\left[\phi(x(T)) + \int_0^T  \left(L(x(t))+\frac{\alpha}{2}\|\widetilde{u}\|^2\right)dt\right]
\end{equation}
for some $\alpha>0$. The expectation $\mathbb{E}_Q$ is taken with respect to the dynamics \eqref{SDE hierar}. The cost function has a quadratic control cost, an arbitrary state-dependent running cost $L(x(t))$, and a terminal cost $\phi(x(T))$. Consider the example of a platoon of robots illustrated in Section \ref{Sec: Introduction Task Hierarchical Control}. Suppose task 2 (steering the platoon’s centroid toward a goal position) is controlled by the path integral controller. Let there be $I$ robots, with the configuration of each robot be denoted by $q_i, 1\leq i\leq I$, and the goal position be $q_g$. In this case, the running cost $L(x(t))$ could be $\|\sum_{i=1}^{I}\frac{1}{I}q_i(t)-q_g\|^2$, and the terminal cost $\phi(x(T))$ could be $\|\sum_{i=1}^{I}\frac{1}{I}q_i(T)-q_g\|^2$.\par
We now formally state our problem:
\begin{problem}\label{PI problem}
   \begin{align*}
       \min_{\widetilde{u}}\; & \mathbb{E}_{Q}\left[\phi(x(T))+\int_0^T \left(L(x(t))+\frac{\alpha}{2}\|\widetilde{u}\|^2\right)dt\right]\nonumber\\
       \emph{s.t.} \;\; &  d{x} = \widetilde{f}(x)dt + \widetilde{G}(x) \left(\widetilde{u}\,dt + \hat{s}\,dw(t)\right).
   \end{align*}
\end{problem}
It is well-known that the value function $J(x,t)$ of Problem \ref{PI problem} satisfies the Hamilton-Jacobi-Bellman (HJB) partial differential equation (PDE) \cite{williams2017model}: 
 \begin{equation}\label{HJB PDE hierar}
  \begin{aligned}
         -\partial_tJ\!=\!-\frac{1}{2\alpha}\!\left(\partial_xJ\right)^\top\!\!\widetilde{G}\widetilde{G}^\top\!\partial_xJ\!+\!L
         +\!\!\widetilde{f}^\top\!\partial_xJ+\frac{\hat{s}^2}{2}\text{Tr}\left(\widetilde{G}\widetilde{G}^\top\partial^2_xJ\right),
          \end{aligned}
    \end{equation}
    with the boundary condition $J(x(T), T)=\phi(x(T))$. The optimal control is expressed in terms of the solution to this PDE as follows:
    \begin{equation*}
        \widetilde{u}(x,t) = -\frac{1}{\alpha}{\widetilde{G}}^\top\partial_xJ\left(x, t\right).
    \end{equation*}
Therefore, in order to compute the optimal controls, we need to solve this backward-in-time PDE \eqref{HJB PDE hierar}. Unfortunately, classical methods for solving partial differential equations (such as the finite difference method) of this nature suffer from the curse of dimensionality and are intractable for systems with more than a few state variables. The path integral control framework provides an alternative approach by transforming the HJB PDE into a path integral. This transformation allows us to approximate the solution using Monte Carlo simulations of the system's stochastic dynamics. We use the Feynman-Kac formula, which relates PDEs to path integrals \cite{williams2017model}. First, we define a constant $\lambda$ such that:
    \begin{equation}\label{lambda hierar}
        \hat{s}^2 = \frac{\lambda}{\alpha}.
    \end{equation}
Next, using this constant $\lambda$, we introduce the following transformed value function $\xi(x,t)$:

\begin{equation}\label{exp transformation hierar}
 J(x,t) = -\lambda\,\text{log}\left(\xi\left(x,t\right)\right).
\end{equation}
The transformation (\ref{exp transformation hierar}) allows us to write the HJB PDE \eqref{HJB PDE hierar} in terms of $\xi(x,t)$ as
\begin{equation}\label{transformed HJB PDE hierar}
    \begin{aligned}
    \!\partial_t\xi\!=\frac{L\xi}{\lambda}-\!\frac{\hat{s}^2}{2}\text{Tr}\!\left(\widetilde{G}\widetilde{G}^\top\!\partial^2_x\xi\right)\!+\!\frac{\hat{s}^2}{2\xi}\!\left(\partial_x\xi\right)^{\!T}\!\!\widetilde{G}\widetilde{G}^\top\!\partial_x\xi-\!\frac{\lambda}{2\alpha\xi}\!\left(\partial_x\xi\right)^\top\!\!\widetilde{G}\widetilde{G}^\top\!\partial_x\xi\!-\!\widetilde{f}^\top\partial_x\xi,
\end{aligned}
\end{equation}
with the boundary condition $\xi(x(T),T) =\exp\left(-\frac{\phi(x(T))}{\lambda}\right)$. Using \eqref{lambda hierar} in equation \eqref{transformed HJB PDE hierar}, we can rewrite the PDE \eqref{HJB PDE hierar} as a linear PDE in terms of $\xi(x,t)$ as:
\begin{equation}\label{linear PDE}
    \partial_t\xi\!=\!\frac{L\xi}{\lambda}\!-\!\widetilde{f}^\top\partial_x\xi-\frac{\hat{s}^2}{2}\text{Tr}\left(\widetilde{G}\widetilde{G}^\top\partial^2_x\xi\right)
\end{equation}
with the boundary condition $\xi(x(T),T) =\exp\left(-\frac{\phi(x(T))}{\lambda}\right)$.
    This particular PDE is known as the \textit{backward Chapman–Kolmogorov PDE}. Now we find the solution of the linearized PDE \eqref{linear PDE} using the Feynman-Kac lemma.
\begin{theorem}[Feynman-Kac lemma]\label{theorem: Feynman-Kac}
    The solution to the linear PDE \eqref{linear PDE} exists. Moreover, the solution is unique in the sense that $\xi$ solving \eqref{linear PDE} is given by \begin{equation}\label{xi hierar}
       \begin{aligned}
  \!\!\!\!\!\!\xi\!\left(x,t\right) & \!= \!\mathbb{E}_{P}\!\! \left[\exp\!\left(\!\!-\frac{1}{\lambda}S(x,t)\!\!\right)\!\right]\\
  \end{aligned}
    \end{equation}
    where the expectation $\mathbb{E}_P$ is taken with respect to the uncontrolled dynamics of the system \eqref{SDE hierar} (i.e., equation \eqref{SDE hierar} with $\widetilde{u}=0$) starting at $x,t$. $S(x,t)$ is the cost to go of the state-dependent cost of a trajectory given by  
    \begin{equation*}
        S(x,t) = {\phi\left({{x}}({{T}})\right)}\!+\!\!\int_{t}^{{{T}}}\!\!\!\!L\!\left({{x}}(t)\right)\!dt .
    \end{equation*}
\end{theorem}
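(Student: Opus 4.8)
The plan is to establish that the unique solution of the linear PDE (the backward Chapman–Kolmogorov PDE)
$$
\partial_t\xi=\frac{L\xi}{\lambda}-\widetilde{f}^\top\partial_x\xi-\frac{\hat{s}^2}{2}\mathrm{Tr}\!\left(\widetilde{G}\widetilde{G}^\top\partial^2_x\xi\right),
$$
with terminal condition $\xi(x(T),T)=\exp(-\phi(x(T))/\lambda)$, admits the stochastic representation given in the statement. This is a standard application of the Feynman–Kac theorem, so the proof will largely amount to verifying that our PDE is in the correct canonical form and then invoking the classical result (as already done for the analogous Lemma earlier in the dissertation, which cited \cite[Theorem 9.1.1]{oksendal2013stochastic}).

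\medskip

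\textbf{Step 1.} First I would introduce the uncontrolled diffusion associated with the dynamics \eqref{SDE hierar} by setting $\widetilde{u}=0$, giving $d\widehat{x}=\widetilde{f}(\widehat{x})\,dt+\hat{s}\,\widetilde{G}(\widehat{x})\,dw$, so that its infinitesimal generator acting on a test function $g$ is
$$
\mathcal{A}g=\widetilde{f}^\top\partial_x g+\frac{\hat{s}^2}{2}\mathrm{Tr}\!\left(\widetilde{G}\widetilde{G}^\top\partial_x^2 g\right).
$$
The PDE can then be rewritten compactly as $\partial_t\xi+\mathcal{A}\xi=\tfrac{L}{\lambda}\xi$, which is exactly the form to which Feynman–Kac applies, with potential term $L/\lambda$ and zero source.

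\textbf{Step 2.} Next I would define the candidate process $M(s)=\exp\!\big(-\tfrac{1}{\lambda}\int_t^s L(\widehat{x}(r))\,dr\big)\,\xi(\widehat{x}(s),s)$ along the trajectories of $\widehat{x}$, and apply Itô's lemma to $M(s)$. The drift terms of $dM$ collect precisely the combination $\partial_t\xi+\mathcal{A}\xi-\tfrac{L}{\lambda}\xi$, which vanishes by the PDE, leaving $M(s)$ a local martingale whose only surviving term is the Itô stochastic integral against $dw$. Taking expectations (after arguing the integral is a true martingale, so its expectation is zero), integrating from $t$ to $T$, and substituting the terminal condition $\xi(\widehat{x}(T),T)=\exp(-\phi(\widehat{x}(T))/\lambda)$ yields
$$
\xi(x,t)=\mathbb{E}_P\!\left[\exp\!\left(-\frac{1}{\lambda}\Big(\phi(\widehat{x}(T))+\int_t^T L(\widehat{x}(r))\,dr\Big)\right)\right]=\mathbb{E}_P\!\left[\exp\!\left(-\frac{1}{\lambda}S(x,t)\right)\right],
$$
which is the claimed representation. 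Existence and uniqueness then follow from the cited existence-uniqueness theory for this class of linear parabolic Cauchy problems.

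\medskip

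\textbf{The main obstacle} will be the technical regularity bookkeeping rather than the mechanics of the Itô computation: to invoke Feynman–Kac rigorously one needs sufficient smoothness and growth conditions on $\widetilde{f}$, $\widetilde{G}$, $L$, and $\phi$ to guarantee (i) that a classical $C^{2,1}$ solution $\xi$ exists, and (ii) that the stochastic integral arising in Step 2 is a genuine martingale so its expectation vanishes (a square-integrability / uniform-integrability condition). A subtlety specific to this setting is that $\widetilde{f}$ and $\widetilde{G}$ are themselves built from the null-space projectors and the PD feedback laws of Section \ref{Sec: task hierarchy-1}, so one must ensure these composite coefficients inherit the required regularity (in particular that the pseudo-inverses are well-defined, i.e. the Jacobians retain full row rank along trajectories). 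Consistent with the treatment of the analogous lemma earlier in the excerpt, I would not prove these regularity hypotheses from scratch but instead assume sufficient regularity and cite the standard Feynman–Kac existence-uniqueness result \cite[Theorem 9.1.1]{oksendal2013stochastic}, noting the nonzero-potential version needed here.
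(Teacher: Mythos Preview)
Your proposal is correct and takes essentially the same approach as the paper: the paper's proof consists solely of the sentence ``The proof follows from \cite[Theorem 9.1.1]{oksendal2013stochastic},'' and your sketch simply unpacks the mechanics of that citation (identifying the generator, applying It\^o's lemma to the exponential-discounted process, and invoking the terminal condition) before arriving at the same reference. Your additional remarks on regularity of the composite coefficients $\widetilde{f},\widetilde{G}$ are reasonable caveats but go beyond what the paper itself addresses.
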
 
\begin{proof}
    The proof follows from \cite[Theorem 9.1.1]{oksendal2013stochastic}.
\end{proof}
We now obtain the expression for the optimal control policy for Problem \ref{PI problem} via the following theorem.

\begin{theorem}
The optimal solution of Problem \ref{PI problem} exists, is unique and is given by
\begin{equation}\label{path integral control hierar}
 \widetilde{u}^*(x,t)dt=\mathcal{G}\left(x\right)\frac{\mathbb{E}_{P}\left[\exp{\left(-\frac{1}{\lambda}S\right)}\hat{s}\,\widetilde{G}\left(x\right)d{w}(t)\right]}{\mathbb{E}_{P}\left[\exp{\left(-\frac{1}{\lambda}S\right)}\right]}, 
\end{equation}
where the matrix $\mathcal{G}\left(x\right)$ is defined as $\mathcal{{G}}\!\left(x\right)\!={\widetilde{G}}^{\top}\!(x)\!\left(\widetilde{G}(x){\widetilde{G}}^{\top}\!(x)\right)^{-1}. $
\end{theorem}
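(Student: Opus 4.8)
The plan is to obtain the explicit control law by starting from the feedback representation of the optimizer and converting the spatial gradient of the value function into an expectation over uncontrolled rollouts. Existence and uniqueness come almost for free: Theorem \ref{theorem: Feynman-Kac} guarantees that $\xi(x,t)$ solving the linear PDE \eqref{linear PDE} exists, is unique, and is given by the Feynman-Kac integral \eqref{xi hierar}; through the logarithmic transformation \eqref{exp transformation hierar} this yields a unique $J$, and since the pointwise minimizer of the Hamiltonian in \eqref{HJB PDE hierar} is the unique feedback law $\widetilde{u}^*(x,t) = -\frac{1}{\alpha}\widetilde{G}^\top\partial_x J$, the optimal policy is unique. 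Substituting \eqref{exp transformation hierar} into this feedback law, using $\partial_x J = -\lambda\,\partial_x\xi/\xi$ together with the definition \eqref{lambda hierar} of $\lambda$ (so that $\lambda/\alpha = \hat{s}^2$), gives the intermediate identity $\widetilde{u}^*(x,t) = \hat{s}^2\,\widetilde{G}^\top(x)\,\partial_x\xi(x,t)/\xi(x,t)$. Everything then reduces to expressing $\widetilde{G}^\top\partial_x\xi$ as a noise-weighted expectation.

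Second, I would carry out the core computation: differentiating the Feynman-Kac representation \eqref{xi hierar} with respect to the starting point $x$. The key idea is to isolate the first infinitesimal step of the uncontrolled dynamics \eqref{SDE hierar} (with $\widetilde{u}=0$). Discretizing over $[t,t+\Delta t]$ gives $x_1 = x + \widetilde{f}(x)\Delta t + \hat{s}\widetilde{G}(x)\,\epsilon$ with $\epsilon \sim \mathcal{N}(0,\Delta t\,I)$, and the tower property lets me write $\xi(x,t) = \mathbb{E}_\epsilon[\,e^{-L(x)\Delta t/\lambda}\,\xi(x_1,t+\Delta t)\,]$ to leading order. Writing this as an integral against the Gaussian transition kernel $p(x_1\mid x)$ with mean $x+\widetilde{f}(x)\Delta t$ and covariance $\hat{s}^2\Delta t\,\widetilde{G}\widetilde{G}^\top$, the gradient $\partial_x\xi$ splits into a term where $\partial_x$ hits the kernel and terms where it hits $\widetilde{f}$, $\widetilde{G}$, and $L$. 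The leading contribution is the kernel score $\partial_x\log p(x_1\mid x) \approx \frac{1}{\hat{s}^2\Delta t}(\widetilde{G}\widetilde{G}^\top)^{-1}\hat{s}\widetilde{G}\epsilon$, so that $\widetilde{G}^\top\partial_x\xi \approx \frac{1}{\hat{s}^2\Delta t}\mathcal{G}\,\mathbb{E}[\hat{s}\widetilde{G}\epsilon\,\Psi]$ with $\mathcal{G}=\widetilde{G}^\top(\widetilde{G}\widetilde{G}^\top)^{-1}$ and $\Psi = e^{-L\Delta t/\lambda}\xi(x_1,t+\Delta t)$. Recombining the first-step factor with the remaining rollout reconstitutes the full path cost $S$, giving $\mathbb{E}[\hat{s}\widetilde{G}\epsilon\,\Psi]=\mathbb{E}_P[\exp(-S/\lambda)\,\hat{s}\widetilde{G}\,dw(t)]$; substituting back and cancelling $\hat{s}^2$ yields \eqref{path integral control hierar} in the limit $\Delta t \to dt$.

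The main obstacle is making the ``leading contribution'' step rigorous: I must show that the terms arising from $\partial_x\widetilde{f}$, $\partial_x\widetilde{G}$, and $\partial_x L$, as well as the $x$-dependence of the covariance $\hat{s}^2\Delta t\,\widetilde{G}(x)\widetilde{G}(x)^\top$ inside the Gaussian score, are all of higher order in $\Delta t$ and therefore vanish relative to the $O(1/\Delta t)$ kernel-score term once multiplied by the $O(\Delta t)$ normalization and passed to the continuum limit. This is the standard but delicate part of every path-integral derivation, and is cleanest to control by carrying the It\^o expansion to first order in $\Delta t$ and tracking which moments of $\epsilon$ survive. A secondary technical point is the invertibility of $\widetilde{G}(x)\widetilde{G}(x)^\top$ needed for $\mathcal{G}(x)$ to be well defined; I would either assume $\widetilde{G}=GN_k$ has full row rank on the relevant domain, or (as done elsewhere in the dissertation) pass to the noise-actuated subsystem partition where the corresponding block is invertible. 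Given these justifications, the derivation of \eqref{path integral control hierar} proceeds exactly as in the single-agent path-integral control literature, which is why the detailed algebra can be omitted.
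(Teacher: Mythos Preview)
Your proposal is correct and follows essentially the same approach as the paper: existence and uniqueness via the Feynman--Kac representation (Theorem~\ref{theorem: Feynman-Kac}) combined with the logarithmic transform, then the explicit formula by taking the gradient of $\xi$ with respect to $x$. The paper's proof is in fact just two sentences that cite \cite{theodorou2010generalized,williams2017model} for the gradient computation, whereas you have sketched out the standard first-step discretization/score-function argument that those references contain; so you have filled in exactly the detail the paper omits, including correctly flagging the invertibility of $\widetilde{G}\widetilde{G}^\top$ and the need to control the higher-order terms in $\Delta t$.
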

\begin{proof}
   The existence and uniqueness of the optimal solution follow from the existence and uniqueness of the linear PDE \eqref{linear PDE} (Theorem \ref{theorem: Feynman-Kac}). The solution \eqref{path integral control hierar} can be computed by taking the gradient of (\ref{xi hierar}) with respect to $x$ \cite{theodorou2010generalized, williams2017model}. 
\end{proof}

To evaluate expectations in (\ref{xi hierar}) and (\ref{path integral control hierar}) numerically, we discretize the uncontrolled dynamics and use Monte Carlo sampling \cite{williams2017model}. After discretizing the uncontrolled dynamics, we get $ x_{t+1} = x_t + \widetilde{f}(x_t)\Delta t + \hat{s}\,\widetilde{G}(x_t)\epsilon\sqrt{\Delta t}$, where the term $\epsilon$ is a time-varying vector of standard normal Gaussian random variables and $\Delta t$ is the step size. The term $S(x,t)$ is approximately given as $ S(x,t) \approx {\phi\left({{x_T}}\right)}\!+\!\!\sum_{i=1}^{{{\tau}}}L\!\left({{x_t}}\right)\!\Delta t $, where $\tau=\frac{T-t}{\Delta t}$. Now suppose we generate $M$ samples of trajectories. Then we can approximate \eqref{path integral control hierar} as 
\begin{equation}\label{u after discretization}
   \!\!\! \widetilde{u}^*(x,t)\approx\mathcal{G}\left(x_t\right)\frac{\sum_{j=1}^{M}\left[\exp{\left(-\frac{1}{\lambda}S(x,t)\right)}\hat{s}\,\widetilde{G}\left(x_t\right)\epsilon\right]}{\sum_{j=1}^{M}\left[\exp{\left(-\frac{1}{\lambda}S(x,t)\right)}\sqrt{\Delta t}\right]}.  
\end{equation}
Fig. \ref{Fig. comparison} shows the comparison of the conventional task hierarchical control approach and our proposed control approach, which integrates null-space projection with path integral control.
\begin{remark}
    Note that as the number of samples $M\rightarrow\infty$, \eqref{u after discretization} $\rightarrow$ \eqref{path integral control hierar} i.e., path integral controller yields a globally optimal policy as $M\rightarrow\infty$.
\end{remark}
\begin{remark}
Increasing the diffusion coefficient $\hat{s}$ allows the system to explore a wider range of possibilities, but it also introduces greater noise into the control input. Therefore, $\hat{s}$ should be carefully selected to strike an appropriate balance between exploration and control noise.  
\end{remark}
The control input for each task in the hierarchy can be computed independently of the others. This allows for parallelization of the task hierarchy algorithm using GPUs, ensuring that the computational complexity remains manageable as additional tasks are introduced, provided a sufficient number of GPUs are available. As the number of agents increases, the dimensionality of the control inputs also grows. According to \cite{patil2024discrete}, in the case of the discrete-time path integral controller, the required sample size exhibits a \textit{logarithmic} dependence on the dimension of the control input. However, a formal analysis of sample complexity for the continuous-time path integral controller remains an open area for future research.

\section{Simulation Results}\label{Sec: simulations}
In this section, we present the simulation results for our proposed control framework.
\subsection{Single Agent Example}
Suppose a unicycle is navigating in a 2D space in the presence of an obstacle. The states of the unicycle model ${x}=[{p}_x \; {p}_y \; {s}\;\; \theta ]^\top$ consist of its $x-y$ position $p \coloneqq [{p}_x \; {p}_y]^\top$, speed ${s}$ and heading angle $\theta\in[0, 2\pi]$. The configuration of the system $q\coloneqq[{p}_x \; {p}_y \; \theta ]^\top$ consists of its position $p$ and the heading angle $\theta$. The system dynamics are given by the following equation:
\begin{equation}\label{unicycle}
    \begin{bmatrix}
        \dot{p}_x(t)\\ \dot{p}_y(t)\\ \dot{s}(t)\\ \dot{\theta}(t)
    \end{bmatrix} =  \begin{bmatrix} s(t)\cos (\theta(t)) \\ s(t)\sin (\theta(t))\\ 0\\ 0 \end{bmatrix} + \begin{bmatrix}
        0 & 0\\
        0 & 0\\
        1 & 0\\
        0 & 1
    \end{bmatrix} \begin{bmatrix} a(t)\\ \omega(t)  \end{bmatrix}.
\end{equation}
The control input $u\coloneqq[ a \; \omega ]^\top$ consists of acceleration $a$ and angular speed $\omega$. The simulation is set with $T=10$ seconds and $\Delta t = 0.01$ seconds. This unicycle system has to accomplish the following two tasks in descending order of importance: 1) obstacle avoidance and 2) move-to-goal. We design a proportional-derivative (PD) controller for the obstacle avoidance task and a path integral controller for the move-to-goal task.
\subsubsection{Obstacle-avoidance} 
Suppose the obstacle is placed at $c=[
    c_x \; c_y]^\top$ having radius $r$. 
In the presence of an obstacle in the advancing direction, the robot aims to keep it at a safe distance from the obstacle. The obstacle avoidance task becomes active when the robot is within a certain threshold distance and moving toward the obstacle. The task variable for obstacle avoidance $\sigma_1$ is defined as:
\begin{equation}\label{sigma1}
    \sigma_{1} =  \|p-c\|, 
\end{equation} 
and the desired value of the task variable be $\sigma_{1,d} =  r$. Differentiating \eqref{sigma1} with respect to $t$, we get 
\begin{equation}\label{sigma1_dot}
    \dot{\sigma}_1 = J_1v
\end{equation}
\begin{align*}
    J_{1} = \begin{bmatrix}
        \frac{p_{x}-c_x}{\|p-c\|} & \frac{p_{y}-c_y}{\|p-c\|} & 0
    \end{bmatrix},\quad v = \begin{bmatrix}
        s\cos\theta \\ s\sin\theta \\ \omega
    \end{bmatrix}.
\end{align*}
In order to devise the control input for task 1 $u_1\coloneqq[ a_1 \; \omega_1 ]^\top$, we further differentiate \eqref{sigma1_dot} and get
\begin{align}\label{sigma1_ddot}
    \ddot{\sigma}_1 = \delta_1 + \Lambda_1\begin{bmatrix}
        a_1 \\ \omega_1
    \end{bmatrix},
\end{align}
 where $\delta_1$ and $\Lambda_1$ are defined as:
\begin{equation}\label{delta1}
\delta_1 = \frac{\left(p_y - c_y\right)^2 s^2 \cos^2\theta + \left(p_x - c_x\right)^2 s^2 \sin^2\theta} {\|p-c\|^3},    
\end{equation}
\begin{equation}\label{lambda1}
   \Lambda_1 = \begin{bmatrix}
        \frac{p_x-c_x}{\|p-c\|}\cos\theta + \frac{p_y-c_y}{\|p-c\|}\sin\theta \\ \frac{p_y-c_y}{\|p-c\|}s\cos\theta + \frac{p_x-c_x}{\|p-c\|}s\sin\theta
    \end{bmatrix}^\top.
\end{equation}
Adding the feedback term to \eqref{sigma1_ddot} similar to \eqref{q ddot PD}, we get
\begin{equation*}
    u_1 = \begin{bmatrix}
        a_1 \\ \omega_1
    \end{bmatrix} = \Lambda_1^\dagger\left(\ddot{\sigma}_{1,d} + K_{p,1} \widetilde{\sigma}_{1} + K_{d,1} \frac{d\widetilde{\sigma}_1}{dt} - \delta_1\right)
\end{equation*}
where $\widetilde{\sigma}_1 = \sigma_{1,d} - \sigma_{1} $, and $K_{p,1}$ and $K_{d,1}$ are the proportional and derivative gains respectively for task 1.
\subsubsection{Move-to-goal}
In this task, the robot must reach the goal position  $p_g \coloneqq [
    p_{g,x} \; p_{g, y}]^\top$. The task variable $\sigma_2$ is given as
\begin{equation}\label{sigma2}
    \sigma_2 = p = \begin{bmatrix}
        p_x \\p_y
    \end{bmatrix}
\end{equation}
and the desired value of the task variable will be $\sigma_{2,d} = p_g$. \par
First, we design the control input $u_2 = [
    a_2 \; \omega_2]^\top$ using a PD controller similar to task 1. Differentiating \eqref{sigma2} with respect to time, we get 
\begin{equation}\label{sigma2_dot}
    \dot{\sigma}_2 = J_2v,\quad  J_{2} = \begin{bmatrix}
        1 & 0 & 0\\
        0 & 1 & 0
    \end{bmatrix},\quad v = \begin{bmatrix}
        s\cos\theta \\ s\sin\theta\\ \omega
    \end{bmatrix}.
\end{equation}
Differentiating \eqref{sigma2_dot} further we get  
\begin{align}\label{sigma2_ddot}
    \ddot{\sigma}_2 = \Lambda_2\begin{bmatrix}
        a_2 \\ \omega_2
    \end{bmatrix}, \quad  \Lambda_2 = \begin{bmatrix}
        \cos\theta & -s\sin\theta\\
        \sin\theta & s\cos\theta
    \end{bmatrix}.
\end{align}
Adding the feedback term to \eqref{sigma2_ddot} similar to \eqref{q ddot PD}, we get
\begin{equation*}
    u_2 = \begin{bmatrix}
        a_2 \\ \omega_2
    \end{bmatrix} = \Lambda_2^\dagger\left(\ddot{\sigma}_{2,d} + K_{p,2} \widetilde{\sigma}_{2} + K_{d,2} \frac{d\widetilde{\sigma}_2}{dt}\right)
\end{equation*}
where $\widetilde{\sigma}_2 = \sigma_{2,d} - \sigma_{2} $, and $K_{p,2}$ and $K_{d,2}$ are the proportional and derivative gains respectively for task 2. 

\begin{figure}[h]
    \centering
      \begin{tabular}{c c}
    \includegraphics[scale=0.45]{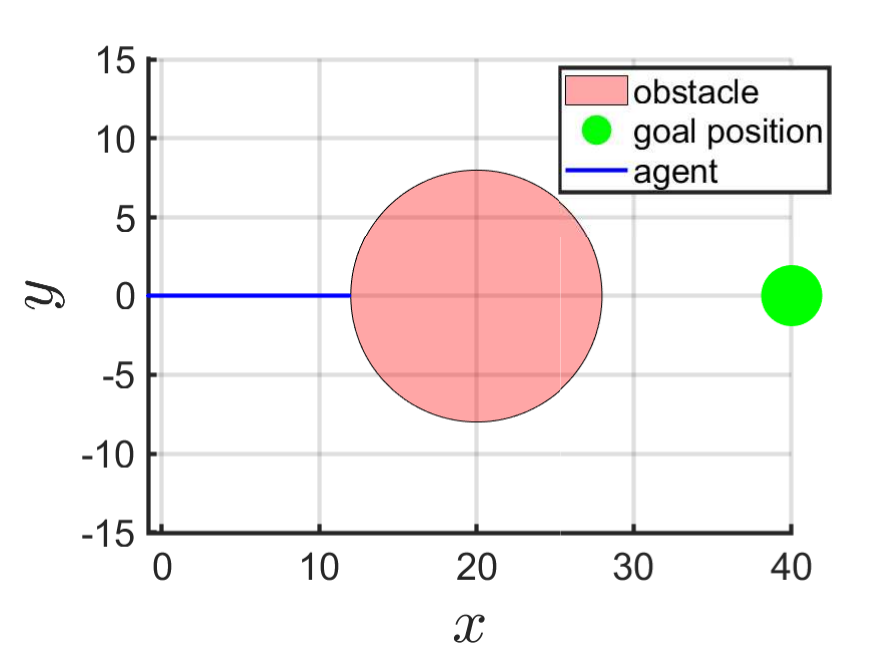} &\includegraphics[scale=0.45]{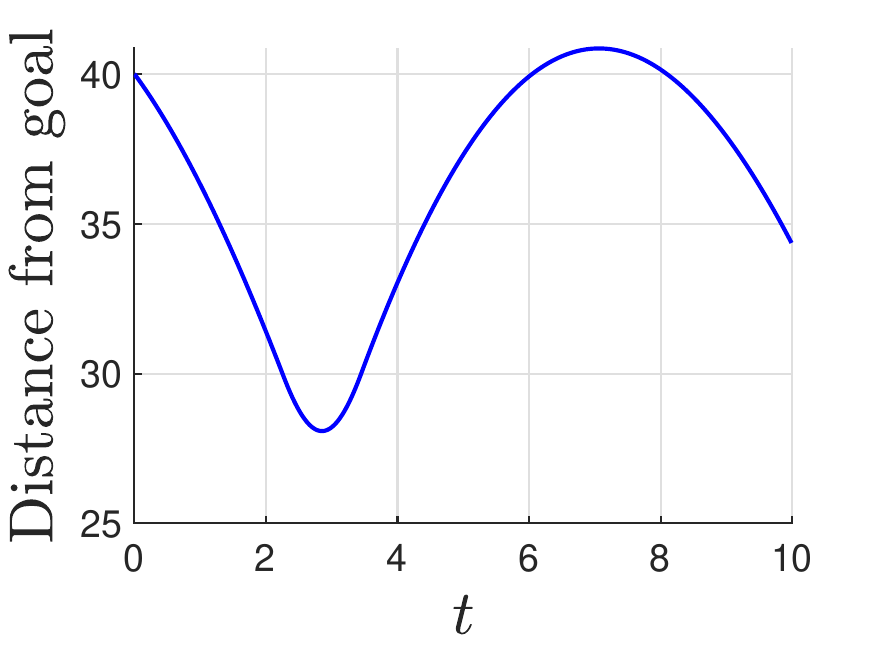} \\
     (a) Path followed without   & (b) Distance from the \\ 
     path integral controller & goal over time
      \end{tabular}
        \caption{Results of single-agent example without the path integral controller} 
        \label{Fig. single agent}
\end{figure}

Next, we design the control input $u_2$ via the path integral controller. The perturbed dynamics are given by \eqref{SDE hierar} with the diffusion coefficient $\hat{s}=0.1$. We formulate the cost function \eqref{PI cost} with 
\begin{equation*}
    L(x(t)) =  0.07\|{p(t) - p_g }\|,\;
    \phi(x(T)) = L(x(T)), \; \alpha =10.
\end{equation*}
The number of Monte Carlo samples has been set to $10^4$. \par
Lastly, the control input $u_2$ is projected onto the null space of task 1 and we get the final control input as $ u = u_1 + (I - \Lambda_1^\dagger\Lambda_1)u_2$ where $I$ is the identity matrix of suitable dimensions. 
\begin{figure}[h]
    \centering
      \begin{tabular}{c c}
     \includegraphics[scale=0.45]{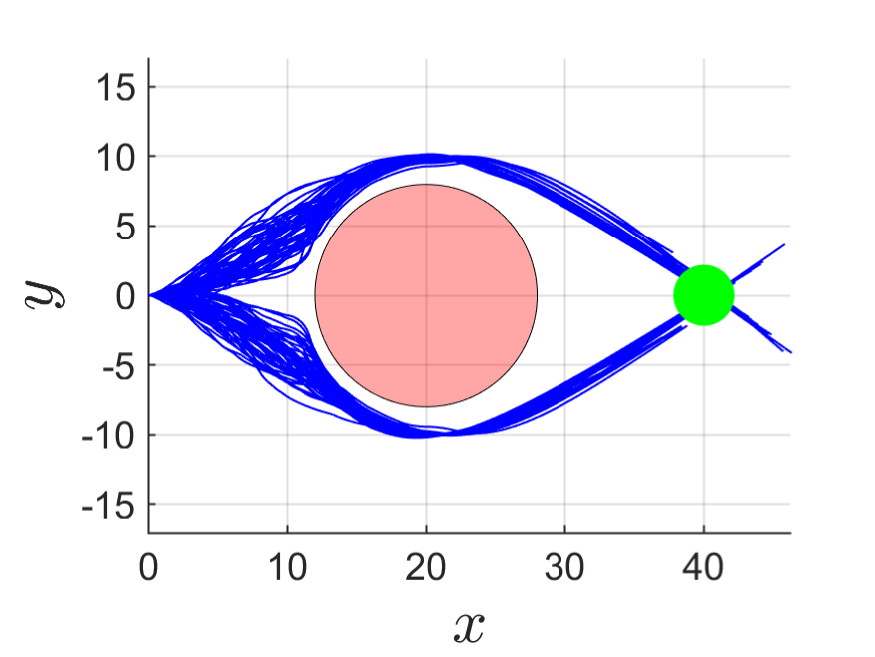} &\includegraphics[scale=0.45]{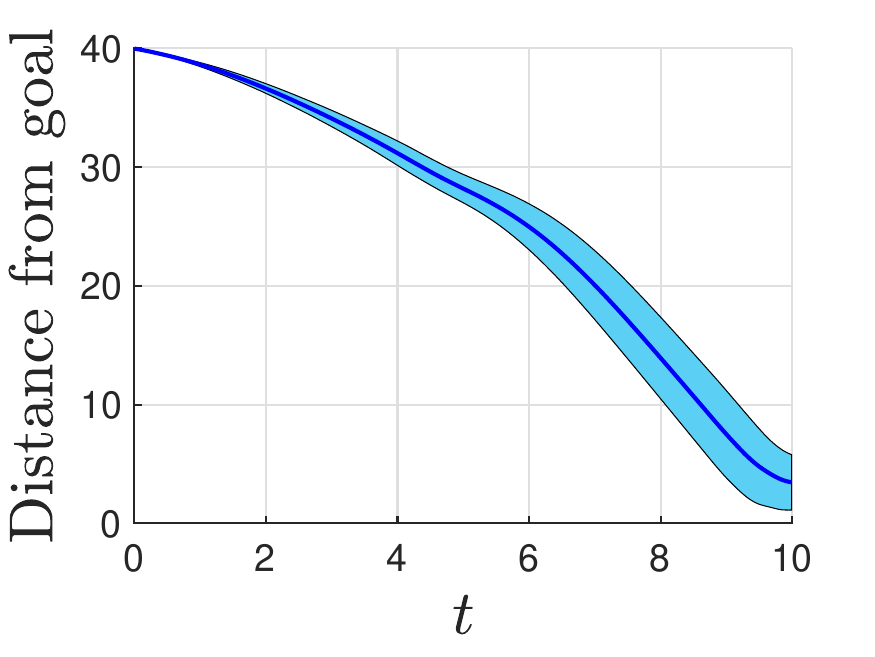} \\
     (a) Paths followed with   & (b) Mean distance from goal  \\
     path integral controller & $\pm$ standard deviation
      \end{tabular}
        \caption{Results of single-agent example using the path integral controller} 
        \label{Fig. single agent PI}
\end{figure}\par
Fig. \ref{Fig. single agent}(a) represents the results of the hierarchical controller when both tasks are controlled using PD controllers. As we can see, the robot initially moves towards the goal however, when it comes in the vicinity of the obstacle it drives itself away from the obstacle, as the obstacle avoidance task has a higher priority. The PD controllers get stuck in a local minima, the robot keeps oscillating as shown in Fig. \ref{Fig. single agent}(b) and never crosses the obstacle to reach the goal.\par
In Fig. \ref{Fig. single agent PI}, we show the results of the hierarchical controller when the first task is designed using a PD controller and the second task is designed using a path integral controller. Note that the path integral controller is designed by perturbing the control input $\widetilde{u}$ via a Brownian motion \eqref{SDE hierar}. Therefore, the outcome of the path integral controller is stochastic. In Fig. \ref{Fig. single agent PI}(a), we plot 100 trajectories obtained by the devised hierarchical controller. We can see that all the trajectories successfully avoid the obstacle by going around it and reaching the goal without getting stuck in local minimas. Fig. \ref{Fig. single agent PI}(b) shows the mean distance to the goal over time, along with the standard deviation across the 100 trajectories. The solid blue line represents the mean over the 100 trajectories and the sky-blue shaded area represents one standard deviation from the mean. 

\subsection{Two Agents Example}
Suppose two unicycles are navigating in a 2D space in the presence of an obstacle. The unicycle dynamics follow the form in \eqref{unicycle} where the state for each unicycle is defined as ${x}^{(i)}=[{p}_{x}^{(i)} \; {p}_{y}^{(i)} \; {s}^{(i)}\;\; \theta^{(i)} ]^\top$, the configurations as $q^{(i)}=[{p}_{x}^{(i)} \; {p}_{y}^{(i)} \; \theta^{(i)} ]^\top$, and the control inputs as $u^{(i)}=[ a^{(i)} \; \omega^{(i)} ]^\top$ for $i=\{1,2\}$. The simulation is set with $T=10$ seconds and $\Delta t = 0.1$ seconds. This two-unicycle system has to accomplish the following three tasks in descending order of importance: 1) obstacle avoidance, 2) steering the centroid of the robots toward a goal position and 3) maintaining a specific distance between the two unicycles. We design a proportional-derivative (PD) controller for tasks 1) and 3), and a path integral controller for task 2).
\subsubsection{Obstacle-avoidance} 
Suppose the obstacle is placed at $c=[
    c_x \; c_y]^\top$ having radius $r$.
Similar to the single agent case, the obstacle avoidance task becomes active when any of the robots is within a certain threshold distance and moving toward the obstacle. Let $\sigma_{1}^{(i)}$ represent the task variable for the robot $i$ and $\sigma_{1,d}$ represent the desired task value for both the robots. $\sigma_{1}^{(i)}$ and $\sigma_{1,d}$ can be written as 
\begin{align}\label{sigma1 2agent}
    \sigma_{1}^{(i)} =  \|p^{(i)}-c\|, \quad i=\{1,2\},\quad     \sigma_{1,d} =  r.
\end{align} 
Similar to the single agent case, we differentiate \eqref{sigma1 2agent} twice with respect to time for both robots and get 
\begin{align}\label{sigma1_ddot 2agent}
    \ddot{\sigma}_{1}^{(i)} = \delta_{1}^{(i)} + \Lambda_{1}^{(i)}\begin{bmatrix}
        a_1^{(i)} \\ \omega_1^{(i)}
    \end{bmatrix}
\end{align}
where $\delta_1^{(i)}$ and $\Lambda_1^{(i)}$ can be defined similar to \eqref{delta1}, \eqref{lambda1}:



Finally, adding the feedback term to \eqref{sigma1_ddot 2agent} similar to \eqref{q ddot PD}, we get the control input for obstacle avoidance:
\begin{equation*}
    u_1^{(i)} = \begin{bmatrix}
        a_1^{(i)} \\ \omega_1^{(i)}
    \end{bmatrix} = \Lambda_1^{(i)^\dagger}\!\!\!\left(\!\!\ddot{\sigma}_{1,d} + K_{p,1}^{(i)} \widetilde{\sigma}_{1}^{(i)} + K_{d,1}^{(i)} \frac{d\widetilde{\sigma}_1^{(i)}}{dt} - \delta_1^{(i)}\!\!\right)
\end{equation*}
where $\widetilde{\sigma}_1^{(i)} = \sigma_{1,d} - \sigma_{1}^{(i)} $, and $K_{p,1}^{(i)}$ and $K_{d,1}^{(i)}$ are the proportional and derivative gains respectively for task 1 and for $i=\{1,2\}$. Also, note that 
\begin{equation*}
    \Lambda_1 = \begin{bmatrix}
        \Lambda_1^{(1)} & 0\\
        0 &  \Lambda_1^{(2)}
    \end{bmatrix}, \quad \Lambda_1^\dagger = \begin{bmatrix}
        \Lambda_1^{(1)^\dagger} & 0\\
        0 &  \Lambda_1^{(2)^\dagger}
    \end{bmatrix}.
\end{equation*}
\subsubsection{Steering the robots' centroid toward a goal position}
This task involves steering the centroid of the robots toward a goal position $p_g$. The task variable $\sigma_2$ is given as
\begin{equation}\label{sigma2 2agents}
    \sigma_2 = \frac{p^{(1)} + p^{(2)}}{2}
\end{equation}
and the desired value of the task variable will be $\sigma_{2,d} = p_g$. \par
First, we design the control input $u_2 = [
    a_2^{(1)} \; \omega_2^{(1)} \; a_2^{(2)} \; \omega_2^{(2)}]^\top$ using a PD controller. Differentiating \eqref{sigma2 2agents} twice with respect to time we get 
\begin{align}\label{sigma2_ddot 2agents}
    \!\!\ddot{\sigma}_2 = \Lambda_2\!\!\begin{bmatrix}
        a_2^{(1)} \\ \omega_2^{(1)} \\ a_2^{(2)} \\ \omega_2^{(2)}
    \end{bmatrix}\!\!, \quad \!\!\!\!\! \Lambda_2 \!\!=\!\!\frac{1}{2} \!\!\begin{bmatrix}
        \cos\theta^{(1)} & \sin\theta^{(1)}\\
        -s^{(1)}\sin\theta^{(1)} & s^{(1)}\cos\theta^{(1)}\\
        \cos\theta^{(2)} & \sin\theta^{(2)}\\
        -s^{(2)}\sin\theta^{(2)} & s^{(2)}\cos\theta^{(2)}\\
    \end{bmatrix}^\top\!\!\!\!\!.
\end{align}
Adding the feedback term to \eqref{sigma2_ddot 2agents} similar to \eqref{q ddot PD}, we get
\begin{equation*}
    u_2 = \begin{bmatrix}
        a_2^{(1)} \\ \omega_2^{(1)}\\a_2^{(2)} \\ \omega_2^{(2)}
    \end{bmatrix} = \Lambda_2^\dagger\left(\ddot{\sigma}_{2,d} + K_{p,2} \widetilde{\sigma}_{2} + K_{d,2} \frac{d\widetilde{\sigma}_2}{dt}\right)
\end{equation*}
where $\widetilde{\sigma}_2 = \sigma_{2,d} - \sigma_{2} $, and $K_{p,2}$ and $K_{d,2}$ are the proportional and derivative gains respectively for task 2. \par

Next, we design the control input for task 2) via a path integral controller. The perturbed dynamics are given by \eqref{SDE hierar} with the diffusion coefficient $\hat{s}=0.1$. We formulate the cost function \eqref{PI cost} with $\alpha =10$,
\begin{equation*}
   L(x(t)) =  0.21\left\lVert{\frac{p^{(1)}(t) + p^{(2)}(t)}{2}- p_g }\right\rVert;
    \phi(x(T)) = L(x(T)).
\end{equation*}
The number of Monte Carlo samples has been set to $10^4$. 
\subsubsection{Maintaining a specific distance between two unicycles}
In this task, the two unicycles must maintain the distance $l$ between each other. The task variable $\sigma_3$ is given as
\begin{equation}\label{sigma3 2agents}
    \sigma_3 = \frac{1}{2}\left(p^{(1)} - p^{(2)}\right)^\top\left(p^{(1)} - p^{(2)}\right) 
\end{equation} 
and the desired value of the task variable will be $\sigma_{3,d} = \frac{l^2}{2}$.
Differentiating \eqref{sigma3 2agents} twice with respect to time we get 

\begin{equation}\label{sigma3_ddot 2agents}
  \ddot{\sigma}_{3} = \delta_{3} + \Lambda_{3}\begin{bmatrix}
        a_3^{(1)} \\ \omega_3^{(1)} \\ a_3^{(2)} \\ \omega_3^{(2)}
    \end{bmatrix}
\end{equation}
\[\delta_3 = s^{(1)^2} -2s^{(1)}s^{(2)}\left(\cos\theta^{(1)}\!\!\cos\theta^{(2)}+\sin\theta^{(1)}\sin\theta^{(2)}\right)+ s^{(2)^2},\] 
\begin{equation*}
  \!\!\! \Lambda_3 \!\! =\!\! \begin{bmatrix}
        \left(p_x^{(1)}\!\!-\!p_x^{(2)}\right)\cos\theta^{(1)}\!\! + \!\! \left(p_y^{(1)}\!\!-\!p_y^{(2)}\right)\sin\theta^{(1)}\\
        
        \left(p_x^{(2)}\!\!-\!p_x^{(1)}\right)s^{(1)}\sin\theta^{(1)} \!\!+ \!\!\left(p_y^{(1)}\!\!-\!p_y^{(2)}\right)s^{(1)}\cos\theta^{(1)}\\

        \left(p_x^{(2)}\!\!-\!p_x^{(1)}\right)\cos\theta^{(2)}\!\! + \!\!\left(p_y^{(2)}\!\!-\!p_y^{(1)}\right)\sin\theta^{(2)}\\

        \left(p_x^{(1)}\!\!-\!p_x^{(2)}\right)s^{(2)}\sin\theta^{(2)}\!\! +\!\! \left(p_y^{(2)}\!\!-\!p_y^{(1)}\right)s^{(2)}\cos\theta^{(2)}
    \end{bmatrix}^{\!\!\top}\!\!\!\!\!. 
\end{equation*}
Finally, adding the feedback term to \eqref{sigma3_ddot 2agents} similar to \eqref{q ddot PD}, we get
\begin{equation*}
    u_3 = \begin{bmatrix}
        a_3^{(1)} \\ \omega_3^{(1)}\\a_3^{(2)} \\ \omega_3^{(2)}
    \end{bmatrix} = \Lambda_3^\dagger\left(\ddot{\sigma}_{3,d} + K_{p,3} \widetilde{\sigma}_{3} + K_{d,3} \frac{d\widetilde{\sigma}_3}{dt}-\delta_3\right)
\end{equation*}
where $\widetilde{\sigma}_3 = \sigma_{3,d} - \sigma_{3} $, and $K_{p,3}$ and $K_{d,3}$ are the proportional and derivative gains respectively for task 3.\par

Lastly, the control input of the second task $u_2$ is projected onto the null space of task 1, and the control input of task 3 $u_3$ is projected onto the null spaces of both task 1 and task 2. This yields the overall control input as:
\begin{equation*}
    u = u_1 + (I - \Lambda_1^\dagger\Lambda_1)\left(u_2 + (I - \Lambda_2^\dagger\Lambda_2)u_3 \right)
\end{equation*}
where $I$ is the identity matrix of suitable dimensions. \par
In this experiment, we set the desired distance between the agents to $l = 0.5$ with the initial positions of the unicycles at $[
    p_x^{(1)} \; p_y^{(1)}\;  p_x^{(2)} \;  p_y^{(2)}]^\top = [
    -4.5 \;\; 0 \; -4 \;\; 0]^\top$. Fig. \ref{Fig. two agent}(a) represents the results of the hierarchical controller when all three tasks are controlled using PD controllers. As we can see, the centroid of the two unicycles initially moves towards the goal however, when the unicycles come in the vicinity of the obstacle they drive themselves away from the obstacle, as the obstacle avoidance task has a higher priority. Due to the limitations of the PD controllers, which get stuck in local minima, the unicycles begin to oscillate and never successfully cross the obstacle to reach the goal, as seen in Fig. \ref{Fig. two agent}(b).

\begin{figure}[h]
    \centering
      \begin{tabular}{c c}
    \includegraphics[scale=0.45]{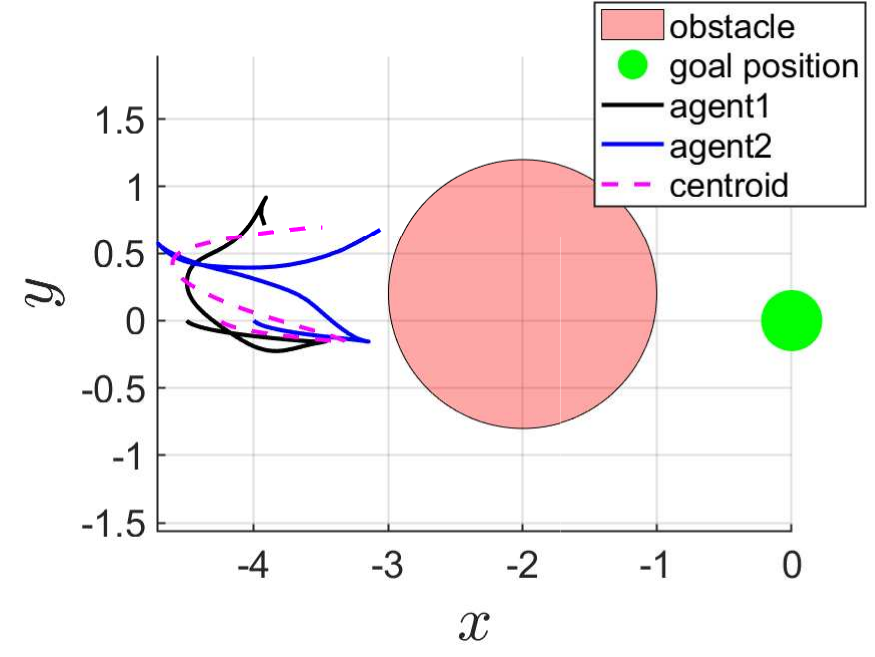} &\includegraphics[scale=0.45]{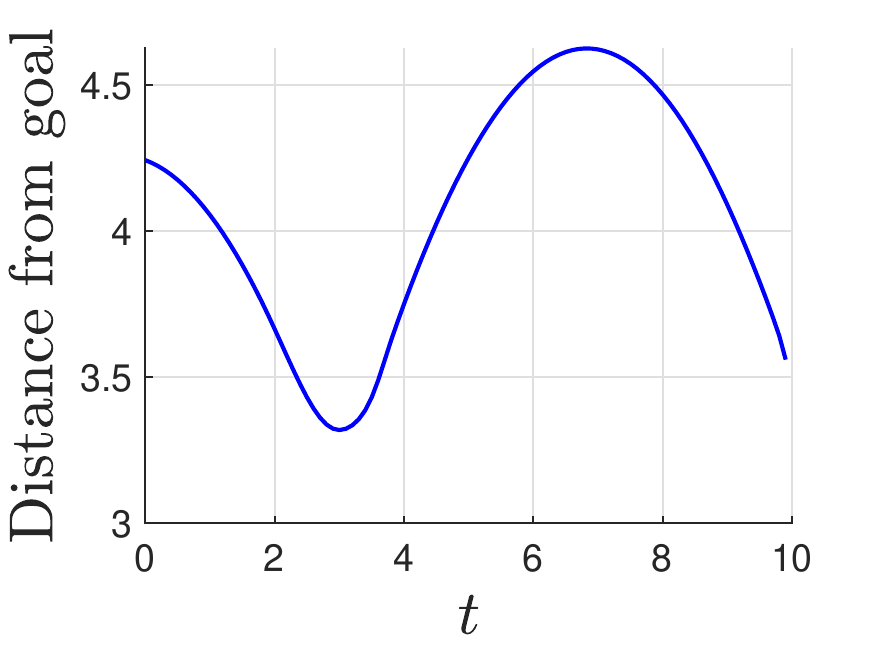} \\
     (a) Path followed without  & (b) Distance from the goal \\
     path integral controller & over time
      \end{tabular}
        \caption{Results of two-agents example without the path integral controller} 
        \label{Fig. two agent}
\end{figure}

In Fig. \ref{Fig. two agent PI}, we present the results using the hierarchical controller, where tasks 1) and 3) are managed with PD controllers, while task 2) is handled using a path integral controller. In Fig. \ref{Fig. two agent PI}(a), we plot 100 trajectories of the two agents and their centroid. The hierarchical controller successfully ensures obstacle avoidance and steers the centroid towards the goal in all trajectories, without getting stuck in local minima. Fig. \ref{Fig. two agent PI}(b) shows the mean distance of the centroid from the goal over time, along with the standard deviation across the 100 trajectories. The solid blue line represents the mean and the sky-blue shaded area represents one standard deviation from the mean. 

\begin{figure}[h]
    \centering
      \begin{tabular}{c c}
     \includegraphics[scale=0.45]{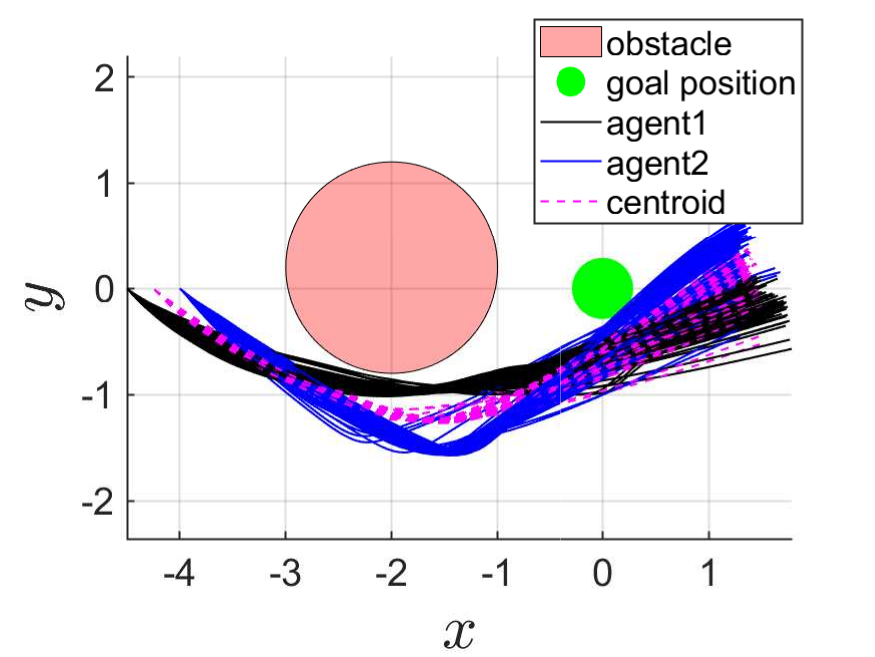} &\includegraphics[scale=0.45]{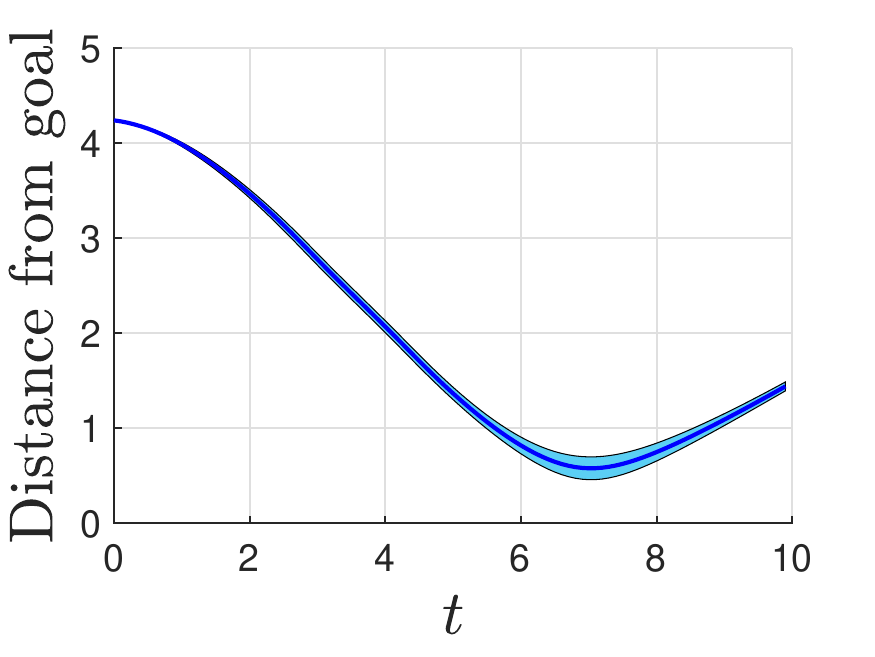} \\
     (a) Paths followed with  & (b) Mean distance from goal  \\
     path integral controller & $\pm$ standard deviation
      \end{tabular}
        \caption{Results of two-agents example using the path integral controller} 
        \label{Fig. two agent PI}
\end{figure}\par

\begin{figure}[h]
    \centering
      \begin{tabular}{c c}
     \includegraphics[scale=0.45]{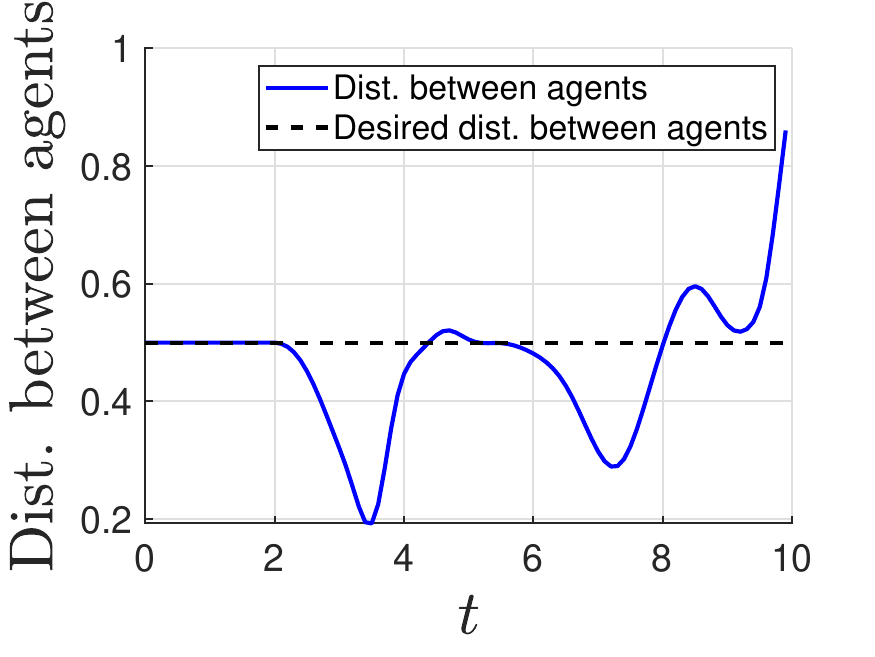} &\includegraphics[scale=0.45]{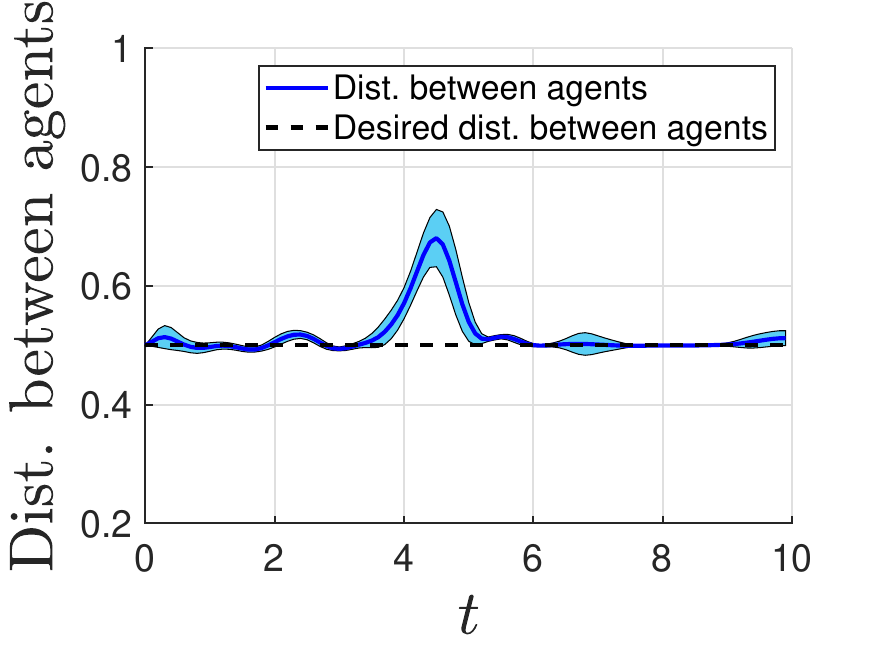} \\
     (a) Without path integral controller:  & (b) With path integral controller: Mean distance\\
     Distance between agents & between agents $\pm$ standard deviation
      \end{tabular}
        \caption{Distance between agents over time} 
        \label{Fig. dist bet agents}
\end{figure}
In Fig. \ref{Fig. dist bet agents}(a), we show the distance between the agents over time when the controller for task 2) is designed using a PD controller. Fig. \ref{Fig. dist bet agents}(b) shows the mean distance between the agents over time, using the path integral controller for task 2). The solid blue line represents the mean across 100 trajectories, while the shaded area corresponds to one standard deviation. This figure highlights that the path integral controller helps the agents better maintain the desired distance between each other compared to the PD controller.

\section{Publications}
\begin{itemize}
     \item \textbf{A. Patil}, R. Funada, T. Tanaka, L. Sentis, ``Task Hierarchical Control via Null-Space Projection and Path Integral Approach ," \textit{American Control Conference (ACC) 2025}
      \item M. Baglioni, \textbf{A. Patil}, L. Sentis, A. Jamshidnejad ``Hierarchical Optimal Control for Multi-UAV Best Viewpoint Coordination with Obstacle Avoidance," \textit{In preparation}
\end{itemize}
\section{Future Work}
For future work, we aim to incorporate the importance sampling techniques into the path integral controller, which offers improved sample efficiency compared to the standard approach. Additionally, we plan to extend the application of this framework to large-scale multi-robot systems and robotic manipulators with a high number of degrees of freedom.
\chapter[Deceptive Control]{Deceptive Control}
\label{Sec: deception}
\section{Motivation}

We consider a deception problem between a supervisor and an agent. The supervisor delegates an agent to perform a certain task and provides a reference policy to be followed in a stochastic environment. The agent, on the other hand, aims to achieve a different task and may deviate from the reference policy to accomplish its own task. The agent uses a deceptive policy to hide its deviations from the reference policy. In this work, we synthesize the optimal policies for such a deceptive agent.

We formulate the agent's deception problem using motivations from hypothesis testing theory. We assume that the supervisor aims to detect whether the agent deviated from the reference policy by observing the state-action paths of the agent. On the flip side, the agent's goal is to employ a deceptive policy that achieves the agent's task and minimizes the detection rate of the supervisor. We design the agent's deceptive policy that minimizes the Kullback-Leibler (KL) divergence from the reference policy while achieving the agent's task. The use of KL divergence is motivated by the log-likelihood ratio test, which is the most powerful detection test for any given significance level~\cite{cover1999elements}. Minimizing the KL divergence is equivalent to minimizing the expected log-likelihood ratio between distributions of the paths generated by the agent's deceptive policy and the reference policy. We also note that due to the Bratagnolle-Huber inequality~\cite{bretagnolle1978estimation}, for any statistical test employed by the supervisor, the sum of false positive and negative rates is lower bounded by a decreasing function of KL divergence between the agent's policy and the reference policy. Consequently, minimizing the KL divergence is a proxy for minimizing the detection rate of the supervisor. We represent the agent's task with a cost function and formulate the agent's objective function as a weighted sum of the cost function and the KL divergence.


We assume that the agent's environment follows discrete-time continuous-state dynamics. When the dynamics are linear, the supervisor's (stochastic) policies are Gaussian, and the cost functions are quadratic, minimizing a weighted sum of the cost function, and the KL divergence leads to solving a linear quadratic regulator problem. However, we consider a broader setting with potentially non-linear state dynamics, non-quadratic cost functions, and non-Gaussian reference policies. In this case, the agent's optimal deceptive policy does not necessarily admit a closed-form solution. While the agent's problem can be solved using backward dynamic programming, this approach suffers from the curse of dimensionality.

We show that, under the assumption of deterministic state dynamics, the optimal deceptive actions can be generated using path integral control without explicitly synthesizing a policy. In detail, we propose a two-step randomized algorithm for simulator-driven control for deception. At each time step, the algorithm first creates forward Monte Carlo samples of system paths under the reference policy. Then, the algorithm uses a cost-proportional weighted sampling method to generate a control input at that time step. We show that the proposed approach asymptotically converges to the optimal action distribution. Since Monte Carlo simulations can be efficiently parallelized, our approach allows the agent to generate the optimal deceptive actions online. 

\section{Literature Review}

Deception naturally occurs in settings where two parties with conflicting objectives coexist. The example domains for deception include robotics~\cite{shim2013taxonomy,dragan2015deceptive}, supervisory control settings~\cite{karabag2021deception,karabag2022exploiting}, warfare~\cite{lloyd2003art}, and cyber systems~\cite{wang2018cyber}.

We formulate a deception problem motivated by hypothesis testing. This problem has been studied for fully observable Markov decision processes~\cite{karabag2021deception}, partially observable Markov decision processes~\cite{karabag2022exploiting}, and hidden Markov models~\cite{keroglou2018probabilistic}. Different from \cite{karabag2021deception,karabag2022exploiting,keroglou2018probabilistic} that study discrete-state systems and directly solve an optimization problem for the synthesis of deceptive policies, we consider a nonlinear continuous-state system and provide a sampling-based solution for the synthesis of deceptive policies. In the security framework,  \cite{kung2016performance,bai2017data} study the detectability of an attacker in a stochastic control setting. Similar to our formulation, \cite{kung2016performance,bai2017data} provide a KL divergence-based optimization problem. While we consider an agent whose goal is to optimize a different cost function from the supervisor, \cite{kung2016performance,bai2017data} consider an attacker whose goal is to maximize the state estimation error of a controller.

KL divergence objective is also used in reinforcement learning~\cite{schulman2015trust,filippi2010optimism} to improve the learning performance and in KL control frameworks~\cite{todorov2007linearly,ito2022kullback} for the efficient computation of optimal policies. In \cite{ito2022kullback}, Ito et al. studied the KL control problem for nonlinear continuous-state space systems and characterized the optimal policies. Different from \cite{ito2022kullback}, we provide a randomized control algorithm based on path integral approach that converges to the optimal policy as the number of samples increases. Path integral control is a sampling-based algorithm employed to solve nonlinear stochastic optimal control problems numerically ~\cite{kappen2005path, theodorou2010generalized, williams2016aggressive}. It allows the policy designer to compute the optimal control inputs online using Monte Carlo samples of system paths. The Monte Carlo simulations can be massively parallelized on GPUs, and thus the path integral approach is less susceptible to the curse of dimensionality \cite{williams2017model}. 

\section{Contributions}
The contributions of this work are as follows:
\begin{enumerate}
    \item The work studies a problem of deception under supervisory control for continuous-state discrete-time stochastic systems. Given a reference policy, we formalize the synthesis of an optimal deceptive policy as a KL control problem and solve it using backward dynamic programming. 
    \item For the deterministic state dynamics, we propose a path-integral-based solution methodology for simulator-driven control. We develop an algorithm based on Monte Carlo sampling to numerically compute the optimal deceptive actions online. Furthermore, we show that the proposed approach asymptotically converges to the optimal control distribution of the deceptive agent.
    \item We present a numerical example to validate the derived simulator-driven control synthesis framework.
\end{enumerate}
\section*{Notations}
Let $(\mathcal{X}, \mathcal{B}(\mathcal{X}))$ be a measurable space where $\mathcal{X}\subseteq\mathbb{R}^n$ is a Borel set and $\mathcal{B}(\mathcal{X})$ is a Borel $\sigma$-algebra. Suppose $(\Omega, \mathcal{F}, \mathcal{P})$ is a probability space. An $( \mathcal{F}, \mathcal{B}(\mathcal{X}))$-measurable random variable $X$ is a function $X:\Omega\rightarrow\mathcal{X}$ whose probability distribution $P_X$ is defined by 
\begin{equation*}
  P_X(B) = \mathcal{P}(X\in B)\quad \forall B \in \mathcal{B}(\mathcal{X})
\end{equation*}
$P_{X_2|X_1}(\cdot|\cdot):\mathcal{B}(\mathcal{X}_2) \times \mathcal{X}_1 \rightarrow [0,1]$ represents a stochastic kernel on $\mathcal{X}_2$ given $\mathcal{X}_1$. For simplicity, we write $P_X(dx)$ and $P_{X_2|X_1}(dx_2|x_1)$ as $P(dx)$ and $P(dx_2|x_1)$. If $P_1$ and $P_2$ are probability distributions on $(\mathcal{X}, \mathcal{B}(\mathcal{X}))$ then, the Kullback-Leibler (KL) divergence from $P_1$ to $P_2$ is defined as
\begin{equation*}\label{eq: KL divergence def}
	D(P_2\|P_1)=\int_\mathcal{X} \log\frac{dP_2}{dP_1}(x)P_2(dx)
\end{equation*}
if the Radon-Nikodym derivative $\frac{dP_2}{dP_1}$ exists, and $D(P_2\|P_1)=+\infty$ otherwise. Throughout this dissertation, we use the natural logarithm. Let $\mathcal{T}=\{0, 1, ... , T\}$ be the set of discrete time indices. A set of variables $\{x_0, x_1, \hdots, x_T\}$ is denoted by $x_{0:T}$ and a Cartesian product of sets $\mathcal{X}_0\times\mathcal{X}_1\times\hdots\times\mathcal{X}_T$ is denoted by $\mathcal{X}_{0:T}$. $P_{X_{0:T}}(dx_{0:T})$ denotes the joint probability distribution of random variables $X_0, X_1, \hdots, X_T$ on $\left(\mathcal{X}_{0:T}, \mathcal{B}(\mathcal{X}_{0:T})\right)$. Table \ref{tab:notation deception} represents the mathematical notations frequently used in this chapter. 

\begin{table}
\begin{center}
\begin{tabular}{||c | c || c | c||} 
 \hline
 \textbf{Notation} & \textbf{Description} & \textbf{Notation} & \textbf{Description} \\ [0.5ex] 
 \hline\hline
 $P(dx_{t+1}|x_t, u_t)$ & agent state transition & $\mathcal{X}_t$ & state space \\ 
 \hline
 
 $\mathcal{U}_t$ & control input space & $R_{U_t|X_t}(\cdot|x_t)$ & reference policy \\ 
 \hline
 $Q_{U_t|X_t}(\cdot|x_t)$ & agent's policy & $C_{0:T}(x_{0:T}, u_{0:T-1})$ & path cost \\ 
 \hline
 $C_t(x_t, u_t)$ & stage cost & $C_T(x_T)$ & terminal cost \\ 
 \hline
 $\pi(x_{0:T}, u_{0:T-1})$ & log-likelihood ratio (LLR) & $\Pi$ & expected LLR \\ 
 \hline
 $ D(Q \|R)$ & KL divergence & $J_t(x_t)$ & value function \\ 
 \hline
  $ F_t(x_t, u_t)$ & deterministic state transition & $\delta$ & Dirac measure \\ 
 \hline
  $ Z_t(x_t)$ & exponentiated value function & &  \\ 
 [1ex] 
 \hline
\end{tabular}
\caption{Table of frequently used mathematical notation in Chapter \ref{Sec: deception}}
\label{tab:notation deception}
\end{center}
\end{table}

\section{Problem Formulation}\label{Sec: KL Control}
We consider a setting in which a supervisor contracts an agent to perform a certain task. Suppose the agent operates in a stochastic environment and follows discrete-time continuous-state dynamics. Let the state transition law of the agent be denoted by $P(dx_{t+1}|x_t, u_t):\mathcal{B}(\mathcal{X}_{t+1}) \times \mathcal{X}_{t} \times \mathcal{U}_{t} \rightarrow [0,1]$, where $\mathcal{X}_t\subseteq \mathbb{R}^n$ and $\mathcal{U}_t\subseteq \mathbb{R}^m$ be the spaces of states and control inputs at time step $t\in\mathcal{T}:=\{0, 1, \cdots, T\}$, respectively. Suppose a supervisor provides a (possibly stochastic) reference policy $\{R_{U_t|X_t}(\cdot|x_t)\}_{t=0}^{T-1}$ to the agent and expects the agent to follow the policy to accomplish a certain task. Here, $R_{U_t|X_t}:\mathcal{B}(\mathcal{U}_t) \times \mathcal{X}_t \rightarrow [0,1]$ is a stochastic kernel on $\mathcal{U}_t$ given $\mathcal{X}_t$. The agent, on the other hand, aims to achieve a different task by minimizing the following cost function, which we henceforth call as \textit{path cost}:
\begin{equation}\label{eq:path cost}
   C_{0:T}(x_{0:T}, u_{0:T-1}) \coloneqq \sum_{t=0}^{T-1} C_t(x_t, u_t)+C_T(x_T)
\end{equation}
where $C_t(\cdot, \cdot):\mathcal{X}_t\times \mathcal{U}_t\rightarrow \mathbb{R}$ for $t\in\mathcal{T}$ and $C_T(\cdot): \mathcal{X}_T\rightarrow \mathbb{R}$ represent the stage costs and the terminal cost, respectively. In order to minimize the path cost \eqref{eq:path cost}, the agent designs its policy (possibly stochastic) $\{Q_{U_t|X_t}(\cdot|x_t)\}_{t=0}^{T-1}$ that may deviate from the reference policy $\{R_{U_t|X_t}(\cdot|x_t)\}_{t=0}^{T-1}$. The agent also attempts to be stealthy to hide its deviations from the supervisor. While the agent executes its policy $Q$, suppose the supervisor observes its state-action paths $\{x_{0:T}, u_{0:T-1}\}$, and uses a likelihood ratio test to detect whether the agent deviates from the reference policy. According to the Neyman–Pearson lemma, the likelihood-ratio test is optimal among all simple hypothesis tests for a given significance level~\cite{cover1999elements}. In other words, we consider the worst-case scenario for the agent to be detected by the supervisor. Suppose the initial state $X_0 = x_0$ of the agent is known. We denote the joint probability distribution of the state-action paths induced via the reference policy by
\begin{align}\label{eq:def_ref_traj_dist}
 R_{X_{0:T}\times U_{0:T-1}}& (dx_{0:T}\times du_{0:T-1})=\prod_{t=0}^{T-1}\!\! P(dx_{t+1}|x_t, u_t)R(du_t|x_t), 
\end{align}
and the joint distribution induced via the agent's policy by 
\begin{align}\label{eq:def_dc_traj_dist}
    Q_{X_{0:T}\times U_{0:T-1}}& (dx_{0:T}\times du_{0:T-1})=\prod_{t=0}^{T-1}\!\! P(dx_{t+1}|x_t, u_t)Q(du_t|x_t). 
\end{align}
Given a path $\{x_{0:T}, u_{0:T-1}\}$ that is randomly sampled under the agent's policy, the supervisor computes the log-likelihood ratio (LLR) 
\begin{equation}
    \pi(x_{0:T}, u_{0:T-1})=\log \frac{dQ_{X_{0:T}\times U_{0:T-1}}}{dR_{X_{0:T}\times U_{0:T-1}}}(x_{0:T}, u_{0:T-1}) \label{eq:LLR}.
\end{equation} The supervisor decides that the agent uses the reference policy $R$ if $\pi(x_{0:T}, u_{0:T-1})\leq c$, and deviates from $R$ otherwise. Here $c$ is a constant chosen by the supervisor to obtain a specified significance level. An agent not wanting to be detected by the supervisor must minimize the LLR \eqref{eq:LLR}. However, since the agent's trajectories are stochastic, the agent cannot directly minimize the LLR. We consequently consider that the agent's goal is to minimize the expected LLR as follows:
\begin{equation}\label{eq:ELLR}
   \Pi = \mathbb{E}_Q\left[\log \frac{dQ_{X_{0:T}\times U_{0:T-1}}}{dR_{X_{0:T}\times U_{0:T-1}}}(x_{0:T}, u_{0:T-1})\right]
\end{equation} 
where $\mathbb{E}_Q[\cdot]$ represents the expectation with respect to the probability distribution $Q$ \eqref{eq:def_dc_traj_dist}. Note that equation \eqref{eq:ELLR} also defines the Kullback-Leibler (KL) divergence $D(Q \|R)$ between the agent's distribution $Q$  and the reference distribution $R$. Now we show that $D(Q \|R)$ can be written as the stage-additive KL divergence between $Q_{U_t|X_t}$ and $R_{U_t|X_t}$:
\begin{subequations}\label{eq:RN as product} \allowdisplaybreaks
\begin{align}
   &\int_{B} Q_{X_{0:T}\times U_{0:T-1}}(dx_{0:T}\!\times\! du_{0:T-1})\nonumber\\
   =&\int_{B}\prod_{t=0}^{T-1}\!\! P(dx_{t+1}|x_t, u_t)Q(du_t|x_t)\label{eq:RN as product1}\\
   =&\int_{B}\!\!\left(\prod_{t=0}^{T-1}\frac{dQ_{U_t|X_t}}{dR_{U_t|X_t}}\!\left(x_t,u_t\right)\!\!\right)\!\!\prod_{t=0}^{T-1}\!\! P(dx_{t+1}|x_t, u_t)R(du_t|x_t)\label{eq:RN as product2}\\
   =&\!\!\!\int_{B}\!\!\left(\prod_{t=0}^{T-1}\frac{dQ_{U_t|X_t}}{dR_{U_t|X_t}}\!\left(x_t,u_t\right)\!\!\right)\!\!R_{X_{0:T}\times U_{0:T-1}}\!(dx_{0:T}\!\!\times\! du_{0:T-1})\label{eq:RN as product3}
\end{align}
\end{subequations}
where, $B$ is a Borel set belonging to the $\sigma-$algebra $\mathcal{B}\left(\mathcal{X}_{0:T}\times\mathcal{U}_{0:T-1}\right)$. The first equality \eqref{eq:RN as product1} follows from the definition \eqref{eq:def_dc_traj_dist}, the second equality \eqref{eq:RN as product2} by the definition of the Radon-Nikodym derivative \cite{durrett2019probability} and the last one \eqref{eq:RN as product3} from the definition \eqref{eq:def_ref_traj_dist}. Using \eqref{eq:RN as product}, we can write the Radon-Nikodym derivative $\frac{dQ_{X_{0:T}\times U_{0:T-1}}}{dR_{X_{0:T}\times U_{0:T-1}}}$ as follows:
\begin{equation}\label{eq: stagewise KL}
     \!\!\!\frac{dQ_{X_{0:T}\times U_{0:T-1}}}{dR_{X_{0:T}\times U_{0:T-1}}}\left(x_{0:T}, u_{0:T-1}\right) \!=\!\!\! \prod_{t=0}^{T-1} \frac{dQ_{U_t|X_t}}{dR_{U_t|X_t}}\left(x_t,u_t\right).
\end{equation}
Using \eqref{eq: stagewise KL}, we get the following:

\begin{align}
 D(Q\|P)=&\;\mathbb{E}_Q\left[\log \frac{dQ_{X_{0:T}\times U_{0:T-1}}}{dR_{X_{0:T}\times U_{0:T-1}}}(x_{0:T}, u_{0:T-1})\right]\nonumber\\
     =&\;\mathbb{E}_Q \left[\log \prod_{t=0}^{T-1} \frac{dQ_{U_t|X_t}}{dR_{U_t|X_t}}\left(x_t, u_t\right)\right]\nonumber\\
     =&\;\mathbb{E}_Q\left[\sum_{t=0}^{T-1}\log\frac{dQ_{U_t|X_t}}{dR_{U_t|X_t}}\left(x_t, u_t\right)\right]\label{eq:llr and KL}\\
     =&\;\mathbb{E}_Q\left[\sum_{t=0}^{T-1}D(Q_{U_t|X_t}(\cdot | X_t) \|R_{U_t|X_t}(\cdot | X_t))\right].\label{eq: KL cost}
\end{align}
Since the KL divergence $ D(Q \|R)$ is equivalent to the expected LLR \eqref{eq:ELLR}, in this work, the KL divergence is used as a proxy for the measure of the agent's deviations from the reference policy. 

Minimizing the KL divergence is in fact equivalent to minimizing the detection rate of an attacker for an ergodic process as proved in \cite{bai2017data}. While we do not consider an ergodic process, the use of KL divergence is still well-motivated by the Bretagnolle–Huber inequality~\cite{bretagnolle1978estimation}. Let \(\mathcal{E}\) be an arbitrary set of events that the supervisor will identify the agent as a deceptive agent, i.e., the agent followed \(Q\). According to the Bretagnolle–Huber inequality, we have
 \begin{equation}\label{eq: BH inequlity}
     \Pr(\mathcal{E}|R) + \Pr(\neg \mathcal{E}|Q) \geq \frac{1}{2} \exp(-D(Q||R))
 \end{equation}
 where $\Pr(\mathcal{E}|R)$ and $\Pr(\neg \mathcal{E}|Q)$ denote the supervisor's false positive and negative rates, respectively. The false positive rate is the probability that the supervisor will identify the well-intentioned agent as a deceptive agent, i.e., the agent's policy is \(R\), but the supervisor thinks that the agent has followed \(Q\). Similarly, the false negative rate is the probability that the supervisor will identify the deceptive agent as a well-intentioned agent. The Bretagnolle–Huber inequality \eqref{eq: BH inequlity} states that the sum of the supervisor's false positive and negative rates is lower bounded by a decreasing function of the KL divergence between the distributions \(Q\) and \(R\). 
Therefore, an agent wanting to increase the supervisor's false classification rate should minimize the KL divergence from $R$ to $Q$.\par

The goal of the agent is to design a deceptive policy $Q$ that minimizes the expected path cost $\mathbb{E}_Q\left[C_{0:T}(X_{0:T}, U_{0:T-1})\right]$ \eqref{eq:path cost} while limiting the KL divergence $D\left(Q\|R\right)$ \eqref{eq: KL cost}. Using \eqref{eq:path cost} and \eqref{eq: KL cost}, we propose the following KL control problem for the synthesis of optimal deceptive policies for the agent: 

\begin{problem}[Synthesis of optimal deceptive policy]\label{Prob:KL control}
\begin{equation}\label{eq:prob_KL_dc1_fixed_end}
\min_{\{Q_{U_t|X_t}\}_{t=0}^{T-1}} \mathbb{E}_Q \sum_{t=0}^{T-1}  \Big\{ C_t(X_t, U_t) +\lambda D(Q_{U_t|X_t}(\cdot | X_t) \|R_{U_t|X_t}(\cdot | X_t))\Big\} + \mathbb{E}_Q C_T(X_T)
\end{equation}
where $\lambda$ is a positive weighting factor that balances the trade-off between the KL divergence and the path cost.
\end{problem}
 We explain the above KL control problem via the following example:  
\begin{example}
Consider a drone that is contracted by a supervisor to perform a surveillance task over an area. The supervisor prefers the drone to operate at high speeds (policy \(R\)) to improve the efficiency of the surveillance. The operator of the drone, the agent, on the other hand, prefers the drone to operate in a battery-saving, safe mode (policy \(Q\)) to improve the longevity of the drone. The agent does not want to get detected by the supervisor and fired. Hence, the goal of the agent is to operate in a way that would balance the energy consumption (\(\mathbb{E}_Q\left[ C_{0:T}(X_{0:T}, U_{0:T-1})\right]\)) and the deviations from the behavior desired by the supervisor (\(D(Q||R)\)).
\end{example}

\section{Synthesis of Optimal Control Policies}
In this section, we solve Problem \ref{Prob:KL control} using backward dynamic programming and propose a policy synthesis algorithm based on path integral control. 

\subsection{Backward Dynamic Programming}
Notice that the cost function of Problem \ref{Prob:KL control} possesses the time-additive Bellman structure and, therefore, can be solved by utilizing the principle of dynamic programming \cite{bertsekas2012dynamic}. Define for each $t\in\mathcal{T}$ and $x_t\in\mathcal{X}_t$, the value function:

\begin{equation}\label{eq:value function}
J_t(x_t):=\inf_{\{Q_{U_k|X_k}\}_{k=t}^{T-1}} \mathbb{E}_Q \sum_{k=t}^{T-1}  \Big\{ C_k(X_k, U_k) +\lambda D(Q_{U_k|X_k}(\cdot | X_k) \|R_{U_k|X_k}(\cdot | X_k))\Big\} + \mathbb{E}_Q C_T(X_T).\nonumber
\end{equation}
Notice that in \eqref{eq:value function}, we used ``inf" instead of ``min" since we do not know if the infimum is attained. In the following theorem, we show that the infimum is indeed attained, and therefore, ``inf" can be replaced by ``min". 
\begin{theorem}\label{thrm:Bellman recursion}
The value function $J_t(x_t)$ satisfies the following backward Bellman recursion with the terminal condition $J_T(x_T)=C_T(x_T)$:
\begin{equation}\label{eq:v_bellman} 
J_t(x_t)= -\lambda \log\Bigg\{ \int_{\mathcal{U}_t}\exp\left(-\frac{C_t(x_t, u_t)}{\lambda}\right)\times\exp\left(-\frac{1}{\lambda}\int_{\mathcal{X}_{t+1}}\!\!\!\!\!\!J_{t+1}(x_{t+1})P(dx_{t+1}|x_t, u_t)\right) R(du_t|x_t)\Bigg\}\nonumber
\end{equation}
and the minimizer of Problem \ref{Prob:KL control} is given by
\begin{equation}
\label{eq:p_star_dc}
Q_{U_t|X_t}^*(B_{U_t}|x_t)\!=\!\frac{\int_{B_{U_t}}\!\!\!\exp(-\rho_t(x_t, u_t)/\lambda)R(du_t|x_t)}{\int_{\mathcal{U}_t} \exp(-\rho_t(x_t, u_t)/\lambda)R(du_t|x_t)}
\end{equation}
where 
\begin{equation}\label{eq:rho_dc_def}
\!\!\!\rho_t(x_t, u_t)\!:=\! C_t(x_t, u_t)+\!\!\int_{\mathcal{X}_{t+1}} \!\!\!\!\!\!\!\!\! J_{t+1}(x_{t+1})P(dx_{t+1}|x_t, u_t)
\end{equation}
and $B_{U_t}$ is a Borel set belonging to the $\sigma-$algebra $\mathcal{B}(\mathcal{U}_t)$.
\end{theorem}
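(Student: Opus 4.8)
The plan is to proceed by backward induction on $t$, reducing the horizon problem to a sequence of single-stage minimizations, each of which is solved in closed form by the Gibbs variational principle (the duality at the heart of KL control). The base case is immediate: at $t=T$ there is no control left, so $J_T(x_T)=C_T(x_T)$, matching the terminal condition. For the inductive step, suppose $J_{t+1}$ has been characterized. The cost in Problem \ref{Prob:KL control} is time-additive, and the KL penalty at stage $t$ depends only on the kernel $Q_{U_t|X_t}$, while the future enters only through the one-step transition $P(dx_{t+1}|x_t,u_t)$. Hence the dynamic programming principle lets me write
\[
J_t(x_t)=\inf_{Q_{U_t|X_t}}\int_{\mathcal{U}_t}\left[\rho_t(x_t,u_t)+\lambda\log\frac{dQ_{U_t|X_t}}{dR_{U_t|X_t}}(x_t,u_t)\right]Q(du_t|x_t),
\]
with $\rho_t$ exactly as in \eqref{eq:rho_dc_def}. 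Only $Q\ll R$ need be considered, since otherwise $D(Q\|R)=+\infty$ and such $Q$ cannot be optimal.

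The heart of the argument is a single-stage variational lemma. Fixing $x_t$, define the Gibbs kernel $Q^*(du_t|x_t)\propto\exp(-\rho_t(x_t,u_t)/\lambda)R(du_t|x_t)$ with normalizer $Z_t(x_t)=\int_{\mathcal{U}_t}\exp(-\rho_t(x_t,u_t)/\lambda)R(du_t|x_t)$, so that $\log\frac{dQ^*}{dR}=-\rho_t/\lambda-\log Z_t(x_t)$. Splitting $\log\frac{dQ}{dR}=\log\frac{dQ}{dQ^*}+\log\frac{dQ^*}{dR}$ and integrating against $Q$ yields the identity
\[
\int_{\mathcal{U}_t}\left[\rho_t+\lambda\log\frac{dQ}{dR}\right]dQ=\lambda D(Q\|Q^*)-\lambda\log Z_t(x_t),
\]
valid for every $Q\ll R$. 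Because $D(Q\|Q^*)\ge0$ with equality iff $Q=Q^*$, the infimum is attained (so ``inf'' is genuinely ``min''), the unique minimizer is $Q^*$ — which is precisely \eqref{eq:p_star_dc} — and the optimal value is $-\lambda\log Z_t(x_t)$. Substituting the definition of $\rho_t$ into $-\lambda\log Z_t(x_t)$ reproduces the recursion \eqref{eq:v_bellman}, and since $Z_t(x_t)=\exp(-J_t(x_t)/\lambda)$ this is consistent with the interpretation of $Z_t$ as the exponentiated value function. This closes the induction.

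The main obstacle is not the algebra — the variational lemma is essentially a one-line computation — but the measure-theoretic rigor of the dynamic programming principle in the continuous-state setting. I would need to verify that $x_t\mapsto J_t(x_t)$ is (universally) measurable, that the pointwise-in-$x_t$ Gibbs minimizers assemble into a genuine stochastic kernel $Q^*_{U_t|X_t}$, and that interchanging the infimum over the full policy tail with the stagewise infimum is legitimate; this is a measurable-selection argument in the spirit of Bertsekas--Shreve. I would also impose mild integrability so that $0<Z_t(x_t)<\infty$ and the relevant expectations and Radon--Nikodym derivatives are well defined, guaranteeing that the KL divergences are finite along candidate optimal policies. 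These regularity points are routine under the standing assumptions but constitute the only genuinely delicate part of the proof.
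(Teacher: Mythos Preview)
Your proposal is correct and follows essentially the same approach as the paper: apply Bellman's optimality principle to reduce to a single-stage minimization, then solve that minimization via the Gibbs variational principle (which the paper invokes as the Legendre duality between KL divergence and free energy, deferred to an appendix). Your treatment is more explicit---you derive the variational lemma directly via the $D(Q\|Q^*)$ decomposition rather than citing it, and you flag the measurable-selection issues the paper glosses over---but the skeleton is identical.
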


\begin{proof}
    By Bellman's optimality principle, the value function satisfies the following recursive relationship:
\begin{equation}\label{eq:kl_dc_bellman1}
J_t(x_t)\!=\!\!\!\!\!\inf_{Q_{U_t|X_t}}\! \int_{\mathcal{U}_t}\!\!\! \left\{\!\rho_t(x_t, u_t)\! +\!\lambda \log \frac{dQ}{dR}(u_t|x_t)\!\right\}\!Q(du_t|x_t) 
\end{equation}
where $\rho_t(x_t, u_t)$ is defined by \eqref{eq:rho_dc_def}. Invoking the Legendre duality between the KL divergence and free energy (see Appendix \ref{sec:Legendre Duality}), it can be shown that there exists a minimizer $Q^*_{U_t|X_t}$ of the right-hand side of \eqref{eq:kl_dc_bellman1}, which can be written as 
\begin{equation}\label{eq:Q_bellman}
Q_{U_t|X_t}^*(B_{U_t}|x_t)\!=\!\frac{\int_{B_{U_t}}\!\!\!\exp(-\rho_t(x_t, u_t)/\lambda)R(du_t|x_t)}{\int_{\mathcal{U}_t} \exp(-\rho_t(x_t, u_t)/\lambda)R(du_t|x_t)}
\end{equation}
where $B_{U_t}$ is a Borel set belonging to the $\sigma-$algebra $\mathcal{B}(\mathcal{U}_t)$. Using \eqref{eq:Q_bellman}, the value of \eqref{eq:kl_dc_bellman1} can be computed as
\begin{equation}
\label{eq:v_rho_bellman}
\!\!J_t(x_t)\!=\!-\lambda \log \left\{ \int_{\mathcal{U}_t}\!\! \exp\left(-\frac{\rho_t(x_t, u_t)}{\lambda}\right)\!R(du_t|x_t) \right\}.
\end{equation}
Substituting \eqref{eq:rho_dc_def} into \eqref{eq:v_rho_bellman}, we obtain the recursive expression \eqref{eq:v_bellman}.
\end{proof}
Theorem \ref{thrm:Bellman recursion} provides a recursive method to compute the value functions $J_t(x_t)$ and optimal control distributions $Q^*_{U_t|X_t}$ backward in time. As one can see, to perform the backward recursions \eqref{eq:v_bellman} and \eqref{eq:p_star_dc}, the function $J_t(x_t)$ must be evaluated everywhere in the continuous domain $\mathcal{X}_t$. Therefore, in practice, an exact implementation of backward dynamic programming is computationally costly (unless the problem has a special structure, for example, linear state dynamics and quadratic costs). The computational cost grows quickly with the dimension of the state space of the system, which is referred to as the curse of dimensionality. In the next section (Section \ref{sec:PI_control}), we show that under the assumption of the deterministic state transition law, the backward Bellman recursions can be linearized. This allows us to design a simulator-driven algorithm to compute optimal deceptive actions.

\subsection{Path Integral Solution}\label{sec:PI_control}
In this section, we focus on a special case in which the agent's dynamics are deterministic and propose a simulator-driven algorithm to compute the optimal deceptive actions via path integral control. 
\begin{assumption}\label{assump: deterministic law}
The state transition law is governed by a deterministic mapping $F_t:\mathcal{X}_t\times\mathcal{U}_t\rightarrow\mathcal{X}_{t+1}$ as
\begin{equation}\label{eq:deter state transition law}
    x_{t+1} = F_t(x_t, u_t);
\end{equation}
that is, $P(dx_{t+1}|x_t,u_t) = \delta_{F_t(x_t, u_t)}(dx_{t+1})$, where $\delta$ denotes the Dirac measure.
\end{assumption}

\begin{remark}
Note that under Assumption \ref{assump: deterministic law}, the agent can deviate from the reference policy \(R\) only if it is stochastic. Otherwise, under any deviations from the reference policy, with a positive probability, the supervisor will be sure that the agent did not follow the reference policy. Therefore, in what follows, we consider the reference policy to be stochastic. The stochasticity of the reference policy could be to account for the unmodeled elements of the dynamics, to provide robustness, or to encourage exploration. 
\end{remark}

\begin{remark}
Consider a special setting in which the state dynamics $F_t(x_t, u_t)$ is linear in $x_t$ and $u_t$, the reference policy distribution $R_{U_t|X_t}(\cdot | x_t)$ is Gaussian, and the cost functions $C_t(\cdot,\cdot)$ and $C_T(\cdot)$ are quadratic in $x_t$ and $u_t$. In such a setting, it can be shown that the optimal deceptive policy $Q^*_{U_t|X_t}(\cdot | x_t)$ is also Gaussian and can be analytically computed by backward Riccati recursions similar to the standard Linear-Quadratic-Regular (LQR) problems. In this work, we consider a broader setting with possibly non-Gaussian reference distribution, non-linear state dynamics, and non-quadratic cost functions. In this case, the optimal deceptive policy might not be efficiently computed by solving backward recursions.
\end{remark}

Now, we propose a path-integral-based solution approach for simulator-driven policy synthesis. Under assumption \ref{assump: deterministic law}, we can rewrite \eqref{eq:v_bellman} as
\begin{equation}\label{eq:v_bellman_post}
J_t(x_t)= -\lambda \log\Bigg\{ \int_{\mathcal{U}_t}\exp\left(-\frac{C_t(x_t, u_t)}{\lambda}\right)\times\exp\left(-\frac{J_{t+1}\left(F_t(x_t,u_t)\right)}{\lambda}\right)R(du_t|x_t)\Bigg\}.\nonumber
\end{equation}
We introduce the exponentiated value function as $Z_t(x_t):=\exp\left(-\frac{1}{\lambda}J_t(x_t)\right)$. Using $Z_t(x_t)$, the Bellman recursion \eqref{eq:v_bellman_post} can be linearized, and we get the following linear relationship between $Z_t$ and $Z_{t+1}$:
\begin{align}
\! Z_t(x_t)\!=\!&\int_{\mathcal{U}_t}\!\!\!\exp\left(-\frac{C_t(x_t, u_t)}{\lambda}\right) Z_{t+1}\left(F_t(x_t, u_t)\right)R(du_t|x_t)\nonumber\\
=&\int_{\mathcal{U}_t}\int_{\mathcal{X}_{t+1}}\exp\left(-\frac{C_t(x_t, u_t)}{\lambda}\right) Z_{t+1}(x_{t+1})\times P(dx_{t+1}|x_t, u_t)R(du_t|x_t).\label{eq:z_recursive}
\end{align}
Note that in \eqref{eq:z_recursive}, $P(dx_{t+1}|x_t,u_t) = \delta_{F_t(x_t, u_t)}(dx_{t+1})$ by Assumption \ref{assump: deterministic law}. 
Equation \eqref{eq:z_recursive} is a linear backward recursion in \(Z_t\). The linear solvability of the KL control problem is well-known in the literature (e.g., \cite{todorov2007linearly}). We remark that linearizability critically relies on Assumption \ref{assump: deterministic law}\footnote{We remark that in prior works where the path integral method is used to solve stochastic control problems, a certain assumption (e.g. Eq. (9) in \cite{kappen2005path}) is made to reinterpret the original problem as a problem of designing the optimal randomized policy for a deterministic transition system.}.
Now, by recursive substitution, \eqref{eq:z_recursive} can also be written as
\begin{align*}
Z_t(x_t)=
\int_{\mathcal{U}_t}\int_{\mathcal{X}_{t+1}}\cdots 
\int_{\mathcal{U}_{T-1}}\int_{\mathcal{X}_T}&
\exp\left(-\frac{C_t(x_t, u_t)}{\lambda}\right)\times \cdots \times \exp\left(-\frac{C_T(x_T)}{\lambda}\right)\\
& R(dx_{t+1:T}\times du_{t:T-1}|x_t). 
\end{align*}
Thus, by introducing the path cost function 
\begin{equation}\label{path cost function}
 C_{t:T}(x_{t:T}, u_{t:T-1}):=\sum_{k=t}^{T-1}C_k(x_k, u_k)+C_T(x_T),   
\end{equation}
we obtain
\begin{equation}
  Z_t(x_t)=\mathbb{E}_R \exp \left(-\frac{1}{\lambda}C_{t:T}(X_{t:T}, U_{t:T-1}) \right)  
 \label{eq:z_pi}
\end{equation}
where the expectation 
$\mathbb{E}_R(\cdot)$ is with respect to the probability measure $R$ \eqref{eq:def_ref_traj_dist}. Equation \eqref{eq:z_pi} expresses the exponentiated value function $Z_t(x_t)$ as the expected path cost under the reference distribution. This suggests a path-integral-based approach to numerically compute $Z_t(x_t)$. Suppose we generate a collection of $N$ independent samples of system paths $\{x_{t:T}(i), u_{t:T-1}(i)\}_{i=1}^N$ starting from $x_t$ under the reference distribution $R$. Since the reference distribution is known, a collection of such sample paths can be easily generated using a Monte Carlo simulation. If $C_{t:T}(x_{t:T}(i), u_{t:T-1}(i))$ represents the path cost of the sample path $i$, then by the strong law of large numbers \cite{durrett2019probability} as $N\rightarrow \infty$, we get 
\begin{equation}
\label{eq:mc_dc_phi_convergence}
 \!\! \!\!\frac{1}{N}\sum_{i=1}^N \exp\!\left(-\frac{1}{\lambda}C_{t:T}(x_{t:T}(i), u_{t:T-1}(i))\right) \overset{a.s.}{\rightarrow}   Z_t(x_t).
\end{equation}\par
Similarly, a collection of sample paths $\{x_{t:T}(i), u_{t:T-1}(i)\}_{i=1}^N$ starting from $x_t$ under the reference distribution $R$ can be used to sample $u_t$ from the optimal distribution $Q_{U_t|X_t}^*(\cdot|x_t)$.
Notice that the optimal deceptive policy \eqref{eq:p_star_dc} can be expressed in terms of $\{Z_t\}_{t=0}^{T}$ as 
\begin{subequations}\label{eq:p_star_dc_linear_path_cost}
\begin{align}
 Q_{U_t|X_t}^*(B_{U_t}|x_t) =&
\frac{1}{Z_t(x_t)}\int_{B_{U_t}}\exp\left(-\frac{C_t(x_t, u_t)}{\lambda}\right)\times\exp\left(-\frac{J_{t+1}\left(F_t(x_t,u_t)\right)}{\lambda}\right)R(du_t|x_t)\nonumber\\
&=\frac{1}{Z_t(x_t)}\int_{B_{U_t}}\exp\left(-\frac{C_t(x_t, u_t)}{\lambda}\right)\times Z_{t+1}\left(F_t(x_t, u_t)\right)R(du_t|x_t)\label{eq:p_star_dc_linear_path_cost1}\\
&=\!\frac{1}{Z_t(x_t)}\int_{B_{U_t}}\int_{\mathcal{X}_{t+1}}\!\!\!\!\exp\left(-\frac{C_t(x_t, u_t)}{\lambda}\right)Z_{t+1}(x_{t+1})\times P(dx_{t+1}|x_t, u_t)R(du_t|x_t) \label{eq:p_star_dc_linear_path_cost2} \\
&=\!\frac{1}{Z_t(x_t)}\!\!\int_{\!\{\mathcal{X}_{t+1:T},\,\mathcal{U}_{t:T-1}|u_t\in B_{U_t}\}} \!\!\!\!\!\!\! \exp\!\left(\!-\frac{C_{t:T}(x_{t:T}, u_{t:T-1})}{\lambda}\!\right)\times R(dx_{t+1:T}\times du_{t:T-1}|x_t).
\label{eq:p_star_dc_linear_path_cost3}
\end{align}
\end{subequations}
The step \eqref{eq:p_star_dc_linear_path_cost1} follows from the definition of $Z_t$. In \eqref{eq:p_star_dc_linear_path_cost2}, we used our assumption $P(dx_{t+1}|x_t,u_t) = \delta_{F_t(x_t, u_t)}(dx_{t+1})$. Finally, \eqref{eq:p_star_dc_linear_path_cost3} is obtained by the recursive substitution of \eqref{eq:p_star_dc_linear_path_cost2}, and $\{\mathcal{X}_{t+1:T},\,\mathcal{U}_{t:T-1}|u_t\in B_{U_t}\}$ represents a collection paths such that $u_t\in B_{U_t}$. \par
We use the above representation of \(Q^{*}_{U_t|X_t}\) to sample an action \(u_{t}\) from it. Let $r_t(i)$ be the exponentiated path cost of the sample path $i$:
\begin{equation}
\label{eq:dc_path_reward}
r_t(i):= \exp\left(-\frac{1}{\lambda}C_{t:T}(x_{t:T}(i), u_{t:T-1}(i))\right)
\end{equation}
and $r_t:=\sum_{i=1}^N r_t(i)$.
For each $t\in\mathcal{T}$, we introduce a piecewise constant, monotonically non-decreasing function $F_t: [0,N]\rightarrow [0,r_t]$ by
\[
F_t(x)=\sum_{i=1}^{\lfloor x \rfloor} r_t(i).
\]
where $\lfloor x \rfloor$ denotes $\mathrm{floor}(x)$, i.e., the greatest integer less than or equal to $x$. The function $F_t(x)$ is represented in Figure \ref{fig:f_func}.

\begin{figure}[t]
\centering
{\includegraphics[scale=0.1]{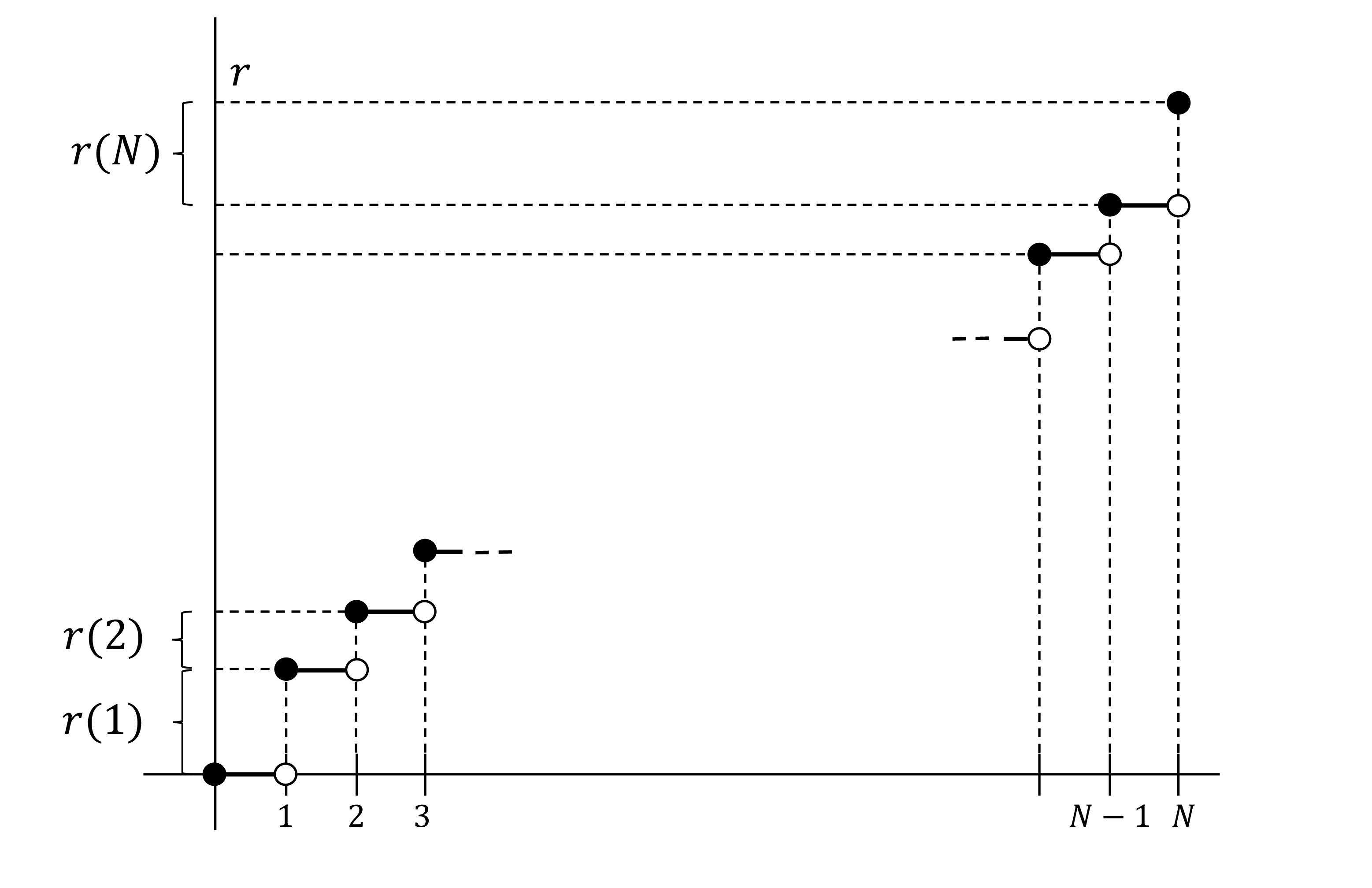}}
    \caption{Function $F_{t}(x)$.}
    \label{fig:f_func}
\end{figure}

\begin{algorithm}[t]
\caption{Sampling $u_t$ approximately from $Q_{U_t|X_t}^*(\cdot|x_t)$ by Monte Carlo simulations}\label{Algo: Q*}
\begin{algorithmic}[1]
\Require{Initial state $x_0$}
\For {$t\in\mathcal{T}$}
\State Sample $N$ paths $\{x_{t:T}(i), u_{t:T-1}(i)\}_{i=1}^N$ starting from $x_t$ under the reference distribution $R$. \label{alg:randomsamplepath}
\State For each sample path $i$, compute the exponentiated path cost $r_t(i)$ by \eqref{eq:dc_path_reward}.\label{alg:reward of a sample path}
\State Compute $r_t:=\sum_{i=1}^N r_t(i)$.\label{alg:total reward}
\State Generate a random variable $d$ according to $d \sim \mathrm{unif}[0,r_t]$. \label{alg:randomoutputpath}
\State Select a sample ID by $j_t\leftarrow F_t^{-1}(d)$.  \label{alg:sample ID} 
\State Select a control input as $u_t\leftarrow u_t(j_t)$.
\EndFor
\end{algorithmic}
\end{algorithm}
Notice that the inverse $F_t^{-1}$ of $F_t$ defines a mapping $F_t^{-1}: [0,r_t]\rightarrow \{1, 2, ... , N\}$.
To generate a sample $u_t$ approximately from the optimal distribution $Q_{U_t|X_t}^*$, we propose Algorithm \ref{Algo: Q*}.
We first, generate a random variable $d$ according to $d \sim \text{unif}[0,r_t]$. Then, we select a sample ID by $j_t\leftarrow F_t^{-1}(d)$. Finally, the control input adopted in the $j_t$-th sample path at time step $t$ is selected as $u_t$, i.e., $u_t\leftarrow u_t(j_t)$. Theorem \ref{theorem:lln} proves that as the number of Monte Carlo samples tends to infinity, Algorithm \ref{Algo: Q*} samples $u_t$ from the optimal distribution $Q_{U_t|X_t}^*(\cdot|x_t)$.

\begin{theorem}
\label{theorem:lln}
Let $B_{U_t}\in\mathcal{B}(\mathcal{U}_t)$ be a Borel set. Suppose for a given collection of sample paths $\{x_{t:T}(i), u_{t:T-1}(i)\}_{i=1}^N$, $u_t$ is computed by Algorithm  \ref{Algo: Q*} and the probability of $u_t\in B_{U_t}$ is denoted by $\Pr\{u_t \in B_{U_t} |\{x_{t:T}(i),  u_{t:T-1}(i)\}_{i=1}^N \}$. Then, as $N \rightarrow \infty$
\[
\Pr\{u_t \in B_{U_t} |\{x_{t:T}(i),  u_{t:T-1}(i)\}_{i=1}^N \} \overset{a.s.}{\rightarrow} Q^*_{U_t|X_t}(B_{U_t}|x_t). 
\]
\end{theorem}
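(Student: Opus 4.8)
The plan is to reduce the claim to two applications of the strong law of large numbers followed by passing to a ratio. First I would make explicit the sampling law induced by Algorithm \ref{Algo: Q*}. Because $F_t$ is the step function with jump $r_t(i)$ at the $i$-th integer, its generalized inverse $F_t^{-1}$ returns the index $i$ precisely when $d$ lands in the subinterval $(\,F_t(i-1),F_t(i)\,]$ of length $r_t(i)$, where $F_t(0):=0$. Since $d\sim\mathrm{unif}[0,r_t]$ and $r_t=F_t(N)$, this shows that, conditioned on the sample paths $\{x_{t:T}(i),u_{t:T-1}(i)\}_{i=1}^N$, the selected index $j_t$ equals $i$ with probability $r_t(i)/r_t$. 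Consequently the conditional probability in the statement is exactly the cost-proportional weighted empirical measure
\begin{equation*}
\Pr\{u_t\in B_{U_t}\mid\{x_{t:T}(i),u_{t:T-1}(i)\}_{i=1}^N\}=\sum_{i=1}^N\frac{r_t(i)}{r_t}\,\mathds{1}_{u_t(i)\in B_{U_t}}.
\end{equation*}

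Next I would rewrite this weighted sum as a ratio of two empirical averages by dividing numerator and denominator by $N$:
\begin{equation*}
\sum_{i=1}^N\frac{r_t(i)}{r_t}\,\mathds{1}_{u_t(i)\in B_{U_t}}=\frac{\tfrac{1}{N}\sum_{i=1}^N r_t(i)\,\mathds{1}_{u_t(i)\in B_{U_t}}}{\tfrac{1}{N}\sum_{i=1}^N r_t(i)}.
\end{equation*}
The summands $r_t(i)$ and $r_t(i)\mathds{1}_{u_t(i)\in B_{U_t}}$ are i.i.d.\ across $i$ because the paths are independent draws from $R$. Applying the strong law of large numbers to the denominator reproduces \eqref{eq:mc_dc_phi_convergence}, so it converges almost surely to $Z_t(x_t)=\mathbb{E}_R[\exp(-C_{t:T}/\lambda)]$ by \eqref{eq:z_pi}. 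Applying it to the numerator gives $\mathbb{E}_R[\exp(-C_{t:T}/\lambda)\,\mathds{1}_{U_t\in B_{U_t}}]$, which by the representation \eqref{eq:p_star_dc_linear_path_cost3} equals exactly $Z_t(x_t)\,Q^*_{U_t|X_t}(B_{U_t}|x_t)$. Taking the ratio of the two almost-sure limits, the factor $Z_t(x_t)$ cancels and leaves $Q^*_{U_t|X_t}(B_{U_t}|x_t)$, the desired conclusion.

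The main obstacle is technical rather than conceptual: to legitimately pass to the ratio of limits I need the denominator limit $Z_t(x_t)$ to be strictly positive and both expectations to be finite. Positivity holds because $\exp(-C_{t:T}/\lambda)>0$ pointwise whenever the path cost is finite, so its expectation under $R$ is positive; finiteness of the two expectations is precisely the integrability condition that makes $Z_t(x_t)<\infty$ and $Q^*$ well-defined, which I would record as a standing regularity assumption on the cost (or note that it is inherited from the finiteness of the value function in Theorem \ref{thrm:Bellman recursion}). Given positivity, almost-sure convergence of the quotient follows from joint almost-sure convergence of numerator and denominator and continuity of division away from zero, so no uniformity over $B_{U_t}$ is needed. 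A minor point worth spelling out is that the exceptional null set may a priori depend on $B_{U_t}$; since the statement concerns a fixed Borel set, this causes no difficulty.
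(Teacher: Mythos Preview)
Your proposal is correct and follows essentially the same argument as the paper: both identify the conditional probability as the ratio $\sum_{i}r_t(i)\mathds{1}_{u_t(i)\in B_{U_t}}/\sum_i r_t(i)$ (the paper writes the numerator as $r_{B_{U_t}}=\sum_{i\in\mathcal{I}_{B_{U_t}}}r_t(i)$), apply the strong law of large numbers separately to numerator and denominator, and invoke \eqref{eq:mc_dc_phi_convergence} and \eqref{eq:p_star_dc_linear_path_cost3} to identify the limits. Your version is slightly more careful in spelling out why the algorithm selects index $i$ with probability $r_t(i)/r_t$ and in flagging the positivity of $Z_t(x_t)$ needed to pass to the quotient, points the paper leaves implicit.
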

\begin{proof}
    Let $\mathcal{I}_{B_{U_t}}$ be the set of indices of sample paths for which an action in $B_{U_t}$ is taken at time step $t$, i.e., \[\mathcal{I}_{B_{U_t}}=\{i\in \{1, 2, \ldots , N\} | u_t(i)\in B_{U_t}\}.\] Define the sum of the exponentiated path costs of the paths in $\mathcal{I}_{B_{U_t}}$ as
\[r_{B_{U_t}}=\sum_{i\in\mathcal{I}_{B_{U_t}}} r_t(i).\]
By construction of Algorithm \ref{Algo: Q*},
\begin{equation}\label{eq:rB/r}
    \Pr\{u_t \in B_{U_t} |\{x_{t:T}(i),  u_{t:T-1}(i)\}_{i=1}^N \}=\frac{r_{B_{U_t}}}{r_t}.
\end{equation}
Now, from \eqref{eq:mc_dc_phi_convergence}, as $N\rightarrow \infty$, we get
\[
\frac{r_t}{N}=\frac{1}{N}\sum_{i=1}^N \exp\left(-\frac{C_{t:T}(x_{t:T}(i), u_{t:T-1}(i))}{\lambda}\right)\overset{a.s.}{\rightarrow} Z_t(x_t). 
\]
Similarly, as $N\rightarrow \infty$,
\begin{align*}
\frac{r_{B_{U_t}}}{N}=&\frac{1}{N}\sum_{i\in\mathcal{I}_{B_{U_t}}} \exp\left(-\frac{C_{t:T}(x_{t:T}(i), u_{t:T-1}(i))}{\lambda}\right) \\
\overset{a.s.}{\rightarrow}& \int_{\{\mathcal{X}_{t+1:T},\; \mathcal{U}_{t:T-1}|u_t\in B_{U_t}\}} \exp\left(-\frac{C_{t:T}(x_{t:T},  u_{t:T-1})}{\lambda}\right)\times R(dx_{t+1:T},du_{t:T-1}|x_t)
\end{align*}
Thus, from \eqref{eq:p_star_dc_linear_path_cost3} and \eqref{eq:rB/r}
\begin{align*}
\Pr\{u_{t}\in B_{U_t} |\{x_{t:T}(i),  u_{t:T-1}(i)\}_{i=1}^N\}\overset{a.s.}{\rightarrow}& \frac{1}{Z_t(x_t)}\!\!\int_{\{\mathcal{X}_{t+1:T},\; \mathcal{U}_{t:T-1}|u_t\in B_{U_t}\}}& \!\!\!\!\!\!\!\!\!\!\! \exp\left(-\frac{C_{t:T}(x_{t:T},  u_{t:T-1})}{\lambda}\right)\\
& & \times R(dx_{t+1:T}, du_{t:T-1}|x_t)\\
=&\;Q_{U_t|X_t}^*(B_{U_t}|x_t). &
\end{align*} 
\end{proof}
\vspace{1mm}

We showed that under Assumption \ref{assump: deterministic law}, the optimal deceptive policies can be synthesized using path integral control. Algorithm \ref{Algo: Q*} allows the deceptive agent to numerically compute optimal actions via Monte Carlo simulations without explicitly synthesizing the policy. Since Monte Carlo simulations can be efficiently parallelized, the agent can generate the optimal control actions online.

\section{Simulation Results}
\begin{figure*}
     \centering
       \!\!\!\!\!\!\!\!\!\!\!\!\!\!\!\begin{tabular}{c c}
       \includegraphics[scale=0.75]{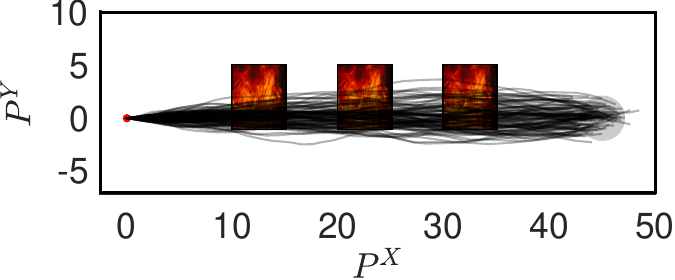} &\includegraphics[scale=0.75]{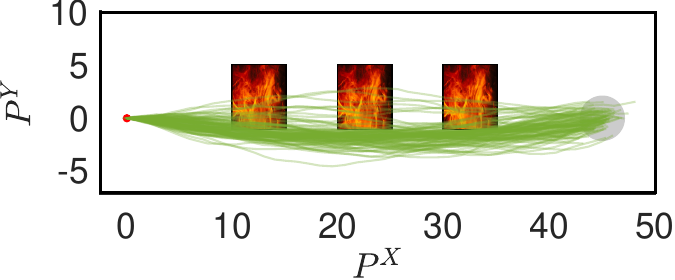} \\
       (a) Paths under $R$, $\Pr^{\mathrm{safe}} = 0.04$  & (b) Paths under $\widehat{Q}^*$ with $\lambda=3$, $\Pr^{\mathrm{safe}} = 0.48$ \\
       \vspace{1mm}\\
      \includegraphics[scale=0.75]{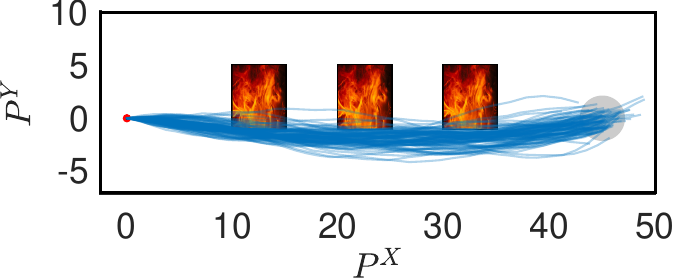} &\includegraphics[scale=0.75]{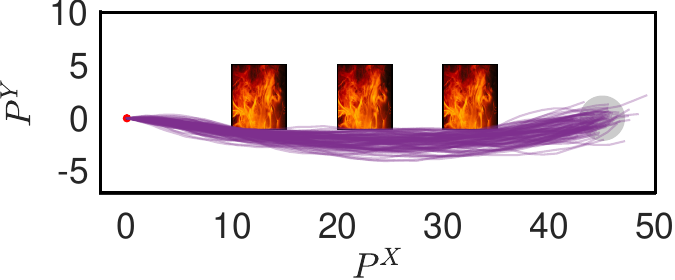} \\
       (c) Paths under $\widehat{Q}^*$ with $\lambda=2$, $\Pr^{\mathrm{safe}} = 0.62$  & (d) Paths under $\widehat{Q}^*$ with $\lambda=0.5$, $\Pr^{\mathrm{safe}} = 0.94$ \\
       \end{tabular}
         \caption{A unicycle navigation problem. The start position is shown by a red dot, and the goal region by a disk colored in gray. $100$ sample paths generated under the reference policy $R$ and the agent's policy $\widehat{Q}^*$ with three values of $\lambda$ are shown. The probability of safe paths $\Pr^{\mathrm{safe}}$ are noted below each case.} 
         \label{Fig. simulation trajs}
 \end{figure*}
 In this section, we validate the path-integral-based algorithm proposed to generate optimal deceptive control actions. The problem is illustrated in Figure \ref{Fig. simulation trajs}. A supervisor wants an agent to start from the origin and reach a disk of radius $G^R$ centered at $\begin{bmatrix} G^X & G^Y\end{bmatrix}^\top$ (shown in gray color) as fast as possible. The supervisor also expects the agent to inspect the region on the way. To encourage exploration and to provide robustness against unmodeled dynamics, the supervisor designs a randomized reference policy. The agent, on the other hand, wishes to avoid the regions on the way that are covered under fire, as shown in Figure \ref{Fig. simulation trajs}. Let these regions be represented collectively by $\mathcal{X}^{\mathrm{fire}}$. Suppose the agent's dynamics are modeled by a unicycle model as:
\begin{align*}
    P^X_{t+1} & = P^X_{t} + S_t\cos\Theta_t h, & \quad P^Y_{t+1} &= P^Y_{t} + S_t\sin\Theta_t h, &\quad\\
    S_{t+1}  &= S_{t} + A_t h, &\quad
    \Theta_{t+1} & = \Theta_{t} + \Omega_t h &
\end{align*}
where $(P^X_t, P^Y_t)$, $S_t$, and $\Theta_t$ denote the $x-y$ position, speed, and the heading angle of the agent at time step $t$, respectively. The control input $U_t\coloneqq\begin{bmatrix}A_t & \Omega_t\end{bmatrix}^T$ consists of acceleration $A_t$ and angular speed $\Omega_t$. $h$ is the time discretization parameter used for discretizing the continuous-time unicycle model. For this simulation study, we set $h=1$. Note that the agent's dynamics is deterministic as per Assumption \ref{assump: deterministic law}; however, the control input $U_t$ can be stochastic. Suppose the supervisor designs the reference policy $R$ as a Gaussian probability density with mean $\overline{u}_t$ and covariance $\Sigma_t$:
\begin{equation*}
  R_{U_t|X_t}(\cdot|x_t) = \frac{\exp\left[-\frac{1}{2}(u_t-\overline{u}_t)^\top\Sigma_t^{-1}(u_t-\overline{u}_t)\right]}{\sqrt{(2\pi)^2|\Sigma_t|}}.
\end{equation*}
The mean $\overline{u}_t\coloneqq\begin{bmatrix}\overline{a}_t & \overline{\omega}_t \end{bmatrix}^\top$ is designed using a proportional controller as 
\begin{equation*}
    \overline{A}_t = -k_{A} (S_t - S_t^{\mathrm{desired}}),\quad
    \overline{\Omega}_t = -k_{\Omega} (\Theta_t - \Theta_t^{\mathrm{desired}}) 
\end{equation*}
where $k_{A}$ and $k_{\Omega}$ are proportional gains and $S_t^{\mathrm{desired}}$, $\Theta_t^{\mathrm{desired}}$ are computed as


\begin{equation*}
  S_t^{\mathrm{desired}}  = \frac{\left\lVert \begin{bmatrix}G^X\\G^Y\end{bmatrix} - \begin{bmatrix}P^X_t\\P^Y_t\end{bmatrix}\right\rVert}{T-t} ,\quad \Theta_t^{\mathrm{desired}}\!  =\! \tan^{\!-\!1}\!\!\left(\!\frac{G^Y-P^Y_t}{G^X-P^X_t}\!\right). 
\end{equation*}

As mentioned before, the agent wishes to avoid the region $\mathcal{X}^{\mathrm{fire}}$. Suppose the cost function $C_{0:T}$ is designed as
\begin{equation*}
   C_{0:T}(X_{0:T}, U_{0:T-1}) = \sum_{t=0}^{T}\mathds{1}_{[P^X_t\;P^Y_t]^{^\top}\in\mathcal{X}^{\mathrm{fire}}}
\end{equation*}
where $\mathds{1}_{[P^X_t\;P^Y_t]^{^\top}\in\mathcal{X}^{\mathrm{fire}}}$ represents an indicator function that returns $1$ when the agent is inside the region $\mathcal{X}^{\mathrm{fire}}$ and $0$ otherwise.
For this simulation, we set
\begin{equation*}
   \begin{bmatrix}G^X\\
    G^Y\end{bmatrix}=\begin{bmatrix}45\\
    0\end{bmatrix}\!\!,\quad \Sigma_t = \begin{bmatrix}0.5 & \!\!\!\!0\\
    0 & \!\!\!\!0.5\end{bmatrix}\!\!,\quad k_A = 0.1, \quad k_\Omega = 0.2, \quad T = 50.
\end{equation*}

The agent chooses its action at each time step using Algorithm \ref{Algo: Q*} where the number of samples is $N=10^5$. \par
Suppose $\widehat{Q}^*$ denotes the deceptive agent's distribution generated by the sampling-based Algorithm  \ref{Algo: Q*}. Figure \ref{Fig. simulation trajs} shows $100$ paths under the reference distribution R (Figure \ref{Fig. simulation trajs}(a)) and the agent's distribution $\widehat{Q}^*$ for three values of $\lambda$ (Figure \ref{Fig. simulation trajs}(b) - \ref{Fig. simulation trajs}(d)). A lower value of $\lambda$ implies that the agent cares less about its deviation from the reference policy and more about avoiding the region $\mathcal{X}^{\mathrm{fire}}$. A higher value of $\lambda$ implies the opposite. We also report $\Pr^{\mathrm{safe}}$, the percentage of paths that avoid $\mathcal{X}^{\mathrm{fire}}$. Under the reference distribution $R$, only $4\%$ of the paths are safe. On the other hand, more paths are safe under the agent's distribution $\widehat{Q}^*$, and as the value of $\lambda$ reduces, $\Pr^{\mathrm{safe}}$ increases. \par

Figure \ref{Fig. LLR} shows the expected log-likelihood ratio (with one standard deviation) with respect to time $t$ for three values of $\lambda$. The expected LLR is computed as follows. Algorithm \ref{Algo: Q*} selects a control input $u_k\leftarrow u_k(j_k)$ at time step $k$, where $j_k$ is a sample ID obtained from step \ref{alg:randomoutputpath}. From the construction of the algorithm, at each time step $k$, the probability of choosing the control input $u_k\leftarrow u_k(j_k)$ under the agent's distribution $\widehat{Q}^*$ is $r_k(j_k)/r_k$, where $r_k(j_k)$ and $r_k$ are computed by steps \ref{alg:reward of a sample path} and \ref{alg:total reward} of Algorithm \ref{Algo: Q*}. Whereas the probability of choosing the control input $u_k\leftarrow u_k(j_k)$ under the reference distribution is $1/N$. Therefore, using \eqref{eq:llr and KL}, the expected LLR upto time $t\in\mathcal{T}$ can be approximately computed as
\begin{align*}
     \mathbb{E}_{Q^*}\left[\log \frac{dQ^*_{X_{0:t}\times U_{0:t-1}}}{dR_{X_{0:t}\times U_{0:t-1}}}(x_{0:t}, u_{0:t-1})\right] = \mathbb{E}_{Q^*}\!\!\! \left[\sum_{k=0}^{t-1}\log\frac{dQ^*_{U_k|X_k}}{dR_{U_k|X_k}}\left(x_k, u_k\right)\right]\!\!\approx\!\frac{1}{N_{\widehat{Q}^*}}\!\!\sum_{i=1}^{N_{\widehat{Q}^*}}\sum_{k=0}^{t-1} \frac{r_k(j_k)/r_k}{1/N}.
\end{align*}
where $N_{\widehat{Q}^*}$ is the number of paths generated by repeatedly running Algorithm \ref{Algo: Q*}. Note that since we assume the system dynamics to be deterministic (Assumption \ref{assump: deterministic law}), once the control input $u_k$ is chosen at time step $k$, the state $x_{k+1}$ is uniquely determined. Therefore, while computing the expected LLR, we only need to consider the probabilities of choosing the control input $u_k$ under policies $\widehat{Q}^*$ and $R$. Figure \ref{Fig. LLR} shows that for a lower value of $\lambda$, the expected LLR is higher, i.e., more deviation of $\widehat{Q}^*$ from $R$.
\begin{figure}
     \centering
      \includegraphics[scale=0.6]{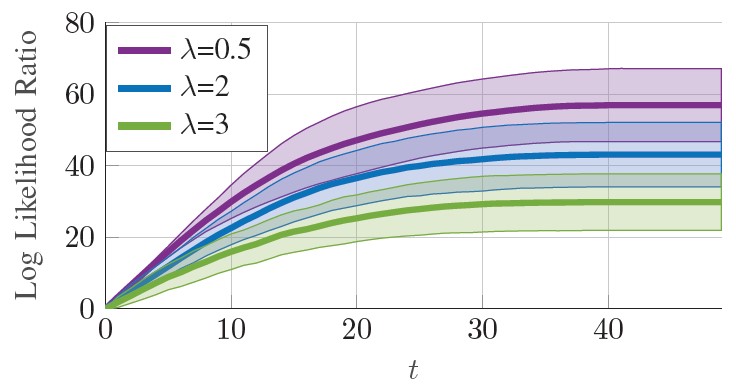}
         \caption{Expected LLR (with one standard deviation) with respect to time $t$ for three values of $\lambda$.} 
         \label{Fig. LLR}
 \end{figure}

\section{Publications}
\begin{itemize}
     \item \textbf{A. Patil}, M. Karabag, U. Topcu,  T. Tanaka, ``Simulation-Driven Deceptive Control via Path Integral  Approach," \textit{2023 Conference on Decision and Control}
\end{itemize}
\section{Future Work}
For future work, we plan to study the deception problem for continuous-time stochastic systems. We also plan to conduct the sample complexity analysis of the path integral approach to solve KL control problems. 
\chapter[Path Integral Methods for Synthesizing and Preventing Stealthy Attacks in Nonlinear Cyber-Physical Systems]{Deceptive Attack Synthesis and Its Mitigation for Nonlinear Cyber-Physical Systems: Path Integral Approach}
\label{Sec: Deceptive Attack Synthesis and Its Mitigation for Nonlinear Cyber-Physical Systems: Path Integral Approach}

\section{Motivation and Literature Review}
The work presented in this section is motivated by the need to protect cyber-physical systems (CPS) from stealthy attacks. As the next generation of engineered systems, CPS integrate computation, communication, control, and physical processes in a tightly interconnected manner \cite{kim2012cyber}. Because of this close integration across diverse technologies, CPS are susceptible to adversarial intrusions that can lead to severe ramifications for national economies, public safety, and even human lives \cite{poovendran2011special}. Notable cyber attacks—such as the StuxNet malware \cite{karnouskos2011stuxnet} and the Maroochy water bleach incident \cite{slay2007lessons}—underscore the critical importance of ensuring robust security measures for the safe operation of CPS.\par

With the increasing adoption of CPS, attack strategy and defense mechanism design have received considerable attention in the literature. In this section, we consider worst-case attack synthesis and its mitigation problems for nonlinear continuous-time CPS and propose their solutions using the path integral control method. Suppose a CPS is operated by a controller in a stochastic environment. Assume that a stealthy attacker hijacks the control authority and injects an attack signal to misguide the system. The attacker carefully designs the attack signals such that they can be disguised as the effects of natural disturbances in the CPS, such as controller noises and environmental disturbances. Suppose the controller continuously monitors the system to distinguish the attack signals from natural background noises by designing an appropriate detection test. However, knowing that the system is continuously monitored, a rational attacker will conduct a covert attack, maximizing the attack’s impact while avoiding detection. Intuitively, there is a tradeoff between the performance degradation an attacker can induce and how easy it is to detect the attack \cite{teixeira2012attack}. We solve two key problems in this section. First, we compute the worst-case stealthy attack signal assuming that the controller's policy is fixed and is known to the attacker. Next, we design the controller's policy to mitigate the risk of stealthy attacks, which will serve as the physical watermarking policy that countermeasures the potential stealthy attacks.\par

We design a stealthy attack synthesis problem motivated by hypothesis testing. 
Despite recent progress, existing applications of hypothesis testing frameworks to control systems \cite{bai2017data,kung2016performance,guo2018worst, shang2021worst} are restricted to discrete-time linear-Gaussian settings. The assumption of linear systems driven by Gaussian noises provides a significant advantage in designing and analyzing the statistical tests for detecting an attack, and the removal of such assumptions is marked as an open problem in \cite{sandberg2022secure}. Invoking Stein’s lemma, Bai et al. \cite{bai2017data} introduced the notion of $\epsilon$-stealthiness as measured by relative entropy. The worst-case degradation of linear control systems attainable by $\epsilon$-stealthy attacks was studied in \cite{guo2018worst, shang2021worst}. In this section, we generalize this analysis to the class of nonlinear continuous-time dynamical systems. We then propose the path integral approach for the synthesis of stealthy attack policies. Our preliminary result \cite{patil2023simulator} demonstrates the feasibility of such a path-integral-based attack synthesis in a discrete-time setting.\par

Next, we consider the minimax formulation in which the controller is also allowed to inject a control input into the system to mitigate the attack impact. Several mitigation strategies against stealthy attacks have been proposed in the literature \cite{sandberg2022secure}. The trade-off between control performance and system security was investigated under a stochastic game framework in \cite{miao2013stochastic}. The minimax games between the attacker and the controller have been studied by many authors in the systems and control community.  By formulating a zero-sum game, Zhang and Venkitasubramaniam [8] studied false data injection and detection problems
in infinite-horizon linear-quadratic-Gaussian systems. The works \cite{bai2017data, guo2018worst, shang2021worst} adopted
the hypothesis testing theory to characterize covert false data injection attacks against control systems. Despite recent progress, existing applications of hypothesis testing frameworks to control systems are limited to linear discrete-time settings. The goal of this section is to broaden the scope of the literature by formulating the aforementioned minimax game in continuous time for nonlinear cyber-physical systems. One effort in this direction is presented in one of our earlier works \cite{tanaka2024covert}. 


The conventional approach for attack synthesis necessitates explicit models of physical systems. However, in increasingly common scenarios, physical models are represented by various forms of ``digital twins," such as trained neural ODEs, which are easy to simulate but are not necessarily easy to express as models in the classical sense. This has elevated the importance of simulator-based CPS interaction, where the agent can directly use real-time simulation data to assist its decision-making without needing to construct an explicit model. In this section, we use a specific type of simulator-based control scheme known as the \textit{path integral control method}. Path integral control is a sampling-based algorithm employed to solve nonlinear stochastic optimal control problems numerically ~\cite{kappen2005path, theodorou2010generalized, williams2016aggressive}. It allows the policy designer to compute the optimal control inputs online using Monte Carlo samples of system paths. Such an algorithm was pioneered by Kappen \cite{kappen2005path} and has been generalized in the robotics and machine-learning literature \cite{theodorou2010generalized,williams2016aggressive}. The Monte Carlo simulations can be massively parallelized on GPUs, and thus the path integral approach is less susceptible to the curse of dimensionality \cite{williams2017model}.

\section{Contributions}
\begin{enumerate}
\item Kullback–Leibler (KL) control formulation for stealthy attack synthesis: We propose a KL control framework that models the trade-off between attack impact and detectability (Problem \ref{prob: KL}). This formulation is subsequently transformed into an equivalent quadratic-cost stochastic optimal control (SOC) problem  (Theorem \ref{thm:prob1_1}), enabling the synthesis of worst-case stealthy attacks.

\item Path-integral-based stealthy attack synthesis: We develop a novel path-integral-based method to synthesize worst-case stealthy attacks in real time for nonlinear continuous-time systems (Theorem \ref{Theorem: sol of SOC}). Notably, the proposed approach does not require an explicit system model or an explicit policy synthesis.

\item Zero-sum game formulation for attack mitigation: In order to mitigate the risk of stealthy attacks, we propose a novel zero-sum game formulation to model the competition between the attacker and the controller (Problem \ref{prob: minimax_KL}). We further show that this formulation is equivalent to both a risk-sensitive control problem and an H$_\infty$ control problem (Problems \ref{prob: risk-sensitive control} and \ref{prob: game}, respectively).  

\item Path-integral-based attack mitigation:  We provide a path-integral-based approach that allows the controller to synthesize the attack mitigating control inputs online, without relying on explicit models or policy synthesis (Theorems \ref{thm: risk-sensitive control} and \ref{thm: two-player game}).
\end{enumerate}

\section{Preliminaries}
\label{sec:formulation}

\subsection{Notation}
Given two probability measures $P$ and $Q$ on a measurable space $(\Omega, \mathcal{F})$, the Kullback-Leibler (KL) divergence from $Q$ to $P$ is defined as
$D(P\|Q)=\int_\Omega \log\frac{dP}{dQ}(\omega)P(d\omega)$
if the Radon-Nikodym derivative $\frac{dP}{dQ}$ exists, and $D(P\|Q)=+\infty$ otherwise. $\mathcal{N}(\mu, \Sigma)$ represents the Gaussian distribution with mean $\mu$ and covariance $\Sigma$.
Throughout this paper, we use the natural logarithm.

\subsection{System Setup}
{\color{black}
All the random processes considered in this paper are defined on the probability space $(\Omega, \mathcal{F}, P)$. Let $w_t$ be an $m$-dimensional standard Brownian motion with respect to the probability measure $P$, and let $\mathcal{F}_t\subset \mathcal{F}$ be an increasing family of $\sigma$-algebras such that $w_t$ is a martingale with respect to $\mathcal{F}_t$. In the sequel, $w_t$ will be used to model the natural disturbance. Consider the class of continuous-time nonlinear cyber-physical systems (shown in Figure~\ref{fig:cps_security})  with state $x_t\in\mathbb{R}^n$, control input $u_t\in\mathbb{R}^\ell$, and the input random process $v_t\in\mathbb{R}^m$:
\begin{equation}\label{continuous-time deception SDE}
dx_t=f_t(x_t) dt +g_t(x_t)u_tdt+h_t(x_t)dv_t.
\end{equation}
Under the nominal operation condition, we assume $v_t=w_t$ for all $0\leq t\leq T$, i.e.,  the natural disturbance directly enters into the system.
In this paper, we are concerned with the competition between the controller agent (showing blue in Figure~\ref{fig:cps_security}) and the attacker agent (showing red in Figure~\ref{fig:cps_security}) over the dynamical system \eqref{continuous-time deception SDE}.

\subsubsection{Controller}
The controller has a legitimate authority to operate the system \eqref{continuous-time deception SDE}. Her primary role is to compute the control input $u_t$ to minimize the expected cost of the system operation, which is denoted by $\mathbb{E}\left[\int_0^T c_t(x_t, u_t)dt\right]$ with some measurable functions $c(\cdot,\cdot)$. We assume that the controller applies a state feedback policy. Specifically, we impose a restriction $u \in \mathcal{U}$ where
\[
\mathcal{U}=\{u: \text{$u_t$ is $\mathcal{F}_t^x$-adapted It\^o process}\}
\]
and  $\mathcal{F}_t^x=\sigma(\{x_s:0\leq s\leq t\})\subset \mathcal{F}$ is a sigma algebra generated by the state random process $x_s, 0\leq s\leq t$.

The secondary role of the controller is to detect the presence of the attacker, who is able to alter the disturbance process $v_t$. We assume that the controller is able to monitor $v_t$. When an anomaly is found, the controller has the authority to trigger an alarm and halt the system's operation. We assume that the controller agent is always present.

\subsubsection{Attacker} Unlike the controller, the attacker agent may or may not be present. When the attacker is absent (or inactive), the system \eqref{continuous-time deception SDE} is under the nominal operating condition, i.e., $v_t=w_t$. When the attacker is present, she is allowed to inject a synthetic attack signal $v_t\neq w_t$. The class of admissible attack signals $\mathcal{V}$ is specified below:
\begin{definition}
\label{def:admissible_attack}
The attack signal $v_t\in\mathbb{R}^m, 0\leq t\leq T$ is in the admissible class $\mathcal{V}$ if it is an It\^o process of the form 
\begin{equation}
\label{eq:admissible_v}
dv_t=\theta_t(\omega)dt+dw_t, \; v_0=0
\end{equation}
where $\theta_t(\omega)\in\mathcal{R}^m$ satisfies the following conditions:
\begin{itemize}
\item[(i)] $\theta: [0,T]\times\Omega \rightarrow \mathbb{R}^m$ is $\mathcal{B}[0,T]\times \mathcal{F}$ measurable where $\mathcal{B}$ is the Borel $\sigma$-algebra;
\item[(ii)] $\theta_t(\omega)$ is $\mathcal{F}_t$-adapted; and
\item[(iii)] $P(\omega\in\Omega: \int_0^t |\theta_s(\omega)|ds<\infty \; \forall t \in [0,T])=1$.
\end{itemize}
\end{definition}
\begin{remark}
The admissible class $\mathcal{V}$ of attack signals allows the attacker to inject a time-varying, possibly randomized, bias term $\theta_t(\omega)\in\mathcal{R}^m$ to the disturbance input. The progressive measurability conditions (i) and (ii), and the $L_1$ integrability condition (iii) ensure that \eqref{eq:admissible_v} is a well-defined It\^o process. 
\end{remark}

The objective of the attacker is twofold.
First, she tries to maximize the expected cost $\mathbb{E}\left[\int_0^T c_t(x_t, u_t)dt\right]$ of the system operation. This can be achieved by altering the statistics of $v_t$ from those of the natural disturbance $w_t$. 
However, a significant change in the disturbance statistics will increase the chance of being detected. Therefore, the second objective of the attacker is to stay ``stealthy" by choosing $v_t$ that is statistically similar to $w_t$.
\begin{remark}
One can consider more general attack signals of the form
\begin{equation}
\label{eq:not_admissible_v}
dv_t=\theta_t(\omega)dt+\sigma_t(\omega)dw_t, \; v_0=0
\end{equation}
with a diffusion coefficient $\sigma_t(\omega)\neq 1$.
However, from the attacker's perspective, there's no benefit in such a generalization because the choice $\sigma_t(\omega)\neq 1$ only ``hurts" the stealthiness of the attack signal.
To see this, it is sufficient to consider the following two hypotheses: 
\begin{align*}
&H_0:  dv_t=dw_t \text{ (No attack)} \\
&H_1:  dv_t=\sigma dw_t, \sigma\neq 1 \text{ (Attack on the diffusion coefficient)} 
\end{align*}
and whether the detector can tell which model has generated an observed continuous sample path $v\in\mathcal{C}[0,T]$. It can be shown that there exists a hypothesis test $\phi: \mathcal{C}[0,T] \rightarrow \{H_0, H_1\}$ that returns a correct result with probability one (See Appendix \ref{sec: Attack on Diffusion Coefficient}). Therefore, we can restrict ourselves to the admissible class $\mathcal{V}$ we defined as in Definition~\ref{def:admissible_attack} without loss of generality.
\end{remark}
}

\begin{figure}[!tbp]
\centering
\includegraphics[scale=0.6]{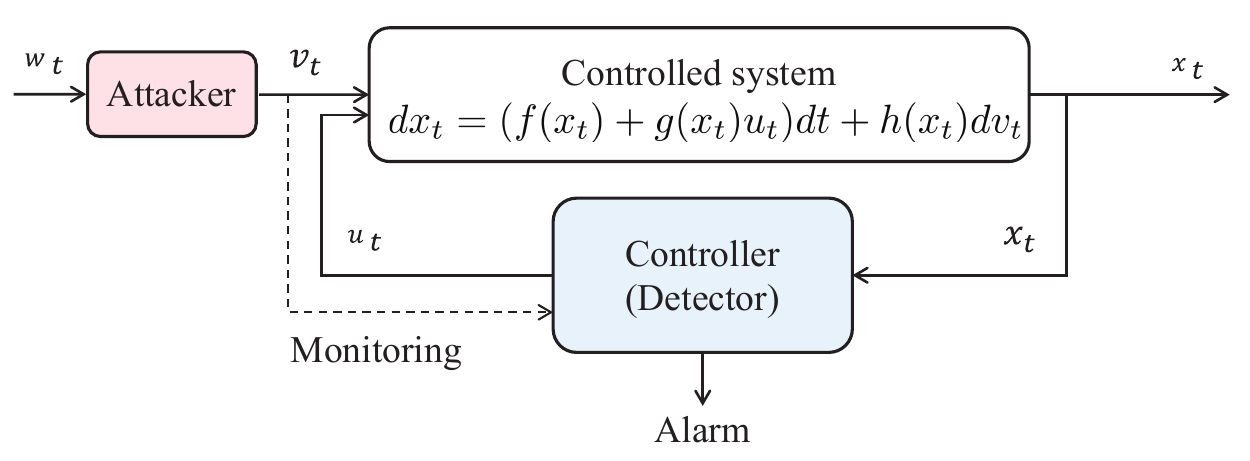}
\caption{Attacker vs controller/detector.}
\vspace{-5pt}
\label{fig:cps_security}
\end{figure}

\subsection{KL Divergence as a Stealthiness Measure}

{\color{black}
By controlling the drift term $\theta_t(\omega)$ in \eqref{eq:admissible_v}, the attacker can alter the distribution of the attack signal $v_t$.
To deal with the distributions in the space of continuous time paths, it is convenient to discuss it from the perspective of the change of measures. 
Even though $v_t$ in \eqref{eq:admissible_v} is not the standard Brownian motion under $P$ (unless $\theta_t(\omega)\equiv 0$), by Girsanov's theorem \cite{oksendal2013stochastic}, there always exists an alternative measure $Q$ in which $v_t$ is the standard Brownian motion. Girsanov's theorem also guarantees that such a measure $Q$ is equivalent to $P$ and hence the Radon-Nikodym derivative $\frac{dP}{dQ}(\omega)$ exists.

If an attack policy is fixed, then the controller's task is to determine whether an observed signal $v_t$ is generated by the natural disturbance, i.e.,
\begin{equation}
H_0: dv_t=dw_t
\end{equation}
or it is a synthetic attack signal, i.e.,
\begin{equation}
H_1: dv_t=\theta_t(\omega)dt+dw_t.
\end{equation}
This is a binary hypothesis testing problem.
Let $A\in \mathcal{F}$ be an event in which the controller triggers the alarm.
The quality of a hypothesis test (a choice of $A$) is usually evaluated in terms of the probability of type-I error (false alarm) and the probability of type-II error (failure of detection). Using notations introduced so far, the probability of type-I error can be expressed as $Q(A)$, whereas the probability of type-II error can be expressed as $P(A^c)$. 

It is well known from the Neyman-Pearson Lemma that the optimal trade-off between the type-I and type-II errors can be attained by a threshold-based likelihood ratio test. That is, for a fixed type-I error probability, the region $A$ that minimizes the type-II error probability is given by 
\begin{equation}
A=\left\{\omega\in\Omega: \log\left(\frac{dP}{dQ}(\omega)\right)\geq \tau\right\}
\end{equation}
where $\tau$ is an appropriately chosen threshold \cite{cvitanic2001generalized}.

Assuming that the controller adopts the Neyman-Pearson test (which is the most pessimistic assumption for the attacker), the attacker tries to decide on an attack policy that is difficult to detect. 
Unfortunately, for each attack policy $v_t$, it is generally difficult to compute type-I and type-II errors attainable by the Neyman-Pearson test analytically.
This poses a significant challenge in studying the attacker's policy.
Therefore, in this paper, we adopt an information-theoretic surrogate function that is more amenable to optimization -- namely, the KL divergence $D(P\|Q)=\mathbb{E}^P\log\frac{dP}{dQ}$ -- that serves as a ``stealthiness" measure.

The KL divergence has been adopted as a stealthiness measure in prior studies (e.g., \cite{bai2017data}), which can be justified by the following arguments:
The first argument is based on non-asymptotic inequalities (i.e., the inequalities that hold for any finite horizon lengths $T$). 
It follows from Pinsker's inequality and Bretagnolle-Huber inequality \cite[Theorem 14.2]{lattimore2020bandit} that
\begin{align}
Q(A)+P(A^c)&\geq 1-\sqrt{\frac{1}{2}D(P\|Q)} \label{eq:pinsker} \\
Q(A)+P(A^c)&\geq \frac{1}{2}\exp\left(-D(P\|Q)\right).
\end{align} 
These inequalities suggest that the attacker can enhance her stealthiness by choosing a policy that makes $D(P\|Q)$ small -- see Figure~\ref{fig:error_bounds}.
\begin{figure}[h]
\centering
\includegraphics[width=0.6\columnwidth]{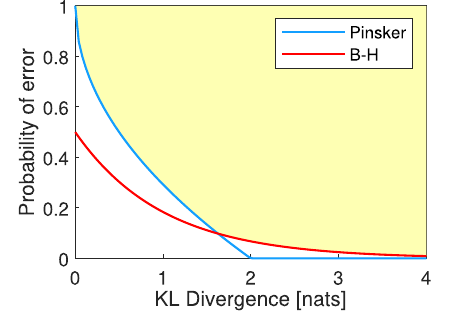}
\caption{Error bounds and achievable region (shown in yellow).}
\vspace{-5pt}
\label{fig:error_bounds}
\end{figure}\par
The second argument is asymptotic in the sense that it is concerned with how type-I and type-II errors behave as $T\rightarrow \infty$. 
The Chernoff-Stein lemma \cite{cover1999elements} states that when the type-I error probability is constrained, the exponent of the type-II error under the Neyman-Pearson test behaves as
\begin{equation}
\label{eq:error_asymptotes}
\begin{cases}
\max_\tau & -\log P(A^c) \\
\text{s.t. } & Q(A)\leq \alpha
\end{cases}=D(P\|Q) T+\mathcal{O}(\sqrt{T}).
\end{equation}
These results indicate that choosing a small $D(P\|Q)$ reduces the rate at which $P(A^c)$ decays to zero.\footnote{While \cite[Theorem 11.8.3]{cover1999elements} is specialized to i.i.d. samples, its generalization to ergodic samples \cite{polyanskiy2014lecture} has been used in \cite{bai2017data}. The continuous time result \eqref{eq:error_asymptotes} follows from \cite{tanaka2024covert}.}

\begin{remark}
Despite these justifications, the higher-order analysis of \eqref{eq:error_asymptotes} (e.g., \cite{lungu2024optimal}) suggests that the KL divergence $D(P\|Q)$ may not be an appropriate measure of stealthiness in the regime of finite $T$. Developments of stealthiness measures that outperform $D(P\|Q)$ in the finite data-length regime are critical for some applications (such as the fastest anomaly detection) and are postponed as an important future work.
\end{remark}
}

\subsection{Problem Formulation}
{\color{black}
Summarizing the discussion so far, this subsection formally states the problem studied in this paper.
\subsubsection{Stealthy Attack Synthesis}\label{sec: Worst-Case Attack Synthesis}
We first formulate a problem for optimal attack policy synthesis under a simplifying assumption that the controller's policy $u_t$ is fixed and is known to the attacker. Adopting $D(P\|Q)$ as the stealthiness measure, the attacker is incentivized to keep $D(P\|Q)$ small while maximizing the expected cost $\mathbb{E}\left[\int_0^T c_t(x_t, u_t)dt\right]$.
Introducing a constant $\lambda>0$ balancing these two requirements, the problem is formulated as follows:
\begin{problem}[KL control problem]\label{prob: KL}
\begin{equation}
\begin{aligned}
\label{eq:kld_soft}
\max_{v \in \mathcal{V}}\; &  \mathbb{E}^P\left[\int_0^T c_t(x_t,u_t)dt\right] - \lambda D(P \| Q)\\
\text{s.t.} \; & dx_t\!=\!f_t(x_t) dt +g_t(x_t)u_tdt+h_t(x_t)dv_t.
\end{aligned}
\end{equation}
\end{problem}
\vspace{2mm}
The term ``KL control problem" is inherited from \cite{theodorou2012relative} where problems with a similar structure were studied.
}

\vspace{3mm}
\subsubsection{Attack Risk Mitigation}\label{Sec: Attack Risk Mitigation}
{\color{black} Here, we consider the scenario shown in Figure~\ref{fig:cps_security} where the controller is now able to apply a control input $u_t$ to combat with the potential attack input $v_t$. As before, we assume the existence of an attack detector to detect the stealthy attack. We model the competition between the controller and the attacker as a minimax game in which the controller acts as the minimizer and the attacker acts as the maximizer. This results in the following \emph{minimax KL control} problem, which is a two-player dynamic zero-sum game between the controller (cost minimizer) and the attacker (cost maximizer):

\begin{problem}[Minimax KL control problem]\label{prob: minimax_KL}
 \begin{equation}
\label{eq:minmax_kl}
\begin{aligned}
\min_{u\in\mathcal{U}} & \max_{v\in\mathcal{V}} \mathbb{E}^{P} \left[ \int_0^T c_t(x_t, u_t)dt \right]-\lambda D(P \| Q)\\
\text{s.t.} \; & dx_t\!=\!f_t(x_t) dt\! +g_t(x_t)u_tdt+h_t(x_t)dv_t
\end{aligned}
\end{equation}   
We assume the constant $\lambda>0$ is known to both players in advance.
\end{problem}
}

\section{Stealthy Attack Synthesis via Path Integral Approach}
\label{sec:deception}
{\color{black}
This section summarizes the main technical results concerning Problem 1.
We present structural properties of the optimal attack signal $v_t$ (Theorem~\ref{thm:prob1_1}) and show how to numerically compute $v_t$ for real-time implementations.

Our first result states that Problem 1 reduces to a quadratic-cost stochastic optimal control (SOC) problem in terms of the bias input $\theta_t$:
\begin{problem}[Quadratic-cost SOC problem]\label{prob: SOC}
\begin{align}
\max_\theta\; &  \mathbb{E}^P\left[\int_0^T \left\{c_t(x_t,u_t)-\frac{\lambda}{2}\|\theta_t\|^2\right\}dt\right] \label{eq:soc_theta}\\
\text{s.t.} \; & dx_t\!=\!f_t(x_t) dt +g_t(x_t)u_tdt+h_t(x_t)(\theta_t dt + dw_t) \nonumber \\
&  \text{where $\theta_t$ satisfies conditions (i)-(iii) in Definition~\ref{def:admissible_attack}}. \nonumber
\end{align}
\end{problem}
\begin{theorem}
\label{thm:prob1_1}
Let $u_t$ be a given $\mathcal{F}_t^x$-adapted process. Then, the following statements hold:
\begin{itemize}
\item[(i)] Problem 1 is equivalent to the quadratic-cost SOC problem \eqref{eq:soc_theta}.
\item[(ii)] There exists a deterministic state feedback policy (which can be written as $\theta_t(x_t)$) for Problem \ref{prob: SOC}.
\item[(iii)] An optimal attack policy $v_t$ (governed by It\^o process  \eqref{eq:admissible_v}) is characterized by 
\begin{equation}
\label{eq:theta_opt}
\theta_t^*(x_t)=\frac{1}{\lambda}h_t^\top\partial_xV_t(x_t)
\end{equation}
where $V_t$ solves the Hamilton-Jacobi Bellman equation
\begin{equation}\label{eq:HJB in V}
\partial_t V_t =  -\frac{1}{2\lambda}\left(\partial_x{V}_t\right)^\top h_th_t^\top\partial_x{V}_t-c_t-(f_t+g_tu_t)^\top\partial_x{V}_t-\frac{1}{2}\text{Tr}\left(h_th_t^\top\partial^2_x{V}_t\right).
\end{equation}
\end{itemize}
\end{theorem}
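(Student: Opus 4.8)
The plan is to establish the three claims in order, relying on Girsanov's theorem to handle the information-theoretic term in part (i) and on a standard dynamic-programming/verification argument for parts (ii)--(iii).

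For part (i), the key observation is that the KL term admits a closed form. Since $v_t$ is a standard Brownian motion under $Q$ while $dv_t=\theta_t dt + dw_t$ with $w_t$ a $P$-Brownian motion, Girsanov's theorem gives the exponential martingale
$$\frac{dP}{dQ}=\exp\left(\int_0^T \theta_t^\top dw_t + \frac{1}{2}\int_0^T \|\theta_t\|^2 dt\right).$$
Taking logarithms and then the $P$-expectation, the It\^o integral $\int_0^T \theta_t^\top dw_t$ is a $P$-martingale (using the adaptedness and integrability in Definition~\ref{def:admissible_attack}), so its mean vanishes and
$$D(P\|Q)=\frac{1}{2}\,\mathbb{E}^P\left[\int_0^T \|\theta_t\|^2 dt\right].$$
Substituting this identity into \eqref{eq:kld_soft} and rewriting the dynamics via $dv_t=\theta_t dt + dw_t$ turns the KL control problem into the quadratic-cost SOC problem \eqref{eq:soc_theta}, which is the asserted equivalence.

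For parts (ii)--(iii), I would treat \eqref{eq:soc_theta} as a finite-horizon SOC problem and write down the HJB equation for its value function $V_t(x)$,
$$-\partial_t V_t=\max_{\theta}\left\{c_t-\frac{\lambda}{2}\|\theta\|^2+(f_t+g_t u_t+h_t\theta)^\top\partial_x V_t+\frac{1}{2}\text{Tr}\!\left(h_th_t^\top\partial^2_x V_t\right)\right\}.$$
The inner maximization is an unconstrained concave quadratic in $\theta$; differentiating and setting the gradient to zero yields the unique maximizer $\theta_t^*(x_t)=\frac{1}{\lambda}h_t^\top\partial_x V_t(x_t)$, which is precisely \eqref{eq:theta_opt} and is a deterministic state-feedback law, giving (ii). Plugging $\theta_t^*$ back into the bracket and collecting the quadratic terms reproduces the PDE \eqref{eq:HJB in V}, giving (iii). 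To make this rigorous I would prove a verification theorem: assuming a sufficiently regular ($C^{1,2}$ with suitable growth) solution $V$ of \eqref{eq:HJB in V}, apply It\^o's formula to $V_t(x_t)$ along an arbitrary admissible $\theta$, take expectations so the martingale part drops out, and use the HJB inequality to conclude that $V_0(x_0)$ dominates the objective for every admissible attack, with equality attained exactly at $\theta^*$.

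The hard part will be the technical justification underlying both steps rather than the formal calculations. In part (i) one must verify that $\frac{dP}{dQ}$ is a genuine density (e.g., via a Novikov or Kazamaki condition ensuring the exponential is a true martingale of unit expectation) and that $\int_0^T \theta_t^\top dw_t$ is a true, not merely local, martingale so that its expectation is zero; in parts (ii)--(iii) the verification theorem requires enough regularity of $V$ and growth control on $c_t,f_t,g_t,h_t$ to legitimize It\^o's formula and the interchange of maximization and expectation. These are exactly the places where the admissibility conditions (i)--(iii) of Definition~\ref{def:admissible_attack} must be invoked, and a careful statement of the standing regularity hypotheses will be needed to close the argument.
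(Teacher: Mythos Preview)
Your proposal is correct and follows essentially the same approach as the paper: Girsanov's theorem plus the zero-mean property of the It\^o integral to reduce $D(P\|Q)$ to $\tfrac{1}{2}\mathbb{E}^P\int_0^T\|\theta_t\|^2 dt$ for part (i), and the HJB equation with pointwise optimization over the concave quadratic in $\theta$ for parts (ii)--(iii). The only cosmetic difference is that the paper flips the sign via $\overline{V}_t:=-V_t$ to write the HJB as a minimization, whereas you keep the maximization form; your discussion of the verification step and the Novikov-type conditions is in fact more careful than the paper's own treatment.
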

\begin{remark}
Theorem~\ref{thm:prob1_1} implies that the optimal attack signal can be written as $dv_t=\theta_t(x_t)dt+dw_t$, i.e., an addition of a deterministic bias $\theta_t(x_t)$ computed by \eqref{eq:theta_opt} to the natural disturbance $w_t$.
This is in contrast to the result in discrete-time \cite{bai2017data}, where the optimal attack is a randomized policy.
\end{remark}
\begin{proof}
Notice that the KL divergence $D(P \|Q)$ term in Problem \ref{prob: KL} can be rewritten as

\begin{equation*}
  D(P \|Q)= \mathbb{E}^P \log \frac{dP}{dQ} 
\end{equation*}
Recall that $P$ is the measure in which $w_t$ is a standard Brownian motion and $Q$ is the measure in which the attack signal $v_t$ (governed by It\^o process  \eqref{eq:admissible_v}) is a standard Brownian motion. Using the Girsanov theorem \cite{oksendal2003stochastic}, we obtain
\begin{subequations}
    \label{eq:KL_girsanov}
    \begin{align}
D(P \|Q)= &\mathbb{E}^P \log \frac{dP}{dQ} \nonumber\\
=&\mathbb{E}^P \left[ \int_0^T \theta_t^\top(\omega) dw_t + \frac{1}{2}\int_0^T \|\theta_t(\omega)\|^2 dt\right] \label{eq:KL_girsanov-a}\\
=&\frac{1}{2}\mathbb{E}^P \left[ \int_0^T \|\theta_t(\omega)\|^2 dt \right]. \label{eq:KL_girsanov-b}
\end{align}
\end{subequations}
Notice that the term $\int_0^T \theta_t(\omega)^\top dw_t$ in \eqref{eq:KL_girsanov-a} is an It\^o integral. We obtain \eqref{eq:KL_girsanov-b} using the following property of It\^o integral \cite[Chapter 3]{oksendal2013stochastic}:
\begin{equation*}
    \mathbb{E}^P \int_0^T \theta_t^\top(\omega) dw_t=0.
\end{equation*} 
Equation \eqref{eq:KL_girsanov} proves statement (i). In order to prove statements (ii) and (iii), we use the \emph{dynamic programming principle}. We introduce the value function $V_t(x_t)$ for each time $t\in[0,T)$ and the state $x_t\in\mathbb{R}^n$ as 
\begin{equation}
\label{eq:value_f}
V_t(x_t):=\max_\theta  \mathbb{E}^P\int_t^T \left(c_t(x_s,u_s) - \frac{\lambda}{2}\|\theta_s\|^2\right)ds.
\end{equation} 
 Let us define \begin{equation}
\label{eq:value_f_min}
\overline{V_t}(x_t):=\min_\theta  \mathbb{E}^P\int_t^T \left(-c_s (x_s,u_s) + \frac{\lambda}{2}\|\theta_s\|^2\right)ds.
\end{equation} 
Note that $V_t(x_t)=-\overline{V_t}(x_t)$. The stochastic Hamilton-Jacobi-Bellman (HJB) equation \cite{fleming2006controlled, stengel1994optimal} associated with \eqref{eq:value_f_min} is expressed as follows:
\begin{equation}\label{eq:HJB}
  -\partial_t\overline{V}_t =  \min_{{\theta}_t}\Big[\frac{\lambda}{2}\|\theta_t\|^2+\left(f_t+g_tu_t + h_t\theta_t\right)^{\top}\partial_x\overline{V}_t
   -c_t+\frac{1}{2}\text{Tr}\left(h_t h_t^\top\partial^2_x\overline{V}_t\right)\Big].
\end{equation}
Solving \eqref{eq:HJB}, we get the optimal $\theta_t$ as
\begin{equation}\label{eq:theta_star}
    \theta_t^*(x_t) = -\frac{1}{\lambda}h_t^\top\partial_x\overline{V_t}(x_t).
\end{equation}
Putting \eqref{eq:theta_star} in \eqref{eq:HJB}, we get
\begin{equation}\label{eq:HJB2}
  -\partial_t\overline{V}_t =  -\frac{1}{2\lambda}\left(\partial_x\overline{V}_t\right)^\top h_th_t^\top\partial_x\overline{V}_t-c_t +(f_t+g_tu_t)^\top\partial_x\overline{V}_t+\frac{1}{2}\text{Tr}\left(h_th_t^\top\partial^2_x\overline{V}_t\right).
\end{equation}
Since $V_t(x_t)=-\overline{V_t}(x_t)$, this proves statements (ii) and (iii).
\end{proof}
Generally, it is challenging to compute \eqref{eq:theta_opt} since it requires the solution $V_t(x_t)$ to a nonlinear, possibly high-dimensional, partial differential equation (PDE) \eqref{eq:HJB in V}.
Fortunately, the structure of the optimal control problem \eqref{eq:soc_theta} allows for an application of the \emph{path integral method} \cite{kappen2005path},  offering a Monte-Carlo-based attack signal synthesis. The applicability of the path-integral method to the quadratic-cost SOC problems shown in Problem \ref{prob: SOC} was first pointed out by the work of Kappen\cite{kappen2005path}.
For each time $t\in[0,T)$ and the state $x_t\in\mathbb{R}^n$, the path-integral method allows the adversary to compute the optimal attack signal $\theta_t(x_t)$ by evaluating the path integrals along randomly generated trajectories $\{x_s, u_s\}$, $t\leq s \leq T$ starting from $x_t$. The next result provides the details.
}

\begin{theorem}\label{Theorem: sol of SOC}
  The solution of \eqref{eq:value_f} exists, is unique and is given by \begin{equation}
\label{eq:value_f2}
V_t(x_t)=\lambda \log \mathbb{E}^Q \left[\exp \left\{\frac{1}{\lambda}\int_t^T c_s(x_s, u_s)ds \right\}\right]. 
\end{equation}  
Furthermore, the optimal bias input $\theta_t^*(x_t)$ is given by
\begin{equation}\label{eq: theta_star} \!\!\!\!\!\theta_t^*dt\!=\!\mathcal{H}_t(x_t)\frac{\mathbb{E}^Q\left[\text{exp}{\left\{\frac{1}{\lambda}\int_t^T\!\! c_s(x_s, u_s)ds\right\}}h_t(x_t)d{w}_t\right]}{\mathbb{E}^Q\left[\text{exp}{\left\{\frac{1}{\lambda}\int_t^T \!\!c_s(x_s, u_s)ds\right\}}\right]} 
\end{equation}
where the matrix $\mathcal{H}_t(x_t)$ is defined as
\begin{equation*}
    \mathcal{H}_t(x_t) = h_t^\top(x_t)\left(h_t(x_t)h_t(x_t)^\top\right)^{-1}.
\end{equation*}
\end{theorem}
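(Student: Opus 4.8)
The plan is to prove the path-integral representation by the standard three-step route: linearize the Hamilton--Jacobi--Bellman equation \eqref{eq:HJB in V} through a logarithmic (Cole--Hopf) change of variables, invoke the Feynman--Kac lemma to obtain a path integral for the transformed value function under the measure $Q$, and finally differentiate this representation to express the optimal bias \eqref{eq:theta_opt} in terms of sampled trajectories. Since Theorem~\ref{thm:prob1_1} already reduces Problem~\ref{prob: KL} to the quadratic-cost problem \eqref{eq:soc_theta} and establishes that $V_t$ solves \eqref{eq:HJB in V} with terminal condition $V_T\equiv 0$ (read off from \eqref{eq:value_f}), I may take this PDE as the starting point.

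First I would introduce the exponentiated value function $\Psi_t(x):=\exp\!\big(V_t(x)/\lambda\big)$, equivalently $V_t=\lambda\log\Psi_t$, and substitute its derivatives into \eqref{eq:HJB in V}. A direct computation shows that the quadratic gradient term $-\tfrac{1}{2\lambda}(\partial_xV_t)^\top h_th_t^\top\partial_xV_t$ produced by the logarithm is exactly cancelled by the corresponding term arising from $\partial_x^2V_t$ inside the trace, leaving the \emph{linear} backward PDE
\begin{equation*}
\partial_t\Psi_t + (f_t+g_tu_t)^\top\partial_x\Psi_t + \tfrac{1}{2}\mathrm{Tr}\!\left(h_th_t^\top\partial_x^2\Psi_t\right) + \tfrac{1}{\lambda}c_t\,\Psi_t = 0,
\end{equation*}
with terminal condition $\Psi_T\equiv 1$. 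Under the measure $Q$ the attack signal $v_t$ is a standard Brownian motion, so the state obeys $dx_s=(f_s+g_su_s)ds+h_s\,dw_s$ with $w$ a $Q$-Brownian motion; applying the Feynman--Kac lemma (the continuous-time version used for Lemma~\ref{theorem: E and !}) to this linear equation yields existence, uniqueness, and the representation $\Psi_t(x_t)=\mathbb{E}^Q[\exp(\tfrac{1}{\lambda}\int_t^T c_s\,ds)]$, which is precisely \eqref{eq:value_f2} after taking $\lambda\log(\cdot)$.

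For the optimal bias, I would start from \eqref{eq:theta_opt}, which in terms of $\Psi_t$ reads $\theta_t^*=h_t^\top\partial_x\Psi_t/\Psi_t$, so the task is to represent $\partial_x\Psi_t$ by Monte-Carlo sampling. Define the $Q$-martingale $M_s:=\exp(\tfrac{1}{\lambda}\int_t^s c_r\,dr)\,\Psi_s(x_s)$; It\^o's formula together with the linear PDE collapses the drift of $M_s$ and gives $dM_s=\exp(\tfrac{1}{\lambda}\int_t^s c_r\,dr)\,(\partial_x\Psi_s)^\top h_s\,dw_s$. Integrating from $t$ to $T$ and using $\Psi_T\equiv 1$ expresses $\exp(\tfrac{1}{\lambda}\int_t^T c_s\,ds)$ as $\Psi_t(x_t)$ plus a stochastic integral; correlating this identity with the initial increment $dw_t$ and applying the It\^o isometry isolates the leading term, yielding $\mathbb{E}^Q[\exp(\tfrac{1}{\lambda}\int_t^T c_s\,ds)\,dw_t]=h_t^\top\partial_x\Psi_t\,dt$. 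Left-multiplying by $h_t$ and then by $\mathcal{H}_t=h_t^\top(h_th_t^\top)^{-1}$ gives $\mathcal{H}_t\,h_th_t^\top\partial_x\Psi_t\,dt=h_t^\top\partial_x\Psi_t\,dt$, and dividing by $\Psi_t=\mathbb{E}^Q[\exp(\tfrac{1}{\lambda}\int_t^T c_s\,ds)]$ produces exactly \eqref{eq: theta_star}. Alternatively, the gradient step can be carried out by discretizing the dynamics and differentiating with respect to the first noise increment, in the manner of \cite{williams2017model,theodorou2010generalized}; this is the same computation that produced the single-agent formula \eqref{path integral control}.

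The main obstacle I anticipate is the rigorous justification of this last gradient-extraction step rather than the linearization, which is purely algebraic. Specifically, I must guarantee that $\Psi_t$ is $C^{1,2}$ so that It\^o's formula applies to $M_s$, that the exponentiated running cost is $Q$-integrable (and square-integrable) so that the stochastic integral is a genuine martingale and the It\^o isometry is valid, and that the interchange of the spatial gradient with the $Q$-expectation is permissible. These require mild growth and regularity hypotheses on $f_t,g_t,h_t,c_t$ together with $h_t$ being of full row rank so that $(h_th_t^\top)^{-1}$ exists along sampled paths; under such assumptions the heuristic ``$dw_t$-correlation'' argument becomes the precise statement obtained by passing to the $\Delta t\to 0$ limit of the time-discretized representation.
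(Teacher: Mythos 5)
Your proposal is correct and follows essentially the same route as the paper: the Cole--Hopf transformation $\Psi_t=\exp(V_t/\lambda)$ linearizes the HJB equation \eqref{eq:HJB in V} into the backward Chapman--Kolmogorov PDE, Feynman--Kac gives existence, uniqueness, and the representation \eqref{eq:value_f2}, and \eqref{eq: theta_star} follows by differentiating that representation. The only difference is that the paper dispatches the gradient-extraction step with a one-line appeal to the standard path-integral references, whereas you carry it out explicitly via the martingale $M_s$ and the It\^o isometry; your computation is consistent (including the $\mathcal{H}_t h_t h_t^\top=h_t^\top$ cancellation) and supplies the detail the paper omits.
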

\begin{proof}
 In Theorem \ref{thm:prob1_1}, we proved that the value function $\overline{V}_t(x_t)$ \eqref{eq:value_f_min} satisfies the PDE \eqref{eq:HJB2}. We introduce the exponential transformation of the value function $\overline{V}_t(x_t) = -\lambda\log\Psi_t(x_t)$ (known as Cole-Hopf transformation \cite{kappen2005path}). This will reformulate \eqref{eq:HJB2} as:
\begin{equation}\label{eq:Psi}
    \!\partial_t\Psi_t\!=\!\frac{-c_t\Psi_t}{\lambda}\!-\!(f_t+g_tu_t))^\top\partial_x\Psi_t-\frac{1}{2}\text{Tr}\left(h_th_t^\top\partial^2_x\Psi\right).
\end{equation}
The PDE \eqref{eq:Psi} is linear in terms of $\Psi_t$ and is known as the backward Chapman-Kolmogorov PDE \cite{williams2017model}. According to the Feynman-Kac lemma \cite{oksendal2003stochastic}, the solution of the linear PDE \eqref{eq:Psi} exists and is unique in the sense that $\Psi_t$ solving \eqref{eq:Psi} is given by
\begin{equation}\label{eq:Psi_sol}
    \Psi_t(x_t) = \mathbb{E}^Q \left[\exp \left\{\frac{1}{\lambda}\int_t^T c_s(x_s, u_s)ds \right\}\right]
\end{equation}
where $Q$ is the probability measure in which the attack $v_t$ is a standard Brownian motion i.e., $\theta_s$, $t\leq s\leq T$ is zero. Since $V_t(x_t) = -\overline{V_t}(x_t) = \lambda\log\Psi_t(x_t)$, using \eqref{eq:Psi_sol}, we get the desired result \eqref{eq:value_f2}. Solving \eqref{eq:theta_opt}, i.e., taking the gradient of ${V_t(x_t)}$ with respect to $x$, we get the optimal bias input \eqref{eq: theta_star}.
\end{proof}
Since the right-hand side of \eqref{eq:value_f2} contains the expectation operation with respect to $Q$, using the strong law of large numbers \cite{durrett2019probability}, we can prove that as $N\rightarrow\infty$,
\[
\lambda \log \left[\frac{1}{N}\sum_{i=1}^N \exp \left\{\frac{1}{\lambda}\int_t^T c_s(x^i_s, u^i_s)ds\right\}\right] \overset{a.s.} {\rightarrow} V_t(x_t)
\]
where $\{x_s^i, u^i_s, t\leq s\leq T\}_{i=1}^N$ are randomly drawn sample paths from distribution $Q$.
Since $Q$ is the measure in which $v_t$ is the standard Brownian motion, generating such a sample ensemble is easy. It suffices to perform $N$ independent simulations of the dynamics $dx_s=f_s(x_s)ds +g_s(x_s)u_sds+ h_s(x_s)dw_s$. 
Similarly, the optimal bias input $\theta_t^*$ \eqref{eq: theta_star} can be readily computed by the same simulated ensemble $\{x_s^i, u_s^i, t\leq s\leq T\}_{i=1}^N$ and their path costs \cite{kappen2005path, williams2016aggressive}. According to the strong law of large numbers, as $N\rightarrow\infty$,

\begin{equation} \label{eq: theta_star_MC}\!\!\mathcal{H}_t(x_t)\frac{\frac{1}{N}\sum_{i=1}^N \text{exp}{\left\{\frac{1}{\lambda}\int_t^T c_s(x_s^{i}, u_s^{i})ds\right\}}h_t(x_t)\epsilon}{\frac{\sqrt{\Delta t}}{N}\sum_{i=1}^N \text{exp}{\left\{\frac{1}{\lambda}\int_t^T c_s(x_s^i, u_s^i)ds\right\}}} \overset{a.s.} {\rightarrow} \theta^*_t
\end{equation}
where $\epsilon\sim\mathcal{N}(0,1)$ and $\Delta t$ is the step size. Equation \eqref{eq: theta_star_MC} implies that if the attacker has a simulator engine that can generate a large number of sample trajectories $\{x_t^i, u_t^i\}_{i=1}^N$ from the distribution $Q$ starting from the current state-time pair $(x_t,t)$, then drift term $\theta_t^*$ \eqref{eq: theta_star_MC} of the optimal attack signal $v_t^*$  can be computed directly from the sample ensemble $\{x_t^i, u_t^i\}_{i=1}^N$. A notable feature of such a simulator-driven attack synthesis is that it computes the optimal stealthy attack signal in real time for nonlinear CPS without requiring an explicit model of the system or an explicit policy synthesis step. 

\section{Attack Risk Mitigation}\label{sec: Attack impact mitigation}
\label{sec:mitigation}

In this section, we turn our attention to the controller's problem, who is interested in mitigating the risk of stealthy attacks when the attacker is present. In Section \ref{sec: Connections with Risk-Sensitive Control and Two-Player Zero-Sum Stochastic Differential Game}, we establish a connection between the minimax KL control problem (Problem \ref{prob: minimax_KL}) with risk-sensitive control and two-player zero-sum stochastic differential game. In Sections \ref{sec: Risk-Sensitive Control via Path Integral Approach} and \ref{sec: Two-player Zero-Sum Stochastic Differential Game via Path Integral Approach}, we present the solutions of risk-sensitive control problems and two-player stochastic differential games, 
respectively, using the path integral approach. This allows the controller to synthesize the risk-mitigating control signals via Monte Carlo simulations.  

\subsection{Connections with Risk-Sensitive Control and Two-Player Zero-Sum Stochastic Differential Game}\label{sec: Connections with Risk-Sensitive Control and Two-Player Zero-Sum Stochastic Differential Game}
\begin{figure*}[h]
    \centering   \!\!\!\!\!\!\!\!\!\!\!\!\!\!\!\!\!\!\!\!\includegraphics[scale=0.37]{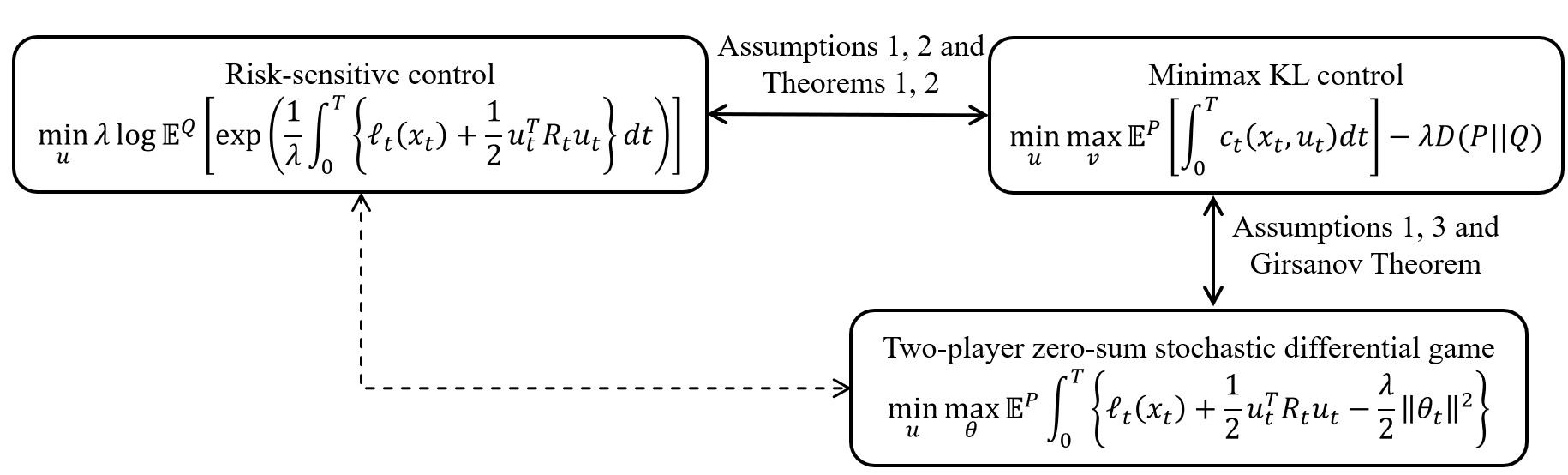}
    \vspace{-1ex}
    \caption{Connections between minimax KL control, risk-sensitive control, and two-player stochastic differential game}
    \label{fig:jacobson}
\end{figure*}
First, we establish the connection between the minimax KL control problem \eqref{eq:minmax_kl} and the \emph{risk-sensitive control problem} \cite{fleming2006controlled, whittle1981risk, jacobson1973optimal}. To this end, we make the following assumption:
\begin{assumption}\label{Assum:quadratic cost}
 the  cost function $c_t$ is quadratic in $u_t$:
\begin{equation}\label{eq: ct}
c_t(x_t, u_t)=\ell_t(x_t)+\frac{1}{2}u_t^\top R_t(x_t) u_t
\end{equation}
where $R_t(x_t)\succeq 0$ for all $t$. 
\end{assumption}
We can determine the value of the inner maximization problem in \eqref{eq:minmax_kl} using \eqref{eq:value_f2}. Using \eqref{eq:value_f2} and \eqref{eq: ct}, Problem \ref{prob: minimax_KL} can be written equivalently as follows:

\begin{problem}[Risk-sensitive control problem]\label{prob: risk-sensitive control}
 \begin{equation}
\label{eq:risk_sensitive}
\begin{aligned}
\min_{u} \; & \lambda \log \mathbb{E}^Q \!\left[\exp \!\left(\frac{1}{\lambda}\!\int_0^T\!\! \left\{\!\ell_t(x_t)\!+\!\frac{1}{2}u_t^{\!\top} R_t u_t\!\right\}dt\!\right)\!\right]\\
\text{s.t.} \; & dx_t\!=\!f_t(x_t) dt\! +g_t(x_t)u_tdt+h_t(x_t)dw_t.
\end{aligned}
\end{equation}   
\end{problem}
\vspace{3mm}
 The objective function in \eqref{eq:risk_sensitive} is also related to the risk measure called \emph{Entropic value-at-risk (EVaR)}
\cite{ahmadi2012entropic}.
This equivalence shows the intimate relationship between the minimax KL control problem and the risk-sensitive control problem. Problem \ref{prob: risk-sensitive control} is a class of risk-sensitive control problems with certain structural constraints. Specifically, the cost function is quadratic in the control input $u_t$, and the state equation is affine in both the control input $u_t$ and the noise $w_t$. This class of risk-sensitive control problems can be solved using the basic path-integral method \cite{broek2012risk}.\par

Now, we establish the connection between the minimax KL control problem \eqref{eq:minmax_kl} and the \emph{two-player zero-sum stochastic differential game}. Using the Girsanov's theorem (Theorem \ref{thm:prob1_1}-(i)) and assuming that the cost function $c_t$ is quadratic in $u_t$ \eqref{eq: ct}, Problem \ref{prob: minimax_KL} can be rewritten as

\begin{problem}[Two-player stochastic differential game]\label{prob: game}
\begin{equation}
\label{eq:h_inf}
\begin{aligned}
\min_u&\max_\theta \mathbb{E}^P\! \left[\int_0^T\!\! \left\{\ell_t(x_t)+\frac{1}{2}u_t^\top R_t u_t -\frac{\lambda}{2}\|\theta_t\|^2\right\}dt\right]\\
\text{s.t.} \; & dx_t\!=\!f_t(x_t) dt\! +g_t(x_t)u_tdt+h_t(x_t)\left(\theta_t dt \!+\!dw_t\!\right).
\end{aligned}
\end{equation}
\end{problem}
\vspace{3mm}
Problem \ref{prob: game} is a class of two-player zero-sum stochastic differential games with certain structural constraints. Specifically, the cost function is quadratic in $u_t$, $\theta_t$, and the state equation is affine in both the inputs $u_t$, $\theta_t$, and the noise input $w_t$. This class of stochastic differential games can be solved using the basic path-integral method \cite{patil2023risk}.\par

The equivalence of Problems \ref{prob: minimax_KL}, \ref{prob: risk-sensitive control}, and \ref{prob: game} under assumptions \ref{Assum:quadratic cost}, \ref{continuous-time deception Assumption: linearity_risk} and \ref{continuous-time deception Assumption: linearity} is shown in Figure~\ref{fig:jacobson}. Problem \ref{prob: game} is also known as the \emph{non-linear $H_\infty$ control} problem. We remark that the connection between the risk-sensitive control and the $H_\infty$ control (indicated by a dashed line in Figure~\ref{fig:jacobson}) for a class of linear systems is already known (e.g., \cite{jacobson1973optimal}). 
Figure~\ref{fig:jacobson} re-establishes this connection, possibly for a broader class of dynamical systems.

\subsection{Risk-Sensitive Control via Path Integral Approach}\label{sec: Risk-Sensitive Control via Path Integral Approach}

The applicability of the path-integral method to the class of risk-sensitive control problems shown in Problem \ref{prob: risk-sensitive control} was pointed out by the work of Broek et. al. in \cite{broek2012risk}. For each time $t\in[0,T)$ and the state $x_t\in\mathbb{R}^n$, the path-integral method, under a certain assumption, allows the controller to compute the optimal policy $u_t$ under the presence of worst-case attack by evaluating the path integrals along randomly generated trajectories $\{x_s\}$, $t\leq s \leq T$ starting from $x_t$. This can be derived from the fact that under a certain assumption, the value function of the risk-sensitive control problem \eqref{eq:risk_sensitive} can be computed by Monte-Carlo sampling. To see this, for each state-time pair $(x_t,t)$, introduce the value function 
\begin{equation}
\label{eq:value_f_risk}
\!\!V_t(x_t)\!=\min_u \lambda \log \mathbb{E}^Q \!\left[\exp \!\left(\frac{1}{\lambda}\!\int_t^T\!\! \left\{\!\ell_s(x_s)\!+\!\frac{1}{2}u_s^{\!\top} R_s u_s\!\right\}ds\!\right)\!\right]\!\!.
\end{equation}
First, we make the following assumption which is essential to solve \eqref{eq:value_f_risk} using the path integral method \cite{broek2012risk}.

\begin{assumption}\label{continuous-time deception Assumption: linearity_risk}
   For all $(x_t,t)$, there exists a constant $0<\xi<\lambda$ satisfying the following equation: 
   \begin{equation}\label{eq: linearizability_risk}
    \!\!h_t(x_t)  h_t^{\!\top}(x_t)\!\! =\! \xi g_t(x_t)R_t^{-1}(x_t)g_t^\top(x_t).
\end{equation}

\end{assumption}
\vspace{3mm}
Assumption \ref{continuous-time deception Assumption: linearity_risk} is similar to the assumption required in the path integral formulation of a stochastic control problem \cite{satoh2016iterative}. A possible interpretation of condition \eqref{eq: linearizability_risk} is that in a direction with high noise variance, the control cost of the risk-sensitive control problem \eqref{eq:risk_sensitive} has to be low. Therefore, the weight of the control cost $R_t$ need to be tuned appropriately for the given $\lambda$, $h_t(x_t)$ and  $g_t(x_t)$ for all $t$. See \cite{broek2012risk} for further discussion on this condition.

\begin{theorem}\label{thm: risk-sensitive control}
    Under Assumption \ref{continuous-time deception Assumption: linearity_risk}, the solution of \eqref{eq:value_f_risk} exists, is unique and is given by
    \begin{equation}
\label{eq:value_f2_risk}
V_t(x_t)=-\gamma \log \mathbb{E}^Z \left[\exp \left\{-\frac{1}{\gamma}\int_t^T \ell_s(x_s)ds \right\}\right]
\end{equation}
where $Z$ is the probability measure defined under the system dynamics \eqref{continuous-time deception SDE} when both $u_s$ and $\theta_s$, $t\leq s\leq T$ are zero and 
\begin{equation}\label{eq: gamma}
\gamma = \frac{\xi\lambda}{\lambda-\xi} .   
\end{equation}
Furthermore, the optimal controller signal $u_t^*$ is given by
\begin{equation}\label{eq: u_star_risk2} \!\!\!\!\!u_t^*dt\!=\!\mathcal{H}_t(x_t)\frac{\mathbb{E}^Z\left[\text{exp}{\left\{-\frac{1}{\gamma}\int_t^T \ell_s(x_s)ds\right\}}h_t(x_t)d{w}_t\right]}{\mathbb{E}^Z\left[\text{exp}{\left\{-\frac{1}{\gamma}\int_t^T \ell_s(x_s)ds\right\}}\right]} 
\end{equation}
where the matrix $\mathcal{H}_t(x_t)$ is defined as
\begin{equation*}
    \!\!\mathcal{H}_t(x_t)\!=\!R_t^{-1} g_t^\top(x_t)\!\left(\!g_t(x_t)R_t^{-1}g_t^\top(x_t)\! -\frac{1}{\lambda} h_t(x_t)h_t(x_t)^\top\!\right)^{\!-1}
\end{equation*}
\end{theorem}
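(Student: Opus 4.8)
The plan is to solve the risk-sensitive problem \eqref{eq:value_f_risk} by dynamic programming, linearize the resulting Hamilton--Jacobi--Bellman (HJB) equation through a logarithmic transformation whose scaling constant is fixed by Assumption \ref{continuous-time deception Assumption: linearity_risk}, and then invoke the Feynman--Kac lemma exactly as in the proof of Theorem \ref{Theorem: sol of SOC}. First I would write the dynamic programming equation for the exponential-of-integral cost. Because the objective is risk-sensitive, the associated HJB carries an extra quadratic gradient term; concretely, the value function obeys
\begin{equation*}
-\partial_t V_t = \min_{u_t}\Big[\ell_t + \tfrac{1}{2}u_t^\top R_t u_t + (f_t+g_tu_t)^\top\partial_x V_t + \tfrac{1}{2}\mathrm{Tr}(h_th_t^\top\partial_x^2 V_t) + \tfrac{1}{2\lambda}(\partial_x V_t)^\top h_th_t^\top\partial_x V_t\Big].
\end{equation*}
This identity I would justify by setting $W_t = \exp(V_t/\lambda)$, observing that $W_t$ satisfies a linear Feynman--Kac PDE with potential $\ell_t/\lambda$, and translating back. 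Carrying out the pointwise minimization over $u_t$ gives $u_t^\ast = -R_t^{-1}g_t^\top\partial_x V_t$ and, after substitution, removes the explicit $u_t$ dependence while leaving the two quadratic gradient terms $-\tfrac{1}{2}(\partial_x V_t)^\top g_tR_t^{-1}g_t^\top\partial_x V_t$ and $+\tfrac{1}{2\lambda}(\partial_x V_t)^\top h_th_t^\top\partial_x V_t$.

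The crux of the argument is the linearization. I would introduce $V_t = -\gamma\log\Phi_t$ with $\gamma$ to be determined and substitute the resulting expressions for $\partial_t V_t$, $\partial_x V_t$, $\partial_x^2 V_t$ into the reduced HJB. Two sources produce terms quadratic in $\partial_x\Phi_t$: the control/risk-sensitive quadratic terms above and the It\^o (trace) term. Using Assumption \ref{continuous-time deception Assumption: linearity_risk} to replace $h_th_t^\top$ by $\xi\, g_tR_t^{-1}g_t^\top$ throughout, every quadratic-gradient contribution becomes proportional to $(\partial_x\Phi_t)^\top h_th_t^\top\partial_x\Phi_t$, and these cancel precisely when $\gamma\big(1 - \tfrac{(\lambda-\xi)\gamma}{\lambda\xi}\big)=0$, i.e. $\gamma = \xi\lambda/(\lambda-\xi)$, which is \eqref{eq: gamma}. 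What remains is the linear backward PDE
\begin{equation*}
\partial_t\Phi_t = \tfrac{\ell_t}{\gamma}\Phi_t - f_t^\top\partial_x\Phi_t - \tfrac{1}{2}\mathrm{Tr}(h_th_t^\top\partial_x^2\Phi_t), \qquad \Phi_T = 1,
\end{equation*}
where the terminal condition follows from $V_T\equiv 0$. Applying the Feynman--Kac lemma (via \cite[Theorem 9.1.1]{oksendal2013stochastic}) to this equation gives existence and uniqueness of $\Phi_t$ together with the representation $\Phi_t(x_t)=\mathbb{E}^Z[\exp(-\tfrac{1}{\gamma}\int_t^T\ell_s ds)]$ under the measure $Z$ of the uncontrolled, unattacked dynamics $dx_s=f_sds+h_sdw_s$; then $V_t=-\gamma\log\Phi_t$ yields statement \eqref{eq:value_f2_risk}.

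Finally, for the optimal control I would start from $u_t^\ast = -R_t^{-1}g_t^\top\partial_x V_t = \gamma R_t^{-1}g_t^\top(\partial_x\Phi_t/\Phi_t)$ and convert the spatial gradient of the Feynman--Kac expectation into a noise-weighted expectation, in the same vein as the path-integral derivations of \cite{theodorou2010generalized, williams2017model} and of Theorem \ref{Theorem: sol of SOC}. Differentiating the representation of $\Phi_t$ with respect to the initial state produces the ratio of expectations carrying an $h_t(x_t)dw_t$ factor in the numerator; combining the prefactor $\gamma R_t^{-1}g_t^\top$ with the Jacobian of the sampled flow and again invoking Assumption \ref{continuous-time deception Assumption: linearity_risk} collapses the gain into $\mathcal{H}_t(x_t) = R_t^{-1}g_t^\top\big(g_tR_t^{-1}g_t^\top - \tfrac{1}{\lambda}h_th_t^\top\big)^{-1}$, giving \eqref{eq: u_star_risk2}. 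I expect the main obstacle to be this last gradient manipulation rather than the linearization: one must exploit the partition of the dynamics into noise-driven and noise-free components (as in \eqref{SDE partition}) so that the inverted matrix is well defined, and must track the likelihood-ratio weighting carefully to recover the precise $\mathcal{H}_t$ form. The linearization itself, though algebra-heavy, is routine once the correct $\gamma$ has been identified.
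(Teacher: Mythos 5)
Your proposal is correct and follows essentially the same route as the paper's proof: write the risk-sensitive HJB, minimize pointwise to get $u_t^*=-R_t^{-1}g_t^\top\partial_xV_t$, apply the Cole--Hopf transformation $V_t=-\gamma\log\Phi_t$, use Assumption \ref{continuous-time deception Assumption: linearity_risk} to cancel the quadratic-gradient terms (fixing $\gamma=\xi\lambda/(\lambda-\xi)$), and invoke Feynman--Kac on the resulting backward Chapman--Kolmogorov PDE before differentiating to obtain \eqref{eq: u_star_risk2}. The only additions beyond the paper's argument are your explicit justification of the risk-sensitive HJB (which the paper instead cites from the literature) and your remarks on the terminal condition and the invertibility of the gain matrix, all of which are consistent with the paper.
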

\begin{proof}
   We use \textit{dynamic programming principle} and \textit{Feynman-Kac lemma} \cite{oksendal2003stochastic} to prove this theorem. The stochastic Hamilton-Jacobi-Bellman (HJB) equation \cite{broek2012risk} associated with \eqref{eq:value_f_risk} is expressed as follows:
\begin{equation}\label{eq:HJB_risk}
  -\partial_t{V}_t =  \min_{{u}_t}\Big[\frac{1}{2}u_t^\top\!R_tu_t+ \ell_t+ \left(\!f_t\!+\!g_tu_t\right)^\top\!\partial_xV_t
 +\frac{1}{2\lambda}\|h_t^\top\partial_xV_t\|^2 + \frac{1}{2}\text{Tr}\left(h_t h_t^\top\partial^2_x{V}_t\right)\Big]
\end{equation}
Solving \eqref{eq:HJB_risk}, we get the optimal $u_t$ as 
   \begin{equation}\label{eq:u_star_risk}
       u_t^*(x_t) = -R_t^{-1}g_t^\top\partial_xV_t(x_t).
   \end{equation}
   Putting \eqref{eq:u_star_risk} in \eqref{eq:HJB_risk}, we get 
   \begin{equation}\label{eq:HJB_risk2}
         \!\!-\partial_tV_t\!=\ell_t\! +\!f_t^\top\!\partial_xV_t\!+\!\frac{1}{2}\text{Tr}\left(h_th_t^\top\partial^2_xV_t\right)+\frac{1}{2}\!\left(\partial_xV_t\right)^\top\!\!\left(\frac{1}{\lambda}h_th_t^\top\!-\!g_tR_t^{-1}g_t^\top\right)\!\partial_xV_t.
\end{equation}
Let $\Psi_t(x_t)$ be the logarithmic transformation of the value function $V_t(x_t)$ (known as Cole-Hopf transformation) defined as $V_t(x_t) = -\gamma\log\Psi_t(x_t)$
where $\gamma>0$ is a proportionality constant to be defined. Applying this transformation of the value function to \eqref{eq:HJB_risk2} yields \begin{equation}\label{eq:Psi_risk}
 \begin{aligned}         \!\!\!\!\partial_t\Psi_t\!=&\frac{\ell_t\Psi_t}{\gamma}-\!\frac{1}{2}\text{Tr}\!\left(h_th_t^\top\!\partial^2_x\Psi_t\right)\!+\!\frac{1}{2\Psi_t}\!\left(\partial_x\Psi_t\right)^{\!T}\!\!h_th_t^\top\!\partial_x\Psi_t \\
         &+\!\frac{\gamma}{2\Psi_t}\!\left(\partial_x\Psi_t\right)^\top\!\!\left(\frac{1}{\lambda}h_th_t^\top\!-\! g_tR_t^{-1}g_t^\top\right)\!\partial_x\Psi_t\!-\!f_t^\top\partial_x\Psi_t.
          \end{aligned}               
 \end{equation}
If we assume that Assumption \ref{continuous-time deception Assumption: linearity_risk} holds and $\gamma$ satisfies \eqref{eq: gamma}, we obtain a linear PDE in $\Psi_t$ known as the backward Chapman-Kolmogorov PDE:
\begin{equation}\label{eq:Psi_risk2}
 \!\partial_t\Psi_t\!=\!\frac{\ell_t\Psi_t}{\lambda}\!-\!f_t^\top\partial_x\Psi_t-\frac{1}{2}\text{Tr}\left(h_th_t^\top\partial^2_x\Psi_t\right).
\end{equation}
According to the Feynman-Kac lemma \cite{oksendal2003stochastic}, the solution of the linear PDE \eqref{eq:Psi_risk2} exists and is unique in the sense that $\Psi_t$ solving \eqref{eq:Psi_risk2} is given by
\begin{equation}\label{eq:Psi_risk3}
    \Psi_t(x_t)= \mathbb{E}^Z \left[\exp \left\{-\frac{1}{\gamma}\int_t^T \ell_s(x_s)ds \right\}\right]
\end{equation}
   where $Z$ is the probability measure defined under the system dynamics \eqref{continuous-time deception SDE} when both $u_s$ and $\theta_s$, $t\leq s\leq T$ are zero. Since $V_t(x_t) = -\gamma\log\Psi_t(x_t)$, using \eqref{eq:Psi_risk3}, we get the desired result \eqref{eq:value_f2_risk}. Solving \eqref{eq:u_star_risk} (taking the gradient of \eqref{eq:value_f2_risk} with respect to $x$), we get the optimal controller policy \eqref{eq: u_star_risk2}.
\end{proof}

\begin{remark}
Notice that according to Assumption \ref{continuous-time deception Assumption: linearity_risk}, $0<\xi<\lambda$. Therefore, $\gamma$ is always positive.     
\end{remark}

Since the right-hand side of \eqref{eq:value_f2_risk} contains the expectation operation with respect to $Z$, using the strong law of large numbers \cite{durrett2019probability}, we can prove that as $N\rightarrow\infty$,
\[-\gamma \log \left[\frac{1}{N}\sum_{i=1}^N \exp \left\{-\frac{1}{\gamma}\int_t^T \ell_s(x^i_s)ds\right\}\right]\overset{a.s.} {\rightarrow} V_t(x_t)
\]
where $\{x_s^i, t\leq s\leq T\}_{i=1}^N$ are randomly drawn sample paths from distribution $Z$. Since $Z$ is the measure in which both $u_s$ and $\theta_s$, $t\leq s\leq T$ are zero, generating such a sample ensemble is easy.
It suffices to perform $N$ independent simulations of the dynamics $dx_s=f_s(x_s)ds + h_s(x_s)dw_s, \; x_t=x$.  

\subsection{Two-Player Zero-Sum Stochastic Differential Game via Path Integral Approach}\label{sec: Two-player Zero-Sum Stochastic Differential Game via Path Integral Approach}
The applicability of the path-integral method to the class of stochastic differential games shown in Problem \ref{prob: game} was pointed out by our work in \cite{patil2023risk}. For each time $t\in[0,T)$ and the state $x_t\in\mathbb{R}^n$, the path-integral method, under a certain assumption, allows both the controller and the adversary to compute the optimal policies $u_t$ and $\theta_t$- known as \emph{saddle-point policies} \cite[Chapter 2]{bacsar2008h} by evaluating the path integrals along randomly generated trajectories $\{x_s\}$, $t\leq s \leq T$ starting from $x_t$. This can be derived from the fact that under a certain assumption, the value of the game \eqref{eq:h_inf} can be computed by Monte-Carlo sampling. To see this, for each state-time pair $(x_t,t)$, introduce the value of the game as 
\begin{equation}
\label{eq:value_f_game}
V_t(x_t)\!=\!\min_u \max_\theta  \mathbb{E}^P\!\!\left[\int_t^T\!\!\! \left(\!\ell_s(x_s)\! + \!\frac{1}{2}u_s^{\!\top} R_s u_s\! -\! \frac{\lambda}{2}\|\theta_s\|^2\!\!\right)ds\right]\!.
\end{equation}
First, we make the following assumption, which is essential to solve \eqref{eq:value_f_game} using the path integral approach \cite{patil2023risk}.

\begin{assumption}\label{continuous-time deception Assumption: linearity}
   For all $(x,t)$, there exists a constant $\alpha>0$ satisfying the following equation: 
   \begin{equation}\label{eq: linearizability_game}
    \!\!h_t(x_t)  h_t^{\!\top}(x_t)\!\! =\! \alpha\!\left(\!g_t(x_t)R_t^{-1}\!g_t^\top(x_t)\! -\! \frac{1}{\lambda} h_t(x_t)  h_t^{\!\top}(x_t)\!\!\right)\!.
\end{equation}
\end{assumption}
\vspace{3mm}
Assumption \ref{continuous-time deception Assumption: linearity} is similar to the assumption required in the path integral formulation of a single-agent stochastic control problem \cite{satoh2016iterative}. A possible interpretation of condition \eqref{eq: linearizability_game} is that in a direction with high noise variance, the controller's control cost has to be low. Therefore, the weight of the control cost $R_t$ need to be tuned appropriately for the given $\lambda$, $h_t(x_t)$ and  $g_t(x_t)$ for all $t$. See \cite{patil2023risk} for further discussion on this condition.

\begin{theorem}\label{thm: two-player game}
  Under Assumption \ref{continuous-time deception Assumption: linearity}, the solution of \eqref{eq:value_f_game} exists, is unique and is given by
  \begin{equation}
\label{eq:value_f2_game}
V_t(x_t)=-\alpha \log \mathbb{E}^Z \left[\exp \left\{-\frac{1}{\alpha}\int_t^T \ell_s(x_s)ds \right\}\right]
\end{equation}
where $Z$ is the probability measure defined under the system dynamics \eqref{continuous-time deception SDE} when both $u_s$ and $\theta_s$, $t\leq s\leq T$ are zero.
Furthermore, the saddle-point policies of Problem \ref{prob: game} are given by 
\begin{equation}\label{eq: u_star_game} \!\!\!\!\!u_t^*dt\!=\!\mathcal{H}^u_t(x_t)\frac{\mathbb{E}^Z\left[\text{exp}{\left\{-\frac{1}{\alpha}\int_t^T \ell_s(x_s)ds\right\}}h_t(x_t)d{w}_t\right]}{\mathbb{E}^Z\left[\text{exp}{\left\{-\frac{1}{\alpha}\int_t^T \ell_s(x_s)ds\right\}}\right]} 
\end{equation}
where the matrix $\mathcal{H}^u_t(x_t)$ is defined as
\begin{equation*}
    \!\!\mathcal{H}^u_t(x_t)\!=\!R_t^{-1} g_t^\top(x_t)\!\left(\!g_t(x_t)R_t^{-1}g_t^\top(x_t)\! -\frac{1}{\lambda} h_t(x_t)h_t(x_t)^\top\!\right)^{\!-1}
\end{equation*}
and
\begin{equation}\label{eq: theta_star_game} \!\!\!\!\!\theta_t^*dt\!=\!\mathcal{H}^\theta_t(x_t)\frac{\mathbb{E}^Z\left[\text{exp}{\left\{-\frac{1}{\alpha}\int_t^T \ell_s(x_s)ds\right\}}h_t(x_t)d{w}_t\right]}{\mathbb{E}^Z\left[\text{exp}{\left\{-\frac{1}{\alpha}\int_t^T \ell_s(x_s)ds\right\}}\right]} 
\end{equation}
where the matrix $\mathcal{H}^\theta_t(x_t)$ is defined as
\begin{equation*}
    \!\!\mathcal{H}^\theta_t(x_t)\!=\!-\frac{1}{\lambda} h_t^\top(x_t)\!\left(\!g_t(x_t)R_t^{-1}g_t^\top(x_t)\! -\frac{1}{\lambda} h_t(x_t)h_t(x_t)^\top\!\right)^{\!\!-1}\!\!\!\!.
\end{equation*}
\end{theorem}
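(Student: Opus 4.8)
The plan is to mirror the proof of Theorem~\ref{thm: risk-sensitive control}, replacing the Hamilton--Jacobi--Bellman equation by the Hamilton--Jacobi--Isaacs (HJI) equation governing the zero-sum game~\eqref{eq:h_inf} and then linearizing it by the same Cole--Hopf device. First I would invoke the dynamic programming principle to assert that the value~\eqref{eq:value_f_game} satisfies
\begin{equation*}
-\partial_t V_t = \min_u\max_\theta\Big[\ell_t + \tfrac{1}{2}u_t^\top R_t u_t - \tfrac{\lambda}{2}\|\theta_t\|^2 + (f_t+g_tu_t+h_t\theta_t)^\top\partial_x V_t + \tfrac{1}{2}\text{Tr}(h_th_t^\top\partial_x^2 V_t)\Big].
\end{equation*}
The bracketed Hamiltonian is strictly convex in $u_t$, strictly concave in $\theta_t$, and the two inputs do not interact (there is no bilinear $u_t$--$\theta_t$ term), so $\min_u$ and $\max_\theta$ commute and the Isaacs condition holds automatically. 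I would state this explicitly, since it is the one structural feature separating the game from the risk-sensitive problem, and Theorem~\ref{theorem: solution to risk-minimizing soc game} earlier guaranteed a saddle point only under exactly such an interchangeability hypothesis.

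Next I would carry out the pointwise stationarity conditions, which yield the candidate saddle pair $u_t^* = -R_t^{-1}g_t^\top\partial_x V_t$ and $\theta_t^* = \tfrac{1}{\lambda}h_t^\top\partial_x V_t$. Substituting both back into the HJI equation, the control-quadratic contributions combine into a single quadratic form governed by $\tfrac{1}{\lambda}h_th_t^\top - g_tR_t^{-1}g_t^\top$, so that $V_t$ obeys \emph{precisely} the semilinear PDE~\eqref{eq:HJB_risk2} already encountered in the risk-sensitive analysis. I would then apply the Cole--Hopf transformation $V_t = -\alpha\log\Psi_t$ with $\alpha$ left undetermined, obtaining a PDE for $\Psi_t$ that still carries a residual quadratic term in $\partial_x\Psi_t$. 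Imposing Assumption~\ref{continuous-time deception Assumption: linearity}, that is $h_th_t^\top = \alpha\big(g_tR_t^{-1}g_t^\top - \tfrac{1}{\lambda}h_th_t^\top\big)$, is exactly the identity that cancels this residual term; note that here the linearizing constant $\alpha$ enters the transformation directly, in contrast to the risk-sensitive case where the analogous constant was tied to $\xi$ and $\lambda$ through~\eqref{eq: gamma}. What remains is the linear backward Chapman--Kolmogorov PDE with terminal data $\Psi_T=1$, so the Feynman--Kac lemma delivers existence, uniqueness, and the representation~\eqref{eq:value_f2_game}; uniqueness of the saddle-point policies then follows, as in~\cite{patil2023risk}, from uniqueness for this linear parabolic problem together with Theorem~\ref{thm:prob1_1}.

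The final step is to convert the feedback laws into the Monte-Carlo forms~\eqref{eq: u_star_game} and~\eqref{eq: theta_star_game}. Here I would differentiate the Feynman--Kac representation of $\Psi_t$, write $\partial_x V_t = -\alpha\,\partial_x\Psi_t/\Psi_t$ as a ratio of path expectations taken over trajectories of the uncontrolled, unattacked dynamics (the measure $Z$, generated simply by $dx_s = f_s\,ds + h_s\,dw_s$), and substitute into $u_t^* = -R_t^{-1}g_t^\top\partial_x V_t$ and $\theta_t^* = \tfrac{1}{\lambda}h_t^\top\partial_x V_t$. The gain matrices $\mathcal{H}^u_t$ and $\mathcal{H}^\theta_t$ emerge when the resulting gradient is rewritten through the linearizability identity~\eqref{eq: linearizability_game}, exactly as in the single-agent derivation. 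I expect this gradient-to-path-integral reduction to be the main obstacle: it requires passing the spatial derivative under the expectation and re-expressing $\partial_x\Psi_t$ via the $dw_t$ increment inside the rollout, and it is where essentially all the bookkeeping lives. By contrast, verifying the $\min$--$\max$ interchange is conceptually the key new ingredient but technically light. I would import the path-integral reduction from the derivations cited in~\cite{satoh2016iterative, theodorou2010generalized} rather than reproduce it in full.
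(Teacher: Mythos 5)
Your proposal follows essentially the same route as the paper's proof: write the HJI equation, extract the stationary pair $u_t^*=-R_t^{-1}g_t^\top\partial_xV_t$ and $\theta_t^*=\tfrac{1}{\lambda}h_t^\top\partial_xV_t$, substitute back to obtain the semilinear PDE in $V_t$, apply the Cole--Hopf transformation $V_t=-\alpha\log\Psi_t$ with $\alpha$ fixed by Assumption~\ref{continuous-time deception Assumption: linearity} to cancel the residual quadratic term, invoke Feynman--Kac for existence, uniqueness, and the representation~\eqref{eq:value_f2_game}, and recover the policies by differentiating the path-integral representation. Your explicit remark on the $\min$--$\max$ interchange (absence of a bilinear $u$--$\theta$ term) is a small addition the paper leaves implicit, but the argument is otherwise identical.
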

\begin{proof}
   The stochastic Hamilton-Jacobi-Isaacs (HJI) equation \cite{bacsar2008h} associated with \eqref{eq:value_f_game} is expressed as follows:
   \begin{equation}\label{eq:HJI}
         \!\!-\partial_tV_t\!=\!\min_{{u_t}} \max_{{\theta_t}}\!\Bigg[\frac{1}{2}u_t^\top\!R_tu_t-\frac{\lambda}{2}\|\theta_t\|^2+\!\ell_t\!+\!\left(\!f_t\!+\!g_tu_t\!+\!h_t\theta_t\right)^\top\!\partial_xV_t+\!\frac{1}{2}\text{Tr}\!\left(h_th_t^\top\partial^2_xV_t\right)\!\Bigg]\!.
\end{equation}
   Solving \eqref{eq:HJI}, we get 
   \begin{equation}\label{eq:saddle-point}
       u_t^*(x_t) \!= \!-R_t^{-1}g_t^\top\partial_xV_t(x_t), \quad \theta_t^*(x_t)\! =\!\frac{1}{\lambda}h_t^\top\partial_xV_t(x_t).
   \end{equation}
Putting \eqref{eq:saddle-point} in \eqref{eq:HJI}, we get  
\begin{equation}\label{eq:HJI2}
    \begin{aligned}
         \!\!-\partial_tV_t\!=\ell_t\! +\!f_t^\top\!\partial_xV_t\!+\!\frac{1}{2}\text{Tr}\left(h_th_t^\top\partial^2_xV_t\right)+\frac{1}{2}\!\left(\partial_xV_t\right)^\top\!\!\left(\frac{1}{\lambda}h_th_t^\top\!-\!g_tR_t^{-1}g_t^\top\right)\!\partial_xV_t.
\end{aligned}
\end{equation}
Let $\Psi_t(x_t)$ be the logarithmic transformation of the value function $V_t(x_t)$ defined as $V_t(x_t) = -\alpha\log\Psi_t(x_t)$
where $\alpha>0$ is a proportionality constant defined by the Assumption \ref{continuous-time deception Assumption: linearity}. Applying this transformation of the value function to \eqref{eq:HJI2} yields \begin{equation}\label{eq:Psi_game}
 \begin{aligned}         \!\!\!\!\partial_t\Psi_t\!=&\frac{\ell_t\Psi_t}{\alpha}-\!\frac{1}{2}\text{Tr}\!\left(h_th_t^\top\!\partial^2_x\Psi_t\right)\!+\!\frac{1}{2\Psi_t}\!\left(\partial_x\Psi_t\right)^{\!T}\!\!h_th_t^\top\!\partial_x\Psi_t \\
         &+\!\frac{\alpha}{2\Psi_t}\!\left(\partial_x\Psi_t\right)^\top\!\!\left(\frac{1}{\lambda}h_th_t^\top\!-\! g_tR_t^{-1}g_t^\top\right)\!\partial_x\Psi_t\!-\!f_t^\top\partial_x\Psi_t.
          \end{aligned}               
 \end{equation}
By assuming an $\alpha$ satisfying Assumption \ref{continuous-time deception Assumption: linearity} holds in \eqref{eq:Psi_game}, we obtain a linear PDE in $\Psi_t$ known as the backward Chapman-Kolmogorov PDE:
\begin{equation}\label{eq:Psi_game2}
 \!\partial_t\Psi_t\!=\!\frac{\ell_t\Psi_t}{\lambda}\!-\!f_t^\top\partial_x\Psi_t-\frac{1}{2}\text{Tr}\left(h_th_t^\top\partial^2_x\Psi_t\right).
\end{equation}
According to the Feynman-Kac lemma \cite{oksendal2003stochastic}, the solution of the linear PDE \eqref{eq:Psi_game2} exists and is unique in the sense that $\Psi_t$ solving \eqref{eq:Psi_game2} is given by
\begin{equation}\label{eq:Psi_game3}
    \Psi_t(x_t)= \mathbb{E}^Z \left[\exp \left\{-\frac{1}{\alpha}\int_t^T \ell_s(x_s)ds \right\}\right]
\end{equation}
   where $Z$ is the probability measure defined under the system dynamics \eqref{continuous-time deception SDE} when both $u_s$ and $\theta_s$, $t\leq s\leq T$ are zero. Since $V_t(x_t) = -\alpha\log\Psi_t(x_t)$, using \eqref{eq:Psi_game3}, we get the desired result \eqref{eq:value_f2_game}. Solving \eqref{eq:saddle-point} (taking the gradient of \eqref{eq:value_f2_game} with respect to $x$), we get the saddle-point policies \eqref{eq: u_star_game} and \eqref{eq: theta_star_game}.
\end{proof}

 Since the right-hand side of \eqref{eq:value_f2_game} contains the expectation operation with respect to $Z$, using the strong law of large numbers \cite{durrett2019probability}, we can prove that as $N\rightarrow\infty$,
\begin{equation}\label{eq: eq:value_f2_game_MC}
    -\alpha \log \left[\frac{1}{N}\sum_{i=1}^N \exp \left\{-\frac{1}{\alpha}\int_t^T \ell_s(x^i_s)ds\right\}\right]\overset{a.s.} {\rightarrow} V_t(x_t)
\end{equation}
where $\{x_s^i, t\leq s\leq T\}_{i=1}^N$ are randomly drawn sample paths from distribution $Z$. Since $Z$ is the measure in which both $u_s$ and $\theta_s$, $t\leq s\leq T$ are zero, generating such a sample ensemble is easy.
It suffices to perform $N$ independent simulations of the dynamics $dx_s=f_s(x_s)ds + h_s(x_s)dw_s, \; x_t=x$. 

\begin{remark}\label{rem: equivalence}
Notice that the solution obtained by solving the risk-sensitive control problem \eqref{eq:value_f2_risk} is the same as the one obtained by solving the two-player zero-sum stochastic differential game \eqref{eq:value_f2_game} if $\alpha = \gamma$.
From \eqref{eq: linearizability_risk}, \eqref{eq: gamma}, and \eqref{eq: linearizability_game} we can indeed prove that $\alpha = \gamma$ i.e., the required condition to apply the path integral method to solve the risk-sensitive control problems is the same as the one in the two-player zero-sum stochastic differential games.    
\end{remark}

Similar to \eqref{eq: eq:value_f2_game_MC}, the saddle-point policies $u_t^*, \theta_t^*$ can be readily computed by the same simulated ensemble $\{x_s^i, t\leq s\leq T\}_{i=1}^N$ under distribution $Z$ and their path costs \cite{patil2023risk}. According to the strong law of large numbers, as $N\rightarrow\infty$, 

\begin{equation}\label{eq: u_star_game_MC} \mathcal{H}^u_t(x_t)\frac{\frac{1}{N}\sum_{i=1}^N\text{exp}{\left\{-\frac{1}{\alpha}\int_t^T \ell_s(x_s^i)ds\right\}}h_t(x_t)\epsilon}{\frac{\sqrt{\Delta t}}{N}\sum_{i=1}^N\text{exp}{\left\{-\frac{1}{\alpha}\int_t^T \ell_s(x_s^i)ds\right\}}} \overset{a.s.} {\rightarrow} u_t^*
\end{equation}

\begin{equation}\label{eq: theta_star_game_MC} \mathcal{H}^\theta_t(x_t)\frac{\frac{1}{N}\sum_{i=1}^N\text{exp}{\left\{-\frac{1}{\alpha}\int_t^T \ell_s(x_s^i)ds\right\}}h_t(x_t)\epsilon}{\frac{\sqrt{\Delta t}}{N}\sum_{i=1}^N\text{exp}{\left\{-\frac{1}{\alpha}\int_t^T \ell_s(x_s^i)ds\right\}}} \overset{a.s.} {\rightarrow} \theta_t^*
\end{equation}
where $\epsilon\sim\mathcal{N}(0,1)$ and $\Delta t$ is the step size. Equation \eqref{eq: u_star_game_MC} implies that under the assumption \eqref{eq: linearizability_risk} or \eqref{eq: linearizability_game}, the minimax KL control problem (Problem \ref{prob: game}) can be solved using the path integral approach. That is, if the controller has a simulator engine that can generate a large number of sample trajectories $\{x_t^i\}_{i=1}^N$ from the distribution $Z$ starting from the current state-time pair $(x_t,t)$, then the optimal attack mitigating control signal $u^*_t$ \eqref{eq: u_star_game} against the worst-case attack can be computed directly from the sample ensemble $\{x_t^i\}_{i=1}^N$. A notable feature of such a simulator-driven attack mitigation policy synthesis is that it requires neither an explicit model of the system nor an explicit policy synthesis step.

\section{Numerical Experiments}
In this section, we present two numerical studies illustrating the proposed attack synthesis and mitigation approach.
\subsection{Collision Avoidance with the Unsafe Region}
 \begin{figure*}
     \centering
       \begin{tabular}{c c}
\!\!\!\!\!\!\!\!\!\!\!\!\!\!\!\!\!\!\!\!\includegraphics[scale=0.37]{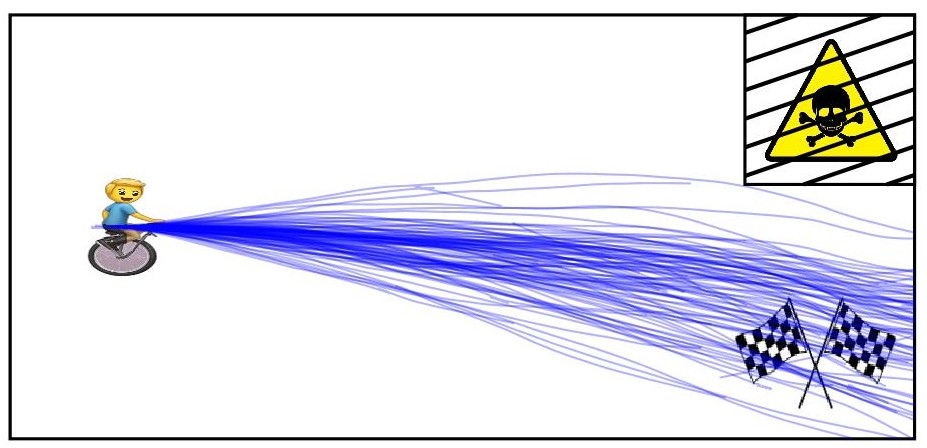} &\!\!\!\!\!\!\includegraphics[scale=0.37]{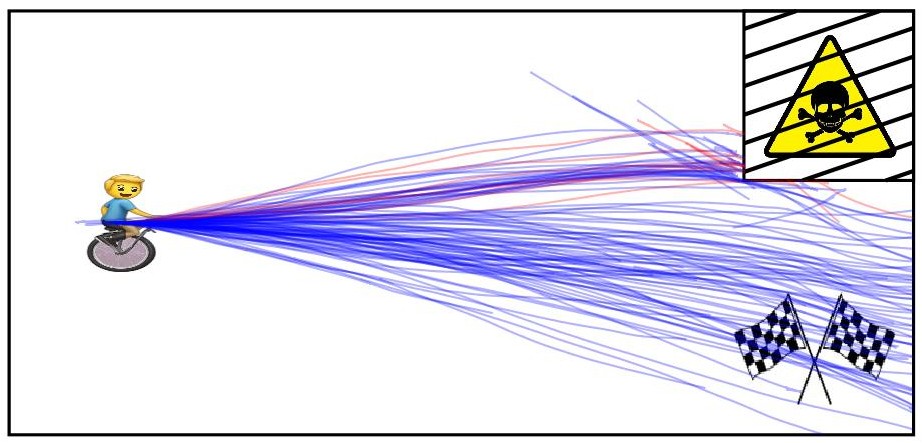} \\
\!\!\!\!\!\!\!\!\!\!\!\!\!\!\!\!\!\!\!\!(a) No attack, $P^{\text{crash}} \approx 0$  &\!\!\!\!\!\!(b) Stealthy attack, $\lambda=2$, $P^{\text{crash}} \approx 0.09$ \\

\!\!\!\!\!\!\!\!\!\!\!\!\!\!\!\!\!\!\!\!\includegraphics[scale=0.37]{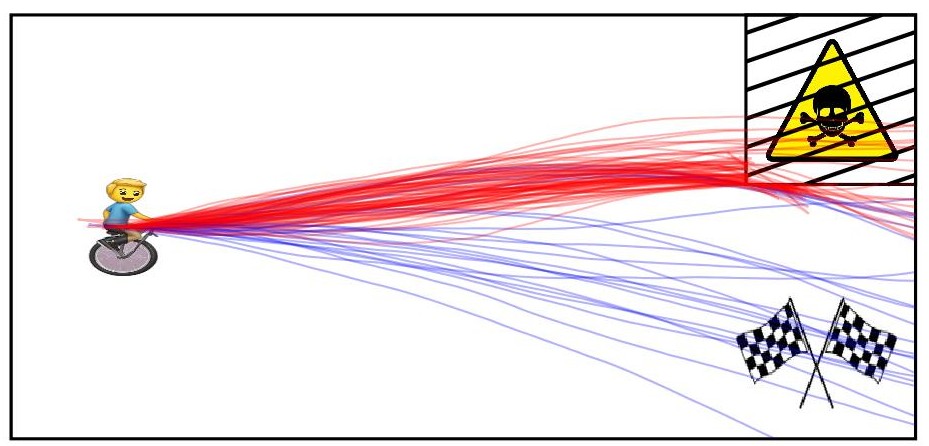} &\!\!\!\!\!\!\includegraphics[scale=0.37]{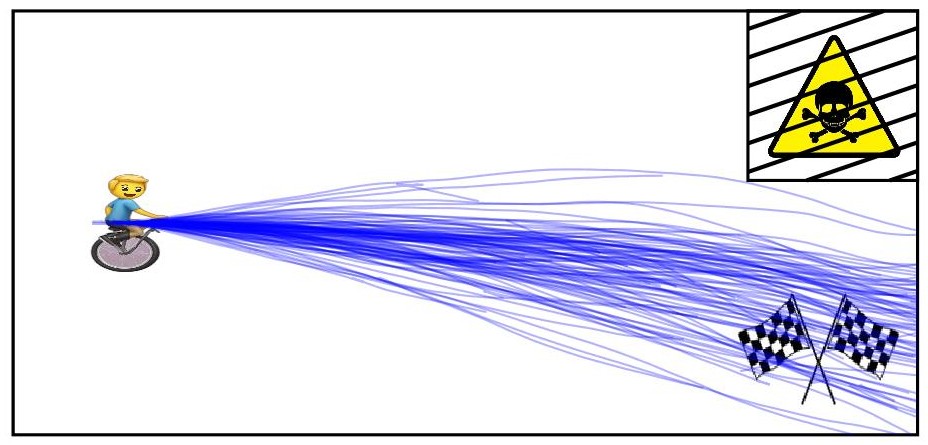} \\
\!\!\!\!\!\!\!\!\!\!\!\!\!\!\!\!\!\!\!\!(c) Stealthy attack, $\lambda=0.1$, $P^{\text{crash}} \approx 0.78$  &\!\!\!\!\!\!(d) Attack mitigation, $\lambda=0.1$, $P^{\text{crash}} \approx 0$ \\

       \end{tabular}
         \caption{A unicycle navigation problem. $100$ sample paths generated without the attacker and with the attacker for two values of $\lambda$ are shown. The probability of crashed paths $P^{\text{crash}}$ are noted below each case.} 
         \label{Fig. worst-case attack}
 \end{figure*}
 
Consider an autonomously operated unicycle that receives control commands $u_t$ to move from an initial location to a target destination. In this setting, a noise signal $w_t$ perturbs the control commands, and a stealthy attacker may hijack control by injecting an attack signal $v_t\neq w_t$, aiming to misguide the unicycle to an unsafe region. Consider the following unicycle dynamics model:
\begin{equation*}
\begin{aligned}
    \begin{bmatrix}
    d{p}^x_t\\d{p}^y_t\\d{s_t}\\d{\phi_t}
    \end{bmatrix}\!\!=\!
    \begin{bmatrix}
    {s}_t\cos{{\phi_t}}\\{s_t}\sin{{\phi_t}}\\0\\0
    \end{bmatrix}\!dt+\!\!\begin{bmatrix}
    0 & 0\\0 & 0\\1 & 0\\0 & 1
    \end{bmatrix} \!
    \begin{bmatrix}
    a_t\\
    \omega_t
    \end{bmatrix}\!dt  +\!
    \begin{bmatrix}
    0 & 0\\0 & 0\\1 & 0\\0 & 1
    \end{bmatrix} \!
    \begin{bmatrix}
    \sigma_t & 0\\
    0 & \nu_t 
    \end{bmatrix}\left(
    \begin{bmatrix}
    \Delta a_t\\
    \Delta \omega_t
    \end{bmatrix}\!dt \!+\!
    d{w}_t\right)
   ,
\end{aligned}
\end{equation*}
where $p_t:=\begin{bmatrix}
  {p}^x_t & {p}^y_t  
\end{bmatrix}^\top$, ${s}_t$ and ${\phi_t}$ denote the position, speed, and the heading angle of the unicycle, respectively, at time $t$. The control input $u_t:=\begin{bmatrix} a_t & \omega_t \end{bmatrix}^\top$ consists of acceleration $a_t$ and angular speed $\omega_t$. $\theta_t:=\begin{bmatrix} \Delta a_t & \Delta \omega_t \end{bmatrix}^\top$ is the attacker's bias input, $d{w}_t\in\mathbb{R}^2$ is the white noise and $\sigma_t$, $\nu_t$ are the noise level parameters. As illustrated in Figure \ref{Fig. worst-case attack}, the unicycle’s objective is to navigate from its initial position to the target position $\begin{bmatrix}
    \mathcal{G}^x & \mathcal{G}^y
\end{bmatrix}^\top$ (shown by two cross flags at the bottom right). Let $\mathcal{X}^\text{unsafe}$ denote the unsafe region shown by a hatched box at the top right. \par

\subsubsection{Stealthy Attack Synthesis}
First, we will compute the worst-case attack signal (i.e., solve Problem \ref{prob: KL}) assuming that the controller's policy $u_t$ is fixed and is known to the attacker. $u_t$ is designed to drive the unicycle to the target location. For the simulation, we set $\sigma_t=\nu_t=0.1$, $\forall t$, $T=5$, $c_t({x_t, u_t}) = b_t\left[\left(\mathcal{G}^x-{p}^x_t\right)^2 + \left(\mathcal{G}^y-{p}^y_t\right)^2\right] + \frac{1}{2}u_t^\top u_t + \eta_t\mathds{1} _{{p_t}\in\mathcal{X}^{\text{unsafe}}}$ where $b_t = 0.1, \eta_t = 0.1, \forall t$. $\mathds{1} _{{p_t}\in\mathcal{X}^{\text{unsafe}}}$ is an indicator function which returns $1$ when the position $p_t$ of the unicycle is inside the unsafe region $\mathcal{X}^\text{unsafe}$ and $0$ otherwise. In order to evaluate the optimal bias input \eqref{eq: theta_star} via Monte Carlo sampling, $10^4$ trajectories and a step size equal to $0.01$ are used. Figure \ref{Fig. worst-case attack}(a) shows the plot of $100$ trajectories when the system is under no attack i.e., the bias input $\theta_t=0, \forall t$. The trajectories are color-coded; the red paths crash with the unsafe region $\mathcal{X}^{\text{unsafe}}$, while the blue paths converge in the neighborhood of the target position. Figures \ref{Fig. worst-case attack}(b) and \ref{Fig. worst-case attack}(c) show the plots of $100$ trajectories with the same color-coding scheme, when the system is under the attack for two values of $\lambda$. A lower value of $\lambda$ implies that the attacker cares less about being stealthy and more about crashing the unicycle with the unsafe region $\mathcal{X}^{\text{unsafe}}$. A higher value of $\lambda$ implies the opposite. We also report $P^{\text{crash}}$, the percentage of paths that crash with the unsafe region $\mathcal{X}^{\text{unsafe}}$. Under no attack (Figure \ref{Fig. worst-case attack}(a)), none of the paths crash with the unsafe region. On the other hand, some paths crash under the adversary's attack, and for a lower value of $\lambda$, $P^{\text{crash}}$ is higher.

\subsubsection{Attack Risk Mitigation}
Now, we will solve the controller's problem who is interested in mitigating the risk of stealthy attacks i.e., solve Problem \ref{prob: minimax_KL}. In Section \ref{sec: Connections with Risk-Sensitive Control and Two-Player Zero-Sum Stochastic Differential Game}, we established the connections between the minimax KL control problem (Problem \ref{prob: minimax_KL}) with risk-sensitive control (Problem \ref{prob: risk-sensitive control}) and two-player zero-sum stochastic differential game (Problem \ref{prob: game}). We showed in Section \ref{sec: Two-player Zero-Sum Stochastic Differential Game via Path Integral Approach} that under Assumption \ref{continuous-time deception Assumption: linearity_risk} or \ref{continuous-time deception Assumption: linearity}, the solutions of both the problems—Problems \ref{prob: risk-sensitive control} and \ref{prob: game}—lead to the same optimal policy of the controller \eqref{eq: u_star_game} under the worst-case attack signal. In order to use the path integral control to solve Problem \ref{prob: risk-sensitive control} or \ref{prob: game}, it is necessary to find a constant $\alpha>0$ (by Assumption \ref{continuous-time deception Assumption: linearity}) such that 
\begin{equation*}
    \begin{bmatrix}
    \sigma_t & 0\\
    0 & \nu_t 
    \end{bmatrix}\begin{bmatrix}
    \sigma_t & 0\\
    0 & \nu_t 
    \end{bmatrix}^\top = \alpha\left(I_{2\times2} - \frac{1}{\lambda}\begin{bmatrix}
    \sigma_t & 0\\
    0 & \nu_t 
    \end{bmatrix}\begin{bmatrix}
    \sigma_t & 0\\
    0 & \nu_t 
    \end{bmatrix}^\top\right)
\end{equation*}
where $I_{2\times2}$ is an identity matrix of size $2\times2$. We solve Problem \ref{prob: game} and evaluate the saddle-point policies \eqref{eq: u_star_game}, \eqref{eq: theta_star_game} via path integral control. Note that solving Problem \ref{prob: risk-sensitive control} will also give us the same results. We use $10^4$ Monte Carlo trajectories and a step size equal to $0.01$. Figure \ref{Fig. worst-case attack}(d) shows the plots of $100$ sample trajectories generated using synthesized saddle-point policies $(u^*_t, \theta^*_t)$ for $\lambda = 0.1$. In Figures \ref{Fig. worst-case attack}(b) and \ref{Fig. worst-case attack}(c), the controller is unaware of the attacker. However, in Figure \ref{Fig. worst-case attack}(d), the controller is aware of the attacker and designs an attack mitigating policy to combat the potential attacks. As we observe, under the attack mitigating policy, the controller is able to avoid the crashes with the unsafe region $\mathcal{X}^{\text{unsafe}}$.  

\subsection{Cruise Control}
\begin{figure*}
     \centering
       \begin{tabular}{c c}
\!\!\!\!\!\!\!\!\!\!\!\!\!\!\!\!\!\!\!\!\includegraphics[scale=0.17]{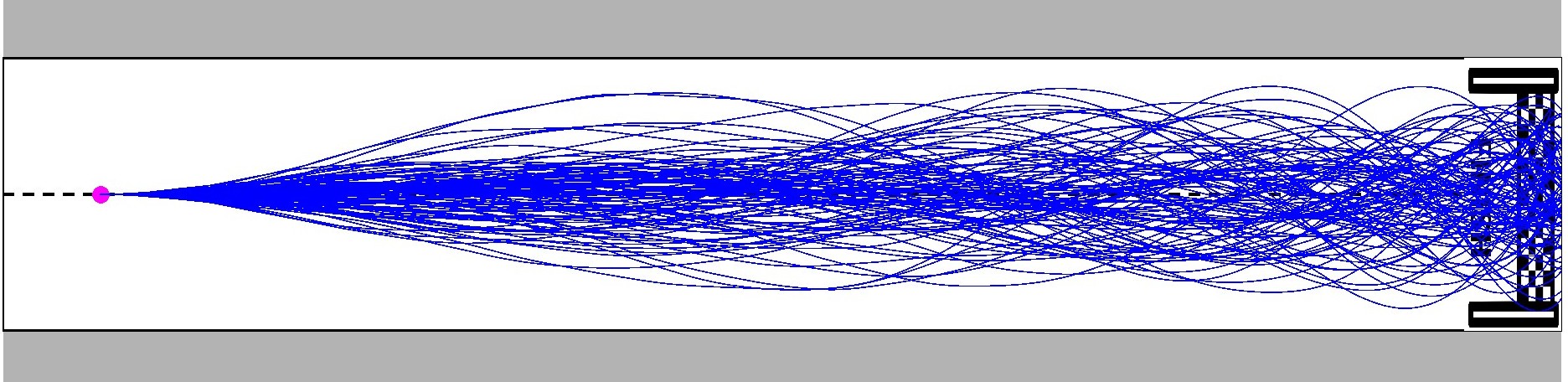} &\includegraphics[scale=0.17]{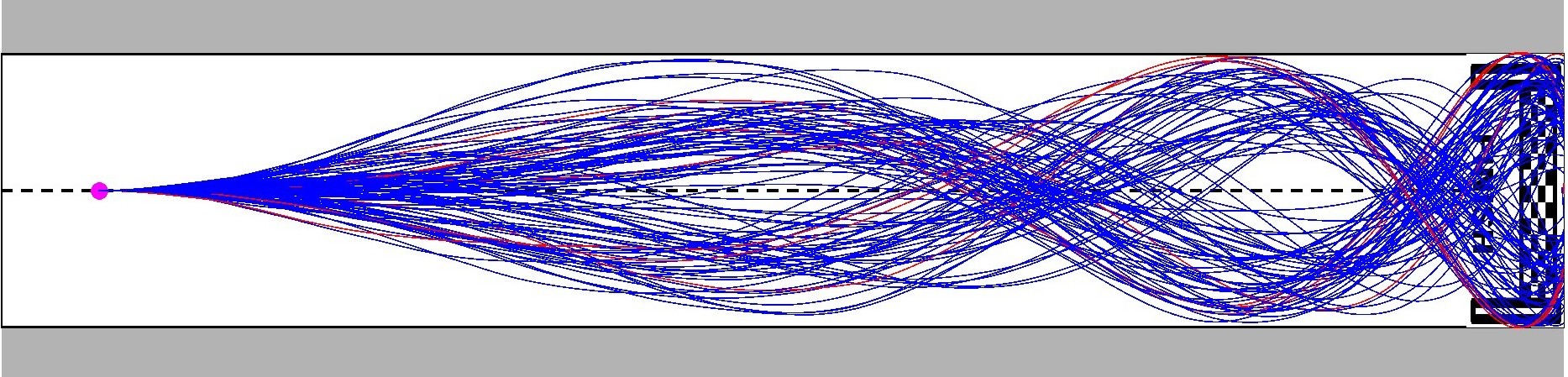} \\
\!\!\!\!\!\!\!\!\!\!\!\!\!\!\!\!\!\!\!\!(a) No attack, $P^{\text{crash}} \approx 0$  &(b) Stealthy attack, $\lambda=3$, $P^{\text{crash}} \approx 0.07$ \\\\

\!\!\!\!\!\!\!\!\!\!\!\!\!\!\!\!\!\!\!\!\includegraphics[scale=0.17]{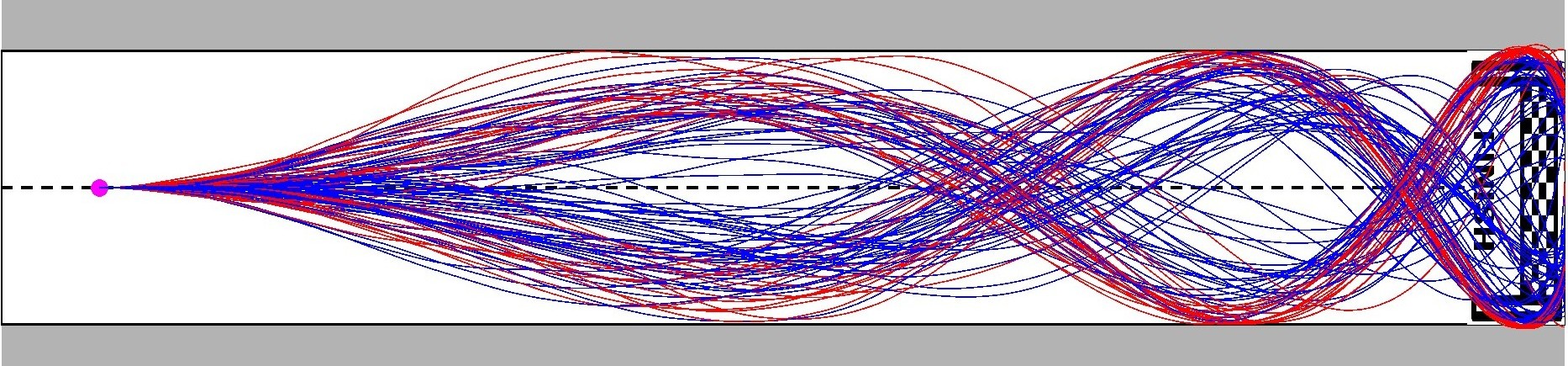} &\includegraphics[scale=0.17]{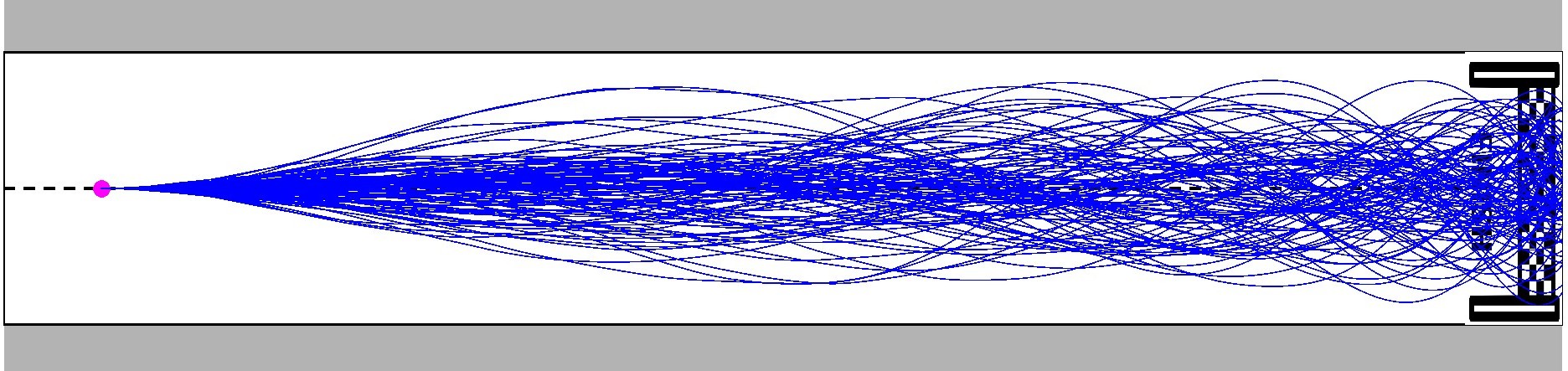} \\
\!\!\!\!\!\!\!\!\!\!\!\!\!\!\!\!\!\!\!\!(c) Stealthy attack, $\lambda=1.5$, $P^{\text{crash}} \approx 0.35$  &(d) Attack mitigation, $\lambda=1.5$, $P^{\text{crash}} \approx 0$ \\

       \end{tabular}
         \caption{Cruise control problem. The start position is shown by a magenta dot on the left, and the finish line is shown on the right. $100$ sample paths generated without the attacker and with the attacker for two values of $\lambda$ are shown. The probability of crashed paths $P^{\text{crash}}$ are noted below each case.} 
         \label{Fig. car_worst-case attack}
 \end{figure*}
 
Consider an autonomously controlled car that receives control commands $u_t$ to remain on the road without veering off track. In this setup, a noise signal $w_t$ perturbs the control commands, and a stealthy attacker may hijack control by injecting an attack signal $v_t$, aiming to drive the car off the track. Consider the following car model:
\begin{equation*}
\begin{aligned}
    \begin{bmatrix}
    d{p}^x_t\\d{p}^y_t\\d{s_t}\\d{\delta_t}\\d\phi_t
    \end{bmatrix}\!=
    \begin{bmatrix}
    {s_t}\cos{{\delta}_t}\\{s_t}\sin{{\delta}_t}\\0\\\frac{s_t\tan{\phi}_t}{L}
    \\0
    \end{bmatrix}\!dt +\begin{bmatrix}
    0 & 0\\0 & 0\\1 & 0\\0&0\\0 & 1
    \end{bmatrix} \!
    \begin{bmatrix}
    a_t\\
    \zeta_t
    \end{bmatrix}\!dt \!\!\!\!+\begin{bmatrix}
    0 & 0\\0 & 0\\1 & 0\\0 & 0\\0 & 1
    \end{bmatrix}\begin{bmatrix}
    \sigma_t & 0\\
    0 & \nu_t 
    \end{bmatrix} \!
    \left(\begin{bmatrix}
    \Delta a_t\\
    \Delta\zeta_t
    \end{bmatrix}dt + d{w}_t
    \right)
\end{aligned}
\end{equation*}
where $p_t:=\begin{bmatrix}
  {p}^x_t & {p}^y_t  
\end{bmatrix}^\top$, ${s}_t$, ${\delta_t}$ and $\phi_t$ denote the position, speed, and the heading angle and the front wheel angle of the car, respectively, at time $t$. $L$ is the inter-axle distance. The control input $u_t:=\begin{bmatrix} a_t & \zeta_t \end{bmatrix}^\top$ consists of acceleration $a_t$ and the front wheel angular rate $\zeta_t$. $\theta_t:=\begin{bmatrix} \Delta a_t & \Delta \zeta_t \end{bmatrix}^\top$ is the attacker's bias signal, $d{w}_t\in\mathbb{R}^2$ is the white noise and $\sigma_t$, $\nu_t$ are the noise level parameters. As illustrated in Figure \ref{Fig. car_worst-case attack}, the car’s objective is to drive on the road without going off track until it crosses the finish line. Let $\mathcal{G}^{\text{center}}$ denote the center of the road, $\mathcal{G}^{\text{finish}}$ denote the finish line, and $\mathcal{X}^\text{unsafe}$ denote the off-the-track region shown in gray.

\subsubsection{Stealthy Attack Synthesis}
First, we will compute the worst-case attack signal assuming that the controller's policy $u_t$ is fixed and is known to the attacker. $u_t$ is designed to drive the car to the finish line without going off track. For the simulation, we set $L=0.05, \sigma_t=\nu_t=\sqrt{0.005}$, $\forall t$, $T=10$, $c_t({x_t, u_t}) = b_t\left[\left(\mathcal{G}^{\text{finish}}-{p}^x_t\right)^2 + \left(\mathcal{G}^{\text{center}}-{p}^y_t\right)^2\right] + \frac{1}{2}u_t^\top R_t u_t + \eta_t\mathds{1} _{{p_t}\in\mathcal{X}^{\text{unsafe}}}$ where $b_t = 0.02, R_t =4, \eta_t = 0.02, \forall t$. $\mathds{1} _{{p_t}\in\mathcal{X}^{\text{unsafe}}}$ is an indicator function which returns $1$ when the car goes off track and $0$ otherwise. In order to evaluate the optimal bias input \eqref{eq: theta_star} via Monte Carlo sampling, $10^4$ trajectories and a step size equal to $0.02$ are used. Figure \ref{Fig. car_worst-case attack}(a) shows the plot of $100$ trajectories when the car is under no attack i.e., the attack signal $\theta_t=0, \forall t$. The trajectories are color-coded; the red paths go off track, while the blue paths cross the finish line without going off track. Figures \ref{Fig. car_worst-case attack}(b) and \ref{Fig. car_worst-case attack}(c) show the plots of $100$ trajectories with the same color-coding scheme, when the system is under the attack for two values of $\lambda$. A lower value of $\lambda$ implies that the attacker cares less about being stealthy and more about driving the car off track. We also report $P^{\text{crash}}$, the percentage of paths that go off track. Under no attack (Figure \ref{Fig. car_worst-case attack}(a)), none of the paths go off track. On the other hand, trajectories under the adversary's attack drift off from the center of the road, and some of them go off track. Also, notice that for a lower value of $\lambda$, $P^{\text{crash}}$ is higher. 

\subsubsection{Attack Risk Mitigation}
Now, we will solve the controller's problem who is interested in mitigating the risk of stealthy attacks i.e., solve Problem \ref{prob: minimax_KL}. In order to use the path integral control to solve Problem \ref{prob: risk-sensitive control} or \ref{prob: game}, it is necessary to find a constant $\alpha>0$ (by Assumption \ref{continuous-time deception Assumption: linearity}) such that 
\begin{equation*}
    \begin{bmatrix}
    \sigma_t & 0\\
    0 & \nu_t 
    \end{bmatrix}\begin{bmatrix}
    \sigma_t & 0\\
    0 & \nu_t 
    \end{bmatrix}^\top = \alpha\left(R_t^{-1} - \frac{1}{\lambda}\begin{bmatrix}
    \sigma_t & 0\\
    0 & \nu_t 
    \end{bmatrix}\begin{bmatrix}
    \sigma_t & 0\\
    0 & \nu_t 
    \end{bmatrix}^\top\right).
\end{equation*}
We solve Problem \ref{prob: game} and evaluate the saddle-point policies \eqref{eq: u_star_game}, \eqref{eq: theta_star_game} via path integral control. We use $10^4$ Monte Carlo trajectories and a step size equal to $0.02$. Figure \ref{Fig. car_worst-case attack}(d) shows the plots of $100$ sample trajectories generated using synthesized saddle-point policies $(u^*_t, \theta^*_t)$ for $\lambda = 1.5$. In Figures \ref{Fig. car_worst-case attack}(b) and \ref{Fig. car_worst-case attack}(c), the controller is unaware of the attacker. However, in Figure \ref{Fig. car_worst-case attack}(d), the controller is aware of the attacker and designs an attack-mitigating policy to combat the potential attacks. As we observe, under the attack mitigating policy, the controller is able to cross the finish line without going off-track.

\section{Publications}
\begin{itemize}
     \item \textbf{A. Patil}, K. Morgenstein,  L. Sentis, T. Tanaka, ``Path Integral Methods for Synthesizing and Preventing Stealthy Attacks in Nonlinear Cyber-Physical Systems," \textit{under review in Transactions on Automatic Control (TAC)}
\end{itemize}

\section{Conclusion and Future Work}\label{sec:conclusion}
We presented a framework for understanding and counteracting stealthy attacks in continuous-time, nonlinear cyber-physical systems. By representing the trade-off between remaining covert and degrading system performance through a Kullback–Leibler (KL) divergence measure of stealthiness, we formulated both (i) a KL control problem to characterize the attacker’s worst-case policy, and (ii) a minimax KL control problem to design a controller that mitigates the risk of such attacks. Key to our approach is a simulator-driven path integral control method that uses forward Monte Carlo simulations rather than closed-form models, thus remaining viable even for high-dimensional or complex systems. Numerical experiments on a unicycle navigation and cruise control illustrated that the attacker can indeed degrade the system performance while maintaining low detectability, yet the controller can act against the attack signals through risk-aware policy design.\par
Going forward, our path integral–based methodology can be extended to broader settings, such as partial observability or more general cost functions, and can accommodate advanced learning-based models that serve as ``digital twins" of physical processes. This work thus provides both a theoretical and computational foundation for tackling stealthy attack synthesis and mitigation in modern CPS. We also plan to conduct the sample complexity analysis of the path integral approach to solve both the KL control and the minimax KL control problems.

\chapter[Discrete-Time Stochastic LQR via Path Integral Control and Its Sample Complexity Analysis]{Discrete-Time Stochastic LQR via Path Integral Control and Its Sample Complexity Analysis}
\label{Sec: sample complexity of LQR}

\section{Motivation and Literature Review}
We presented five application domains of path integral control approach. Although the path integral method is convenient for a broad class of stochastic optimal control problems, the outcomes of Monte-Carlo simulations are inherently probabilistic; therefore, applying path integral methods to safety-critical missions would require rigorous performance guarantees. The purpose of this section is to introduce a theoretical framework for sample complexity analysis that allows us to understand the interplay between the achievable control performance by the path integral method and the required computational cost. \par

Despite the wide applicability of path integral control, not enough work has been done on its sample complexity analysis. To the best of the authors' knowledge, the only prior work addressing this problem is the work by Yoon et al. \cite{yoon2022sampling}. 
The authors of \cite{yoon2022sampling} considered the continuous-time path integral control, and applied Chebyshev and Hoeffding inequalities to relate the instantaneous (pointwise-in-time) error bound in control input and the sample size of the Monte-Carlo simulations performed at that particular time instance. While this result sheds light on the sample complexity analysis, the work  \cite{yoon2022sampling} is limited in the following aspects.
First, the effect of time discretization is not addressed. Since numerical implementations of path integral methods necessitate time discretization, and the control performance depends on the discretization step sizes, such an analysis is not negligible. 
Second, it is not clear how the pointwise-in-time bound in \cite{yoon2022sampling} can be translated into a more explicit, end-to-end (trajectory-level) error bound. Third, \cite{yoon2022sampling} does not provide machinery to compute the required sample size to achieve an acceptable loss of control performance. Such an analysis is crucial for safety-critical applications. Another analysis of sample complexity for path integral control appears in \cite{jasra2022multilevel}, where the authors demonstrate how naive Monte Carlo simulations can exhibit exponentially increasing variance over longer time horizons. As a remedy, they propose a multilevel Monte Carlo (MLMC) approach.

\section{Contributions}
We make the following contributions to this work:
\begin{enumerate}
    \item Derivation of a path integral formulation for discrete-time stochastic Linear Quadratic Regulator (LQR) problem: Recognizing the difficulties in answering the above questions in the continuous-time setting, we take a step back and study the sample complexity of path integral in the discrete-time setting. We develop a discrete-time counterpart of the path integral control scheme for the stochastic LQR problems. Since a formulation of path integral control for discrete-time LQR has not been provided explicitly in the literature (note, however, a recent work \cite{ito2022kullbackleibler}), we decided to derive it based on a more general setup of the so-called Kullback-Leibler (KL) control problem and using its intimate connection with path integral control \cite{theodorou2012relative, arslan2014information}.
    \item Derivation of an end-to-end (trajectory-level) bound on the error in the control signals: There is often a need to assess the safety of the control policy across the entire time horizon. Notably, the end-to-end error bound in the control signals is challenging to evaluate because it couples states across the time horizon as a whole. In this work, we provide the end-to-end bound on the error between the optimal control signal (computed by the classical Riccati solution) and the one obtained by the path integral method as a function of sample sizes. Our sample complexity analysis reveals that the required number of samples for path integral control depends only logarithmically on the dimension of the control input. 
    \item Formulation of a chance-constrained optimization problem to quantify the worst-case performance of the path integral LQR control: We provide a method to quantify the worst-case control performance by formulating a chance-constrained LQR problem for an adversarial agent. This result, together with (2), will allow us to relate the sample size and the worst-case control performance of the path integral method.
\end{enumerate}

The main purpose of our study is to establish a theoretical foundation for the sample complexity of the path integral method. While the path integral method is particularly powerful in nonlinear settings, in this work, we have chosen the LQR setup since the availability of an analytical expression of the optimal control law allows for quantitative error analysis. The results presented in this work will be the necessary first step for future study on the sample complexity of the path integral method for nonlinear systems.

\section*{Notations}
We use the same notations as described in Section \ref{Sec: deception}. 

\section{KL Control via Path Integral}
In this section, we introduce a class of discrete-time continuous-state KL control problems similar to Section \ref{Sec: KL Control} and present the path-integral-based solution approach.
{Let $\mathcal{X}_t\subseteq \mathbb{R}^n$ and $\mathcal{U}_t\subseteq \mathbb{R}^m$ be the spaces of states and control inputs at time step $t$, respectively. }
Suppose the state transition is governed by the mapping $x_{t+1}=F_t(x_t, u_t)$, {$F_t: \mathcal{X}_t \times \mathcal{U}_t \rightarrow \mathcal{X}_{t+1}$ for each $t\in\mathcal{T}:=\{0, 1, \cdots, T\}$} with a given initial state $X_0 = x_0$.
Notice that we assume the state transition law is deterministic and can also be written as $P_{X_{t+1}|X_t,U_t}(dx_{t+1}|x_t, u_t)=\delta_{F_t(x_t, u_t)}(dx_{t+1})$ where $\delta$ is the Dirac measure. 
{In contrast, the control policy to be designed is allowed to be randomized and is represented by a sequence of stochastic kernels \cite{bertsekas1996stochastic} $Q_{U_t|X_t}, t\in\mathcal{T}$. 
A sequence of stochastic kernels $R_{U_t|X_t}, t\in\mathcal{T}$ represents a nominal (reference) policy which is given in advance.} 
We denote the joint probability distributions of the state-control trajectories induced by the policies $R$ and $Q$ as Equations \eqref{eq:def_ref_traj_dist} and \eqref{eq:def_dc_traj_dist} respectively. Let the functions $C_t(\cdot, \cdot):\mathcal{X}_t\times \mathcal{U}_t\rightarrow \mathbb{R}$ for $t\in\mathcal{T}$ and $C_T(\cdot): \mathcal{X}_T\rightarrow \mathbb{R}$ represent the stage-wise and the terminal cost functions, respectively and the path cost can be written as \eqref{path cost function}. Introducing {a positive weighing factor $\lambda$}, the KL control problem is formulated as: 
\begin{problem}[KL control problem]\label{Prob:KL control sample complexity}
\begin{align*}
\min_{\{Q_{U_t|X_t}\}_{t=0}^{T-1}} \quad \mathbb{E}_Q^{x_0}\; C_{0:T}(X_{0:T}, U_{0:T-1})  +\lambda D(Q_{X_{0:T}, U_{0:T-1}} \|R_{X_{0:T}, U_{0:T-1}}). 
\end{align*}
\end{problem}
The expectation $\mathbb{E}_Q^{x_0}(\cdot)$ is with respect to the probability measure \eqref{eq:def_dc_traj_dist} with a fixed initial state $x_0$. {$\lambda$ is a positive constant that balances a trade-off between the path cost $C_{0:T}(X_{0:T}, U_{0:T-1})$ and the KL divergence $D(Q_{X_{0:T}, U_{0:T-1}} \|R_{X_{0:T}, U_{0:T-1}})$. 
} Define the value function for Problem~\ref{Prob:KL control sample complexity} by
\begin{align*}
J_t(x_t):=\min_{\{Q_{U_k|X_k}\}_{k=t}^{T-1}}  \mathbb{E}_Q^{x_t}\; C_{t:T}(X_{t:T}, U_{t:T-1}) +\lambda D(Q_{X_{t:T}, U_{t:T-1}} \|R_{X_{t:T}, U_{t:T-1}}). 
\end{align*}
Here, $\mathbb{E}_Q^{x_t}$ denotes the expectation with respect to the measure $Q_{X_{t:T}, U_{t:T-1}}$ induced by the policy $\{Q_{U_k|X_k}\}_{k=t}^{T-1}$ and the state transition law $x_{k+1}=F_k(x_k,u_k)$, $k=t, \cdots T-1$ with a fixed initial state $x_t$. The expectation $\mathbb{E}_R^{x_t}$ with respect to the measure $R_{X_{t:T}, U_{t:T-1}}$ is defined similarly. {The next theorem provides an explicit representation of the value function, which plays a central role in the path integral control algorithm. This result can be thought of as a discrete-time counterpart of the Feynmann-Kac lemma used in \cite{kappen2005path} in the derivation of continuous-time path integral control.}
\begin{theorem}
\label{thm:KL}
For each $t\in\mathcal{T}$, the value function admits a representation
\begin{equation}
\label{eq:jt_pi}
J_t(x_t)=-\lambda \log \mathbb{E}_R^{x_t} \exp \left(-\frac{1}{\lambda}C_{t:T}(X_{t:T}, U_{t:T-1}) \right).
\end{equation}
The optimal policy $Q_{U_t|X_t}^*$ for Problem~\ref{Prob:KL control sample complexity} is expressed as \eqref{eq:p_star_dc}.
\end{theorem}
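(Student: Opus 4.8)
\textbf{Proof proposal for Theorem~\ref{thm:KL}.}

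The plan is to establish \eqref{eq:jt_pi} by backward induction on $t$, exploiting the time-additive structure of the KL control problem exactly as was done in the proof of Theorem~\ref{thrm:Bellman recursion} in the deception chapter, and then to recognize the resulting recursion as the discrete-time analogue of the Feynman-Kac representation. First I would set up the Bellman recursion. By the dynamic programming principle, the value function satisfies
\begin{equation*}
J_t(x_t)=\min_{Q_{U_t|X_t}}\int_{\mathcal{U}_t}\Bigl\{\rho_t(x_t,u_t)+\lambda\log\tfrac{dQ}{dR}(u_t|x_t)\Bigr\}Q(du_t|x_t),
\end{equation*}
with the terminal condition $J_T(x_T)=C_T(x_T)$, where $\rho_t(x_t,u_t):=C_t(x_t,u_t)+J_{t+1}(F_t(x_t,u_t))$ and I have used Assumption~\ref{assump: deterministic law} (the deterministic transition $P(dx_{t+1}|x_t,u_t)=\delta_{F_t(x_t,u_t)}(dx_{t+1})$) to collapse the expectation over $x_{t+1}$. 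This is precisely the inner minimization appearing in \eqref{eq:kl_dc_bellman1}.

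The second step is to solve this inner problem using the Legendre (Donsker-Varadhan) duality between KL divergence and free energy, which was already invoked in the proof of Theorem~\ref{thrm:Bellman recursion}. That duality gives the minimizer in Gibbs form, namely \eqref{eq:p_star_dc}, and evaluates the minimum to
\begin{equation*}
J_t(x_t)=-\lambda\log\int_{\mathcal{U}_t}\exp\!\Bigl(-\tfrac{1}{\lambda}\rho_t(x_t,u_t)\Bigr)R(du_t|x_t).
\end{equation*}
This immediately yields the optimal policy claim of the theorem and gives me the single-step version of \eqref{eq:jt_pi}.

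The third step is the induction. Defining the exponentiated value function $Z_t(x_t):=\exp(-J_t(x_t)/\lambda)$ linearizes the recursion into
\begin{equation*}
Z_t(x_t)=\int_{\mathcal{U}_t}\exp\!\Bigl(-\tfrac{1}{\lambda}C_t(x_t,u_t)\Bigr)Z_{t+1}\bigl(F_t(x_t,u_t)\bigr)R(du_t|x_t),
\end{equation*}
with $Z_T(x_T)=\exp(-C_T(x_T)/\lambda)$. This is exactly the linear backward recursion \eqref{eq:z_recursive}, and by recursive substitution together with the definition \eqref{path cost function} of the path cost it telescopes into $Z_t(x_t)=\mathbb{E}_R^{x_t}\exp\bigl(-\tfrac{1}{\lambda}C_{t:T}(X_{t:T},U_{t:T-1})\bigr)$, which is \eqref{eq:z_pi}. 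Taking $-\lambda\log$ of both sides gives \eqref{eq:jt_pi}.

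The main obstacle I anticipate is the careful justification that the infimum in the inner problem is actually attained and that the Gibbs-form kernel \eqref{eq:p_star_dc} is a bona fide stochastic kernel (measurability in $x_t$, and finiteness of the normalizing integral $Z_t(x_t)$). The finiteness requires a mild integrability hypothesis ensuring $\mathbb{E}_R^{x_t}\exp(-C_{t:T}/\lambda)<\infty$, which for the LQR instantiation of interest follows from the Gaussian reference policy and quadratic costs; the measurability follows from standard results on stochastic kernels under the assumed Borel structure. A secondary technical point is confirming that the telescoping substitution is valid, i.e.\ that the iterated integrals can be exchanged via Fubini-Tonelli, which is justified because the integrand is nonnegative. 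These are exactly the points where the proof leans on the Legendre-duality argument already developed earlier in the excerpt, so I would cite that argument rather than reprove it.
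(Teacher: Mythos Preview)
Your proposal is correct and follows essentially the same approach as the paper: the paper's own proof simply defers to Theorem~\ref{thrm:Bellman recursion}, whose argument (Bellman recursion plus Legendre duality to obtain the Gibbs-form minimizer and the log-partition value) combined with the linearization via $Z_t$ and recursive substitution in Section~\ref{sec:PI_control} is exactly the three-step program you outline. Your additional remarks on integrability and measurability go slightly beyond what the paper spells out, but they are appropriate caveats rather than a different method.
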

\begin{proof}
The proof follows similar to the proof of Theorem \ref{thrm:Bellman recursion}. 
\end{proof}
Equation \eqref{eq:jt_pi} implies that the value function $J_t(x_t)$ can be computed approximately using independent Monte-Carlo simulations of {state-control trajectories} starting from $x_t$ under the reference policy  $\{R_{U_k|X_k}\}_{k=t}^{T-1}$. Let $\{x_{t:T}(i), u_{t:T-1}(i)\}_{i=1}^n$ be an ensemble of $n$ such sample {state-control trajectories}. For each $i$, let
\begin{equation}
\label{eq:dc_path_reward sample complexity}
r(i):= \exp\left(-\frac{1}{\lambda}C_{t:T}(x_{t:T}(i), u_{t:T-1}(i))\right)
\end{equation}
be the reward of the sample path $i$. Then, $J_t(x_t)$ can be evaluated approximately by $J_t(x_t) \approx -\lambda \log \left(\frac{1}{n}\sum_{i=1}^n r(i)\right)$.
Moreover, the expectation of the control input under the optimal distribution $Q^*_{U_t|X_t}$ \eqref{eq:p_star_dc} can also be evaluated as 
\begin{align}\label{eq: controller avg}
  \mathbb{E}_{Q^*}(U_t|x_t) & =  \int_{\mathcal{U}_{t}} u_t Q^*(du_t|x_t)\nonumber\\
    & =\frac{\mathbb{E}_R \left[U_t \exp \left(-\frac{1}{\lambda}C_{t:T}(X_{t:T}, U_{t:T-1}) \right)\right]}{\mathbb{E}_R \left[\exp \left(-\frac{1}{\lambda}C_{t:T}(X_{t:T}, U_{t:T-1})\nonumber \right)\right]} \nonumber\\
    &\approx \frac{\sum_{i=1}^{n} u_t(i)r(i)}{\sum_{i=1}^{n} r(i)}.
\end{align}
\section{Stochastic LQR via Path Integral}\label{Sec: Stochastic LQR via Path Integral}
In this section, we obtain the path integral algorithm for discrete-time stochastic LQR as a special case of the discrete-time, continuous-state KL control problem presented in Section \ref{Sec: KL Control}. Suppose for each $t\in\mathcal{T}$, the state transition is governed by the following linear difference equation
\begin{equation}
\label{eq:dc_lqr_state}
X_{t+1}=A_tX_t+B_tU_t+W_t
\end{equation}
where $X_t\in\mathbb{R}^n$, $U_t\in\mathbb{R}^m$ and {$W_t\sim\mathcal{N}(0,\Omega_t), t\in\mathcal{T}$ are mutually independent Gaussian random variables.} We assume that the initial state $X_0 = x_0$ is given. Now we state the stochastic LQR problem as follows:
\begin{problem}[Stochastic LQR]\label{Prob:LQR}
{Compute the state feedback policy $u_t=k_t(x_t)$ that solves the following optimal control problem:}
\begin{align}\label{eq:dc_lqr_cost}
\!\!\!\!\!\!\min_{\{k_t(\cdot)\}_{t=0}^{T-1}} & \!\mathbb{E} \sum_{t=0}^{T-1}\!\!\left(\!\frac{1}{2}X_t^{\!\top} \! M_t X_t \!+\! \frac{1}{2}U_t^{\!\top} \! N_t U_t \!\right)\!+\!\mathbb{E}\!\left(\!\frac{1}{2}X_T^{\!\top} M_T X_T \!\!\right)\nonumber\\
\!\!\mathrm{s.t.}\;\;&X_{t+1}\!=\!A_tX_t\!+\!B_tU_t\!+\!W_t, \;\;\; {X_0 = x_0}
\end{align}
where $\{M_t\}_{t=0}^T$ and $\{N_t\}_{t=0}^{T-1}$ are {sequences of positive definite matrices}.
\end{problem}

\subsection{Classical solution}\label{Sec: classical LQR}
It is well-known that the optimal policy is given by
\begin{subequations}
\label{eq:dc:lqr_policy}
\begin{align}
u_t&=k_t(x_t)=K_tx_t, \\
K_t&=-(B_t^\top \Theta_{t+1} B_t + N_t)^{-1}B_t^\top \Theta_{t+1}A_t
\end{align}
\end{subequations}
where $\{\Theta_t\}_{t=0}^T$ is a {sequence of positive definite matrices} computed by the backward Riccati recursion with $\Theta_T = M_T$:
\begin{align}\label{eq: dc_lqr_theta}
    \Theta_t=&A_t^\top \Theta_{t+1} A_t + M_t-A_t^\top \Theta_{t+1}B_t(B_t^\top \Theta_{t+1} B_t + N_t)^{-1}B_t^\top \Theta_{t+1}A_t. 
\end{align}

\subsection{Path-integral-based solution}
We now recover the classical result in Section \ref{Sec: classical LQR} using the KL control framework described in Section~\ref{Sec: KL Control}. 
It turns out that Problem 2 can be related to Problem 1 under the following assumption:
\begin{assumption}
\label{asmp:1}
For each $t=0, 1, \cdots, T-1$, there exists a positive definite matrix $\hat{\Omega}_t$ and $\lambda >0$  such that \begin{equation}
\label{eq:omega_asmp}
N_t=\lambda \hat{\Omega}_t^{-1} \text{ and } B_t\hat{\Omega}_t B_t^\top=\Omega_t.
\end{equation}
\end{assumption}
This assumption implies that the control input in the direction with higher noise variance is cheaper than that in the direction with lower noise variance. In order to satisfy this assumption the noise $W_t$ has to enter the dynamics via the control channel. {While Assumption~\ref{asmp:1} is somewhat restrictive, it is a common assumption in the path integral control literature. See, e.g., \cite{kappen2005path} for its system theoretic interpretation. Its relaxations have been studied in the literature (e.g., \cite{levine2018reinforcement}).} For simplicity, we accept Assumption \ref{asmp:1} in what follows.

Consider the KL control problem in which the state transition law is given by $x_{t+1}=F_t(x_t, u_t)$ where
\begin{equation}\label{eq: Ft for LQR}
    F_t(x_t, u_t)=A_t x_t + B_t u_t \;\;\; \forall t=0, 1, \cdots, T-1.
\end{equation}
Suppose that the reference policy $R_{U_t|X_t}$ is given as a non-degenerate zero-mean Gaussian probability density function with covariance $\hat{\Omega}_t$, i.e.,
\begin{equation}
\label{eq:dc_lqr_ref}
R_{U_t|X_t}(u_t|x_t)=\mathcal{N}(0, \hat{\Omega}_t).
\end{equation}
Finally, suppose that the stage-wise and terminal cost functions are given by
\begin{align}
    C_t(x_t, u_t)&=\frac{1}{2}x_t^\top M_t x_t, \quad \forall t=0, 1, \cdots, T-1 \\
    C_T(x_T)&=\frac{1}{2}x_T^\top M_T x_T. \label{C_t for linear setting}
\end{align}
Under this setup, Theorem~\ref{thm:KL} leads to the following result:
\begin{theorem}\label{theorem: KL control solution}
The KL control problem defined by \eqref{eq:omega_asmp} through \eqref{C_t for linear setting} admits the optimal policy $Q_{U_t|X_t}^*$ given by
\begin{equation} 
\label{eq:lem_q_star}
Q_{U_t|X_t}^*(u_t|x_t)=\mathcal{N}(-\hat{H}_t^{-1}\hat{G}_t^\top x_t, \lambda \hat{H}_t^{-1})
\end{equation}
where $\hat{G}_t=A_t^\top \hat{\Theta}_{t+1}B_t$ and $\hat{H}_t=B_t^\top\hat{\Theta}_{t+1}B_t+\lambda \hat{\Omega}_t^{-1}$
and $\hat{\Theta}_{t}$ is the solution of the  Riccati difference equation 
\begin{align}\label{eq:dc_lqr_riccati_theta}
\hat{\Theta}_t=A_t^\top\hat{\Theta}_{t+1}A_t + M_t  - \!A_t^\top \hat{\Theta}_{t+1}B_t(B_t^\top \hat{\Theta}_{t+1}B_t\!+\!\lambda\hat{\Omega}_t^{-1})^{-1}B_t^\top \hat{\Theta}_{t+1}A_t 
\end{align}
with $\hat{\Theta}_T=M_T$.
\end{theorem}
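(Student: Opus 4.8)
The plan is to specialize the general KL control result of Theorem~\ref{thm:KL} to the linear-quadratic setup defined by \eqref{eq: Ft for LQR}, \eqref{eq:dc_lqr_ref}, and \eqref{C_t for linear setting}, and to verify by backward induction that the value function is quadratic in the state with Hessian $\hat{\Theta}_t$ satisfying the Riccati recursion \eqref{eq:dc_lqr_riccati_theta}. I would adopt the induction hypothesis that, at each stage $t$,
\begin{equation*}
J_t(x_t)=\tfrac{1}{2}x_t^\top \hat{\Theta}_t x_t + d_t
\end{equation*}
for some constant $d_t$ (independent of $x_t$), with the terminal case $\hat{\Theta}_T=M_T$ following directly from $C_T(x_T)=\tfrac12 x_T^\top M_T x_T$. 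Under Assumption~\ref{asmp:1}, the proportionality $N_t=\lambda\hat{\Omega}_t^{-1}$ and $B_t\hat{\Omega}_tB_t^\top=\Omega_t$ is exactly what is needed to match the KL-weighted control cost to the LQR control penalty, so the two problems coincide.

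For the inductive step, I would start from the exact optimal-policy formula \eqref{eq:p_star_dc} of Theorem~\ref{thrm:Bellman recursion}, specialized here as $Q^*_{U_t|X_t}(du_t|x_t)\propto \exp(-\rho_t(x_t,u_t)/\lambda)\,R(du_t|x_t)$, where $\rho_t(x_t,u_t)=C_t(x_t,u_t)+J_{t+1}(F_t(x_t,u_t))$. Substituting the quadratic induction hypothesis for $J_{t+1}$, the deterministic linear transition $F_t(x_t,u_t)=A_tx_t+B_tu_t$, and the Gaussian reference density $R_{U_t|X_t}=\mathcal{N}(0,\hat{\Omega}_t)$, the exponent becomes a quadratic form in $u_t$. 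Collecting the quadratic and linear terms in $u_t$, the coefficient of the quadratic part is precisely $\hat{H}_t=B_t^\top\hat{\Theta}_{t+1}B_t+\lambda\hat{\Omega}_t^{-1}$ (the $\lambda\hat{\Omega}_t^{-1}$ arising from the reference covariance), and the linear part produces the cross-term $\hat{G}_t=A_t^\top\hat{\Theta}_{t+1}B_t$. Completing the square in $u_t$ shows that the product of $\exp(-\rho_t/\lambda)$ and the Gaussian reference is itself (proportional to) a Gaussian in $u_t$ with mean $-\hat{H}_t^{-1}\hat{G}_t^\top x_t$ and covariance $\lambda\hat{H}_t^{-1}$, which is exactly \eqref{eq:lem_q_star}.

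It then remains to evaluate the normalizing integral, i.e.\ the log-partition function $J_t(x_t)=-\lambda\log\int_{\mathcal{U}_t}\exp(-\rho_t(x_t,u_t)/\lambda)R(du_t|x_t)$ from \eqref{eq:v_rho_bellman}. Carrying out the Gaussian integral and tracking the residual $x_t$-dependent terms (the part of the completed square not absorbed into the $u_t$ integral, together with $\tfrac12 x_t^\top M_t x_t$ and $\tfrac12 x_t^\top A_t^\top\hat{\Theta}_{t+1}A_t x_t$) yields a quadratic form $\tfrac12 x_t^\top \hat{\Theta}_t x_t$ plus a constant, where the Schur-complement structure of completing the square gives
\begin{equation*}
\hat{\Theta}_t=A_t^\top\hat{\Theta}_{t+1}A_t+M_t-A_t^\top\hat{\Theta}_{t+1}B_t\hat{H}_t^{-1}B_t^\top\hat{\Theta}_{t+1}A_t,
\end{equation*}
matching \eqref{eq:dc_lqr_riccati_theta} after substituting $\hat{H}_t=B_t^\top\hat{\Theta}_{t+1}B_t+\lambda\hat{\Omega}_t^{-1}$. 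This closes the induction and establishes both the Gaussian form of $Q^*_{U_t|X_t}$ and the Riccati recursion for $\hat{\Theta}_t$.

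\textbf{Main obstacle.} The bookkeeping in the completion-of-the-square step is where the argument is most error-prone: one must simultaneously combine the quadratic-in-$u_t$ pieces coming from the reference density (which contributes $\tfrac{1}{2}u_t^\top\lambda\hat{\Omega}_t^{-1}u_t$ inside the exponent, since $R=\mathcal{N}(0,\hat{\Omega}_t)$ and the density is weighted by $1/\lambda$), from the cost $C_t$, and from the propagated value $J_{t+1}(A_tx_t+B_tu_t)$, and then correctly identify the Schur complement that produces $\hat{\Theta}_t$. The subtle point worth double-checking is that the $\lambda$ appearing in the KL weight and the $\lambda$ implicit in $N_t=\lambda\hat{\Omega}_t^{-1}$ enter consistently, so that $\hat{H}_t$ plays the role of $(B_t^\top\hat{\Theta}_{t+1}B_t+N_t)$ in the classical recursion \eqref{eq: dc_lqr_theta}; verifying $\hat{\Theta}_t=\Theta_t$ and $-\hat{H}_t^{-1}\hat{G}_t^\top=K_t$ under Assumption~\ref{asmp:1} is what certifies that the path-integral solution recovers the classical LQR gain. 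The positive definiteness of $\hat{H}_t$ (needed for the inversion and for the Gaussian integral to converge) follows from $\hat{\Theta}_{t+1}\succeq 0$ and $\hat{\Omega}_t\succ 0$, which I would note but not belabor.
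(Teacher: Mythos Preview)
Your proposal is correct and follows essentially the same approach as the paper's proof: both posit a quadratic-in-state value function $J_t(x_t)=\tfrac12 x_t^\top\hat{\Theta}_t x_t+\text{const}$, substitute the linear dynamics, quadratic cost, and Gaussian reference into the optimal-policy formula \eqref{eq:p_star_dc}, complete the square in $u_t$ to read off the Gaussian form \eqref{eq:lem_q_star}, and then use the normalization (equivalently, the log-partition identity \eqref{eq:v_rho_bellman}) to close the induction and obtain the Riccati recursion \eqref{eq:dc_lqr_riccati_theta}. The paper carries the computation via the exponentiated value function $Z_t=\exp(-J_t/\lambda)$ and introduces the auxiliary matrix $\hat{F}_t=A_t^\top\hat{\Theta}_{t+1}A_t+M_t$ to organize the algebra, but these are notational conveniences rather than a different argument.
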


\begin{proof}
Suppose the value function $J_t(x_t)$ takes the form ${J}_t(x_t)=\frac{1}{2}x_t^\top \hat{\Theta}_t x_t+\hat{\kappa}_t$, where $\hat{\Theta}_T = {M}_T$ and $\hat{\kappa}_T = 0$. Define $Z_t(x_t)\coloneqq \text{exp}\left(-\frac{1}{\lambda}J_t(x_t)\right)$. Now, from \eqref{eq:p_star_dc}, the optimal policy is given by 
\begin{align}
Q_{U_t|X_t}^*(u_t|x_t)=\frac{1}{Z_t(x_t)}\exp\left(-\frac{C_t(x_t, u_t)}{\lambda}\right)\times Z_{t+1}\left(F_t(x_t, u_t)\right)R(u_t|x_t).
\label{dc_lqr_q_star1}
\end{align}
Introducing $\hat{F}_t= A_t^\top\hat{\Theta}_{t+1} A_t + {M}_t$, $\hat{G}_t= A_t^\top\hat{\Theta}_{t+1} B_t $ and $\hat{H}_t= B_t^\top\hat{\Theta}_{t+1} B_t +\lambda \hat{\Omega}_t^{-1}$ and using the reference policy \eqref{eq:dc_lqr_ref}, equation \eqref{dc_lqr_q_star1} can be written as
\begin{align*}
Q_{U_t|X_t}^*(u_t|x_t)=&\frac{1}{Z_t(x_t)}\frac{1}{\sqrt{(2\pi)^m\det \hat{\Omega}_t}}\times\exp\left(\frac{1}{2\lambda}x_t^\top (\hat{G}_t\hat{H}_t^{-1}\hat{G}_t^\top-\hat{F}_t)x_t-\frac{1}{\lambda}\hat{\kappa}_{t+1}\right)
 \\
&\times \exp\left(-\frac{1}{2\lambda}(u_t+\hat{H}_t^{-1}\hat{G}_t^\top x_t)^\top \hat{H}_t(u_t+\hat{H}_t^{-1}\hat{G}_t^\top x_t)\right).
\end{align*}
or \eqref{eq:lem_q_star} for short. From the condition $\int_{\mathcal{U}_t}Q_{U_t|X_t}^*(u_t|x_t)du_t=1$, $Z_t(x_t)$ is determined as
\begin{align*}
\!\!Z_t(x_t)\!=\!\!\sqrt{\!\!\frac{\det\!\! \left(\!\lambda\hat{H}_t^{-\!1}\!\right)}{\det \hat{\Omega}_t }}\!\exp\!\left(\!\!\frac{x_t^\top (\hat{G}_t\hat{H}_t^{-1}\hat{G}_t^\top-\hat{F}_t)x_t}{2\lambda}\!-\!\frac{\hat{\kappa}_{t+1}}{\lambda}\!\!\right)\!\!.
\end{align*}
Therefore, the value function is written as
\[
{J}_t(x_t)=-\lambda \log Z_t(x_t)=\frac{1}{2}x_t^\top \hat{\Theta}_t x_t+\hat{\kappa}_t
\]
where $\hat{\Theta}_t$ is defined by \eqref{eq:dc_lqr_riccati_theta} and $\hat{\kappa}_t$ by
\begin{align*}
    &\hat{\kappa}_t\!=\!\frac{\lambda}{2}\!\log\det \hat{\Omega}_t \!+\!\frac{\lambda}{2}\!\log\det \!\!\left(\!\!\hat{\Omega}_t^{-1}\!\!\!+\!\frac{1}{\lambda}B_t^\top \hat{\Theta}_{t+1}B_t\!\!\right)\!\!+\hat{\kappa}_{t+1} .\label{eq:dc_lqr_riccati_kappa}
\end{align*}
\end{proof}

Critically, observe that Riccati equations \eqref{eq: dc_lqr_theta} and \eqref{eq:dc_lqr_riccati_theta} coincide under Assumption~\ref{asmp:1}.
Moreover, the mean of the control input under the optimal policy \eqref{eq:lem_q_star} is
\begin{equation}
\mathbb{E}_{Q^*}(u_t|x_t)=-\hat{H}_t^{-1}\hat{G}_t^\top x_t=K_t x_t,
\end{equation}
which coincides with the LQR solution \eqref{eq:dc:lqr_policy}.
Since \eqref{eq: controller avg} provides us with a Monte-Carlo-based approach to compute $\mathbb{E}_{Q^*}(u_t|x_t)$, this connection implies that the optimal LQR input \eqref{eq:dc:lqr_policy} can be computed by Monte-Carlo simulations. Specifically, at each time step $t$, we generate sample {state-control trajectories}  $\{x_{t:T}(i), u_{t:T-1}(i)\}_{i=1}^n$ under the reference policy $R_{U_k|X_k}$. Since the reference policy is currently given by \eqref{eq:dc_lqr_ref}, from Assumption \ref{asmp:1}, this amounts to performing $n$ independent simulations of the ``uncontrolled" dynamics $X_{k+1}=A_kX_k+W_k, \; W_k\sim\mathcal{N}(0, \Omega_k)$ for $k=t, \cdots, T-1$ with the initial state $X_t=x_t$. The path reward $r(i)$ is computed for each sample path using \eqref{eq:dc_path_reward sample complexity}. Finally, the optimal control input for stochastic LQR is determined by 
\begin{equation}
    \label{eq:lqr_kl_control}
    u_t\approx \frac{\sum_{i=1}^{n} r(i)u_t(i)}{\sum_{i=1}^{n} r(i)}.
\end{equation}
Equation \eqref{eq:lqr_kl_control} is the discrete-time counterpart of the path integral control scheme introduced in the original work \cite{kappen2005path}.
Notice that the path-integral-based LQR implementation using \eqref{eq:lqr_kl_control} does not require solving the backward Riccati equation \eqref{eq:dc:lqr_policy}.

\section{Sample Complexity Analysis}
Suppose $u_t=K_tx_t$ represents the optimal control input computed by solving the classical Riccati equation (\ref{eq: dc_lqr_theta}) and $\hat{u}_t$ represents the control input obtained via path integral approach using a finite number of samples $n_t\in \mathbb{N}$, i.e.,
\begin{equation}\label{eq:PI with E}
    \hat{u}_t = \sum_{i=1}^{n_t} \frac{r(i)}{r}u_t(i) \text{ where } r=\sum_{i=1}^{n_t} r(i).
\end{equation}

\subsection{Sample Complexity Bound}

By the strong law of large numbers, $\hat{u}_t\overset{a.s.}{\rightarrow} u_t$, as $n_t\rightarrow\infty$. In this section, we analyze the finite sample performance of this approximation. Define the empirical means $\hat{E}$ and true expectations $E$ as

\begin{align}
\hat{E}^{ru}_t & \coloneqq\frac{1}{n_t} \sum_{i=1}^{n_t} r(i)u_t(i), & \quad \hat{E}^{r}_t & \coloneqq \frac{1}{n_t} \sum_{i=1}^{n_t} r(i), \label{eq:Ehat}\\ 
{E}^{ru}_t & \coloneqq \!\!\lim_{n_t\to\infty} \frac{1}{n_t} \sum_{i=1}^{n_t} r(i)u_t(i), & \quad  {E}^{r}_t & \coloneqq \!\! \lim_{n_t\to\infty} \frac{1}{n_t} \sum_{i=1}^{n_t} r(i).\label{eq:E}
\end{align}
Since $r(i)\in[0,1]$, by Hoeffding's inequality \cite{hoeffding1994probability}, we get
\begin{equation}\label{eq:eps1}
    \text{Pr}\left(\left|\hat{E}^{r}_t - {E}^{r}_t\right|\leq \gamma_1\right)\geq 1-2e^{-2n_t\gamma_1^2}.
\end{equation}
 We know that $u_t(i)$ is sampled from $R_{U_t|X_t}$ in \eqref{eq:dc_lqr_ref} which is a zero-mean Gaussian distribution with covariance $\hat{\Omega}_t$. Therefore, $u_t(i)\in \mathrm{SG}(\|\hat{\Omega}_t\|)$, where $\mathrm{SG}(\|\hat{\Omega}_t\|)$ represents a class of sub-Gaussian distributions with parameter $\|\hat{\Omega}_t\|$. 
 Since $r(i)\in [0, 1]$, we also have $\left(r(i)u_t(i)\right)\in\mathrm{SG}(\|\hat{\Omega}_t\|)$. Using Hoeffding's bound for sub-Gaussians and the union bound \cite{prekopa1988boole}, we get
{\begin{equation}\label{eq:eps2}
    \text{Pr}\left(\left\|\hat{E}^{ru}_t - {E}^{ru}_t\right\|_\infty\leq \gamma_2\right)\geq 1-2me^{-n_t\gamma_2^2/(2\|\hat{\Omega}_t\|)}
\end{equation}}
where $m$ is the dimension of $u_t(i)$. The following theorem provides an end-to-end sample complexity bound on the error in the path integral control signal. 
\begin{theorem}
\label{lem:complexity}
Let $\{\epsilon_t\}_{t=0}^{T-1}$, $\{\alpha_t\}_{t=0}^{T-1}$ and $\{\beta_t\}_{t=0}^{T-1}$ be given sequences of positive numbers and \begin{equation*}
    \epsilon \coloneqq \sum\nolimits_{t=0}^{T-1} \epsilon_t^2,\quad\alpha \coloneqq \sum\nolimits_{t=0}^{T-1} \alpha_t, \quad \beta \coloneqq \sum\nolimits_{t=0}^{T-1} \beta_t.
\end{equation*}
Suppose $\alpha+\beta<1$. If $\hat{u}_t$ is computed by \eqref{eq:PI with E} with $n_t$ satisfying 
{\begin{equation}\label{eq:Nt}
   \!\!\!\!\!\! n_t\! \geq\! \frac{\Bigg(\!\!\hat{E}^r_t\!\sqrt{2\|\hat{\Omega}_t\|\log\frac{2m}{\beta_t}}\!+\!\left(\!\epsilon_t\hat{E}^r_t\!\! +\! \|\hat{E}^{ru}_t\|_\infty\!\right)\!\!\sqrt{\frac{1}{2}\log\frac{2}{\alpha_t}}\!\Bigg)^{\!\!\!2}}{\epsilon_t^2(\hat{E}^r_t)^4}
\end{equation}}
 {\color{black} 
{and
\begin{equation}\label{eq:Nt2}
\hat{E}_t^r>\sqrt{\frac{1}{2n_t}\log\frac{2}{\alpha_t}}
\end{equation}
 }}
for all $t=0,1,\hdots, T-1$, then 
{\begin{equation}\label{eq:final bound}
    \|\hat{u}-u\|_\infty^2:=\sum_{t=0}^{T-1}\|\hat{u}_t-{u}_t\|_\infty^2 \leq \epsilon
\end{equation}}
with probability greater than or equal to $1-\alpha-\beta$.
\end{theorem}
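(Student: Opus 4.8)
The plan is to reduce the end-to-end statement to a per-timestep ratio-estimation error, control that error using the two given concentration bounds \eqref{eq:eps1} and \eqref{eq:eps2}, and then aggregate across the horizon with a union bound. First I would observe that both the path-integral control $\hat{u}_t$ in \eqref{eq:PI with E} and the true LQR control $u_t=K_tx_t$ are ratios of the empirical and limiting expectations defined in \eqref{eq:Ehat}--\eqref{eq:E}: namely $\hat{u}_t=\hat{E}^{ru}_t/\hat{E}^r_t$ and, by the strong law of large numbers together with the KL/LQR correspondence established in Theorem~\ref{theorem: KL control solution}, $u_t=E^{ru}_t/E^r_t$.

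Next I would fix, for each $t$, the confidence levels $\alpha_t,\beta_t$ and set the tolerances $\gamma_1=\sqrt{\tfrac{1}{2n_t}\log\tfrac{2}{\alpha_t}}$ and $\gamma_2=\sqrt{\tfrac{2\|\hat{\Omega}_t\|}{n_t}\log\tfrac{2m}{\beta_t}}$, so that \eqref{eq:eps1} and \eqref{eq:eps2} read $\Pr(|\hat{E}^r_t-E^r_t|>\gamma_1)\leq\alpha_t$ and $\Pr(\|\hat{E}^{ru}_t-E^{ru}_t\|_\infty>\gamma_2)\leq\beta_t$. On the good event $\mathcal{A}_t\cap\mathcal{B}_t$ where both deviations are within tolerance, I would write $\Delta_r:=\hat{E}^r_t-E^r_t$ and $\Delta_{ru}:=\hat{E}^{ru}_t-E^{ru}_t$ and expand the ratio difference via the identity $\hat{u}_t-u_t=(\hat{E}^r_t\Delta_{ru}-\hat{E}^{ru}_t\Delta_r)/(\hat{E}^r_t(\hat{E}^r_t-\Delta_r))$. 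Since $\hat{E}^r_t-\Delta_r=E^r_t$, taking $\|\cdot\|_\infty$ and using $|\Delta_r|\leq\gamma_1$, $\|\Delta_{ru}\|_\infty\leq\gamma_2$ yields the per-timestep bound $\|\hat{u}_t-u_t\|_\infty\leq(\hat{E}^r_t\gamma_2+\|\hat{E}^{ru}_t\|_\infty\gamma_1)/(\hat{E}^r_t(\hat{E}^r_t-\gamma_1))$.

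The core algebraic step is then to show that the sample-size condition \eqref{eq:Nt} is exactly equivalent to the requirement $\|\hat{u}_t-u_t\|_\infty\leq\epsilon_t$. Clearing the positive denominator, that requirement becomes $\hat{E}^r_t\gamma_2+(\|\hat{E}^{ru}_t\|_\infty+\epsilon_t\hat{E}^r_t)\gamma_1\leq\epsilon_t(\hat{E}^r_t)^2$; substituting $\gamma_1=c_1/\sqrt{n_t}$ and $\gamma_2=c_2/\sqrt{n_t}$ with $c_1=\sqrt{\tfrac{1}{2}\log\tfrac{2}{\alpha_t}}$ and $c_2=\sqrt{2\|\hat{\Omega}_t\|\log\tfrac{2m}{\beta_t}}$ and solving for $n_t$ reproduces \eqref{eq:Nt} verbatim, where the extra $\epsilon_t\hat{E}^r_t$ inside the parenthesis is precisely the contribution of the $-\gamma_1$ term carried through the denominator. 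Finally I would union-bound over the $2T$ events $\{\mathcal{A}_t,\mathcal{B}_t\}_{t=0}^{T-1}$: on $\bigcap_t(\mathcal{A}_t\cap\mathcal{B}_t)$, which holds with probability at least $1-\sum_t(\alpha_t+\beta_t)=1-\alpha-\beta$, every per-timestep bound $\|\hat{u}_t-u_t\|_\infty\leq\epsilon_t$ holds simultaneously, so $\sum_t\|\hat{u}_t-u_t\|_\infty^2\leq\sum_t\epsilon_t^2=\epsilon$, which is exactly \eqref{eq:final bound}.

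The main obstacle I anticipate is the positivity and the self-referential nature of the ratio estimator. The denominator $\hat{E}^r_t(\hat{E}^r_t-\gamma_1)$ must be strictly positive for the ratio bound to be meaningful, and this is precisely what the auxiliary condition \eqref{eq:Nt2} secures, since it forces $E^r_t=\hat{E}^r_t-\Delta_r\geq\hat{E}^r_t-\gamma_1>0$ on $\mathcal{A}_t$. A secondary subtlety is that \eqref{eq:Nt} is phrased in terms of the empirical quantities $\hat{E}^r_t,\hat{E}^{ru}_t$ rather than their limits, so the sample-size requirement is data-dependent; I would emphasize that the entire algebraic chain is carried out on the good event, allowing the deterministic ratio inequality and the probabilistic concentration statements to be composed cleanly, and that the horizon coupling which complicates the continuous-time analyses is absorbed entirely into the single union bound over $t$.
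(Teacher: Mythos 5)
Your proposal is correct and follows essentially the same route as the paper's proof: the same ratio decomposition yielding the per-step bound $\|\hat u_t-u_t\|_\infty\leq\bigl(\hat E^r_t\gamma_2+\|\hat E^{ru}_t\|_\infty\gamma_1\bigr)/\bigl(\hat E^r_t(\hat E^r_t-\gamma_1)\bigr)$ with the same choices of $\gamma_1$ and $\gamma_2$, the same use of \eqref{eq:Nt2} to keep the denominator positive, and the same union bound over $t$. The only cosmetic difference is that you obtain this bound from an exact algebraic identity for $\hat u_t-u_t$ rather than the paper's add-and-subtract plus triangle-inequality step, but both arguments produce the identical inequality and the identical algebra leading to \eqref{eq:Nt}.
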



\begin{proof}
{Using the definitions in \eqref{eq:Ehat} and \eqref{eq:E}, we have
\begin{align}\label{u_t-u_hat_t}
    \|u_t-\hat{u}_t\|_\infty  =& \|{E}^{ru}_t/{E}_t^r - \hat{E}^{ru}_t/\hat{E}^{r}_t\|_\infty\nonumber\\
    = & \frac{1}{{E}^{r}_t\hat{E}^{r}_t}\left\|{E}^{ru}_t\hat{E}^{r}_t-\hat{E}^{ru}_t\hat{E}^{r}_t + \hat{E}^{ru}_t\hat{E}^{r}_t-\hat{E}^{ru}_t{{E}}_{r}\right\|_\infty\nonumber\\
     \leq & \frac{1}{{E}^{r}_t\hat{E}^{r}_t}\!\!\left(\|{E}^{ru}_t\hat{E}^{r}_t-\hat{E}^{ru}_t\hat{E}^{r}_t\|_\infty + \|\hat{E}^{ru}_t\hat{E}^{r}_t-\hat{E}^{ru}_t{{E}}_{r}\|_\infty\right).
\end{align}
The absolute value in the denominator is removed since $r(i)\in(0,1]$. Given $\alpha_t$ and $n_t$, choose $\gamma_1 = \sqrt{\frac{1}{2n_t}\log\frac{2}{\alpha_t}}$. Then \eqref{eq:eps1} implies that 
\begin{equation}\label{gamma_1}
    \left|\hat{E}^{r}_t - {E}^{r}_t\right| \leq \sqrt{\frac{1}{2n_t}\log\frac{2}{\alpha_t}}
\end{equation}
holds with probability at least $1-\alpha_t$. Combining with \eqref{eq:Nt2}, we have
\begin{equation}\label{consequence of 35}
    0 < \hat{E}^{r}_t - \sqrt{\frac{1}{2n_t}\log\frac{2}{\alpha_t}} \leq {E}^{r}_t. 
\end{equation}
Similarly, choosing $\gamma_2 = \sqrt{\frac{2\|\hat{\Omega}_t\|}{n_t}\log\frac{2m}{\beta_t}}$, \eqref{eq:eps2} implies that 
\begin{equation}\label{gamma_2}
    \left\|\hat{E}^{ru}_t - {E}^{ru}_t\right\|_\infty\leq \sqrt{\frac{2\|\hat{\Omega}_t\|}{n_t}\log\frac{2m}{\beta_t}}
\end{equation}
holds with probability at least $1-\beta_t$. Applying \eqref{gamma_1}, \eqref{consequence of 35} and \eqref{gamma_2} to \eqref{u_t-u_hat_t}, and using union bound, we obtain that
\begin{equation}\label{eq:bound2}
\!\!\|u_t-\hat{u}_t\|_\infty \leq  \frac{\hat{E}^{r}_t\sqrt{\frac{2\|\hat{\Omega}_t\|}{n_t}\log\frac{2m}{\beta_t}} + \|\hat{E}^{ru}_t\|_\infty \sqrt{\frac{1}{2n_t}\log\frac{2}{\alpha_t}}}{\left(\hat{E}^r_t-\sqrt{\frac{1}{2n_t}\log\frac{2}{\alpha_t}}\right)\hat{E}^r_t} 
\end{equation}
with probability at least $1-\alpha_t-\beta_t$. Hence we have
\begin{equation}\label{eq:bound3}
\text{Pr}\left(\|u_t-\hat{u}_t\|_\infty^2 \leq  \epsilon_t^2\right) \geq  1-\alpha_t-\beta_t
\end{equation}
if the right hand side of \eqref{eq:bound2} is less than or equal to $\epsilon_t$. Solving this condition for $n_t$, we get \eqref{eq:Nt}. Therefore, we conclude that if $n_t$ satisfies \eqref{eq:Nt} and \eqref{eq:Nt2}, we obtain \eqref{eq:bound3}. Using the union bound on \eqref{eq:bound3}, we get the desired results.}

\end{proof}
\begin{remark}
    The sample complexity in \eqref{eq:Nt} reveals that the required number of samples for path integral control depends only logarithmically on the dimension $m$.
\end{remark}

\subsection{Upper Bound on the Control Performance}
Often in practice, we are interested in an upper bound on the performance loss of the control system. 
To obtain such a bound, we now analyze the impact of $\|\hat{u}-u\|_\infty$ on the control performance. 
 
Let $\hat{U}_t$ be the ``noisy" control input determined by the path integral controller. The closed-loop dynamics is 
\begin{equation}\label{eq:cl_Uhat}
   X_{t+1}={A}_tX_t+B_t\hat{U}_t+W_t, \quad W_t\sim \mathcal{N}(0,\Omega_t)  
\end{equation}
and the accrued LQR cost is 
\begin{equation}\label{eq:PI cost}  
  \!\!\!\! \mathcal{L}\coloneqq \!\mathbb{E}\!\left[ \sum_{t=0}^{T-1}\!\!\left(\!\frac{1}{2}X_t^{\!\top} \! {M}_t X_t \!+\! \frac{1}{2}\hat{U}_t^{\!\top} \! {N}_t \hat{U}_t\!\!\right)\!\!+\!\frac{1}{2}X_T^{\!\top} M_T X_T\!\right].
\end{equation}
Introducing $V_t:=\hat{U}_t-U_t$, \eqref{eq:cl_Uhat} can be rewritten as
\begin{equation}
    \label{eq:cl_adversary}
X_{t+1}=\widetilde{A}_tX_t+B_tV_t+W_t, \quad W_t\sim \mathcal{N}(0,\Omega_t)
\end{equation}
where we introduced $\widetilde{A}_t\coloneqq A_t + B_tK_t$. Also, (\ref{eq:PI cost}) can be written in terms of $V_t$ as
\begin{align*}
    \mathcal{L}= \mathbb{E} \sum_{t=0}^{T-1}\left(\frac{1}{2}X_t^\top \! \widetilde{M}_t X_t \!+\! X_t^\top \! \widetilde{N}_t {V}_t + \frac{1}{2}V_t^\top \! {N}_t {V}_t\right)+\mathbb{E}\left[\frac{1}{2}X_T^\top M_T X_T\right]
\end{align*}
where $\widetilde{M}_t\coloneqq M_t + K_t^\top N_tK_t$ and $\widetilde{N}_t\coloneqq K_t^\top N_t$. 
From Theorem~\ref{lem:complexity}, we know that if $n_t$ satisfies \eqref{eq:Nt} and \eqref{eq:Nt2} then  $\sum_{t=0}^{T-1}\|v_t\|_\infty^2\leq \epsilon$ with probability at least $1-\alpha-\beta$. Now we formulate the problem to search for the state feedback policy $v_t= \pi_t(x_t)$  that maximizes $\mathcal{L}$ while satisfying $\sum_{t=0}^{T-1}\|v_t\|_\infty^2\leq \epsilon$ with probability at least $1-\alpha-\beta$. The problem is formulated as the following chance-constrained optimization problem.
\begin{problem}[Chance-constrained LQR]\label{prob: worst-case loss}
  \begin{align}\label{eq:worst-case loss}
\max_{\{\pi_t(\cdot)\}_{t=0}^{T-1}}& \mathbb{E}\sum_{t=0}^{T-1}\left(\frac{1}{2}X_t^\top \! \widetilde{M}_t X_t \!+\! X_t^\top \! \widetilde{N}_t {V}_t + \frac{1}{2}V_t^\top \! {N}_t {V}_t\right)+\mathbb{E}\left[\frac{1}{2}X_T^\top M_T X_T\right]\\
\mathrm{s.t.} \quad&  X_{t+1}=\widetilde{A}_tX_t+B_tV_t+W_t, \quad W_t\sim \mathcal{N}(0,\Omega_t)\nonumber\\
&\text{Pr}\left(\sum\nolimits_{t=0}^{T-1}\|v_t\|_\infty^2\leq \epsilon\right)\geq1-\alpha-\beta. \nonumber
\end{align}  
\end{problem}
Let $f^*$ be the value of the optimization problem \ref{prob: worst-case loss}. Then, if $n_t$ satisfies \eqref{eq:Nt} and \eqref{eq:Nt2}, $ \mathcal{L}\leq f^*$.
Finding a worst-case policy $\pi_t$ that solves the Problem \ref{prob: worst-case loss} involving a chance constraint on the control input of the systems is inherently challenging. Some approaches have been proposed in the literature to solve the optimization problems with a chance constraint on the state of the system \cite{blackmore2011chance,zhou2013reliable}. Finding the solution to Problem \ref{prob: worst-case loss} is left as a topic for future work.

\section{Simulation Results}
{Consider an LQR problem where $A_t=\begin{bmatrix}
   0.9 & -0.1\\ -0.1 & 0.8  
\end{bmatrix}$, $B_t= \begin{bmatrix}
    1 \\ 0
\end{bmatrix}$, $\Omega_t = \begin{bmatrix}
   4 & 0\\ 0 & 0  
\end{bmatrix}$, $M_t=0.1I$, $N_t=10$. $I$ represents an identity matrix of size $2\times2$.} In figure \ref{Figure planned trajectories}, we plot the state and control input trajectories for two values of $n$. The red dashed lines represent the analytical solution obtained by the classical Riccati equation. The blue solid lines represent the solution obtained by path integral. As evident from the figure, as $n$ is increased, the trajectories obtained by path integral become less noisy and align well with the analytical LQR solution. {We also plot in figures \ref{Figure planned trajectories}(b) and \ref{Figure planned trajectories}(d) the bounds $u_t\pm\epsilon_t$. As one can see the error $\epsilon_t$ reduces as $n$ is increased.}
\begin{figure}[tbhp]
    \centering
      \begin{tabular}{c c}
      \includegraphics[scale=0.35]{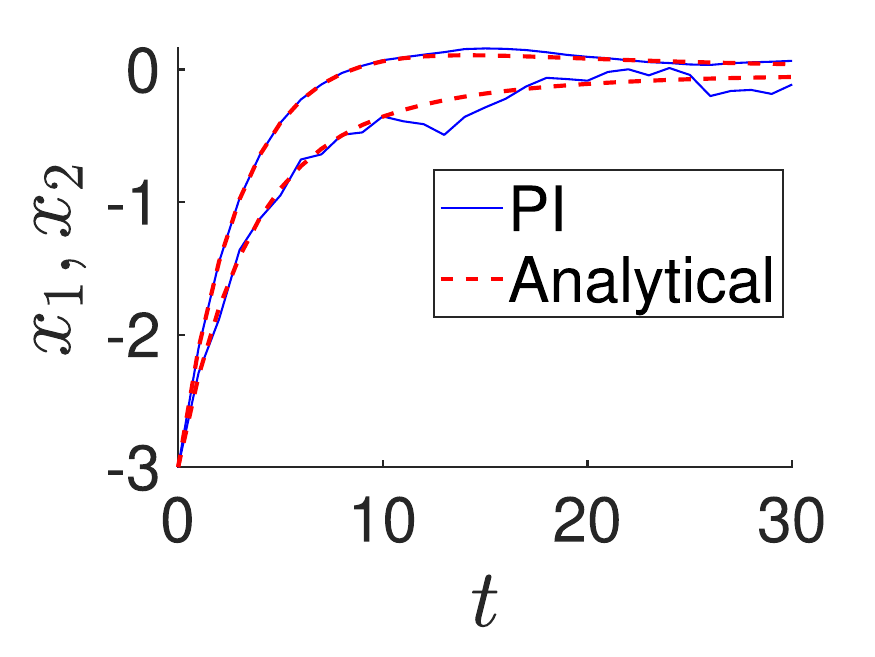} &\!\!\!\!\!\!\includegraphics[scale=0.35]{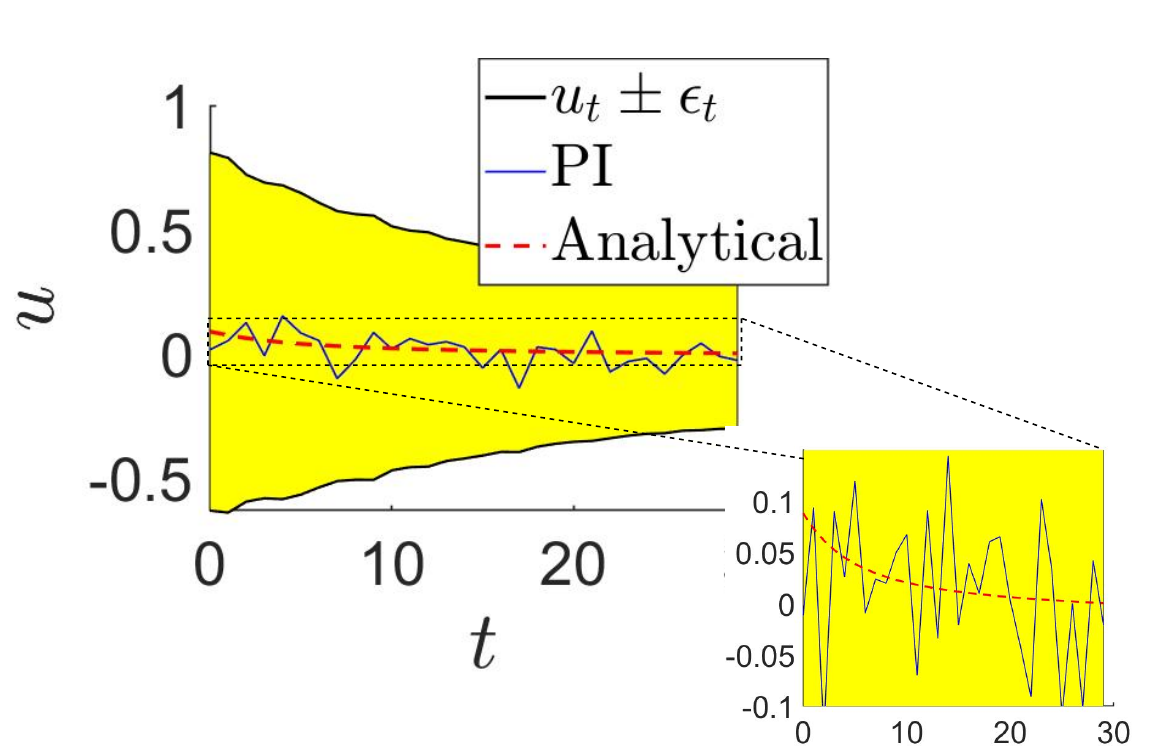} \\
      (a) State trajectory with $n=10^{3}$&(b) Control input trajectory with $n=10^{3}$\\
        \includegraphics[scale=0.35]{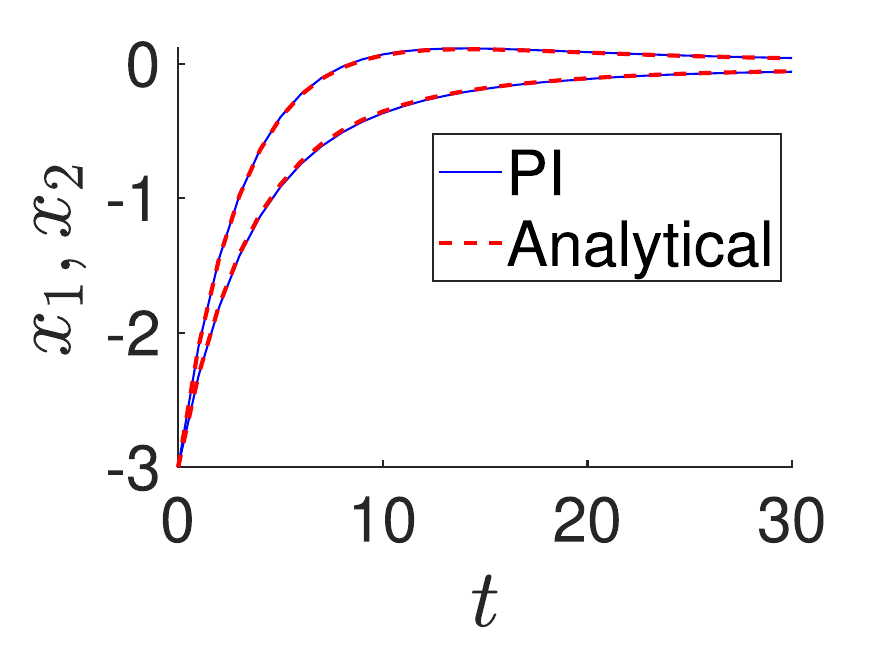} &\!\!\!\!\!\!\includegraphics[scale=0.35]{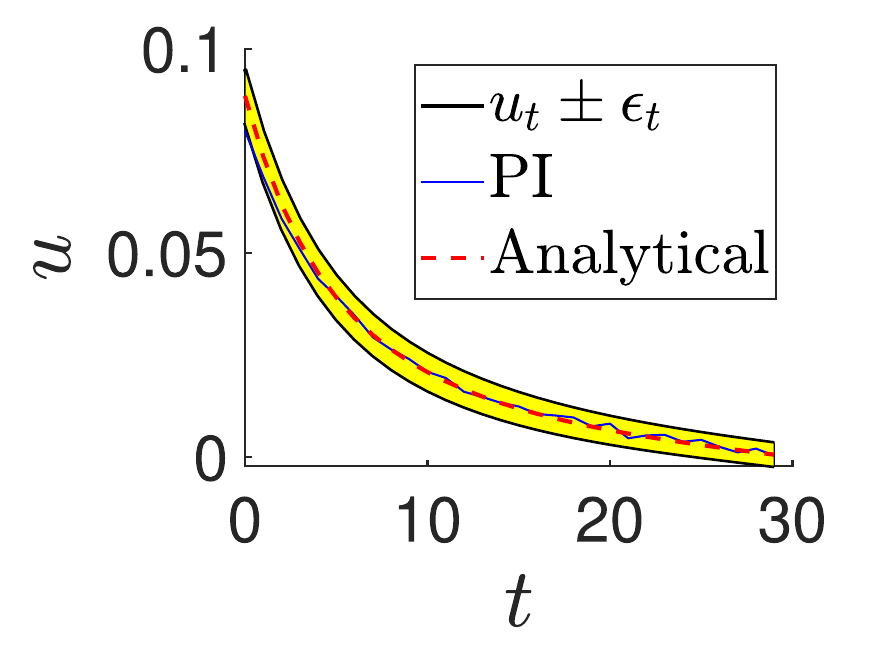} \\
      (c) State trajectory  with $n=10^{7}$ &(d) Control input trajectory with $n=10^{7}$\\
       \end{tabular}
        \caption{State and input trajectories for two values of $n$. The red dashed line represents the solution obtained by the Riccati equation, whereas the blue solid line represents the solution obtained by path integral control. {The bounds $u_t\pm\epsilon_t$ are plotted in (b) and (d).}}
        \label{Figure planned trajectories}
\end{figure}
Figure \ref{Fig: stuff vs sample size}(a) represents how $\epsilon_0$ changes with the sample size. 
Figure \ref{Fig: stuff vs sample size}(b) represents the LQR costs obtained by the classical Riccati equation (red dotted line) and by the path integral approach (blue solid line) as functions of the sample size. Notice that the path integral  solution converges to the analytical solution as the sample size is increased.   

\begin{figure}[tbhp]
    \centering
      \begin{tabular}{c c}
    \includegraphics[scale=0.4]{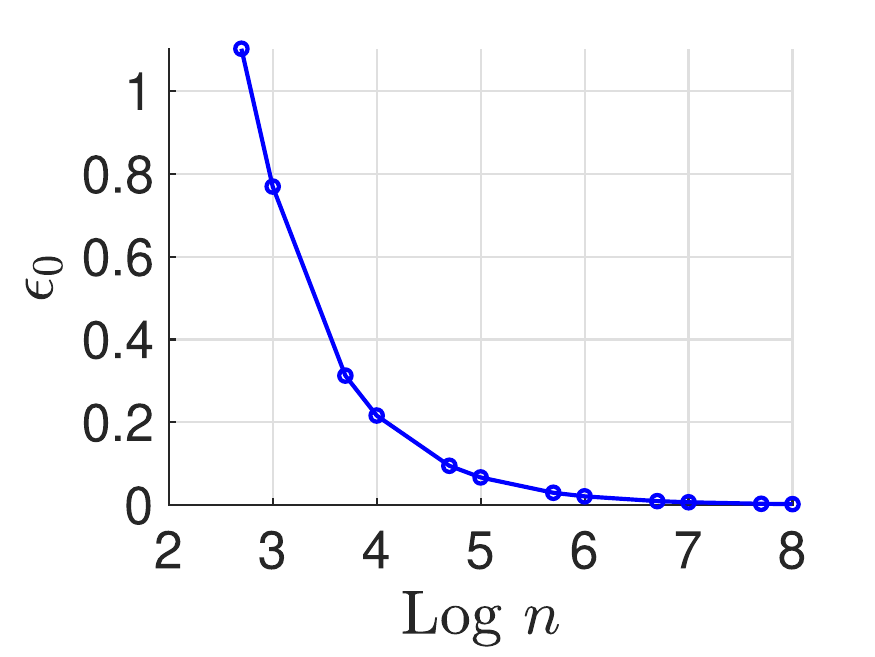} &\includegraphics[scale=0.4]{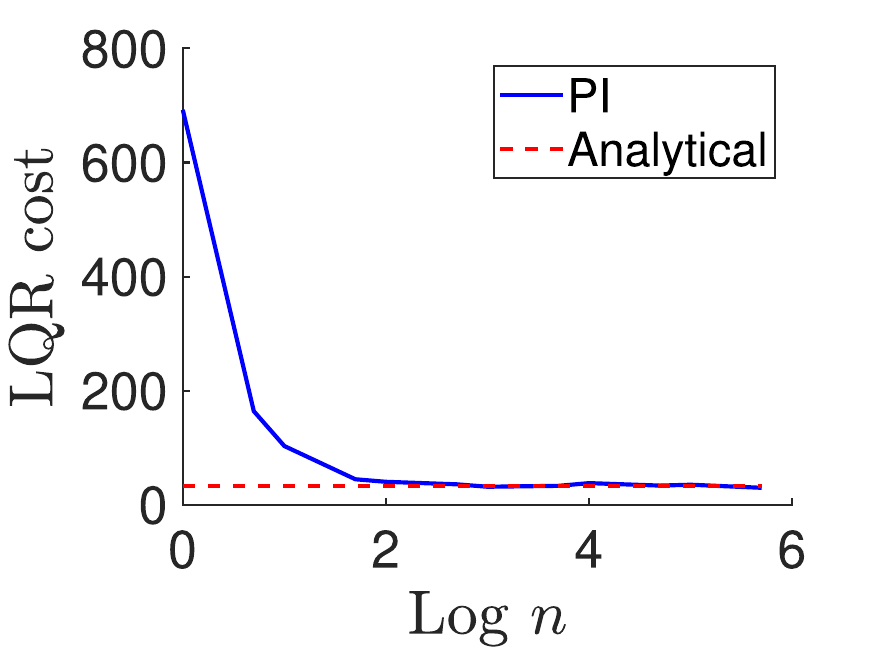} \\
      (a) $\epsilon_0$ vs sample size&(b) LQR costs vs sample size
       \end{tabular}
        \caption{$\epsilon_0$ and LQR cost vs sample size}
        \label{Fig: stuff vs sample size}
\end{figure}

\section{Publications}
\begin{itemize}
     \item \textbf{A. Patil}, G. Hanasusanto, T. Tanaka, ``Discrete-Time Stochastic LQR via Path Integral Control and Its Sample Complexity Analysis," \textit{IEEE Control Systems Letters (L-CSS) }
     \item \textbf{A. Patil}, G. Hanasusanto, T. Tanaka, ``Discrete-Time Stochastic LQR via Path Integral Control and Its Sample Complexity Analysis," \textit{2024 IEEE Conference on Decision and Control (CDC)}
\end{itemize}
\section{Future Work}
Future work will build upon the foundation presented in this dissertation to carry out sample complexity analysis of path integral for nonlinear continuous-time stochastic control problems. Another direction we would like to work on is to ``robustify" the path integral control method by exploiting techniques from $H^\infty$ control.

\chapter[Publications]{Publications}
\label{Sec: publications}
This section includes all the publications from my PhD studies. 
\section{Journal Publications}
\begin{itemize}
     \item \textbf{A. Patil}, G. Hanasusanto, T. Tanaka, ``Discrete-Time Stochastic LQR via Path Integral Control and Its Sample Complexity Analysis," \textit{IEEE Control Systems Letters (L-CSS) }
    \item \textbf{A. Patil}, A. Duarte, F. Bisetti, T. Tanaka, ``Strong Duality and Dual Ascent Approach to Continuous-Time Chance-Constrained Stochastic Optimal Control," \textit{under review in Transactions on Automatic Control (TAC)}
     \item \textbf{A. Patil}, K. Morgenstein,  L. Sentis, T. Tanaka, ``Deceptive Attack Synthesis and Its Mitigation for Nonlinear Cyber-Physical Systems: Path Integral Approach," \textit{under review in Transactions on Automatic Control (TAC)}
   \item M. Baglioni, \textbf{A. Patil}, L. Sentis, A. Jamshidnejad ``Hierarchical Optimal Control for Multi-UAV Best Viewpoint Coordination with Obstacle Avoidance," \textit{In preparation}
\end{itemize}

\section{Conference Publications}
\begin{itemize}
\item C. Martin, \textbf{A. Patil}, W. Li, T. Tanaka, D. Chen, ``Model Predictive Path Integral Control for Roll-to-Roll Manufacturing", \textit{submitted to  Modeling, Estimation and Control Conference, (MECC)}, 2025.
\item \textbf{A. Patil}, G. Hanasusanto, T. Tanaka, ``Discrete-Time Stochastic LQR via Path Integral Control and Its Sample Complexity Analysis," \textit{2024 IEEE Conference on Decision and Control (CDC)}
 \item \textbf{A. Patil}*, M. Karabag*, U. Topcu,  T. Tanaka, ``Simulation-Driven Deceptive Control via Path Integral  Approach," \textit{2023 IEEE Conference on Decision and Control (CDC)}
    \item \textbf{A. Patil}, Y. Zhou, D. Fridovich-Keil, T. Tanaka, ``Risk-Minimizing Two-Player Zero-Sum Stochastic Differential Game via Path Integral Control," \textit{2023 IEEE Conference on Decision and Control (CDC)}

    \item \textbf{A. Patil}, T. Tanaka, ``Upper and Lower Bounds for End-to-End Risks in Stochastic Robot Navigation," \textit{2023 IFAC World Congress}
     \item \textbf{A. Patil}, A. Duarte, A. Smith, F. Bisetti, T. Tanaka, ``Chance-Constrained Stochastic Optimal Control via Path Integral and Finite Difference Methods," \textit{2022 IEEE Conference on Decision and Control (CDC)}
      \item \textbf{A. Patil}, T. Tanaka, ``Upper Bounds for Continuous-Time End-to-End Risks in Stochastic Robot Navigation," \textit{2022 European Control Conference (ECC)}
       \item \textbf{A. Patil}, R. Funada, T. Tanaka, L. Sentis, ``Task Hierarchical Control via Null-Space Projection and Path Integral Approach," \textit{2024 American Control Conference (ACC)}
\end{itemize}
\chapter[Summary]{Summary}
\label{Sec: conclusion}
The summary of this dissertation is depicted in Figure \ref{Fig. Conclusion}. The dissertation focuses on advancing path integral control theory for solving Stochastic Optimal Control (SOC) problems, which are prevalent in systems influenced by uncertainties and random disturbances, such as autonomous robotics. Traditional SOC methods like dynamic programming face computational challenges, particularly in high-dimensional, nonlinear systems. The path integral approach offers a promising alternative by transforming the SOC problem into an expectation over noisy system trajectories, allowing for efficient computation using Monte Carlo sampling. The dissertation explores and develops the path integral control theory for six different classes of SOC problems as shown in Figure \ref{Fig. Conclusion}, with an emphasis on real-time applications and scalability through parallel computing on GPUs. Through these contributions, the research enhances the utility of the path integral control method for handling complex, nonlinear, and safety-critical systems. This work also presents a sample complexity analysis and outlines future directions for developing autonomous systems based on path integral control theory.
\begin{figure}[h]
    \centering
    \!\!\!\!\!\!\!\!\!\!\!\!\includegraphics[scale=0.42]{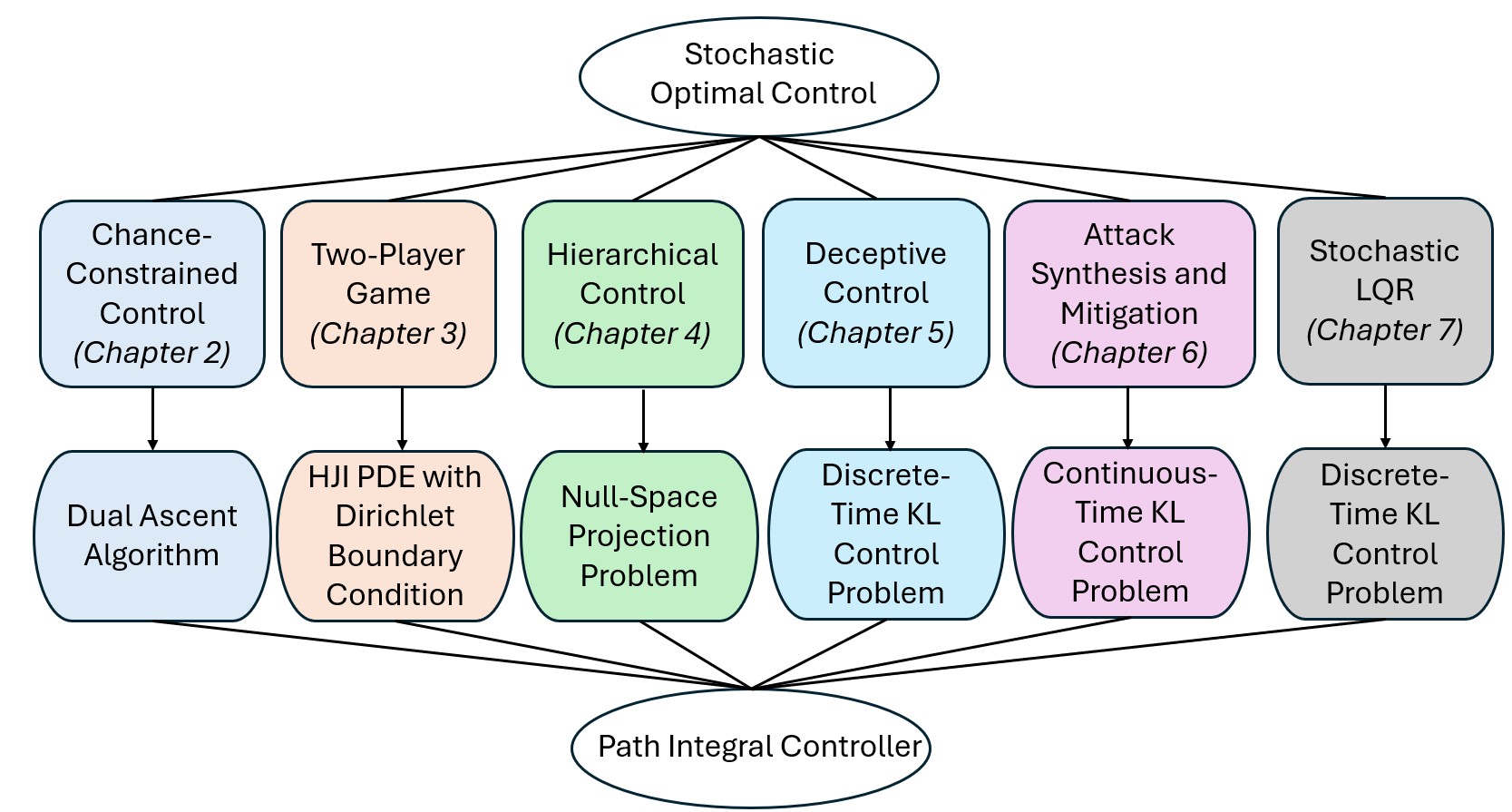} 
      
        \caption{Summary of the dissertation} 
        \label{Fig. Conclusion}
\end{figure}

%
\appendices

\chapter{Appendix} \label{Sec: appendix}
\section{Nonconvex Optimization and Strong Duality}\label{sec: Nonconvex Optimization and Strong Duality}
Let $\mathcal{X}$ be a vector space. Suppose $f_0$ and $f_1$ are not necessarily convex, real-valued functions defined on $\mathcal{X}$. 
The domains of the functions $f_0$ and $f_1$ are denoted by $\text{dom}(f_0)$ and $\text{dom}(f_1)$, respectively.
Consider the following optimization problem with a single inequality constraint:
\begin{subequations}
\label{eq:primal_prob}
\begin{align}
\min_{x\in \mathcal{X}} &\quad f_0(x) \\
&\quad f_1(x)\leq 0.
\end{align}
\end{subequations}
We assume $\text{dom}(f_0)\cap\text{dom}(f_1)$ is nonempty, and $f_0(x)>-\infty \;\; \forall x \in \text{dom}(f_0)\cap\text{dom}(f_1)$.

Let $\eta\geq 0$ be the dual variable. Define the dual function $g:[0,\infty)\rightarrow \bar{\mathbb{R}}$ by
\begin{equation}
\label{eq:def_dual_func}
g(\eta)=\inf_{x\in\mathcal{X}} \; f_0(x)+\eta f_1(x).
\end{equation}
Since \eqref{eq:def_dual_func} is a pointwise infimum of a family of affine functions, it is concave. Moreover, since affine functions are upper semicontinuous, $g(\eta)$ is also upper semicontinuous.

\begin{assumption}
\label{asmp:strict_feasible}
    Problem \eqref{eq:primal_prob} is strictly feasible. That is, there exists $x_0\in \text{dom}(f_0)$ such that $f_1(x_0)<0$.
\end{assumption}
\begin{lemma}
\label{lem:existence_dual_sol}
    Under Assumption~\ref{asmp:strict_feasible}, there exists a dual optimal solution $\eta^*$ such that $0\leq \eta^*<\infty$ such that $g(\eta^*)=\sup_{\eta \geq 0} g(\eta)$.
\end{lemma}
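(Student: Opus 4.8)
The plan is to prove that the supremum $d^\star := \sup_{\eta \ge 0} g(\eta)$ of the concave, upper semicontinuous dual function \eqref{eq:def_dual_func} is attained at some finite point of the closed half-line $[0,\infty)$. The strategy combines a coercivity estimate derived from strict feasibility (Assumption~\ref{asmp:strict_feasible}) with a standard compactness-plus-upper-semicontinuity argument. The degenerate case $d^\star = -\infty$ is immediate: then $g(\eta) = -\infty$ for every $\eta \ge 0$, so in particular $g(0) = -\infty = d^\star$ and $\eta^\star = 0$ attains the supremum; hence I may assume $d^\star > -\infty$ throughout.

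First I would establish coercivity of $g$ at infinity. Let $x_0 \in \mathrm{dom}(f_0)$ be a strictly feasible point, so that $f_1(x_0) < 0$. Evaluating the infimum defining $g$ at this single point gives, for every $\eta \ge 0$,
\begin{equation*}
g(\eta) = \inf_{x \in \mathcal{X}} \big( f_0(x) + \eta f_1(x) \big) \le f_0(x_0) + \eta f_1(x_0),
\end{equation*}
and since $f_1(x_0) < 0$ the right-hand side tends to $-\infty$ as $\eta \to \infty$. Consequently $g(\eta) \to -\infty$, which is the key quantitative consequence of Slater-type strict feasibility.

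Next I would extract a maximizer. Take any maximizing sequence $\{\eta_n\} \subset [0,\infty)$ with $g(\eta_n) \to d^\star$. The coercivity estimate forces $\{\eta_n\}$ to be bounded: if some subsequence diverged to $+\infty$, then $g$ along it would tend to $-\infty$, contradicting $g(\eta_n)\to d^\star > -\infty$. By Bolzano--Weierstrass there is a subsequence $\eta_{n_k} \to \eta^\star$ for some $\eta^\star \in [0,\infty)$, and nonnegativity is preserved in the limit. Upper semicontinuity of $g$ then yields
\begin{equation*}
g(\eta^\star) \ge \limsup_{k\to\infty} g(\eta_{n_k}) = d^\star,
\end{equation*}
while $g(\eta^\star) \le d^\star$ by definition of the supremum, so $g(\eta^\star) = d^\star$ with $0 \le \eta^\star < \infty$, as claimed.

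The main obstacle is not the limit argument itself but ensuring that the structural ingredients it invokes are genuinely available. The argument relies essentially on the upper semicontinuity of $g$ on the closed domain $[0,\infty)$, \emph{including} at the boundary point $\eta = 0$, which is exactly the property recorded just after \eqref{eq:def_dual_func}; and on the fact that $g$ never takes the value $+\infty$ (each $f_0(x)+\eta f_1(x)$ is real on $\mathrm{dom}(f_0)\cap\mathrm{dom}(f_1)$, so the infimum is proper from above) so that $d^\star$ is a bona fide supremum of extended-real numbers rather than an artifact of $+\infty$ values. I would also note that concavity of $g$, though not strictly required for attainment, guarantees that the set of maximizers is an interval; the coercivity estimate additionally shows this set is bounded, reinforcing that $\eta^\star$ can indeed be chosen finite.
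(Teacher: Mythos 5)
Your proof is correct and follows essentially the same route as the paper: both derive coercivity of $g$ from the single estimate $g(\eta)\le f_0(x_0)+\eta f_1(x_0)$ at a strictly feasible point and then combine boundedness of the maximizing set with upper semicontinuity to conclude attainment. The only difference is presentational — the paper packages the final step as an appeal to Weierstrass' theorem for the coercive function $-g$, whereas you unfold it into an explicit maximizing-sequence and Bolzano--Weierstrass argument.
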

\begin{proof}
Since $g(\eta)$ is upper semicontinuous, by Weierstrass' theorem \cite[Proposition A.8]{bertsekas1997nonlinear}, it is sufficient to prove that the function $-g(\eta)$ is coersive. Specifically, it is sufficient to show that there exists a scalar $\gamma$ such that the set 
\[
\eta(\gamma):=\{\eta \geq 0 \mid g(\eta) \geq \gamma\}
\]
is nonempty and compact.

We will show that the choice $\gamma:=\inf_{x\in\mathcal{X}} f_0(x)$ will make $\eta(\gamma)$ nonempty and bounded. To see that $\eta(\gamma)$ is nonempty, notice that 
\[
g(0)=\inf_{x\in\mathcal{X}} f_0(x)=\gamma
\]
and thus $0\in \eta(\gamma)$. 

To prove $\eta(\gamma)$ is bounded, we need to show that $g(\eta)\searrow -\infty$ as $\eta \rightarrow +\infty$. Let $x_0$ be a strictly feasible solution, i.e., $f_1(x_0)<0$. We have
\begin{align*}
g(\eta)&=\inf_x \; f_0(x)+\eta f_1(x) \\
&\leq f_0(x_0)+\eta \underbrace{f_1(x_0)}_{<0}\rightarrow -\infty 
\end{align*}
as $\eta \rightarrow +\infty$. This completes the proof.
\end{proof}
To proceed further, we need an additional set of assumptions.
\begin{assumption}
\label{asmp:continuity}
    \begin{itemize}
    \item[(i)] For each $\eta\geq 0$, the set
    \[
    X(\eta):=\argmin_{x\in\mathcal{X}} \; f_0(x)+\eta f_1(x)
    \]
    is nonempty.
    \item[(ii)] The function $\eta \mapsto f_1(X(\eta)): [0,\infty)\rightarrow \mathbb{R}$ is well-defined. That is, if $x_1\in X(\eta)$ and $x_2\in X(\eta)$, then $f_1(x_1)=f_2(x_2)$.
    \item[(iii)] The function $f_1(X(\cdot))$ is continuous.
    \end{itemize}
\end{assumption}
\begin{lemma}
\label{lem:comp_slackness}
Suppose Assumption~\ref{asmp:strict_feasible} holds so that a dual optimal solution $0\leq \eta^*<+\infty$ exists (c.f., Lemma~\ref{lem:existence_dual_sol}). The following statements hold:
\begin{itemize}
\item[(i)] If $\eta^*=0$, then $f_1(X(\eta^*))\leq 0$.
\item[(ii)] If $\eta^*>0$, then $f_1(X(\eta^*))= 0$.
\end{itemize}
\end{lemma}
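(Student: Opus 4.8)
The plan is to prove the two complementary-slackness statements by combining the characterization of the dual optimum $\eta^*$ with a strict-feasibility argument, leaning on the continuity established in Assumption~\ref{asmp:continuity}(iii). The key observation is that $g(\eta) = f_0(X(\eta)) + \eta f_1(X(\eta))$ for any minimizer $X(\eta)$, and that the dual function is concave, so its behavior at the maximizer $\eta^*$ is governed by one-sided derivatives. A standard fact from convex analysis is that the (super)gradient of $g$ at $\eta$ is precisely $f_1(X(\eta))$; this is where Assumption~\ref{asmp:continuity}(ii) is needed to ensure this value is well-defined regardless of which minimizer we pick.

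For statement (i), suppose $\eta^*=0$. I would argue by contradiction: if $f_1(X(0)) > 0$, then moving $\eta$ slightly positive would increase $g$, since the right derivative of $g$ at $0$ equals $f_1(X(0)) > 0$, contradicting the optimality of $\eta^*=0$. More carefully, because $g$ is concave and $f_1(X(\cdot))$ is continuous (Assumption~\ref{asmp:continuity}(iii)), for small $\eta>0$ we have $g(\eta) - g(0) = \int_0^\eta f_1(X(s))\,ds > 0$ whenever $f_1(X(0))>0$, which again contradicts maximality at $0$. Hence $f_1(X(\eta^*)) = f_1(X(0)) \leq 0$.

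For statement (ii), suppose $\eta^*>0$. Here I would use the fact that $\eta^*$ is an interior maximizer of the concave function $g$ on $[0,\infty)$, so $0$ must belong to the superdifferential of $g$ at $\eta^*$. Since the superdifferential is characterized by the left and right derivatives, and these are both equal to $f_1(X(\eta^*))$ by continuity of $f_1(X(\cdot))$, we conclude $f_1(X(\eta^*)) = 0$. Concretely, if $f_1(X(\eta^*)) > 0$ we could increase $g$ by increasing $\eta$, and if $f_1(X(\eta^*)) < 0$ we could increase $g$ by decreasing $\eta$ (which remains feasible since $\eta^*>0$); either way we contradict optimality, forcing equality.

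The main obstacle will be rigorously justifying that the directional derivatives of $g$ coincide with $f_1(X(\cdot))$, since $g$ is defined as an infimum and its differentiability is not automatic. The cleanest route is to invoke the envelope-type relation: for any $\eta_1, \eta_2 \geq 0$ and minimizers $x_i \in X(\eta_i)$, one has $g(\eta_2) \leq f_0(x_1) + \eta_2 f_1(x_1) = g(\eta_1) + (\eta_2 - \eta_1) f_1(x_1)$, which shows $f_1(X(\eta_1))$ is a supergradient of $g$ at $\eta_1$. Combined with the continuity hypothesis in Assumption~\ref{asmp:continuity}(iii), this pins down the one-sided derivatives and makes both contradiction arguments precise without requiring any smoothness of the underlying (nonconvex) functions $f_0$ and $f_1$.
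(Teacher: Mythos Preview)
Your proposal is correct and takes essentially the same approach as the paper: both proceed by contradiction, perturbing $\eta$ away from $\eta^*$ and using the continuity of $f_1(X(\cdot))$ to show that $g$ would strictly increase, violating optimality. Your added envelope inequality $g(\eta_2) \leq g(\eta_1) + (\eta_2-\eta_1)f_1(X(\eta_1))$ identifying $f_1(X(\eta))$ as a supergradient is a slightly cleaner organizing device than the paper's direct chain of inequalities, but the underlying argument is the same.
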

\begin{proof}
(i) Suppose $\eta^*=0$ but $f_1(X(\eta^*))>0$. Set $\eta_\epsilon=\epsilon>0$.
By continuity of $f_1(X(\cdot))$, we have $f_1(X(\eta_\epsilon))>0$ for a sufficiently small $\epsilon$. Notice that
\begin{align*}
g(\eta_\epsilon)&=f_0(X(\eta_\epsilon))+\underbrace{\eta_\epsilon f_1(X(\eta_\epsilon))}_{>0} \\
&>f_0(X(\eta_\epsilon)) \\
&\geq \min_{x\in\mathcal{X}} \; f_0(x) \\
&=\min_{x\in\mathcal{X}}  \; f_0(x)+\eta^* f_1(x) \\
&=g(\eta^*).
\end{align*}
However, this chain of inequalities contradicts to the fact that $\eta^*$ is the dual optimal solution (i.e., $\eta^*$ maximizes $g(\eta)$).

(ii) Suppose $\eta^*>0$ and $f_1(X(\eta^*))>0$.
Set $\eta_\epsilon=\eta^*+\epsilon$ where $\epsilon>0$ is sufficiently small.
By continuity of $f_1(X(\cdot))$, we have $f_1(X(\eta_\epsilon))>0$. Notice that 
\begin{align*}
g(\eta_\epsilon)&=f_0(X(\eta_\epsilon))+\eta_\epsilon f_1(X(\eta_\epsilon))\\
&=f_0(X(\eta_\epsilon))+\eta^* f_1(X(\eta_\epsilon))+\underbrace{\epsilon f_1(X(\eta_\epsilon))}_{>0} \\
&>f_0(X(\eta_\epsilon))+\eta^* f_1(X(\eta_\epsilon)) \\
&\geq \min_{x\in\mathcal{X}}  \; f_0(x)+\eta^* f_1(x) \\
&=g(\eta^*).
\end{align*}
This contradicts to the fact that $\eta^*$ maximizes $g(\eta)$.

Similarly, suppose $\eta^*>0$ and $f_1(X(\eta^*))<0$.
Set $\eta_\epsilon=\eta^*-\epsilon$ and choose $\epsilon>0$ sufficiently small so that $\eta_\epsilon>0$ and $f_1(X(\eta_\epsilon))<0$. We have
\begin{align*}
g(\eta_\epsilon)&=f_0(X(\eta_\epsilon))+\eta_\epsilon f_1(X(\eta_\epsilon))\\
&=f_0(X(\eta_\epsilon))+\eta^* f_1(X(\eta_\epsilon))-\epsilon \underbrace{f_1(X(\eta_\epsilon))}_{<0} \\
&>f_0(X(\eta_\epsilon))+\eta^* f_1(X(\eta_\epsilon)) \\
&\geq \min_{x\in\mathcal{X}}  \; f_0(x)+\eta^* f_1(x) \\
&=g(\eta^*).
\end{align*}
Again, this contradicts to the fact that $\eta^*$ maximizes $g(\eta)$. Therefore, if $\eta^*>0$ then $f_1(X(\eta^*))=0$ must hold.
\end{proof}
The following is the main result of this section.
\begin{theorem}\label{theorem: strong duality2}
    Consider problem \eqref{eq:primal_prob} and suppose Assumptions~\ref{asmp:strict_feasible} and \ref{asmp:continuity} hold. Then, there exists a dual optimal solution $0\leq \eta^*<+\infty$ that maximizes $g(\eta)$.
    Moreover, the set \[
    X(\eta^*):=\argmin_{x\in\mathcal{X}} \; f_0(x)+\eta^* f_1(x)
    \]
    is nonempty, and any element $x^*\in X(\eta^*)$ is a primal optimal solution such that $f_0(x^*)=g(\eta^*)$. i.e., the duality gap is zero. Consequently, a minimizer of $f_0(x)+\eta^*f_1(x)$ is an optimal solution of \eqref{eq:primal_prob}.
\end{theorem}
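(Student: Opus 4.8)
\textbf{Proof proposal for Theorem~\ref{theorem: strong duality2}.}

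The plan is to chain together the two lemmas already established in this section, Lemma~\ref{lem:existence_dual_sol} and Lemma~\ref{lem:comp_slackness}, with a weak-duality argument and a careful case analysis on the sign of the optimal dual variable $\eta^*$. First I would invoke Lemma~\ref{lem:existence_dual_sol}: under the strict-feasibility Assumption~\ref{asmp:strict_feasible}, a finite dual maximizer $0\le\eta^*<\infty$ exists. By Assumption~\ref{asmp:continuity}(i) the set $X(\eta^*)$ is nonempty, so I may fix any $x^*\in X(\eta^*)$. The goal reduces to showing two things about this particular $x^*$: that it is primal feasible (i.e.\ $f_1(x^*)\le 0$), and that $f_0(x^*)=g(\eta^*)$, since weak duality then forces $x^*$ to be primal optimal with zero duality gap.

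Next I would run the case split supplied by Lemma~\ref{lem:comp_slackness}. If $\eta^*=0$, Lemma~\ref{lem:comp_slackness}(i) gives $f_1(X(\eta^*))\le 0$, so by Assumption~\ref{asmp:continuity}(ii) the value $f_1(x^*)$ is well-defined and satisfies $f_1(x^*)\le 0$; moreover $g(\eta^*)=f_0(x^*)+\eta^* f_1(x^*)=f_0(x^*)$ because $\eta^*=0$. If instead $\eta^*>0$, Lemma~\ref{lem:comp_slackness}(ii) gives complementary slackness $f_1(x^*)=0$, which yields feasibility and simultaneously $g(\eta^*)=f_0(x^*)+\eta^* f_1(x^*)=f_0(x^*)$ since the second term vanishes. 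In both branches I obtain both primal feasibility of $x^*$ and the identity $f_0(x^*)=g(\eta^*)$.

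The final step is to close the gap via weak duality. For \emph{any} feasible $x$ (so $f_1(x)\le 0$) and the nonnegative multiplier $\eta^*\ge 0$, one has $f_0(x)\ge f_0(x)+\eta^* f_1(x)\ge \inf_{x'}\bigl(f_0(x')+\eta^* f_1(x')\bigr)=g(\eta^*)=f_0(x^*)$. Taking the infimum over feasible $x$ shows that $x^*$ attains the primal optimum and that the optimal primal value equals $g(\eta^*)$, i.e.\ the duality gap is zero. This simultaneously establishes that any minimizer of $f_0+\eta^* f_1$ solves \eqref{eq:primal_prob}, which is the concluding claim.

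The main obstacle I anticipate is not any single inequality but rather the bookkeeping around Assumption~\ref{asmp:continuity}(ii): I must be careful that $f_1$ takes a single well-defined value on the (possibly non-singleton) set $X(\eta^*)$, so that the statements ``$f_1(x^*)\le 0$'' and ``$f_1(x^*)=0$'' are unambiguous for the arbitrarily chosen $x^*$ rather than merely for some representative. Once that well-definedness is in hand, the argument is essentially a clean assembly of the prior lemmas, and the only genuinely delicate point—the existence and finiteness of $\eta^*$ together with the continuity needed for complementary slackness—has already been absorbed into Lemmas~\ref{lem:existence_dual_sol} and \ref{lem:comp_slackness}.
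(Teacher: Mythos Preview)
Your proposal is correct and follows essentially the same approach as the paper's proof: use Lemma~\ref{lem:existence_dual_sol} for existence of a finite $\eta^*$, use Lemma~\ref{lem:comp_slackness} to extract primal feasibility $f_1(x^*)\le 0$ and complementary slackness $\eta^* f_1(x^*)=0$, then combine these with $x^*\in X(\eta^*)$ to get $f_0(x^*)=g(\eta^*)$ and conclude via weak duality. The paper compresses your case split into the single observation that both conclusions of Lemma~\ref{lem:comp_slackness} yield $f_1(x^*)\le 0$ and $\eta^* f_1(x^*)=0$ simultaneously, and leaves the weak-duality closure implicit, but the argument is the same.
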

\begin{proof}
By Lemma~\ref{lem:comp_slackness}, any element $x^*\in X(\eta^*)$ satisfies $f_1(x^*)\leq 0$. Hence, $x^*$ satisfies primal feasibility.
It also follows from Lemma~\ref{lem:comp_slackness} that $\eta^* f_1(x^*)=0$, i.e., the complementary slackness condition holds. Therefore,
\begin{align*}
f_0(x^*)&=f_0(x^*)+\eta^* f_1(x^*) \quad \text{(Complementary slackness)}\\
&=\min_{x\in \mathcal{X}} f_0(x)+\eta^* f(x) \quad \text{(since $x^*\in X(\eta^*)$)} \\
&=g(\eta^*).
\end{align*}
Hence, the strong duality holds.
Therefore, $x^*$ is a primal optimal solution.
It follows from the identity above that a minimizer of $f_0(x)+\eta^*f_1(x)$ is an optimal solution for \eqref{eq:primal_prob}.
\end{proof}

\section{Legendre Duality}
\label{sec:Legendre Duality}

Let $P$ and $Q$ be probability distributions on $(\mathcal{X}, \mathcal{B}(\mathcal{X}))$, and $C:\mathcal{X}\rightarrow \mathbb{R}$ a given cost function. Define the internal energy $U(Q,C)$, free energy $F(P,C)$ and relative entropy (KL divergence) $D(Q\|P)$ as:
\begin{align*}
&U(Q,C):=\int_\mathcal{X} C(x)Q(dx)  \\
&F(P,C):= -\lambda \log \int_\mathcal{X} \exp\left(-\frac{C(x)}{\lambda}\right)P(dx) \\
&D(Q\|P):= \int_\mathcal{X} \log\frac{dQ}{dP}(x)Q(dx).
\end{align*}
Then the following duality relationship holds:
\begin{align*}
F(P,C)&=\inf_Q \{ U(Q,C)+\lambda D(Q\|P) \} \\
    -\lambda D(Q\|P)&=\inf_C \{ U(Q,C)-F(P,C) \}.
\end{align*}
Also, the optimal probability distribution $Q^*$ is given by
\begin{equation*}
    Q^*(B) = \frac{\int_{B}\exp(-C(x)/\lambda)P(dx)}{\int_{\mathcal{X}} \exp(-C(x)/\lambda)P(dx)}, \quad \forall B \in \mathcal{B}(\mathcal{X}).
\end{equation*}
See \cite{boue1998variational,theodorou2012relative} for further discussions.

\section{The Likelihood Ratio $\frac{dQ^*}{dP}(x)$}\label{sec: The Likelihood Ratio}
Let $Q(x)$ be the probability distribution of the trajectories defined by system \eqref{SDE} under the given policy $u(\cdot)$ and $Q^*(x)$ be the probability distribution of the trajectories defined by system \eqref{SDE} under the optimal policy $u^*(\cdot)$ Let $P(x)$ be the probability distribution of the trajectories defined by the uncontrolled system \eqref{uncontrolled SDE}. Suppose the likelihood ratio of observing a sample path $x$ under the distributions $Q$ and $P$ is denoted by $\frac{dQ}{dP}(x)$ and the expectation under any distribution $Q$ is denoted by $\mathbb{E}^Q[\cdot]$. Let $k(t)$ be a process defined by 
\begin{equation}\label{k(t)}
    k(t) \coloneqq \Sigma^{-1}Gu(t).
\end{equation}
Now, we state the following theorem:
\begin{theorem}[The Girsanov theorem]
    The likelihood ratio $\frac{dQ}{dP}(x)$ of observing a sample path $x$ under distributions $Q$ and $P$ is given by 
    \begin{equation}\label{dQ*/dP}
        \frac{dQ}{dP}(x) = \exp\left({\int_{t_0}^{{{t}}_{f}}}k(t)^\top d{w}(t) + \frac{1}{2}\int_{t_0}^{{{t}}_{f}}\|k(t)\|^2dt\right)
    \end{equation}
    where the process $k(t)$ is defined by \eqref{k(t)}. 
\end{theorem}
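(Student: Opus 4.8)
The plan is to identify the statement as a direct application of the Girsanov theorem, the only subtlety being to track which Wiener process the It\^o integral is taken against. First I would realize both the controlled law $Q$ and the uncontrolled law $P$ on a common filtered probability space, and characterize each through the noise that reproduces a given sample path $x$. Inverting the diffusion in (\ref{SDE}) and (\ref{uncontrolled SDE}) gives, for the same realized path, $dw^P = \Sigma^{-1}(dx - f\,dt)$ under $P$ and $dw^Q = \Sigma^{-1}(dx - f\,dt - Gu\,dt)$ under $Q$. Subtracting yields the key relation
\[
dw^Q(t) = dw^P(t) - k(t)\,dt, \qquad k(t) = \Sigma^{-1}Gu(t),
\]
so that passing from $P$ to $Q$ amounts precisely to shifting the Brownian drift by the process $k$ defined in (\ref{k(t)}).

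With this identification, I would invoke the classical Girsanov theorem \cite[Chapter 8]{oksendal2013stochastic}: taking $k$ as the drift shift, the Radon-Nikodym derivative of $Q$ with respect to $P$ up to the (stopping) time $t_f$ is
\[
\frac{dQ}{dP}(x) = \exp\left(\int_{t_0}^{t_f} k(t)^\top dw^P(t) - \frac{1}{2}\int_{t_0}^{t_f}\|k(t)\|^2\,dt\right),
\]
and under this $Q$ the shifted process $w^Q(t) = w^P(t) - \int_{t_0}^t k(s)\,ds$ is a standard Brownian motion. This is the density written against the $P$-Brownian motion; to match the statement I would re-express it against the Wiener process $w = w^Q$ appearing in the controlled dynamics (\ref{SDE}). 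Substituting $dw^P = dw^Q + k\,dt$ converts $\int k^\top dw^P - \frac{1}{2}\int\|k\|^2$ into $\int k^\top dw^Q + \frac{1}{2}\int\|k\|^2$, which gives exactly the claimed expression (\ref{dQ*/dP}) with its $+\tfrac{1}{2}$ sign.

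The main obstacle I anticipate is the technical justification rather than the formal computation. Three points need care: (i) the process $k = \Sigma^{-1}Gu$ must be well-defined, which requires $Gu$ to lie in the range of $\Sigma$ (guaranteed here because the control and the noise enter through the same channels, cf. the partition of the dynamics preceding (\ref{path integral control})); (ii) the exponential must be a genuine martingale so that it defines an equivalent change of measure, which I would secure via Novikov's condition $\mathbb{E}^P\exp\left(\frac{1}{2}\int_{t_0}^{t_f}\|k\|^2\,dt\right) < \infty$ or a localization argument, using the square-integrability and admissibility of $u$; and (iii) since $t_f$ is a stopping time rather than a deterministic horizon, I would apply Girsanov on the stopped filtration $\mathcal{F}_{t\wedge t_f}$ and invoke optional stopping to transfer the density to $t_f$. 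Once these regularity conditions are in place, the change-of-variable in the exponent is routine and the result follows.
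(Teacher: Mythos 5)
Your proof is correct and takes essentially the same route as the paper, whose entire proof is a citation to the classical Girsanov theorem in \cite[Chapter 8.6]{oksendal2013stochastic}; your derivation supplies the one detail the paper leaves implicit, namely that the stochastic integral in the stated density is taken against the $Q$-Brownian motion driving the controlled SDE, which is exactly why the exponent carries $+\tfrac{1}{2}\int\|k\|^2\,dt$ rather than the familiar $-\tfrac{1}{2}$. Your technical caveats (the range condition on $Gu$ relative to $\Sigma$, Novikov's condition, and applying Girsanov up to the stopping time $t_f$) are all appropriate and go beyond what the paper records.
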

\begin{proof}
    Refer to \cite[Chapter 8.6]{oksendal2013stochastic}.
\end{proof}

\begin{theorem}\label{Thm: likelihood ratio}
    Suppose we generate an ensemble of $N$ trajectories $ \{x^{(i)}\}_{i=1}^N$ under the distribution $P$. Let $r^{(i)}$ be the path reward associated with the trajectory $x^{(i)}$ as given in \eqref{r(i)}. Define $r \coloneqq \sum_{i=1}^{N}r^{(i)}$. Then as $N\rightarrow\infty$, 
    
    \begin{equation*}
    \frac{r^{(i)}}{r/N} \overset{a.s.}{\rightarrow} \frac{dQ^*}{dP}(x)     
    \end{equation*}
\end{theorem}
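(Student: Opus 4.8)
The plan is to reduce the claim to two independent facts and then combine them. The first is a direct application of the strong law of large numbers together with the Feynman--Kac representation \eqref{xi}: since the $x^{(i)}\sim P$ are i.i.d.\ and $r^{(i)}=\exp(-S(x^{(i)})/\lambda)$, where $S(x)=\phi(x(t_f);\eta)+\int_{t_0}^{t_f}V\,dt$ is the cost-to-go along the path, we have $r/N=\tfrac1N\sum_{i=1}^N r^{(i)}\overset{a.s.}{\to}\mathbb{E}_P[\exp(-S/\lambda)]=\xi(x_0,t_0;\eta)$. The second, and substantive, fact is the pointwise identity
\begin{equation*}
\frac{dQ^*}{dP}(x)=\frac{\exp(-S(x)/\lambda)}{\xi(x_0,t_0;\eta)}=\frac{r(x)}{\xi(x_0,t_0;\eta)}.
\end{equation*}
Granting this, for a fixed sample path $x^{(i)}$ the numerator $r^{(i)}$ is constant while $r/N\to\xi(x_0,t_0;\eta)$, so $r^{(i)}/(r/N)\overset{a.s.}{\to}r^{(i)}/\xi(x_0,t_0;\eta)=\tfrac{dQ^*}{dP}(x^{(i)})$, which is exactly the assertion.

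The core of the work is therefore establishing the pointwise identity. First I would start from the Girsanov expression \eqref{dQ*/dP}, which gives $\log\frac{dQ^*}{dP}=\int_{t_0}^{t_f}(k^*)^\top dw+\tfrac12\int_{t_0}^{t_f}\|k^*\|^2\,dt$ with $k^*=\Sigma^{-1}Gu^*$. Substituting the optimal policy \eqref{optimal policy} together with the log-transform \eqref{exp transformation}, namely $u^*=-R^{-1}G^\top\partial_xJ=\lambda R^{-1}G^\top\partial_x\xi/\xi$, and then invoking Assumption~\ref{Assum: linearity} in the form $GR^{-1}G^\top=\tfrac1\lambda\Sigma\Sigma^\top$ \eqref{lambda}, collapses this to the clean expression $k^*=\Sigma^\top\partial_x\xi/\xi$.

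Next I would evaluate the Girsanov exponent by an It\^o computation on $\log\xi(x(t),t)$ carried out along the \emph{uncontrolled} dynamics \eqref{uncontrolled SDE}, $dx=f\,dt+\Sigma\,dw^P$. Applying It\^o's formula and substituting the linear PDE \eqref{linearized risk-minimizing HJB} for $\partial_t\xi$ cancels the drift and trace terms, leaving $d\xi=\tfrac{V}{\lambda}\xi\,dt+(\partial_x\xi)^\top\Sigma\,dw^P$; a second application of It\^o to $\log\xi$ then yields
\begin{equation*}
d(\log\xi)=\frac{V}{\lambda}\,dt+(k^*)^\top dw^P-\frac12\|k^*\|^2\,dt.
\end{equation*}
Integrating over $[t_0,t_f]$ and inserting the boundary value $\xi(x(t_f),t_f)=\exp(-\phi(x(t_f);\eta)/\lambda)$ gives $\int_{t_0}^{t_f}(k^*)^\top dw^P-\tfrac12\int_{t_0}^{t_f}\|k^*\|^2\,dt=-S(x)/\lambda-\log\xi(x_0,t_0;\eta)$. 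Finally, rewriting the Girsanov exponent (which is stated in terms of the controlled, i.e.\ $Q^*$-Brownian motion $w$) in terms of the $P$-Brownian motion via $dw=dw^P-k^*\,dt$ converts $\int(k^*)^\top dw+\tfrac12\int\|k^*\|^2\,dt$ into exactly $\int(k^*)^\top dw^P-\tfrac12\int\|k^*\|^2\,dt$, so the two sides coincide and the pointwise identity follows.

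I expect the main obstacle to be the bookkeeping between the two probability measures: the Girsanov formula \eqref{dQ*/dP} is written with respect to the controlled ($Q^*$) Brownian motion, whereas the It\^o computation naturally lives on the uncontrolled ($P$) paths, so the drift shift $dw=dw^P-k^*\,dt$ must be tracked carefully to avoid a sign error in the $\tfrac12\|k^*\|^2$ term. A secondary technical point is the meaning of $\Sigma^{-1}$ in $k^*$: this should be handled under the partitioned structure of the dynamics, where only the noise-driven block $\Sigma^{(2)}$ is inverted, consistent with the way Assumption~\ref{Assum: linearity} and \eqref{lambda} are used throughout.
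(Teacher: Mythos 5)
Your proposal is correct, and the reduction (SLLN for the denominator plus the pointwise identity $\tfrac{dQ^*}{dP}(x)=\exp(-S(x)/\lambda)/\xi(x_0,t_0;\eta)$ for the numerator) is exactly the skeleton of the paper's argument. Where you genuinely diverge is in how you establish the pointwise identity. The paper's proof never touches It\^o's formula on $\xi$: it uses Assumption~\ref{Assum: linearity} to rewrite the quadratic control cost as $\tfrac{\lambda}{2}\|k\|^2$, identifies $\mathbb{E}^Q\log\tfrac{dQ}{dP}$ with $\tfrac12\int\mathbb{E}^Q\|k\|^2\,dt$ via Girsanov, recasts the whole problem as a KL control problem over path measures, and then reads off $Q^*$ as the Gibbs-tilted measure from the Legendre duality between KL divergence and free energy (Appendix~\ref{sec:Legendre Duality}). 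You instead take the optimal policy $u^*$ as already constructed by Theorem~\ref{theorem: solution to risk-minimizing soc} and the Feynman--Kac representation, and \emph{verify} by stochastic calculus --- It\^o on $\log\xi$ along the uncontrolled dynamics, cancellation via the linear PDE \eqref{linearized risk-minimizing HJB}, and the measure change $dw=dw^P-k^*\,dt$ --- that the resulting Girsanov density is the tilted one. Your computation is right (in particular the sign bookkeeping on the $\tfrac12\|k^*\|^2$ term and the collapse $k^*=\Sigma^\top\partial_x\xi/\xi$ under \eqref{lambda} both check out), and your route makes the link between the PDE solution and the path density fully explicit; the cost is that it needs $\xi\in C^{1,2}$ up to the exit time so that It\^o's formula applies, and it presupposes the optimality/uniqueness of $u^*$ rather than deriving the optimal path measure directly, as the duality argument does. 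Both proofs lean on Assumption~\ref{Assum: linearity} at the same essential point, and both share the minor loose ends you flagged: the interpretation of $\Sigma^{-1}$ through the partitioned noise block, and the fact that the integrals run to a path-dependent exit time $t_f$ rather than a fixed horizon.
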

\begin{proof}
    The likelihood ratio $\frac{dQ}{dP}(x)$ of observing a sample path $x$ is given by \eqref{dQ*/dP}. Using this result, we obtain
   
    \begin{equation}\label{E log likelihood}
        \mathbb{E}^{Q}\log\left(\frac{dQ}{dP}(x)\right) =\frac{1}{2}\int_{t_0}^{{t}_f} \mathbb{E}^Q\|k(t)\|^2dt.
    \end{equation}
In \eqref{E log likelihood}, we used the property of It\^o integral \cite[Chapter 3]{oksendal2013stochastic}:
    \begin{equation*}
        \mathbb{E}^Q\left[{\int_{t_0}^{{{t}}_{f}}}k(t)^\top d{w}(t)\right] = 0
    \end{equation*} 
    Now, for a given value of $\eta$, we wanted to solve the following problem
    \begin{align}\label{optimization problem}
        \min_{u(\cdot)}\mathbb{E}^Q_{x_0, t_0}\!\!\left[\phi\!\left({x}({t}_{f}); \eta\right)\!+\!\!\!\int_{t_0}^{{t}_{f}}\!\!\!\left(\!\frac{1}{2}{u}^\top\!R{u}+V\!\right)\!dt\!\right].
    \end{align}
    According to Assumption \ref{Assum: linearity}, there exists a positive constant $\lambda$ such that
    \begin{equation}\label{linearity 2}
    R = \lambda G^\top\Sigma^{-\top}\Sigma^{-1}G.
    \end{equation}
   Now, we get
\begin{subequations}
    \begin{align}
        &\min_{u(\cdot)}\mathbb{E}^Q_{x_0, t_0}\!\!\left[\phi\!\left({x}({t}_{f}); \eta\right)\!+\!\!\!\int_{t_0}^{{t}_{f}}\!\!\!\left(\!\frac{1}{2}{u}^\top\!R{u}+V\!\right)\!dt\!\right]\label{with R}\\
        = & \min_{u(\cdot)}\mathbb{E}^Q_{x_0, t_0}\!\!\left[\phi\!\left({x}({t}_{f}); \eta\right)\!+\!\!\!\int_{t_0}^{{t}_{f}}\!\!\!\left(\!\frac{1}{2}{u}^{\!\!\top}\!\lambda G^{\!\top}\Sigma^{-\!\top}\Sigma^{-\!1}G{u}\!+\!\!V\!\!\right)\!dt\!\right]\label{replacing R}\\
        = & \min_{k(\cdot)}\mathbb{E}^Q_{x_0, t_0}\!\!\left[\phi\!\left({x}({t}_{f}); \eta\right)\!+\!\!\!\int_{t_0}^{{t}_{f}}\!\!\!\left(\!\frac{\lambda}{2}\|k(t)\|^2+V\!\right)\!dt\!\right]\label{replacing with k}\\
        = & \min_{Q(x)}\int_{\mathcal{T}}\left( \phi\!\left({x}({t}_{f}); \eta\right) + \int_{t_0}^{{t}_f}V dt + \lambda\log\frac{dQ}{dP}(x)\right)Q(dx).\label{minimization under Q}
        \end{align}
\end{subequations}
Equation \eqref{replacing R} is obtained by plugging  \eqref{linearity 2} into \eqref{with R}. \eqref{replacing with k} obtained by using \eqref{k(t)} and \eqref{minimization under Q} by using \eqref{E log likelihood}. Thus, we converted the problem \eqref{optimization problem} into a KL control problem.
Invoking the Legendre duality between the KL divergence and free energy (See Appendix \ref{sec:Legendre Duality}) it can be shown that there exists a minimizer $Q^*$ of \eqref{minimization under Q} which can be written as 
\begin{align}\label{Q*}
    Q^*(dx) = \frac{\exp\left(-\frac{1}{\lambda}\left(\phi\!\left({x}({t}_{f}); \eta\right) + \int_{t_0}^{{t}_f}V dt\right)\right)P(dx)}{\mathbb{E}^P \left[\exp\left(-\frac{1}{\lambda}\left(\phi\!\left({x}({t}_{f}); \eta\right) + \int_{t_0}^{{t}_f}V dt\right)\right)\right]}
\end{align}
Using \eqref{Q*}, we can write the expression for the Radon-Nikodym derivative $\frac{dQ^*}{dP}(x)$ as 
\begin{equation}\label{dQ/dP}
    \frac{dQ^*}{dP}(x) = \frac{\exp\left(-\frac{1}{\lambda}\left(\phi\!\left({x}({t}_{f}); \eta\right) + \int_{t_0}^{{t}_f}V dt\right)\right)}{\mathbb{E}^P \left[\exp\left(-\frac{1}{\lambda}\left(\phi\!\left({x}({t}_{f}); \eta\right) + \int_{t_0}^{{t}_f}V dt\right)\right)\right]}
\end{equation}
\end{proof}
Using \eqref{dQ/dP}, we can conclude that given the ensemble of N trajectories $\{x^{(i)}\}_{i=1}^N$ sampled under distribution $P$, the likelihood ratio $\frac{dQ^*}{dP}$ of observing a sample path $x^{(i)}$ is given by $\frac{r^{(i)}}{r/N}$ where $r^{(i)}$ is defined by \eqref{r(i)} and $r \coloneqq \sum_{i=1}^{N}r^{(i)}$. Using the strong law of large numbers as $N\rightarrow\infty$, we get  
\begin{equation*}
    \frac{r^{(i)}}{r/N} \overset{a.s.}{\rightarrow} \frac{dQ^*}{dP}(x).  
\end{equation*}

\section{Attack on Diffusion Coefficient}\label{sec: Attack on Diffusion Coefficient}
Consider the following hypothesis testing problem on a sample path $v_t, 0\leq t \leq 1$:
\begin{align*}
H_0 :& \quad dv_t=dw_t, \; v_0=0 \\
H_1 :& \quad dv_t=\sigma dw_t, \; v_0=0, \; \sigma=1.1.
\end{align*}
That is, under $H_0$, $v_t$ follows the law of the standard Brownian motion, whereas under $H_1$, $v_t$ is slightly ``noisier." Fig.~\ref{fig:h0h1} shows ten independent sample paths of $v_t$ generated under each hypothesis.

\begin{figure}
\centering
\includegraphics[scale=0.5]{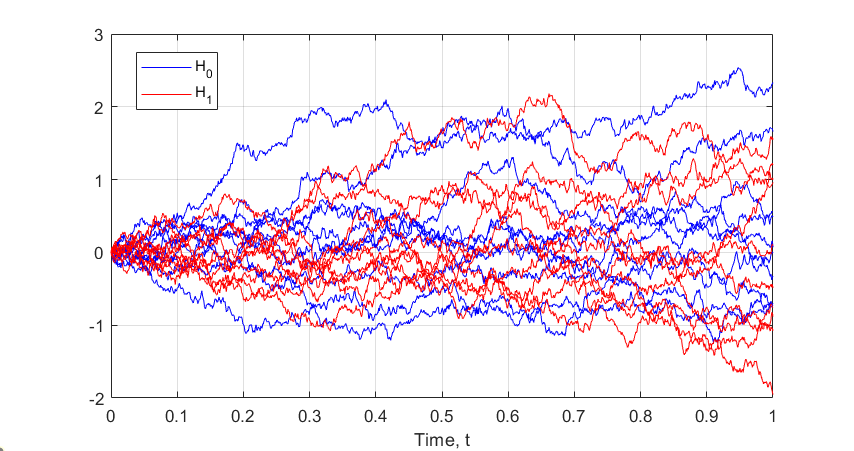}
\caption{Sample paths under $H_0$ and $H_1$.}
\label{fig:h0h1}
\end{figure}

Suppose the detector's task is to determine $H_0$ vs. $H_1$ based on discrete-time measurements $v_{t_k}, t_k=hk$, $k=1,2,\cdots, K(=1/h)$ where $h$ is the discretization step size.
If the underlying model is $dv_t=\sigma dw_t$, then the detector observes $K=(1/h)$ i.i.d. samples of increments:
\[
y_k:=v_{t_k}-v_{t_{k-1}}
\stackrel{\text{i.i.d.}}{\sim} \mathcal{N}(0, \sigma^2 h), \;\; k=1, 2, \cdots, K(=1/h).
\]
If we denote $P_0=\mathcal{N}(0,h)$ and $P_1=\mathcal{N}(0,\sigma^2 h)$, the likelihood ratio is
\begin{equation*}
    \frac{dP_1}{dP_0}(y_1, \cdots, y_K)=\prod_{k=1}^K \frac{\frac{1}{\sqrt{2\pi \sigma^2 h}}\exp\left(-\frac{y_k^2}{2\sigma^2 h}\right)}{\frac{1}{\sqrt{2\pi h}}\exp\left(-\frac{y_k^2}{2 h}\right)}
=
\frac{1}{\sigma^K}\exp\left[\frac{1}{2h}\left(1-\frac{1}{\sigma^2}\right)\sum_{k=1}^K y_k^2\right].
\end{equation*}
Thus, the acceptance region for $H_1$ under the Neyman-Pearson test is
\begin{equation}
\frac{1}{\sigma^K}\exp\left[\frac{1}{2h}\left(1-\frac{1}{\sigma^2}\right)\sum_{k=1}^K y_k^2\right] \geq \tau
 \label{eq:h1_region}
\end{equation}
where $\tau$ is a threshold.

Let us represent the probability of type-I error (false alarm) by $\alpha$ and the probability of type-II error (failure of detection) by $\beta$. In this example, both $\alpha$ and $\beta$ can be computed analytically. 
The false alarm rate $\alpha$ is the probability of the event \eqref{eq:h1_region} when $y_k\stackrel{\text{i.i.d.}}{\sim} \mathcal{N}(0, h)$.
Notice that \eqref{eq:h1_region} is equivalent to
\[
\sum_{k=1}^K \left(\frac{y_k}{\sqrt{h}}\right)^2 \geq \frac{2\sigma^2}{\sigma^2-1}(K\log \sigma + \log \tau)
\]
and $\sum_{k=1}^K \left(\frac{y_k}{\sqrt{h}}\right)^2\sim \chi^2(K)$ (the $\chi^2$-distribution with $K$ degrees of freedom).
Since the CDF of $\chi^2(K)$ is
\[
F(x)=P\left(\frac{x}{2},\frac{K}{2}\right)=\frac{1}{\Gamma\left(\frac{K}{2}\right)}\int_0^{x/2} t^{K/2-1}e^{-t}dt
\]
where $\Gamma(a)$ is the Gamma function and $P(x,a)$ is the regularized lower incomplete Gamma function, we have
\begin{equation}
\alpha = 1-P\left(\frac{\sigma^2}{\sigma^2-1}(K\log \sigma + \log \tau), \frac{K}{2}\right)=Q\left(\frac{\sigma^2}{\sigma^2-1}(K\log \sigma + \log \tau), \frac{K}{2}\right)
 \label{eq:alpha}
\end{equation}
where $Q(x,a)$ is the regularized upper incomplete Gamma function.\par

On the other hand, the detection failure rate $\beta$ is the probability of the event
\[
\sum_{k=1}^K \left(\frac{y_k}{\sqrt{\sigma^2 h}}\right)^2 < \frac{2}{\sigma^2-1}(K\log \sigma + \log \tau)
\]
when $y_k\stackrel{\text{i.i.d.}}{\sim} N(0, \sigma^2 h)$.
Since $\sum_{k=1}^K \left(\frac{y_k}{\sqrt{\sigma^2 h}}\right)^2\sim \chi^2(K)$, we have
\begin{equation}
\beta = P\left(\frac{1}{\sigma^2-1}(K\log \sigma + \log \tau), \frac{K}{2}\right). \label{eq:beta}
\end{equation}
Using \eqref{eq:alpha} and \eqref{eq:beta}, the trade-off curve between $\alpha$ and $\beta$ can be plotted by varying the parameter $\tau$.

Fig.~\ref{fig:alpha_beta} shows the $\alpha$--$\beta$ trade-off curve when the discretization step size is set to be $h=1/100, 1/200$ and $1/300$. It shows how different the trade-off curve can be if different step sizes are chosen. It can be numerically verified that both $\alpha$ and $\beta$ can be made arbitrarily small simultaneously by taking $h\searrow 0$.

\begin{figure}
\centering
\includegraphics[width = 0.8\textwidth]{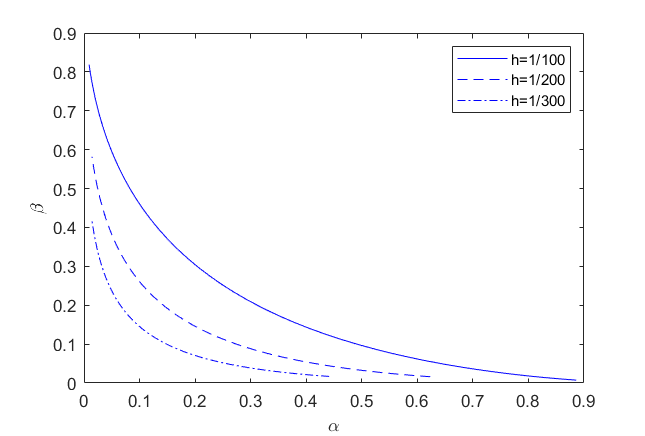}
\caption{$\alpha$--$\beta$ trade-off curves.}
\label{fig:alpha_beta}
\end{figure}
Let $\mu_0$ and $\mu_1$ be the probability measures under which $v_t$ in $H_0$ and $H_1$ are the standard Brownian motions, respectively. It can be shown that $\mu_0$ and $\mu_1$ are mutually singular, i.e., the Radon-Nikodym derivative $\frac{d\mu_1}{d\mu_0}$ is ill-defined. This implies that the hypothesis testing problem formulated above is ill-posed -- that is, given a realized sample path $v_t, 0\leq t \leq 1$, the detector can estimate which model has generated the path correctly with probability one.


\bibliographystyle{plainnat}  
\bibliography{references}        




\end{document}